\documentclass[12pt,reqno]{article}
%
\usepackage{fullpage}
\usepackage{comment}
\usepackage{enumerate}
\usepackage{thmtools}
\usepackage{mathtools}
\usepackage[normalem]{ulem}
%
%
\usepackage{amssymb}
\usepackage{amsthm}
\usepackage[normalem]{ulem}
\usepackage[mathscr]{euscript}
\usepackage{pgf,tikz}
\usetikzlibrary{arrows}
\usepackage{float}
\usepackage{hyperref}
\usepackage{cite}
\usepackage{color}
\usepackage{xcolor}
\usepackage{transparent}
\usepackage{subfigure}
\usepackage{upgreek} 
\usepackage{amsfonts,txfonts}
\usepackage{bm}
\usepackage{epstopdf}
\usepackage{psfrag}

\definecolor{azure(colorwheel)}{rgb}{0.0, 0.5, 1.0}
\definecolor{hanpurple}{rgb}{0.32, 0.09, 0.98}
\definecolor{iris}{rgb}{0.35, 0.31, 0.81}
\definecolor{byzantine}{rgb}{0.74, 0.2, 0.64}
\definecolor{ashgrey}{rgb}{0.7, 0.75, 0.71}
\definecolor{battleshipgrey}{rgb}{0.52, 0.52, 0.51}
\hypersetup{colorlinks=true,pdfborder=000,  citecolor  = blue, citebordercolor= {magenta}}
\hypersetup{linkcolor=black}

\makeatletter
\let\reftagform@=\tagform@
\def\tagform@#1{\maketag@@@{(\ignorespaces\textcolor{purple}{#1}\unskip\@@italiccorr)}}
\renewcommand{\eqref}[1]{\textup{\reftagform@{\ref{#1}}}}
\makeatother
\makeatletter

\DeclareUrlCommand\ULurl@@{%
  \def\UrlLeft{\uline\bgroup}%
  \def\UrlRight{\egroup}}
\def\ULurl@#1{\hyper@linkurl{\ULurl@@{#1}}{#1}}
\DeclareRobustCommand*\ULurl{\hyper@normalise\ULurl@}
\makeatother

\def\lessim{\ \lower4pt\hbox{$
		\buildrel{\displaystyle <}\over\sim$}\ }
\def\gessim{\ \lower4pt\hbox{$\buildrel{\displaystyle >}
		\over\sim$}\ }

\numberwithin{equation}{section}


\newtheorem{thm}{Theorem}[section]    
\newtheorem{lem}[thm]{Lemma}         
\newtheorem{prop}[thm]{Proposition}        
\newtheorem{coro}[thm]{Corollary}
          
\theoremstyle{definition}

\newtheorem{rmk}[thm]{Remark}

\newcommand{\N}{\mathbb{N}}

\newcommand{\R}{\mathbb{R}}
\newcommand{\C}{\mathbb{C}}
\newcommand{\bbS}{\mathbb{S}}


\newcommand{\B}{\textbf}
\newcommand{\non}{\nonumber}

\newcommand{\1}{\B{\rm \B{1}}}
\newcommand{\jc}{\,,\,}
\newcommand{\jdots}{\jc \dots \jc}
\newcommand{\jd}{\,\cdot\,}
\newcommand{\jddots}{\jd \dots \jd}
\newcommand{\leftp}{\left(}
\newcommand{\rightp}{\right)}
\newcommand{\leftb}{\left[}
\newcommand{\rightb}{\right]}
\newcommand{\leftcb}{\left\{}
\newcommand{\rightcb}{\right\}} 
\newcommand{\tc}{\,:\,}
\newcommand{\ands}{\text{ and }}
\newcommand{\eq}[1]{\begin{align*}#1\end{align*}} 


\newcommand{\til}{\tilde} 
\newcommand{\wt}{\widetilde}

\newcommand{\un}{\underline}
\newcommand{\tri}{\triangleq}

\newcommand{\oith}{\textsuperscript{\rm \hspace{.4mm}th}}


\newcommand{\eps}{\epsilon}

\newcommand{\vp}{\varphi}
\newcommand{\sig}{\sigma}



\newcommand{\pa}{\partial}

\newcommand{\ra}{\rangle}
\newcommand{\la}{\langle}

\newcommand{\sgn}{\text{\rm sgn}}

\newcommand{\diag}{{\rm{diag}}}

\newcommand{\E}{\mathbb{E}}

\newcommand{\prob}{\mathbb{P}}

\newcommand{\rmc}{\text{ {\rm c}} }

\newcommand{\crit}{\text{\rm Crt}}
\newcommand{\Crit}{\text{\rm Crt}}
\newcommand{\critNell}{\Crit_{N,\,\ell}}
\newcommand{\CritNell}{\Crit_{N,\,\ell}}
\newcommand{\HNp}{H_{N,\,p}}
\newcommand{\goe}{\text{\rm GOE}}

\newcommand{\ind}{\text{\rm ind}}

\newcommand{\semi}{\mu_{\text{\rm sc}}}

\newcommand{\iid}{\text{\rm iid}}
\newcommand{\pair}{\text{\rm pair}}
\newcommand{\ang}{\text{\rm ang}}

\newcommand{\sfr}{{\text{\rm\textsf{r}}}}

\newcommand{\sfx}{{\text{\rm\small\textsf{x}}}}
\newcommand{\sfy}{{\text{\rm\small\textsf{y}}}}

\newcommand{\sfC}{{\text{\rm\small\textsf{C}}}}
\newcommand{\sfE}{{\text{\rm\small\textsf{E}}}}
\newcommand{\matM}{\B{\rm \B{M}}}
\newcommand{\matG}{\B{\rm \B{G}}}
\newcommand{\matX}{\B{\rm \B{X}}}
\newcommand{\matS}{\B{\rm \B{S}}}
\newcommand{\matT}{\B{\rm \B{T}}}
\newcommand{\matI}{\B{\rm \B{I}}}
\newcommand{\mate}{\B{\rm \B{e}}}
\newcommand{\matW}{\B{\rm \B{W}}}
\newcommand{\matA}{\B{\rm \B{A}}}
\newcommand{\matB}{\B{\rm \B{B}}}
\newcommand{\matC}{\B{\rm \B{C}}}
\newcommand{\matR}{\B{\rm \B{R}}}
\newcommand{\lbob}{\bm{\left[}}
\newcommand{\rbob}{\bm{\right]}}
\newcommand{\Een}{ E ^ { \, {\text{\rm en.}} } }
\newcommand{\Eind}{ E ^ { \, {\text{\rm Hess.}} } }
\newcommand{ \Psipell }{\Psi _ { p , \, \ell }}
\newcommand{ \scrI }{ \mathscr{I} }
\newcommand{ \rateell }{ \mathscr{I}^{\, (\ell)} }
\newcommand{\tsfE}{{\text{\rm\tiny\textsf{E}}}}

\newcommand{\minor}{   {\text{\rm minor}}  }
\newcommand{\zout}{   {\text{\rm zero}}  }
\newcommand{\Xminor}{ \matX _ {\minor, \, i} (r) }
\newcommand{\Xzout}{ \matX _ {\zout, \, i} (r) } 
\newcommand{\rp}{\mathfrak{r}}
\newcommand{\np}{\mathfrak{n}}

\title{The number of saddles of the spherical $p$-spin model}

\author{Antonio Auffinger \thanks{Department of Mathematics, Northwestern University, tuca@northwestern.edu, research partially supported by NSF Grant CAREER DMS-1653552, Simons Foundation/SFARI (597491-RWC), and  NSF Grant 1764421.} \\
\small{Northwestern University}
\and
Julian Gold \thanks{Department of Mathematics, Northwestern University, julian.gold@northwestern.edu, research partially supported by NSF PostDoctoral Research Fellowship DMS-1803622.}
	\\ \small{Northwestern University}
}

\begin{document}
\maketitle 

\footnotetext{MSC2000: Primary 60F10, 82D30.}
\footnotetext{Keywords: Spherical  $p$-spin, complexity, Kac-Rice, spin glass, number of saddles.}

\begin{abstract} 
We show that the quenched complexity of saddles of the spherical pure $p$-spin model agrees with the annealed complexity when both are positive. Precisely, we show that the second moment of the number of critical values of a given finite index in a given interval has twice the growth rate of the first moment.

\end{abstract}
%
%
%
\section{\large Introduction}
%
%
``How many critical values does a typical random Morse function have on a high dimensional manifold? How many of given index, or below a given level? What is the topology of level sets?" These questions were asked almost 10 years ago in \cite{
	ABC
 } which studied a class of natural random Gaussian functions on high-dimensional spheres, known as the pure spherical 
$ p $-spin model. The main result of \cite{
	ABC
 } was a rigorous derivation of the annealed complexity of the model, that is, asymptotics in 
$ N $, the dimension of the sphere, for the 
{\it mean }
number of critical points of given index in a given sub-level set. In particular, the authors of \cite{ABC} showed that the average number of local minima grows exponentially with $N$. The annealed complexity also allowed the authors to obtain information on this high-dimensional non-convex landscape, including a computation of the ground state energy, access to the averaged Euler characteristic, and the existence of diverging barriers between local minima.  

Five years after the annealed complexity was derived, in a remarkable article \cite{
	subag2017complexity
 }, Eliran Subag showed that the asymptotics obtained in \cite{
	ABC
 } for the number of local minima are valid without taking expectation. The current article aims to complete the picture for the complexity of saddles of the spherical pure 
$ p $-spin. We show (in a sense described below) that the quenched complexity i.e. the logarithm of the number of of critical points of finite index 
$ \ell $ in a given sub-level set agrees with the averaged complexity (the logarithm of the mean).

The spherical pure 
$ p $-spin glass model is defined as follows. Let 
$ p $ be an integer larger than 
$ 2 $ (the case 
$ p = 2 $ is rather trivial regarding complexity functions). Let 
$ \bbS _ N 
	= 
		\{ 
			\sig \in \R ^ N 
				: 
					\| \sig \| ^ 2 
						= 
							N 
		\}$ be the 
$ (N - 1) $-dimensional sphere of radius 
$ \sqrt { N } $. The pure 
$ p $-spin Hamiltonian is the following Gaussian random function on 
$ \bbS _ N $:
\begin{align*}							
\HNp ( \sig ) 
	= 
		\frac{ 1 }
			{ 
				N ^
				{ 
					( p - 1 ) / 2
				}
			} 
				\sum_
				{
					i_1 
						\jdots 
							i_p
				} 
					J_
					{
						i_1 
							\jdots 
								i_p
					} \sig_
					{
						i_1
					} 
						\jddots
							\sig_
							{
							 	i_p
							} ,
\end{align*} 							
where the coefficients 
$ J_{
	i_1
		\jdots 
			i_p}$ are i.i.d. standard Gaussians. This is a smooth, centered Gaussian function whose covariance is a function of the geometry of the sphere:
\begin{align*}							
\E 
	\HNp ( \sig ) \HNp ( \sig ' ) 
		= 
			N 
			\leftp 
				\frac{ 1 }
					{ N }
					\la 
						\sig 
							\jc
								\sig '
					\ra 
			\rightp ^ p  ,
\end{align*}							
where 
$ \la
	\jd
 \ra$ denotes the standard inner product in 
$ \R ^ N $.
We now introduce the complexity of spherical spin glasses. For any Borel set 
$ B 
	\subset 
		\R$ and integer 
$ 0
	\leq 
		\ell 
			<
				N $, consider the random number 
$ \CritNell ( B ) $ of critical values of the function 
$ H _ {
	N , \, p 
 } $ in the set 
$ NB 
	\equiv 
		\{
			Nx
				: 
					x \in B 
		\} $ with index equal to 
$ \ell $,
\begin{equation}						
\label{defWk}
\CritNell ( B ) = 
	\sum _ {
		\sigma
			\tc 
				\nabla H _ N ( \sigma ) 
					\, = \, 
						0 
	}
		\1
			\leftcb  
				H _ N ( \sigma ) \in N B 
			\rightcb 
		\cdot \1
			\leftcb 
				\ind
					\leftp
						\nabla^2 H_N ( \sigma )
			\rightp 
				= 
					\ell
			\rightcb . 
\end{equation}							
Here 
$ \nabla $, 
$ \nabla^2 $ are the gradient and the Hessian restricted to 
$ \bbS_N$, and 
$ \ind ( \nabla ^ 2 H _ { N } ( \sigma) ) $ is the index of the Hessian at 
$ \sigma $, i.e. the number of negative eigenvalues of this matrix.
To define the complexity function we first define the  \emph{energy threshold} 
\begin{equation}						
\label{Ein}
\sfE_\infty 
			\tri 
				2 \sqrt{
					\frac{ p - 1 }
						{ p }
				} .
\end{equation}							
For 
$\ell \geq 1$, let  
$I _ \ell
	:
		( - \infty , - \sfE_\infty ]
			\to
				\R$ be given by
\begin{align}							
\label{cookies}
I _ \ell( u ) 
	&\tri
		\frac{ 2 \ell }
			{ \, \sfE _ \infty ^ 2 }
		\int _ { u } ^ { \, - \sfE _ \infty } 
			(
				z ^ 2 - \sfE _ \infty ^ 2 ) ^ { 1 / 2 } d z 		\non \\ %
	&=
	-\ell \cdot 
		\leftb
			\frac { u } 
				{ \sfE _ \infty ^ 2 } 
			\sqrt
			{
				u ^ 2 - \sfE _ \infty ^ 2 
			} 
			- \log 
			\leftp
				- u + \sqrt 
				{ 
					u ^ 2 - \sfE _ \infty ^ 2 
				} 
			\rightp 
			+  \log \sfE _ \infty
		\rightb. 
\end{align}							
\begin{rmk}							
\label{r:Is}
	In \cite{
		ABC
	}, it is proved that 
	$ I _ \ell ( u ) $ is the rate function of the LDP for the 
	$ \ell \oith $ smallest eigenvalue of a GOE matrix with the proper normalization of the variance of the entries. The case 
	$ \ell = 1 $ was first proved in \cite{
		benarous1997large
	}.
\end{rmk}								
For any integer  
$\ell 
	\geq 
		0$, the complexity function of saddles of index 
$\ell$ is defined as
\begin{equation}						
\label{e:thetakp} 
\Sigma _ \ell ( u ) 
	\equiv
		\Sigma _ { p , \, \ell } ( u )
			\tri
				\begin{cases} %
					\frac {1}
						{2}
					\log ( p - 1 )  
    					-  \frac { p - 2 }
						{ 4 ( p - 1 ) }
					u ^ 2 
					- ( \ell + 1 ) 
					I _ 1 ( u ) ,
						& \text { if }%
							u 
								\leq 
									- \sfE _ \infty ,							\\ %
						\frac { 1 }
							{ 2 }
						\log ( p - 1 ) 
						-  \frac { p - 2 }
							{ p } , 
							& \text { if }%
								u 
									\ge 
										- \sfE _ \infty .
	\end{cases} %
\end{equation}							
We note that 
$ \Sigma _ \ell  ( u ) $ are non-decreasing, continuous functions on 
$ \R $, with maximal value given by 
$ \frac{1}
	{2} 
\log(p-1) - \frac{p-2}
		{p} 
			>
				0$ (see Figure
	 ~\ref{Getabetterpicture}). As 
	$ u $ goes to 
	$ - \infty $, 
	$ \Sigma _ \ell ( u ) $ approaches 
	$ - \infty $. We thus introduce 
	$ \sfE _ \ell 
				> 
					0 $ as the unique solution to: 
\begin{equation}						
\label{e:E_k} 
\Sigma _ \ell ( - \sfE _ \ell )
	= 
		0 .
\end{equation}							
As suggested by the left-hand side of Figure~\ref{Getabetterpicture}, the sequence 
$ ( - \sfE _ \ell )_
	{
		\ell
			\geq 
				0
	} $ is increasing and converges to the energy threshold 
$ - \sfE _ \infty $.

\begin{figure}							
\label{Getabetterpicture}
	\begin{center}
		\includegraphics
			[width=7cm]
				{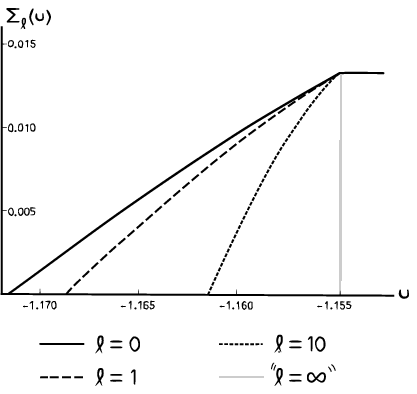}
		\includegraphics
			[width=7cm]
				{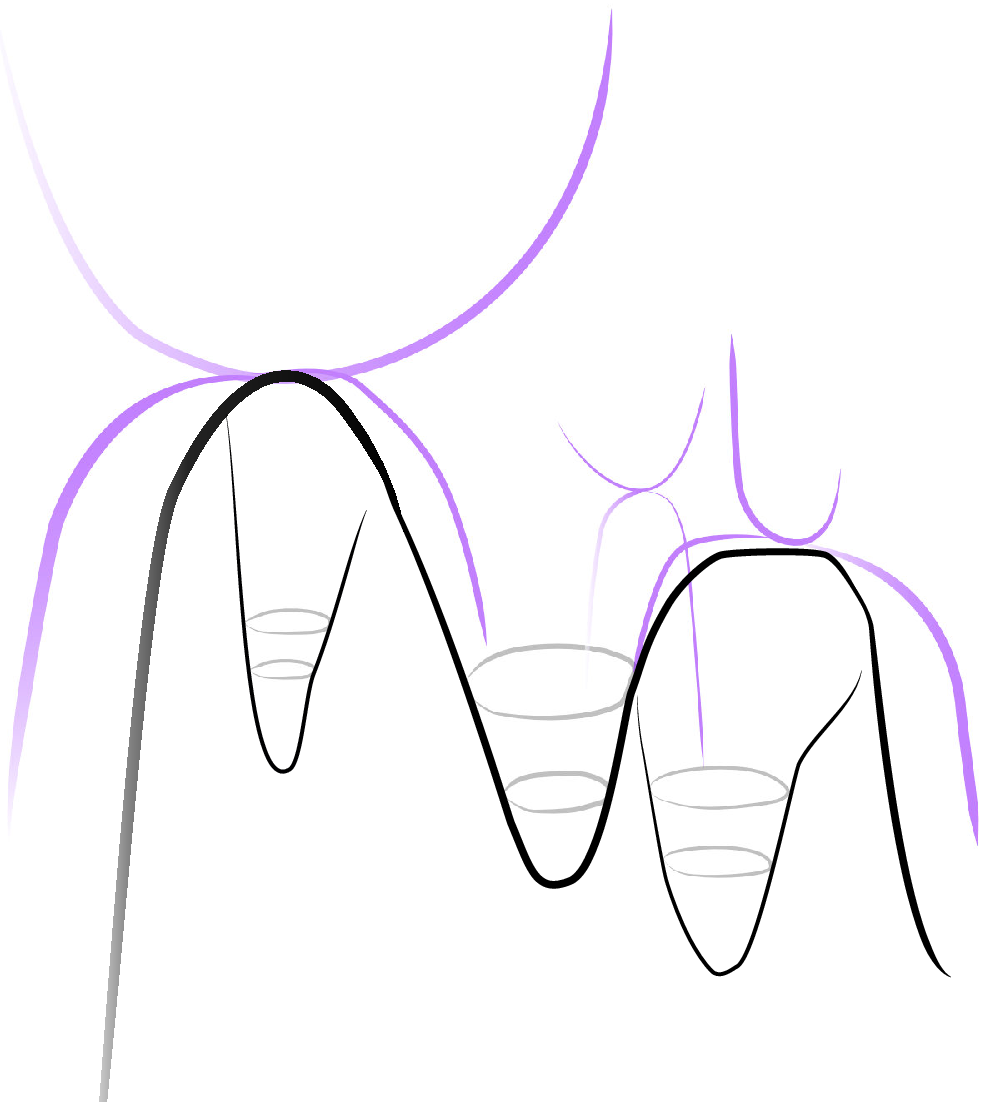}
	\end{center}
	\caption{
		On the left side, a graph of the complexity functions 
		$ \Sigma _ \ell $ for 
		$ p = 3 $ and 
		$ \ell = 0,1 $ and $10$, as well as for
		$ `` \ell = \infty ." $ All these functions agree for 
		$ u
			\ge
				- \sfE _ \infty $. On the right, a heuristic depiction of index-$1$ saddles first appearing at a threshold strictly above the ground state energy, as the landscape is scanned from bottom to top. 
	} 
\end{figure}							

Our first main result shows that for all energy values for which $\mathbb E \critNell (u) $ does not tend to $0$, the second moment agrees at the exponential scale with the square of the first.	
			
\begin{thm}								
\label{thm:exp_match}
For any 
$ p 
	\geq 
		3 $, 
$ \ell \in \{ 0 , 1 , \dots \} $ and 
$ u \in ( - \sfE _ \ell , - \sfE _ \infty ) $, 
\begin{align*}								
\lim _ 
	{ N \to \infty } 
		\frac {1 } 
			{ N } 
			\log  \E 
				\leftp
					\critNell 
						\leftp \,
							( - \infty , u )
						\, \rightp 
				\rightp^2  
					&= 
						2 \lim _ 
							{ N \to \infty } 
								\frac { 1 }
									{ N } 
									\log 
										\E  
											\critNell 
												\left( \,
													( - \infty , u ) 
												\, \right)   				\\ %
					&= 2 \Sigma _ { p , \, \ell } ( u ) .
\end{align*}
\end{thm}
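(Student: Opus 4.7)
The plan is a standard second-moment strategy, with the nontrivial half being the upper bound. The lower bound
$$\liminf_{N\to\infty} \frac{1}{N}\log \E\bigl[\critNell\bigl((-\infty,u)\bigr)^2\bigr] \geq 2\Sigma_{p,\ell}(u)$$
follows immediately from Jensen's inequality $\E[X^2]\geq(\E X)^2$ combined with the annealed complexity asymptotics $\frac{1}{N}\log \E\critNell((-\infty,u)) \to \Sigma_{p,\ell}(u)$ proved in \cite{ABC}. Everything nontrivial is in the matching upper bound.

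For the upper bound, I would start from the Kac--Rice formula applied to the second factorial moment:
\begin{align*}
\E\bigl[\critNell(\critNell-1)\bigr]
&= \int_{\bbS_N\times\bbS_N} \E\Bigl[{\textstyle\prod_{i=1}^2} |\det\nabla^2 H(\sigma_i)|\,\1_{E_i}\,\Big|\,\nabla H(\sigma_1)=\nabla H(\sigma_2)=0\Bigr] \\
&\qquad\qquad \cdot \varphi(\sigma_1,\sigma_2)\,dV(\sigma_1)dV(\sigma_2),
\end{align*}
where $E_i = \{H(\sigma_i) < Nu,\,\ind(\nabla^2 H(\sigma_i)) = \ell\}$ and $\varphi$ is the joint Gaussian density of the two spherical gradients at the origin. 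The ``diagonal'' part of $\E[\critNell^2]$ equals $\E\critNell$, which has exponential rate $\Sigma_{p,\ell}(u) \leq 2\Sigma_{p,\ell}(u)$ and can be absorbed in the error.

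The next step is to exploit rotational invariance: the integrand depends on $(\sigma_1,\sigma_2)$ only through the normalized overlap $r = \la\sigma_1,\sigma_2\ra/N\in[-1,1]$. Passing to spherical coordinates about a fixed reference point collapses the double integral to
$$\E[\critNell^2] \le e^{o(N)} \int_{-1}^{1} \exp\bigl(N\,G_\ell(r;u)\bigr)\,(1-r^2)^{(N-3)/2}\,dr,$$
for an explicit rate $G_\ell(r;u)$ built from the conditional joint Hessian determinant, the joint gradient density $\varphi$, and the conditional law of $(H(\sigma_1),H(\sigma_2))$. The proof then reduces to the variational identity
$$\sup_{r\in[-1,1]}\Bigl\{G_\ell(r;u) + \tfrac{1}{2}\log(1-r^2)\Bigr\} = 2\Sigma_{p,\ell}(u),$$
with the supremum attained at $r=0$.

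The main obstacle is this variational analysis. After the gradient conditioning, the pair $(\nabla^2 H(\sigma_1),\nabla^2 H(\sigma_2))$ is a pair of coupled GOE-type matrices whose correlation is a $p$-dependent function of $r$, perturbed by a low-rank shift coming from the $H$-conditioning. At $r=0$ the two matrices decouple exactly, the index and determinant factors split, and the single-point ABC computation (through the rate function $I_1$ of Remark~\ref{r:Is}) yields precisely $2\Sigma_{p,\ell}(u)$. For $r\neq 0$ one needs (i) a joint LDP for the $\ell^{\text{th}}$ smallest eigenvalues of the two coupled matrices, extending the one-matrix LDP behind Remark~\ref{r:Is}, in order to evaluate $G_\ell(r;u)$, and (ii) a strict-concavity or monotonicity argument showing that $r=0$ is the unique maximizer. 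The endpoints $|r|\to 1$ are delicate: the Gaussian density $\varphi$ degenerates because the two gradients collapse to one, but this blow-up is exactly compensated by the Jacobian $(1-r^2)^{(N-3)/2}$; for $p$ odd, the antipodal symmetry $H(-\sigma)=-H(\sigma)$ creates a secondary contribution at $r=-1$ that must be tracked to avoid double counting. A clean variational formula in $(u,\ell,p,r)$ and the verification that the interior maximum sits at $r=0$ constitute the main technical content of the argument.
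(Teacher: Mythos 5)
Your outline captures the right high-level strategy: Jensen for the lower bound, Kac--Rice for the second factorial moment, reduction to a one-dimensional overlap integral by rotational symmetry, the observation that the conditional Hessians decouple at $r=0$, and the need for a joint LDP for the $\ell$\oith{} eigenvalues of two correlated GOE matrices together with a variational argument pinning the maximizer at $r=0$. This matches the skeleton of the paper's proof (Lemma~\ref{lem:our_Kac_Rice}, Theorem~\ref{thm5}, Theorem~\ref{coro:LDP_ell}, Proposition~\ref{prop:L60}).

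There is, however, a genuine gap you have not noticed. You correctly observe that the conditional Hessian is a GOE matrix ``perturbed by a low-rank shift,'' but you then treat the index constraint as though it lived on the GOE minor. It does not. The Kac--Rice integrand carries $\1\{\ind(\un{\matM}_{\,i})=\ell\}$ on the full $(N-1)\times(N-1)$ matrices $\un{\matM}_{\,i}$, while the two-matrix eigenvalue LDP you invoke controls only the $(N-2)\times(N-2)$ GOE blocks $\un{\matG}_{\,i}$; Cauchy interlacing gives merely $\ind(\un{\matG}_{\,i})\in\{\ell-1,\ell\}$, so the index event does \emph{not} split at $r=0$ the way you claim. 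The paper closes this gap with Proposition~\ref{prop:index_transfer}: a Schur-complement/Sylvester-law criterion reduces the question to the sign of $\big\la Z_i,\un{\matG}_{\,i}^{\,-1}Z_i\big\ra$, which is controlled uniformly in the spectral parameter via an isotropic local semicircle law, so that with overwhelming probability the two indices agree. Without this ``index transfer'' step one cannot pass the index constraint through to the GOE matrices, and the joint LDP never applies. Two secondary points: you posit the joint $\ell$\oith-eigenvalue LDP without a strategy for proving it --- the paper realizes the correlated GOEs as two time slices of Dyson Brownian motion and contracts the Donati-Martin--Ma\"ida path LDP, and the resulting piecewise rate function is precisely what the variational analysis (Lemma~\ref{lem:prelem7}, Lemma~\ref{lem:lem7_1}) needs. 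And your claim that odd $p$ creates a double-counting contribution at $r=-1$ is backwards: for odd $p$, $H(-\sigma)=-H(\sigma)$, so the antipode of a critical point at energy below $Nu<0$ sits at positive energy and is not counted; it is the even-$p$ case, where $H$ is even, that produces the antipodal diagonal term at $r=-1$ (subdominant in any event, since $\Sigma_{p,\ell}(u)>0$). The paper handles $r<0$ by a parity argument on the quadratic form $\mathcal{H}^u(r)$ in the proof of Lemma~\ref{lem:prelem7}.
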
	

\begin{rmk}
The case $\ell =0$, i.e., counting the number of local minima was established in \cite[Theorem 1.5]{subag2017complexity}.
\end{rmk}
%
Before we provide a rough idea of the proof of the theorem above, let us mention some historical aspects. The study of complexity of spin glass models has a long history outside pure mathematics, especially in the physics community. Indeed, many complex systems in physics, biology and computer science are characterized by high-dimensional landscapes full of local minima and saddles of any order. Starting in the '80s with the development of spin-glass theory \cite{
	Bray_1980,
	Kurchan_1991,
	CS95,
	CS,
	Cavagna_1997,
	KMV,
	MPV
 }, physicists have obtained several predictions for the number of critical points and local minima in mean-field models of glasses. A formula for the complexity of local minima in the pure $p$-spin was derived by Crisanti and Sommers \cite{
	CS95
 } and by Crisanti, Leuzzi and Rizzo \cite{
	CLR 
 } both at zero and positive temperature (a.k.a. the TAP complexity).

Major contributions were also given in related models. Fyodorov \cite{
	F0,
	F1,
	MR3469265
 } pioneered the use of random matrix theory in  complexity calculations.  He and his co-authors also provided examples of topology trivialization \cite{
	Fyo2016
 }, predictions for the Hessian spectrum \cite{
	MR3876574
 } and applications to directed polymers \cite{
	FyoPol
 }. The mixed $p$-spin model was studied in \cite{
	AB
 } and 
				\cite{AChenBip}. 

The optimization of high-dimensional non-convex functions is the main task of several machine learning algorithms. There has been a recent burst of striking results relating the complexity and dynamics of spin glasses to those of deep neural networks. In this direction, we refer the reader to the following (non-exhaustive) list of papers at the intersection of computer science, mathematics, and physics \cite{
	ChoromanskaHMAL15,
	NIPS2014_5486,
	PhysRevX.9.011003
 }.  Last, for the pure 
$ p $-spin, a remarkable, rich prediction of the landscape of energy barriers was provided by Ros, Biroli, and Cammarota \cite{
	Ros_2019
 }. 
Different aspects of the landscape of the pure $p$-spin were also rigorously studied in the past. Fluctuations of the partition function and minimum energy were derived in \cite{
	SZ20, 
	MR3878351,
	MR3649446,
	MR3554380,
	Subag2017
 }. A theory that relates the landscape to more classical thermodynamical approaches was constructed in \cite{
	AC15, 
	AC16, 
	AC18Advances, 
	AZeng, 
	AChen15PTRF, 
	JT
 }.  
 
 In mathematics, computations of moments of the number of critical points were done in other settings. The reader is invited to check the work of Sarnak and Wigman \cite{SW}, Douglas, Shiffman and Zelditch \cite{SZ04, douglas2006} and Nazarov and Sodin \cite{NS} and the references therein. In those works, Gaussian fields are on a fixed space and, contrary to our setting, asymptotics are taken in parameters of different nature than the dimension. 
%
%
\subsection{Idea and novelty of the proof}
%
%
The starting point of the proof is the use of the Kac-Rice formula to obtain an expression for the second moment of 
$ \CritNell ( B ) $ as in \cite{
	subag2017complexity
 }. The main difficulty in this step comes from the presence of the constraint on the index of the Hessian and the absolute value of the determinant of $\nabla^{2}H_{N}$. Subag does not encounter these difficulties as counting the number of local minima effectively removes the absolute value of the determinant. 
 
 For level sets  near the global minima, the asymptotics of the total number of critical points coincide with the asymptotics of local minima. This provides the result for index-$0$ critical points. 
In order to obtain asymptotics for saddles we can't go through the same route. We do as follows. First, we note that the Hessian matrices 
$ \matM _
	{ 1 } ^ { N - 1 } $ and 
$ \matM _
	{ 2 } ^ { N - 1 } $ are correlated Gaussian matrices having 
$ ( N - 2 ) $-dimensional principal minors
$ \matG _
	{ 1 } ^ { N - 2 } $ and 
$ \matG _ 
	{ 2 } ^ { N - 2 } $, which are correlated shifted GOEs. There are essentially three steps in our proof. 
\begin{enumerate} 
	\item[(1)] We use an isotropic semicircle local law to control the resolvent of $\nabla^{2}H_{N}$ and to transfer the Hessian index from the
$ \matM _ 
	{ i } ^ { N - 1 } $ to the
$ \matG _
	{ i } ^ { N - 2 } $. %
	\item[(2)] We realize the eigenvalues of the matrices 
$ \matG _ 
	{ 1 } ^ { N - 2 } $ and 
$ \matG _ 
	{ 2 } ^ { N - 2 } $ as two time points of a Dyson Brownian motion and derive a large deviation principle for its 
$ \ell \oith $ line. By contraction, we obtain an LDP for the pair of 
$ \ell \oith $ smallest eigenvalues. %
	\item[(3)] We optimize the resulting bound and recover the complexity function. %
\end{enumerate} %
Step $(1)$ is based on the recent success of rigidity results obtained in random matrix theory \cite{
	KY13,
	ELS09,
	benaych2018lectures
 }. In Step $(2)$, we realize the joint law of the eigenvalues of 
 $ \matG _ 
 	{ i } ^ { N - 2 } $, 
 $ i 
 	= 
		1 , 2 $ as two time points of a Dyson Brownian motion. In order to obtain an upper bound on the formula we use large deviation estimates for the pair of 
 $ \ell \oith $ largest eigenvalues. This can be done as in \cite{
 	donati2012large
 } where the case 
 $ \ell 
 	=
		1 $ was solved. Step $(3)$, although just computational, requires intricate calculus (aided by the analysis in \cite{
	ABC,
	subag2017complexity
 }).

\subsection{Refinement of Theorem \ref{thm:exp_match}}

Theorem  \ref{thm:exp_match} matches the quenched complexity and the annealed complexity at exponential scales. The theorem below is an enhancement of Theorem \ref{thm:exp_match} and establishes the almost sure behavior of the number of saddles of finite index. 
\begin{thm} \label{thm:Jesus} For any $\ell \geq 0$, and any $u \in (-\sfE_{\ell}, -\sfE_{\infty})$ we have 
\[
\frac{\critNell \left( \,( - \infty , u ) \, \right)  }{\mathbb E \critNell \left( \,( - \infty , u ) \, \right) } \to 1
\]
in probability and in $L^{2}$ as  $N$ goes to infinity.
\end{thm}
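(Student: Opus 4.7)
To prove Theorem \ref{thm:Jesus}, observe first that $L^{2}$ convergence of $\critNell((-\infty,u))/\mathbb{E}\critNell((-\infty,u))$ to $1$ is equivalent to
\begin{equation*}
\frac{\mathbb{E}\leftb\critNell((-\infty,u))^{2}\rightb}{\leftp\mathbb{E}\critNell((-\infty,u))\rightp^{2}} \longrightarrow 1,
\end{equation*}
and convergence in probability then follows from Chebyshev. Since Cauchy--Schwarz already gives the ratio above is $\geq 1$, the plan is to prove the matching upper bound. Theorem \ref{thm:exp_match} controls only the logarithms of numerator and denominator, so the task is to refine the second-moment analysis enough to rule out any subexponential discrepancy.

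The starting point is the reduction underlying Theorem \ref{thm:exp_match}. By $O(N)$-invariance of $\HNp$, the Kac--Rice formula for the second moment reduces to a one-dimensional integral over the overlap $r = \langle\sigma,\sigma'\rangle/N$, of schematic form
\begin{equation*}
\mathbb{E}\leftb\critNell((-\infty,u))^{2}\rightb \;=\; \int_{-1}^{1} A_{N}(r,u)\, e^{N\Psipell(r,u)}\, dr,
\end{equation*}
where $\Psipell(\cdot,u)$ attains its unique maximum at $r=0$ with value $2\Sigma_{p,\ell}(u)$ (this is precisely the content that yields the exponential equality in Theorem \ref{thm:exp_match}), and $A_{N}$ is a subexponential prefactor. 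A parallel Kac--Rice computation gives $\mathbb{E}\critNell((-\infty,u)) = B_{N}(u)\, e^{N\Sigma_{p,\ell}(u)}$ with subexponential $B_{N}(u)$.

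The argument then splits into two pieces. First, a \emph{localization} step: strict concavity of $\Psipell(\cdot,u)$ at $r=0$, inherited from the joint LDP for the pair of $\ell\oith$ smallest eigenvalues of the shifted GOE minors $\matG_{i}^{N-2}$ produced in step (2) of the proof of Theorem \ref{thm:exp_match}, ensures that for each fixed $\delta>0$ the tail $\int_{|r|\geq\delta}$ contributes $e^{(2\Sigma_{p,\ell}(u)-\eta(\delta))N + o(N)}$ for some $\eta(\delta)>0$, which is negligible compared to $(\mathbb{E}\critNell((-\infty,u)))^{2}$. Second, a \emph{factorization} step: for $|r|\leq\delta_{N}$ with $\delta_{N}\to 0$ slowly, show that the integrand factorizes into a product of one-point Kac--Rice densities up to multiplicative $(1+o(1))$ error. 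This combines the isotropic local law from step (1), which transfers the index from $\matM_{i}^{N-1}$ to $\matG_{i}^{N-2}$ uniformly in the overlap, with the Dyson Brownian motion coupling from step (2) at large fictitious time separation, which shows that the joint law of the two smallest-index eigenvalues decouples into the product of its marginals, the marginal being precisely the one used in $\mathbb{E}\critNell$. A Laplace expansion around $r=0$ then yields
\begin{equation*}
\int_{|r|\leq\delta_{N}} A_{N}(r,u)\, e^{N\Psipell(r,u)}\, dr \;=\; (1+o(1))\,\leftp\mathbb{E}\critNell((-\infty,u))\rightp^{2},
\end{equation*}
which combined with the tail estimate completes the proof.

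The main obstacle is this factorization at multiplicative precision. The joint event that both Hessians $\matM_{i}^{N-1}$ have their $\ell\oith$ eigenvalue in the prescribed window has probability of order $e^{-2NI_{\ell}(u)}$, so any $e^{o(N)}$ slack in the decorrelation estimate near $r=0$ would destroy the desired sharp matching. Achieving it requires pushing the rigidity and DBM estimates from steps (1) and (2) beyond what is needed for the exponential result of Theorem \ref{thm:exp_match}, so that the subexponential prefactors are pinned down uniformly for $r$ in a shrinking neighborhood of zero; the contraction principle alone is not sufficient for this.
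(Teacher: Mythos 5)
Your high-level reduction — show $\E\leftb\critNell\rightb^{2}/\leftp\E\critNell\rightp^{2}\to 1$, note Cauchy--Schwarz gives $\geq 1$, localize the Kac--Rice overlap integral near $r=0$, then establish a $(1+o(1))$ factorization there — matches the paper's strategy in outline. But the two proofs diverge on the key technical step, the factorization, and you also omit a reduction that the paper uses to make the sharp asymptotics tractable.

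On the reduction: the paper first shrinks the energy window. Using Corollary~\ref{lem:sub20} (which itself rests on Theorem~\ref{thm:exp_match} and Corollary~\ref{coro:cor8}), the paper replaces $(-\infty,u_*)$ by a shrinking interval $J_N=(u_*-\eps_N,u_*)$ and simultaneously introduces a shrinking overlap window $(-\rho_N,\rho_N)$. The sharp $(1+o(1))$ first and second moment asymptotics (Lemmas~\ref{lem:sub18} and \ref{lem:sub19}) are then proved only over $J_N$, where the energy parameter is essentially deterministic. Your proposal keeps the full half-line $(-\infty,u)$, which makes the Laplace analysis substantially harder because you must now track the energy integral at constant order as well.

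On the factorization: you propose to exploit DBM decorrelation at large time separation (since $q=r^{2p-4}\to 0$ as $r\to 0$) together with the isotropic local law and a Laplace expansion in $r$ around zero. The paper takes a different route: Lemma~\ref{lem:sub25} is a Gaussian interpolation/monotonicity bound, showing that the quantity $\tilde g(\rho)=\E\bigl(\det\matW_1\cdot\det\matW_2\bigr)$, viewed as a polynomial in the interpolation parameter $\rho=\sgn(r)\sqrt{|r|^{p-2}}$, satisfies $\tilde g(\rho)-\tilde g(0)\leq\rho\bigl(\tilde g(1)-\tilde g(0)\bigr)$; this directly compares the correlated case to $r=0$, where the two determinants are independent. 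After this decoupling, an index-transfer step (from Proposition~\ref{prop:index_transfer}) replaces the joint index constraint by one at $r=0$, and the resulting one-matrix expectations are controlled via concentration as in \cite{subag2017complexity} and \cite{ABC}. Your approach could in principle work, but it requires establishing quantitative decorrelation of the $\ell$\oith-largest eigenvalue pair with multiplicative precision uniformly in a shrinking $r$-window, which is a considerably harder input than what the interpolation lemma demands; the paper's argument avoids any explicit decorrelation estimate precisely because the interpolation inequality lets one \emph{bound} the correlated expectation by the uncorrelated one without expanding the joint law. Also, you invoke ``strict concavity of $\Psipell(\cdot,u)$ at $r=0$'', but the paper does not establish concavity — Corollary~\ref{coro:cor8} instead shows the exponential gap away from $r=0$ via the piecewise analysis of $\Psipell^u(r)$ (monotone decrease on $[0,r_*(u))$ and monotone increase on $(r_*(u),1)$ together with the endpoint comparison in Lemma~\ref{lem:lem7_1}).
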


The proof of Theorem~\ref{thm:Jesus} is rather long and heavily computational but uses the same technology that we develop to prove Theorem  \ref{thm:exp_match}. We provide a summary of its proof in the appendix and further details in a forthcoming paper.

\subsection{Acknowledgments}

Both authors would like to thank Yi Gu and Eliran Subag for fruitful conversations related to the results of this paper. They also want to thank Yi Gu for useful comments on a previous version of this manuscript. The second author wishes to thank Pax Kivimae for helpful discussions.

\section{\large Notation and key inputs}
\label{sec:notation_inputs}
%
%
We collect results necessary to our argument, starting with two main theorems of \cite{
	ABC
 }, introducing notation as necessary on the way. For 
 $ \vartheta > 0 $, define the following generalization of 
	\eqref{cookies}. 
\begin{align}								
\label{eq:GOE_rate_general}
I _ 1 ( u ; \vartheta ) 
	\tri
		\begin{cases} %
			\,\int_
				{ 2 \vartheta } ^ u 
			\vartheta^
			{
				-1
			} \leftp
				\leftp
					\frac { z } 
						{ 2 \vartheta } 
				\rightp^2 -1 
			\rightp ^ { 1 / 2 } 
			dz  
				& 
					u 
						\geq 
							2 \vartheta , 								\\ %
+\infty &\text{ otherwise. } 
		\end{cases} %
\end{align}								
In this paper, an 
$ N 
	\times 
		N $ GOE matrix has law denoted $\goe_N$, with the convention that 
$ \matX \sim \goe_N $ has Gaussian entries 
$ \bm{[} 
	\matX 
\bm{]}_
	{ i , j } $ with variance
\begin{align*}								
\E 
\lbob
	\matX
\rbob _
	{
		i , j 
	} ^ 2 
		= 
			N ^
				{
					-1
				}
			(
				1 + \delta _ 
					{
						ij
					}
			) \, , 
\end{align*}								
a normalization ensuring 
	\eqref{eq:GOE_rate_general} with 
 $ \vartheta
 	 =
	 	1 $ is the rate function governing the leading eigenvalue of 
 $ \matX $. Recalling
 $ \Sigma _ \ell 
 	\equiv 
		\Sigma _ 
			{
				\ell , p 
			} $ from 
	\eqref{e:thetakp}, define the total complexity function 
\begin{equation}							
\label{e:thetap}
\Sigma ( u )
	\tri
		\begin{cases}%
    			\frac { 1 }
				{ 2 } 
			\log ( p - 1 ) 
    			-   \frac { p - 2 } 
				{ 4 ( p - 1 ) } 
			u ^ 2 - I _ 1 ( u ) ,
				& \text{ if } 
					u 
						\leq 
							- \sfE _ \infty ,						\\
   			\frac { 1 } 
				{ 2 }
			\log ( p - 1 ) 
			-  \frac { p - 2 }
				{ p } 
			u ^ 2 , 
				& \text {if }  
					0 
						\leq 
							u 
								\leq 
									 - \sfE _ \infty , 				\\
    			\frac { 1 } 
				{ 2 } 
			\log ( p - 1 ) 
				& \text{ if }  
					0 
						\leq 
							u ,							 
  		\end{cases}%
\end{equation}								
where $I_{1}(u)\tri I_{1}(-u;\sfE_{\infty}/2)$.%

We first record the averaged complexity results of \cite{ABC}.
\begin{thm}								
	[{
			\cite
			[
				Theorem 2.5 and Theorem 2.6
			]
		{
			ABC
		}
}] 

\label{thm:ABC}

For all 
$ p 
	\geq 
		2 $, 
$ \ell 
	\geq 
		0 $, and $u \in (-\sfE_\ell, -\sfE_\infty )$, 
\begin{align*}								
\lim _
	{ 
		N \to \infty
	} 
		\frac { 1 } 
			{ N } 
		\log 
			\E 
				\critNell
				(
					 ( - \infty , u ) 
				) 
					= 
						\Sigma _
							{ 
								\ell , p 
							} ( u ) 
								\quad 
									\ands
										\quad 
										 	\lim _ 
												{
													N \to \infty
												} 
												\frac{ 1 }
													{ N } 
												\log 
													\E 
														\crit _ N ( 
															( - \infty , u ) 
														) 
															= 
																\Sigma_{p} (u) .
\end{align*}								
\end{thm}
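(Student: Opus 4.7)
The plan is to apply the Kac-Rice formula to express $\E\critNell((-\infty, u))$ as an integral involving the absolute determinant of the Hessian, reduce it by rotational invariance and Gaussian conditioning to a one-dimensional Laplace integral, and extract the exponential rate from random matrix asymptotics. The Kac-Rice formula gives
\begin{align*}
\E \critNell((-\infty,u)) = \int_{\bbS_N} \E\bigl[|\det \nabla^2 \HNp(\sig)| \1\{\HNp(\sig) < Nu\} \1\{\ind(\nabla^2 \HNp(\sig)) = \ell\} \bigm| \nabla \HNp(\sig) = 0\bigr] \rho_\sig(0) \, d\sig,
\end{align*}
where $\rho_\sig$ is the Gaussian density of $\nabla \HNp(\sig)$. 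Since the law of $\HNp$ is rotationally invariant, the integrand is independent of $\sig$, so the spatial integral simply multiplies by the surface area of $\bbS_N$. The structural input is the joint law at a single point $\sig_0$: a covariance computation using $\E \HNp(\sig)\HNp(\sig') = N (\la \sig \jc \sig' \ra / N)^p$ shows that $\nabla \HNp(\sig_0)$ is independent of $(\HNp(\sig_0), \nabla^2 \HNp(\sig_0))$, and that conditional on $\HNp(\sig_0) = Nx$ the restricted Hessian is distributed as $a_{N,p}(\matG - c_p x \, \matI)$ with $\matG \sim \goe_{N-1}$, for explicit constants depending only on $p$ and $N$. Folding the Gaussian density of the value into the integral yields
\begin{align*}
\E \critNell((-\infty,u)) = C_N \int_{-\infty}^u \E\bigl[|\det(\matG - c_p x \matI)| \1\{\ind(\matG - c_p x \matI) = \ell\}\bigr] e^{-Nx^2/2} \, dx,
\end{align*}
where $C_N$ absorbs all deterministic prefactors with $\frac{1}{N} \log C_N \to \frac{1}{2}\log(p-1)$.

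The random matrix analysis is the heart of the argument. On the exponential scale one has
\begin{align*}
\frac{1}{N}\log \E |\det(\matG - y\matI)| \to \int \log|y - \lambda| \, d\semi(\lambda),
\end{align*}
and an explicit integration of the right-hand side at $y = c_p u$ against $\semi$ produces the $-\frac{p-2}{4(p-1)} u^2$ term of $\Sigma_{\ell,p}$. The index constraint is the more delicate piece: for $u < -\sfE_\infty$ the shift lies outside the semicircle bulk, so forcing $\ind = \ell$ requires each of the $\ell$ smallest eigenvalues to be outliers, a large deviation event. I invoke the LDP for the $\ell\oith$ smallest eigenvalue of $\goe_N$ stated in Remark~\ref{r:Is}, and after accounting for the joint law of the $\ell$ outliers (in particular their repulsion from the edge of the bulk) one obtains the rate contribution $-(\ell+1) I_1(u)$, matching the definition of $\Sigma_{\ell,p}$.

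Finally, Laplace's method on the $x$-integral pulls the supremum of the exponential rate to the upper endpoint $x = u$, since on $(-\sfE_\ell, -\sfE_\infty)$ the combined rate is monotone, producing $\Sigma_{\ell,p}(u)$. The hardest step is the joint asymptotic analysis of $\E[|\det(\matG - y\matI)| \1\{\ind = \ell\}]$: the index constraint, the absolute value, and the interaction between the $\ell$ outliers and the bulk eigenvalues couple in a nontrivial way. I would resolve this by conditioning on the $\ell$ smallest eigenvalues and factoring $|\det|$ into a rank-$\ell$ outlier piece (governed by the LDP of Remark~\ref{r:Is}) and a bulk piece (governed by the semicircle asymptotic), then optimizing jointly. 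The second assertion of the theorem, for the total complexity $\Sigma_p(u)$, follows by the same route with the index constraint dropped, which removes the LDP term and yields the second asymptotic.
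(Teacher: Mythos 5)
This theorem is cited in the paper from \cite{ABC} (Theorems~2.5 and 2.6 there); the paper itself supplies no proof, so what you have written is a sketch of the approach in the cited reference rather than a reconstruction of anything in this manuscript. Your outline is broadly in the spirit of the original argument (Kac--Rice, rotational invariance collapsing the spatial integral to a surface-area factor, conditioning the Hessian to a shifted GOE, Laplace's method in the energy variable, and an LDP for the $\ell$\oith\ smallest eigenvalue to handle the index constraint), so the strategy is sound.

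However, there is a concrete bookkeeping error in your decomposition of the exponential rate, and it produces an internal inconsistency. You claim the log-determinant asymptotic $\frac{1}{N}\log\E|\det(\matG - y\matI)| \to \Omega(y)$ accounts only for the quadratic term $-\tfrac{p-2}{4(p-1)}u^2$, and that the index constraint produces the full $-(\ell+1)I_1(u)$. But for $|y| > 2$ one has, exactly as in Remark~\ref{rmk:IOmega},
\begin{align*}
\Omega(y) = \frac{y^2}{4} - \frac{1}{2} - I_1(y;1),
\end{align*}
so the determinant asymptotic at $y = \gamma_p u$ already carries a $-I_1$ contribution in addition to the quadratic term. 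The index LDP then adds only $-\ell I_1(u)$, giving $-(\ell+1)I_1(u)$ in total; the $+1$ belongs to $\Omega$, not to the eigenvalue LDP. Your own last paragraph exposes the problem: you assert that dropping the index constraint ``removes the LDP term and yields the second asymptotic.'' Under your accounting, dropping the index constraint would remove all of $-(\ell+1)I_1(u)$ and leave $\tfrac{1}{2}\log(p-1) - \tfrac{p-2}{4(p-1)}u^2$, which is not $\Sigma_p(u)$; the correct total complexity $\Sigma_p(u)$ retains a $-I_1(u)$ term precisely because it originates in $\Omega$, not in the index constraint. The fix is to track $\Omega$ in its full form from the start, in which case dropping the index constraint removes only $-\ell I_1$, and the two assertions of the theorem become consistent with $\Sigma_{p,\ell}(u) = \Sigma_p(u) - \ell I_1(u)$.
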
									
For 
$ x \in \R $, let 
\begin{align}								
\label{eq:Omega_def}
\Omega ( x )  
	& \tri 
		\int _ \R 
			\log | \lambda - x | \, 
		\semi \, 																	\\ %
	& = 
		\begin{cases} %
			\frac { x ^ 2 }
				{ 4 }  
			- \frac { 1 }
				{ 2 } 
					& \text{ if } 
						0 
							\leq 
								| x | 
									\leq 
										2 \,,										\\ 
 			\frac{ x ^ 2 }
				{ 4 } 
			- \frac { 1 }
				{ 2 } 
			- \leftp
				\frac{ | x | }
					{ 4 } 
				\sqrt { 
					x ^ 2 - 4 
				} 
				- \log 
					\leftp 
						\frac { | x | + \sqrt 
							{ 
								x ^ 2 - 4 
							} 
						}
							{2} 
					\rightp  
			\rightp 
 				& \text{ if } 
					| x | 
						> 
							2.													 
		\end{cases} %
\end{align}								
denote the (negative of the) logarithmic potential of the semicircle law $ \semi $, whose density with respect to Lebesgue measure is 
\begin{align}								
\frac{1}
	{2\pi}
\sqrt{
	4 - x ^ 2 
} 
\, \1 
	\leftcb
		 | x | \, \leq \, 2
	\rightcb  ,
\end{align}								
agreeing with the variance convention 
$ \vartheta
	=
		1 $ for GOE matrices. 
\begin{rmk}								
\label{rmk:IOmega}
The complexity functions 
$ \Sigma, \Sigma_\ell $ can be phrased in terms of 
$ \Omega $ through the following identities: 
\begin{align}								
\label{eq:IOmega}
\Omega ( x ) 
	&= 
		\frac { x ^ 2 }
			{ 4 } 
		- \frac { 1} { 2 } 
		- I _ 1 ( x ; 1 ) , \\
I_{1}(x,1) &= \frac{1}{\lambda} I_{1}(\lambda x, \lambda) \quad \text{ for all } \lambda >0.
\end{align}			
The function $\Omega$ describes the exponential-scale asymptotics of the determinant of $\goe_N$ matrices in the limit $N \to \infty$. These are relevant because of the Hessian determinant factors in the Kac-Rice formula are related to determinants of GOE matrices through Lemma~\ref{lem:rosetta_stone} below. 
\end{rmk}									
Letting $\bbS 
	\tri 
		\{
			s \in \R^N 
				: 
					\| s \| ^ 2  
						=
							1
		\}$ denote the \emph{unit} $(N-1)$-sphere, for $s \in \bbS$, let
\begin{align*}
f( s ) 
	\equiv 
		f_{N,\,p} (s) 
			\tri 
				\frac { 1 } 
					{ \sqrt { N } }  
				\HNp 
				\leftp
					\sqrt{N} s 
				\rightp 
\end{align*}
denote the rescaled $ p $-spin Hamiltonian with domain $\bbS$. This is a centered, smooth Gaussian function on $\bbS$ 
 with
$ \E 
	f ( s ) 
	f ( t ) = \la 
	s , t 
\ra ^ p $ for
$ s, t \in \bbS$. The rescaled landscape $f$ thus has a particularly simple covariance structure, making it convenient to work with. Of course, counting the critical points of $f$ is equivalent to counting those of $H$.

The following lemma describes the joint law of $( \nabla^2f(\np), \nabla^2 f(\rp))$ conditioned on $\np, \rp \in \bbS$ being critical points at given energies, where
\begin{align}								
\label{eq:np_def}
\frak { n }   
	&\tri (
	0 , \dots , 0 , 1
	) ,\\							
\label{eq:rp_def}
\rp \equiv \rp(r)
	&\tri (
		0 , \dots , 0 , \sqrt
					{
						1 - r ^ 2
					} , r 
	) . 
\end{align}								
Kac-Rice expresses the number of pairs of critical points of $f$ as an integral over $\bbS \times \bbS$. At fixed overlap $r$, the rotational symmetry of the law of $f$ reduces the integral over $\bbS \times \bbS$ to an integral in $r$. The determinant factor in Kac-Rice then becomes a product of two terms: an expectation in the randomness described below, localized at $\np$ and $\rp$, and an entropy factor accounting for the dimension and volume of the sphere. Lemma~\ref{lem:rosetta_stone} thus provides an essential description of the geometry around pairs of critical points, and allows for further analysis of the Kac-Rice formula.

\begin{lem}								
	[{
			\cite
			[
				Lemma 13
			]
			{
				subag2017complexity
			}
		}] 
\label{lem:rosetta_stone}
Let 
$ E
	\equiv 
		( E _ i ) _ 
			{ i = 1 } ^ { N - 1 } $ be an orthonormal frame on the unit sphere 
$ \bbS $, and let 
$ \frak { n } , \frak { r } \in \bbS $ be as in 
\eqref{eq:np_def} and 
\eqref{eq:rp_def}. For any 
$ r \in ( - 1 , 1 ) $, the following holds conditional on 
$ f ( \frak { n } ) 
	= 
		u _ 1 ,  
f ( \frak { r } ) 
	= 
		u _ 2 $ and 
$ \nabla f ( \frak { n } ) 
	= \nabla f ( \frak { r } ) 
		= 0 $: the pair
\begin{align}								
\left( 
	\frac { 
			\nabla ^ 2 f ( \frak{n} ) 
		}
		{ 
			\sqrt{
				(N-1) p(p-1) 
			}
		} , 
	\frac {
			\nabla ^ 2 f ( \frak{r} ) 
		}{ 
			\sqrt{ 
				(N-1) p(p-1) 
			}
		} 
\right) 
\end{align}								
has the same law as 
\begin{align}								
\label{eq:M_joint}
\left( 
	\un { \matM } _ 
		{ \, 1 } ^ { N - 1 } 
	( r , u _ 1 , u _ 2 ) , \, 
	\un { \matM } _ 
		{ \, 2 } ^ { N - 1 }  
	( r , u _ 1 , u _ 2 ) 
\right) ,
\end{align}								
where 
\begin{align}								
\label{eq:big_m_hat}
\un { \matM } _ 
	{ \, i } ^ { N - 1 } 
( r , u_1 , u_2 )
		= \matM _ 
			{ \, i } ^ { N - 1 } 
		( r ) 
		- \frac { 2 } 
		{
			{ 
				\sfE _ \infty \sqrt{ N - 1 }
			}  
		}
		u_i \matI ^ { N - 1 }
		+ \frac { m _ i ( r , u _ 1 , u _ 2 ) }
		{ 
			\sqrt{
				( N - 1 ) p ( p - 1 )
			}
		} 
		\mate _ 
			{ N - 1 } ^ { N - 1 } 
		\,,
\end{align}								
with 
$ \sfE _ \infty $ the energy threshold defined in 
\eqref{Ein}, where 
$ \matI^{N-1}$ denotes the 
$ ( N - 1 ) 
	\times 
		( N - 1 ) $ identity matrix, and where the constants 
$ m _ i $ are defined in 
\eqref{eq:little_m}. The matrix 
$ \mate _ 
	{ N - 1 } ^ { N - 1 } 
$ has a $0$ in every entry except the last row and column, where there is a $1$. The
$ \matM _ 
	{ i } ^ { N - 1} $ are Gaussian random matrices whose block structure
\begin{align}								
\label{eq:rosetta_block}
\matM _ 
	{ i } ^ { N - 1 } 
( r ) 
	\equiv  
		\left( 
			\begin{matrix} %
				\matG  _ 
					{ i } ^ { N - 2 }  
				( r )
						\vspace{2mm} & \, 
							Z _ 
								{ i } ^ { N - 2} 
							( r ) 									\\ %
				\, Z _ 
					{ i } ^ { N - 2 }  
				( r ) ^ T  
					\, & \, 
						Q _ i ( r ) 
			\end{matrix} %
		\right) 
\end{align}								
satisfies (1) -- (4) below. 
\begin{enumerate}
\item[(1)] 
	The pairs 
	$ ( \,
		\matG _ 
			{ 1 } ^ { N - 2 } 
		( r ) , \, \matG _ 
			{ 2 } ^ { N - 2 }  
			( r ) 
	\, ) $, 
	$ ( \,
		Z _ 
			{ 1 } ^ { N - 2 }  
		( r ) , \, Z _ 
			{ 2 } ^ { N - 2 }  
		( r ) 
	\, ) $ and 
	$ ( \,
		Q _ 1( r ) , \, Q _ 2 ( r )
	\, ) $ are independent. 
\item[(2)] 
	The  
	$ \matG _ 
			{ i } ^ { N - 2 } 
			( r ) $ are  
	$ ( N - 2 ) 
		\times 
			( N - 2 ) $ random matrices such that 
	$ \sqrt{
		\frac { N - 1 } 
			{ N - 2 } 
	} 
	\, \matG _
		{ i } ^ { N - 2 } ( r ) $ 
	$ \sim \goe _ { N - 2 } \, $, and such that in law,
	\begin{align}							
	\label{eq:GOE_joint}
	\leftp
		\begin{matrix}%
			\matG _
				{ 1 }^ { N - 2 } (
			 r )
			 	\vspace{3mm}											\\ %
			\matG _
				{ 2 } ^ { N - 2 } ( r )
		\end{matrix}%
	\right)
		=
			\left(
				\begin{matrix}%
					\sqrt{
						1 - | r | ^ { p - 2 } 
					} \, \bar { \matG } _
						{ 1 } ^ { N - 2 } 
					+ \, ( \sgn ( r ) ) ^ p  
					\sqrt{ 
						| r | ^ { p - 2 } 
					} \, \bar { \matG } 
						_ 0 ^ { N - 2 } 
						\vspace{3mm} 		\\ %
					\sqrt { 
						1 - | r | ^ { p - 2 } 
					} \, \bar { \matG } 
						_ 2 ^ { N - 2 }   
					+ \sqrt{ 
						| r | ^ { p - 2 } 
					} \, \bar{ \matG }_ 
						0 ^ { N - 2 }  
	\end{matrix}%
	\right)\,,
	\end{align}							
	where 
	$ \bar { \matG } 
		_ 0 ^ { N - 2 } , \, \bar { \matG } _
			1 ^ { N - 2 } , $ and 
	$ \bar { \matG } 
		_ 2 ^ { N - 2 } $ are independent matrices, each distributed as 
	$ \matG _
		1 ^ { N - 2 } 
	( r ) $, so that 
	$ \sqrt{
		\frac { N - 1 } 
			{ N - 2 } 
	} 
	\, \bar{ \matG } 
		_ i ^ { N -  2 } 
			\sim
				\goe _ { N - 2 } \, $ for
	$ i 
		= 
			0, 1, 2 $. 
\item[(3)]															
	The column vectors 
	\begin{align*}							
	Z _ 
		i ^ { N - 2 } 
	( r ) 
		\equiv
			\leftp
				\lbob 
					Z _ 
						i ^ { N - 2 } 
					( r ) 
				\rbob_j
			\rightp_
				{ j = 1 } ^ { N - 2 } 
	\end{align*}							
	are Gaussian vectors such that, for any 
	$ j 
		\leq 
			N - 2 $, the pair 
	$( \, 
		\bm{[}
			Z _ 
				1 ^ { N - 2 } 
			( r )
		\bm{]}_j 
			, \, 
			\bm{[}
				Z _ 
					2 ^ { N - 2 }   
				( r ) 
			\bm{]}_j
	\, ) $ is independent of the analogous pair for any distinct 
	$ j ' 
		\leq 
			N - 2 $. The correlations within each pair are governed by
	$ \bm{\Sigma} _ Z ( r ) $, defined in
	\eqref{eq:Sigma_Z}:
	\begin{align}							
	\label{eq:Z_normalization}
	\leftp \,
		\lbob
			Z _ 
				1 ^ { N - 2 } 
			( r )
		\rbob _ j 
		,	 
		\lbob 
			Z _ 
				2 ^ { N - 2 } 
			( r )  
		\rbob_j
	\, \rightp 
		\sim 
			\mathcal{N}
			\leftp 
				0 
					, \, 
						( ( N - 1 ) p ( p - 1 ) ) ^ { - 1 } 
						\, \bm{\Sigma} _ Z ( r )  
	\rightp \,, 
	\end{align}							
\item[(4)]
	The 
	$ Q _ i ( r ) $ are Gaussian with correlation stricture
	$ \bm{\Sigma} _ Q ( r ) $, defined in \eqref{eq:Sigma_Q_12}. 
	\begin{align}							
	\label{eq:Q_normalization}
	\leftp \,
		Q _ 1( r ) 
			, 
				Q _ 2 ( r ) 
	\, \rightp 
		\sim 
			\mathcal{N}
			\leftp
				0
					, \, 
						( ( N - 1 ) p ( p - 1 ) )^{-1} 
						\, \bm{\Sigma}_Q(r) 
			\right) \,. 
\end{align}								
\end{enumerate}%
\end{lem}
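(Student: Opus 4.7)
The plan is to derive the joint law by computing the covariance structure of $(f, \nabla f, \nabla^2 f)$ at the two conditioning points $\np$ and $\rp$, and then applying Gaussian conditioning. Since $f$ is centered Gaussian on $\bbS$ with covariance $C(s,t)=\langle s,t\rangle^p$, the entire vector of values, first, and second derivatives at $\np$ and $\rp$ is jointly Gaussian, and its covariance is obtained by differentiating $C(s,t)$ in $s$ and $t$ along the tangent frames. Conditioning on the $2+2(N-1)=2N$ dimensional vector $(f(\np),f(\rp),\nabla f(\np),\nabla f(\rp))=(u_1,u_2,0,0)$ produces a Gaussian family whose mean is linear in $(u_1,u_2)$ and whose covariance is that of the Hessians minus the projection onto the conditioning subspace.

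The first step is to fix a convenient orthonormal frame exploiting rotational invariance. Since the joint law depends on $\np,\rp$ only through $r=\langle \np,\rp\rangle$, choose $\np=e_N$ and $\rp=\sqrt{1-r^2}\,e_{N-1}+r\,e_N$. The tangent frame at $\np$ is $\{e_1,\dots,e_{N-1}\}$; at $\rp$, build a frame by keeping $e_1,\dots,e_{N-2}$ (all orthogonal to both $\np$ and $\rp$) and rotating $e_{N-1}$ within the $(e_{N-1},e_N)$-plane to the tangent direction. In this frame the first $N-2$ coordinates play a symmetric role and the last coordinate captures directions that mix with the normal at $\rp$. This immediately suggests the block decomposition in \eqref{eq:rosetta_block}, separating the $(N-2)\times(N-2)$ block $\matG_i^{N-2}$, the vector $Z_i^{N-2}$, and the scalar $Q_i$.

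Next, I would carry out the covariance computation by differentiating $C(s,t)=\langle s,t\rangle^p$. The key output is that the unconditional covariance between the Hessian matrices restricted to directions in $\spa\{e_1,\dots,e_{N-2}\}^\perp$ equals $|r|^{p-2}$ (with sign $(\sgn r)^p$) times the corresponding GOE covariance structure. This decomposition of the covariance kernel into a shared component with weight $|r|^{p-2}$ plus an independent component with weight $1-|r|^{p-2}$ is precisely what produces \eqref{eq:GOE_joint}. The $Z$ and $Q$ blocks are handled the same way, but with different correlation matrices $\bm\Sigma_Z(r)$ and $\bm\Sigma_Q(r)$ that reflect how the tangent rotation at $\rp$ couples the $(N-1)$-th coordinate to both tangent and normal directions. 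Gaussian conditioning on $f(\np)=u_1,f(\rp)=u_2$ then produces two shift terms: the isotropic shift $-\tfrac{2u_i}{\sfE_\infty\sqrt{N-1}}\matI^{N-1}$ comes from the covariance of $f(\np)$ with $\nabla^2 f(\np)$ being proportional to the identity on the tangent space, while the rank-one correction along $\mate_{N-1}^{N-1}$ captures the covariance with $f(\rp)$ and the normal-direction components. Conditioning on $\nabla f(\np)=\nabla f(\rp)=0$ leaves the Hessian block $\matG_i^{N-2}$ unaffected because the gradient at $\np$ is uncorrelated with Hessian entries in the orthogonal tangent directions; its only effect is a further shift absorbed into $m_i(r,u_1,u_2)$.

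The main obstacle is bookkeeping: carefully tracking every normalization factor (the $\sqrt{(N-1)p(p-1)}$, the variance conventions for the $\goe_{N-2}$ matrices, and the parity factor $(\sgn r)^p$ arising because odd $p$ gives an odd function of $r$) and verifying that the shift $m_i(r,u_1,u_2)$ matches precisely the coefficient obtained from Gaussian regression on all four conditioning variables. The actual algebraic manipulations are lengthy but mechanical, following the derivation in \cite{subag2017complexity}. No random-matrix input is needed at this stage; the content is purely Gaussian covariance computation combined with rotational symmetry of the sphere and of the covariance kernel.
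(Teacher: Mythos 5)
The paper does not prove this lemma; it cites it directly from Subag (\cite[Lemma~13]{subag2017complexity}). Your outline correctly reproduces the strategy of Subag's original proof: reduce by rotational invariance to the frame $\np=e_N$, $\rp=\sqrt{1-r^2}\,e_{N-1}+re_N$; differentiate the covariance kernel $\langle s,t\rangle^p$ to obtain the joint Gaussian law of $(f,\nabla f,\nabla^2 f)$ at the two points; observe that the $(N-2)\times(N-2)$ Hessian blocks in the common orthogonal directions are uncorrelated with $f(\rp)$ and both gradients (so conditioning only adds the $-\bar u_i\matI$ shift there), while the last row and column pick up the rank-one correction $m_i\,\mate_{N-1}^{N-1}$; and read off the shared-component weight $|r|^{p-2}$ with sign $(\sgn r)^p$ from the mixed second derivatives $\pa_{s_i}\pa_{s_j}\pa_{t_i}\pa_{t_j}\langle s,t\rangle^p|_{s=\np,\,t=\rp}$. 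One clarification worth making in a written version: conditioning on the gradients being \emph{zero} produces no mean shift (the shift comes only from $u_1,u_2$); what you need from the gradient-Hessian covariance computation is that the $\matG_i$ block's \emph{conditional covariance} is unaffected, which follows from the vanishing of all relevant cross-covariances as you indicate. With that caveat, the proposal is a faithful sketch of the standard Gaussian-regression argument underlying the cited result.
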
									
\begin{rmk}								
\label{rmk:rosetta}
The objects introduced in Lemma~\ref{lem:rosetta_stone} are central enough that, for the convenience of the reader, we now explicitly list abbreviations.
When the overlap parameter 
$ r $ and the dimension 
$ N $ are known implicitly, we drop these from the indexing, writing 
$ \matG _ i $ in place of 
$ \matG _
	i ^ { N - 2 } 
( r ) $,
$ Z _ 
	{ \, i } $ for 
$ Z _
	i ^ { N - 2 }
( r ) $, and 
$ Q _ i $ for 
$ Q _ i ( r ) $. We also write 
$ \matM _ i $ in place of
$ \matM _
	i ^ { N - 1 } 
( r ) $, so that 
\eqref{eq:rosetta_block} becomes 
\begin{align}								
\label{eq:hess_block_structure}
\matM _ i 
	=  
		\leftp 
			\begin{matrix} %
				\matG_i 
					& 
						Z _ i 										\\ %
				Z _ i ^ T 
					& Q _ i 
			\end{matrix} %
		\rightp \,.
\end{align}								
The factor 
$ 2 / \sfE _ \infty $ in
\eqref{eq:big_m_hat} is like a change of units for the 
$ u _ i $, allowing us to reinterpret these energy levels of the pure
$ p $-spin energy landscape within the setting of a GOE matrix spectrum. This correspondence is also fundamental, and we write
\begin{align}								
\label{eq:gamma_def}
\gamma_p 
	\tri 
		\frac { 2 } 
			{ \, \sfE _ \infty }
				\equiv
					\sqrt{
						\frac { p }
							{ p - 1 }
					}
\end{align}								
for brevity. For 
$ z \in \R $, let us use a bar to denote the rescaling performed on the energy parameters in 
\eqref{eq:big_m_hat}:
\begin{align}								
\bar{z} = \frac{\gamma_p z}{\sqrt{N-1}}\,,
\label{eq:u_bar_def}
\end{align}								
and this bar notation will be in effect in every section but 
Section~\ref{sec:exp_match}. The setting of 
Section~\ref{sec:exp_match} is one in which the limit 
$ N \to \infty $ has already been taken, so we repurpose the bar notation there. 

When the energy parameters 
$ u _ 1 $ and 
$ u _ 2 $
are also implicitly known, we make the following abbreviation:
\begin{align}								
\label{eq:un_m_def}
m
	_ { \, i } ^ \circ  
		\tri 
			\frac{m_i(r,u_1,u_2) }{ \sqrt{(N-1)p(p-1)}} \,,
\end{align}								
recalling the definition 
\eqref{eq:little_m} of the
$ m _ i ( r , u _ 1 , u _ 2 ) $. We write the matrices 
$ \un { \matM } _ 
	{ \, i } ^ { N - 1 } 
( r , u _ 1 , u _ 2 ) $ in 
\eqref{eq:M_joint} as 
$ \un { \matM } _ { \, i } $, so that in the above notation, 
\begin{align*}								
\un { \matM } _ { \, i } 
	= 
		\matM _ i 
		- \bar{u}_i \matI ^ { N - 1 } 
		+ m _ { \, i } ^ \circ \mate _ { N - 1 } ^ { N - 1 } \, ,
\end{align*}								
and finally, the 
$ ( N - 2 ) $-dimensional principal minor of 
$ \un { \matM } _ { \, i } $ shall be denoted 
$ \un { \matG } _ { \, i} $, so that
\begin{align*}								
\un { \matG } _ { \, i } 
	\equiv 
		\matG _ i 
		- \bar{ u } _ i 
		\matI ^ { N - 2 } .
\end{align*}								
\end{rmk}									
A subset of 
$ \R $ is 
\emph{nice }
if it is a finite union of non-empty open intervals. For nice 
$ B 
	\subset 
		\R $, let 
$ \mathcal{C} _ N ( B ) $ denote the set of critical points of 
$ \HNp $ whose energies lie in 
$ N B $:
\begin{align}								
\mathcal{C}_N(B) 
	= 
		\leftcb 
			\sig \in \bbS_N 
				: 
					\nabla \HNp ( \sig ) 
						= 
							0, \, 
								\HNp ( \sig ) \in N B 
		\rightcb ,
\end{align}								
and for 
$ \ell 
	= 
		0 , 1 , 2 
			\dots 
				N - 2 $, let 
$ \mathcal { C } _ 
	{ N , \, \ell } 
( B ) $ be the subset of 
$ \mathcal { C } _ N ( B ) $ consisting of critical points with index 
$ \ell $. Define, for 
$ B 
	\subset 
		\R $ and 
$ R 
	\subset 
		( - 1 , 1 ) $ nice,
\begin{align}								
\label{eq:crit_2_notation}
\leftcb
	\critNell (\,B, \, R\,) 
\rightcb _ { \bm { 2 } } 
	\tri 
		\# \leftcb
			( 
				\sig , \, \sig ' 
			) \in \mathcal{ C } _ 
				{ N , \, \ell } 
			( B )
			\times \mathcal { C } _
				{ N , \, \ell }
			( B ) 
				 : 
				 	q ( 
						\sig , \, \sig ' 
					) \in R
		\rightcb ,
\end{align}								
where the overlap function 
$ q ( \, \cdot \, , \, \cdot \, ) $ is  
\begin{align}
q ( \sig , \sig ' ) 
	\tri 
		\frac { 
				\la \, \sig , \, \sig ' \ra 
			} 
			{ 
				\| \, \sig \, \| \, \| \, \sig' \, \| 
			} \,,
\end{align}								
where for a vector
$ x \in \R ^ N $, the notation 
$ \| \, x \, \| $ denotes its 
$ \ell ^ 2 $-norm, i.e. 
$\| \, x \, \| ^ 2 \equiv \la x, x \ra $. Finally, we let 
\begin{align}								
\label{eq:omega_N_def}
\omega _ N 
	\tri 
		\frac{ 2 \pi ^ 
			{ N / 2 }
		}
		{ 
			\Gamma ( N / 2 ) 
		}	
\end{align}								
denote the surface area of the 
$ ( N - 1 ) $-dimensional 
unit 
 sphere $\bbS$. 
\section{\large Proof of main result and some corollaries}\label{sec:outline}
%
%
In this section we provide the proof of Theorem \ref{thm:exp_match} modulo two other results and establish some of its consequences. Let
\begin{align}								
\label{eq:C_N_Kac_Rice}
\sfC_N 
	\tri 
		\leftp \,
			\frac{ ( N - 1 ) \, ( p - 1) }
				{ 2 \pi } 
		\, \rightp ^ { N - 1 } ,
\end{align}								
\vspace{3mm}
\begin{align}
\label{eq:F_G_Kac_Rice}
\mathcal{G}(r) 
	\tri 
		\leftp 
			\frac{ 
				1 - r ^ 2 
			}
				{
				1 - r ^ 
					{ 2 p - 2 } 
				} 
	\rightp ^ { 1 / 2 } 
		\quad \ands \quad
		 	\mathcal { F } ( r ) 
				\tri 
					\frac{
						1-r^{2p-2} 
					} 
						{ 
							 ( 
							 	\mathcal { G } ( r ) 
							) ^ 3
							\sqrt{
								1- (pr^p -(p-1)r^{p-2} )^2
							} 
						} \,. 
\end{align}

The first step is to use Kac-Rice. 
\begin{lem}								
\label{lem:our_Kac_Rice}
Let 
$ r \in ( - 1 , 1 ) $, and from \eqref{eq:Sigma_U}, consider
$ ( \,
	U _ 1 ( r ) , U _ 2 ( r ) 
\, )  
	\sim 
		\mathcal { N } ( 0 , \bm{ \Sigma } _ U ( r ) ) $. For this 
$ r $, independently sample the matrix
$ \matM _ i $ with law described in 
Lemma~\ref{lem:rosetta_stone}, and using these, construct the associated matrix 
$\un { \matM } _ { \, i } 
	\equiv 
		\un { \matM } _
			{ \, i } ^ { N - 1 }
		( 
			r , U _ 1 ( r ) , U _ 2 ( r ) 
		) $. For any nice 
$ B 
	\subset 
		\R $ and nice 
$ R 
	\subset 
		( - 1 , 1 ) $, 
\begin{align}								
\label{eq:our_Kac_Rice_output}
\E 
	\leftcb 
		\CritNell( 
			B ,  R
		) 
	\rightcb _ { \bm { 2 } } 
		&= \sfC_N 
			\int _ R dr 
				\leftp 
					\mathcal{G}(r)
				\rightp^N 
				\mathcal{F}(r) 
				\, \E 
					\leftp 
						\prod _ { i = 1 , \, 2 } 
							\left| 
								\det \leftp
									\un { \matM } _ { \, i } 
								\rightp 
							\right| 
						\1 \leftcb 
							\Een _ B \cap \Eind _ \ell 
						\rightcb 
					\rightp \,,
\end{align}								
where 
$ \Een _ B $ is the event 
$ \{ \, U _ 1 ( r ) ,  U _ 2 ( r ) \in \sqrt{N} B \, \} $, and where 
$ \Eind _ \ell $ is the event that both 
$\un { \matM } _ { \, i } $ have index 
$ \ell $. The terms
$ \sfC _ N , \, \mathcal { G } ( r ) , \, \mathcal { F } ( r ) $ are as above in 
\eqref{eq:C_N_Kac_Rice} and 
\eqref{eq:F_G_Kac_Rice}.
\end{lem}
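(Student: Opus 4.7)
The approach is a direct application of the classical second-moment Kac-Rice formula to the rescaled Hamiltonian $f = f_{N,p}$ on the unit sphere $\bbS$. Since counting critical points of $\HNp$ is equivalent to counting those of $f$, the starting point is
\begin{align*}
\E \leftcb \CritNell (B, R) \rightcb_{\bm{2}} = \int_{\bbS \times \bbS} \1\{q(s,s') \in R\} \, \rho_N(s,s') \, ds \, ds' ,
\end{align*}
where
\begin{align*}
\rho_N(s,s') = p_{\nabla f(s), \nabla f(s')}(0,0) \cdot \E \leftb \, |\det \nabla^2 f(s)| \, |\det \nabla^2 f(s')| \, \1_A \, \big| \, \nabla f(s) = \nabla f(s') = 0 \, \rightb
\end{align*}
and $A = \{ f(s), f(s') \in \sqrt{N} B \} \cap \{ \ind(\nabla^2 f(s)) = \ind(\nabla^2 f(s')) = \ell \}$, all computed in a common orthonormal frame on $\bbS$.

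The $O(N)$-invariance of the law of $f$, inherited from $\E f(s) f(t) = \la s, t \ra^p$, implies that $\rho_N(s,s')$ depends only on $r = \la s, s' \ra$. Fixing $s = \np$ and $s' = \rp(r)$ and integrating out the rotations yields the standard identity
\begin{align*}
\int_{\bbS \times \bbS} g(q(s,s')) \, ds \, ds' = \omega_N \omega_{N-1} \int_{-1}^{1} g(r) (1-r^2)^{(N-3)/2} \, dr
\end{align*}
for any function $g$ of the overlap alone, reducing the task to a one-dimensional integral in $r$.

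Next I would identify the three factors in \eqref{eq:our_Kac_Rice_output}. Two derivatives of $\la s, t \ra^p$ at $(s,t) = (\np, \rp(r))$ give the joint Gaussian density of the gradients at zero as a power of $(2\pi)$ times the reciprocal square root of a $2(N-1) \times 2(N-1)$ covariance determinant. The dominant $N$-th power piece of this determinant, combined with the volume factor $\omega_N \omega_{N-1}(1-r^2)^{(N-3)/2}$, produces the product $\sfC_N \cdot \mathcal{G}(r)^N$. Conditioning further on the energies $f(\np), f(\rp(r))$ taking prescribed values $u_1, u_2$ puts the Hessians in the joint law described by Lemma \ref{lem:rosetta_stone}; after the scalar rescaling by $((N-1)p(p-1))^{(N-1)/2}$ per factor, one may replace $|\det \nabla^2 f(\cdot)|$ by $|\det \un{\matM}_{\,i}|$. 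The residual conditional law of $(f(\np), f(\rp(r)))$ given the vanishing gradients is precisely the centered Gaussian $(U_1(r), U_2(r)) \sim \mathcal{N}(0, \bm{\Sigma}_U(r))$, which is why the expectation on the right-hand side of \eqref{eq:our_Kac_Rice_output} has exactly the stated form.

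The main obstacle is the careful bookkeeping of covariance determinants and normalizing constants. In particular, the factor $\sqrt{1 - (pr^p - (p-1)r^{p-2})^2}$ appearing in $\mathcal{F}(r)$ arises from $\sqrt{\det \bm{\Sigma}_U(r)}$, since $pr^p - (p-1)r^{p-2}$ is the conditional correlation of $(f(\np), f(\rp(r)))$ given that the gradients vanish, while the remaining polynomial-in-$r$ pieces of the gradient covariance determinant are collected into the $(\mathcal{G}(r))^{-3}$ piece of $\mathcal{F}(r)$. Apart from these computations, no new ideas are required beyond those already present in \cite{subag2017complexity}: the index indicator $\1\{\ind(\nabla^2 f) = \ell\}$ is a function of the Hessians alone and passes through Kac-Rice verbatim.
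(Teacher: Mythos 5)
Your approach is exactly the paper's: the second-moment Kac-Rice formula for $f$, reduced to an integral over the overlap $r$ by rotational invariance, with the conditional Hessian law from Lemma~\ref{lem:rosetta_stone} substituted in. One concrete attribution error in your bookkeeping is worth flagging, however: the factor $\sqrt{1-(pr^p - (p-1)r^{p-2})^2}$ in $\mathcal{F}(r)$ does not come from $\sqrt{\det\bm{\Sigma}_U(r)}$. Because the right-hand side of \eqref{eq:our_Kac_Rice_output} already takes an expectation over $(U_1,U_2) \sim \mathcal{N}(0,\bm{\Sigma}_U(r))$, the Gaussian normalization $\bigl(2\pi\sqrt{\det\bm{\Sigma}_U(r)}\bigr)^{-1}$ is absorbed into that expectation and cannot reappear in $\mathcal{F}$; and in any case one can check from Lemma~\ref{lem:U_eval} that $\det\bm{\Sigma}_U(r)$ is not equal to $1-(pr^p-(p-1)r^{p-2})^2$. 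The square root actually comes from the joint gradient density $p_{\nabla f(\np),\nabla f(\rp)}(0,0)$, specifically from the anisotropic $2\times 2$ block of the gradient covariance carried by the tangent directions lying in the $\np$--$\rp$ plane, while the $N-2$ isotropic blocks combine with the spherical volume factor to produce $\sfC_N\,\mathcal{G}(r)^N$. Tracking this correctly is what makes the exponent on $\mathcal{G}$ and the $(\mathcal{G}(r))^{-3}$ piece of $\mathcal{F}$ close up.
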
									
\begin{proof} 
The proof is a standard application of the Kac-Rice formula.
\end{proof}
%

The second step is to bound the right-hand side of \eqref{eq:our_Kac_Rice_output}, at the exponential scale using a \emph{bounding function} denoted $\Psipell$. Define for $\ell \in \N$, $r \in (-1,1)$ and $u_i < -2$, 
\begin{align} 								
\Psipell ( r , u _ 1 , u _ 2 ) 
	&\tri 
		1 + \log ( \, p - 1 \, )  
		+ \log \mathcal { G } ( r ) 
		+ \sum _ 
			{ i
				\, = \, 
					1 , \, 2 } 
			\Omega 
			\leftp 
				\gamma_p \left| u_i \right| 
			\rightp 												\non \\ %
	&\quad 
		- \ell \cdot \mathscr { I } _ r 
			\leftp
				\gamma _ p | u _ 1 | , \, \gamma _ p | u _ 2 | 
			\rightp
			 - \frac { 1 } 
			 	{ 2 }
				\lbob
				\begin{matrix}%
					u _ 1 & u _ 2 
				\end{matrix}%
			\rbob 
			\bm { \Sigma } _ U ( r ) ^ { - 1 }  
			\lbob 
				\begin{matrix}%
					u _ 1												 \\ %
					u _ 2 
				\end{matrix} %
			\rbob , \label{eq:bounding}
\end{align}								
where 
$ \scrI _ r $ is the rate function from our LDP governing the leading eigenvalue pairs of correlated GOE matrices, Theorem~\ref{coro:LDP_ell}. Reading the expression for $\Psipell$ from left to right, the first three terms arise from exponential scale asymptotics of the entropy factors discussed just before the statement of Lemma~\ref{lem:rosetta_stone}. The $\Omega$ terms appear for the reasons described in Remark~\ref{rmk:IOmega}. 

The rate function $ \scrI_r$ is present in \eqref{eq:bounding} because of the Hessian index constraint $ \1 \{ \Eind _ \ell \} $ 
in \eqref{eq:our_Kac_Rice_output}. To use the LDP Theorem~\ref{coro:LDP_ell} for correlated GOE matrices, an intermediate step is required. As in \cite{subag2017complexity}, we will bound the Hessian determinants by determinants of related GOE matrices described in Lemma~\ref{lem:rosetta_stone}. We must also effectively replace the indicator function 
$ \1 \{ \Eind _ \ell \} $ 
in 
\eqref{eq:our_Kac_Rice_output} by an analogous constraint on corresponding GOE matrices. The next result, proven at the start of Section~\ref{sec:exp_bounds}, enables this ``index transfer." 

\begin{prop}								
\label{prop:index_transfer}
Let 
$ \eps 
	> 
		0 $ be small, let 
$ r _ 0  \in (0,1) $, and for 
$ i 
	= 
		1 , 2 $ consider the families of random matrices 
\begin{align*}								
\leftcb 
	\un { \matM } _ { \, i } 
	(
		r , u _ 1 , u _ 2 
	) 
		: 
			r \in [ - r _0 , \, r _ 0 ] 
				\ands 
					u _ 1 , \, u _ 2 \in 
					\left[
						-1 / \eps , \, - 2 - \eps 
					\right] 
\rightcb ,																\\ %
\leftcb 
	\un { \matG } _ { \, i }
	(
		r , u _ 1 , u _ 2 
	) 
		: 
			r \in [ - r _ 0 , \, r _ 0 ]
				\ands 
					u _ 1 , \, u _ 2 \in  
					\leftb 
						-1 / \eps , \, - 2 - \eps 
					\rightb
\rightcb ,
\end{align*}								
as defined in Lemma
~\ref{lem:rosetta_stone} and Remark
~\ref{rmk:rosetta}. 
There is a coupling of these families, and an almost surely finite random variable 
$ N _ 0 ( \,\eps , \, r_0 \, ) $ so that 
$ N 
	\geq 
		N _ 0 $ implies 
\begin{align*}								
\ind
\leftp
	\un{ \matM } _ { \, i } ( r , u _ 1, u _ 2 )
\rightp 
	= 
		\ind
		\leftp
			\un { \matG } _ { \, i } ( r , u _ 1 , u _ 2 ) 
		\rightp
\end{align*}								
holds simultaneously for 
$ i = 1, 2 $ and for all 
$ r \in [ - r _ 0 , \, r _ 0 ] $ and 
$ u _ 1 , u _ 2 \in [ - 1 / \eps , \, - 2 - \eps ] $. 
\end{prop}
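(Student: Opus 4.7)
The $(N-2)\times(N-2)$ principal minor of $\un{\matM}_{\, i}$ is exactly $\un{\matG}_{\, i}$, since the shift $-\bar u_i \matI^{N-1}$ in \eqref{eq:big_m_hat} adjusts the top-left minor $\matG_i$ into $\un{\matG}_{\, i}$, while the rank-one correction $m_{\, i}^\circ \mate_{N-1}^{N-1}$ touches only the bottom-right entry. Cauchy interlacing therefore forces
\begin{align*}
\bigl|\ind(\un{\matM}_{\, i}) - \ind(\un{\matG}_{\, i})\bigr| \leq 1,
\end{align*}
so the task reduces to ruling out a one-index discrepancy uniformly in $(r, u_1, u_2)$ on the compact parameter box.

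The Schur complement locates the potential discrepancy. Let $\beta_1 \leq \ldots \leq \beta_{N-2}$ be the eigenvalues of $\un{\matG}_{\, i}$ and set
\begin{align*}
f_i(\alpha) \, \tri \, Q_i - \bar u_i + m_{\, i}^\circ - \alpha - Z_i^T \bigl(\un{\matG}_{\, i} - \alpha \matI^{N-2}\bigr)^{-1} Z_i.
\end{align*}
The eigenvalues of $\un{\matM}_{\, i}$ outside $\{\beta_k\}$ are precisely the roots of $f_i$: one in each open interval $(\beta_k, \beta_{k+1})$ (with $\beta_0 = -\infty$ and $\beta_{N-1} = +\infty$), since $f_i$ is strictly decreasing off the spectrum ($f_i'(\alpha) \leq -1$) with $\pm \infty$ jumps across each pole. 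Thus $\ind(\un{\matM}_{\, i}) = \ind(\un{\matG}_{\, i})$ iff the unique root of $f_i$ in the interval containing $0$ has positive sign, which by monotonicity is equivalent to $f_i(0) > 0$.

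The main task is then showing $f_i(0) > 0$ uniformly, with high probability. Using independence of $(\matG_i, Z_i, Q_i)$ from Lemma~\ref{lem:rosetta_stone}(1), Hanson--Wright concentration conditional on $\matG_i$, and the isotropic local semicircle law for the rescaled GOE matrix $\matG_i$ evaluated at $\bar u_i$ (which sits at distance $\geq \eps$ from $[-2,2]$), one obtains
\begin{align*}
Z_i^T (\un{\matG}_{\, i})^{-1} Z_i = \frac{[\bm{\Sigma}_Z(r)]_{ii}}{p(p-1)} \, m_{\rm sc}(\bar u_i) + o_\prob(1),
\end{align*}
where $m_{\rm sc}$ is the Stieltjes transform of the semicircle law. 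Since $Q_i = O_\prob(N^{-1/2})$, $m_{\, i}^\circ = O(N^{-1/2})$, and $-\bar u_i \geq 2+\eps$ forces $0 < m_{\rm sc}(\bar u_i) < 1$, the deterministic limit of $f_i(0)$ is continuous and strictly positive on the compact box, bounded below by some $\delta = \delta(\eps, r_0) > 0$ via an elementary calculation.

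For almost sure simultaneity, I combine the quasi-exponential tail of the isotropic local law with an $N^{-C}$-net of the parameter box, and transfer pointwise bounds to the full box by Lipschitz control (resolvent derivatives bounded by $\eps^{-2}$ since $\bar u_i$ stays $\eps$-away from the spectrum of $\matG_i$). Borel--Cantelli then yields the almost surely finite threshold $N_0(\eps, r_0)$. The coupling is the deterministic parametrization of $(\un{\matM}_{\, i}, \un{\matG}_{\, i})$ in $(r, u_1, u_2)$ built on top of a single realization of the underlying Gaussian ingredients $(\bar{\matG}_0^{N-2}, \bar{\matG}_1^{N-2}, \bar{\matG}_2^{N-2}, Z_i, Q_i)$ from Lemma~\ref{lem:rosetta_stone}. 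The hardest point is uniformity near the edge $\bar u_i = -2-\eps$: the resolvent norm grows like $\eps^{-1}$ there, threatening both the Hanson--Wright deviation bound and the net Lipschitz argument, so the $\eps$-dependence through the isotropic local law of \cite{KY13, benaych2018lectures} must be tracked quantitatively.
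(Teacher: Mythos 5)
Your proposal follows essentially the same strategy as the paper's proof: both reduce, via the Schur complement, to showing that the scalar $f_i(0) = Q_i - \bar u_i + m_i^\circ - Z_i^T \un{\matG}_i^{-1} Z_i$ (the paper's $X_i$) is positive uniformly over the compact parameter box, both control the quadratic form via concentration of $\|Z_i\|$ together with the isotropic local semicircle law (the paper applies the isotropic law conditionally to the random unit vector $Z_i / \|Z_i\|$, while your Hanson--Wright plus local-law route is an equivalent path), and both conclude via Borel--Cantelli. The only cosmetic differences are that the paper obtains the equivalence $\ind(\un{\matM}_i) = \ind(\un{\matG}_i) \Leftrightarrow X_i > 0$ through Lazutkin's block-signature lemma rather than your secular-equation/interlacing picture, and it sidesteps the $N^{-C}$-net step because the version of the isotropic local law it invokes (\cite{benaych2018lectures}) already carries the supremum over the spectral parameter.
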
									
Theorem~\ref{thm5} below is the analogue of \cite[Theorem 5]{
	subag2017complexity
 } once Proposition~\ref{prop:index_transfer} is supplied. It is the main output of Section~\ref{sec:exp_bounds}.

\begin{thm}								
\label{thm5}
Suppose that 
$ B 
	\subset 
		( - \infty , - \sfE _ \infty ) $ is nice, and consider 
$ ( - r _ 0 , r _ 0 ) 
	\subset 
	( - 1 , 1 ) $ for some 
$ 0 
	< 
		r _ 0 
			< 
				1 $. The following bound holds: 
\begin{align}								
\label{eq:thm5_statement}
\limsup
	_ { N \to \infty } 
		\frac { 1 }
			{ N } 
			\log 
			\E 
			\leftcb 
				\critNell ( \,
					B , ( - r _ 0 , \, r _ 0 ) 
				\,) 
			\rightcb _ { \bm{ 2 } } 
				\, \leq 
					\sup _ 
						{ \substack
							{ r \, \in \, ( - r _ 0 , \, r _ 0 ) \\ 
							u _ 1 , \, u _ 2 \, \in \, B 
							}
						}  \Psipell\, (r,u_1,u_2) \,,
\end{align}								
where $\Psipell$ is the bounding function defined in \eqref{eq:bounding}. 
\end{thm}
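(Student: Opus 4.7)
The plan is to extract exponential-scale asymptotics from the Kac--Rice identity of Lemma~\ref{lem:our_Kac_Rice} via the following reductions, carried out in order: (i) isolate the Gaussian density of the random energies $(U_1(r), U_2(r))$ after rescaling to order-one energies; (ii) apply Proposition~\ref{prop:index_transfer} to transfer the index constraint from the full Hessians $\un{\matM}_{\,i}$ to the principal minors $\un{\matG}_{\,i}$; (iii) reduce the Hessian determinants to the minor determinants via a Schur-complement identity; (iv) invoke Theorem~\ref{coro:LDP_ell} together with GOE log-determinant asymptotics to extract the $\Omega$ terms and the $-\ell\,\scrI_r$ term; and (v) finish with a Varadhan/Laplace argument on the compact parameter domain.

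I would begin with Lemma~\ref{lem:our_Kac_Rice} specialized to $R = (-r_0, r_0)$: the prefactor $\sfC_N$ contributes $1 + \log(p-1)$ after $\tfrac{1}{N}\log(\cdot)$, the factor $(\mathcal{G}(r))^N$ yields $\log \mathcal{G}(r)$, and $\mathcal{F}(r)$ is sub-exponential on the compact $r$-range. Writing out the joint Gaussian density for $(U_1(r), U_2(r))$ and substituting $U_i = \sqrt{N}\, u_i$ on the event $\Een_B$ produces the Gaussian factor $\exp\!\leftp -\tfrac{N}{2} (u_1, u_2) \bm{\Sigma}_U(r)^{-1} (u_1, u_2)^T \rightp$, which accounts for the last term of $\Psipell$ up to a sub-exponential normalization. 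I would next invoke Proposition~\ref{prop:index_transfer} to replace $\Eind_\ell$ by the corresponding index-$\ell$ event on the pair $(\un{\matG}_{\,1}, \un{\matG}_{\,2})$, with probabilistic cost vanishing uniformly. Using the block decomposition \eqref{eq:hess_block_structure}, the Schur-complement identity
\begin{align*}
\det \un{\matM}_{\,i} = \det \un{\matG}_{\,i} \cdot \leftp Q_i - \bar u_i + m^\circ_{\,i} - Z_i^T (\un{\matG}_{\,i})^{-1} Z_i \rightp
\end{align*}
holds on the index-$\ell$ event, where the spectrum of $\un{\matG}_{\,i}$ is bounded away from $0$. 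An isotropic semicircle local law (step (1) of the outline) together with Gaussian concentration for the $Z_i$ and $Q_i$ shows that the Schur-complement correction is sub-exponential in $N$, reducing the product of Hessian determinants to $\prod_{i=1,2} |\det \un{\matG}_{\,i}|$ up to $e^{o(N)}$ errors.

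The core estimate is then a joint exponential upper bound of the form
\begin{align*}
\E\!\leftp \prod_{i=1,2} |\det \un{\matG}_{\,i}| \cdot \1\{\ind(\un{\matG}_{\,i}) = \ell,\ i = 1, 2\} \rightp \leq \exp\!\leftp N \leftb \sum_{i=1,2} \Omega(\gamma_p |u_i|) - \ell\, \scrI_r(\gamma_p |u_1|, \gamma_p |u_2|) + o(1) \rightb \rightp,
\end{align*}
uniform over $(r, u_1, u_2)$ in compact subsets of $[-r_0, r_0] \times B \times B$ (the tails of $B$ being killed by the Gaussian density factor, so truncation to a compact subset is harmless). To prove this I would split each spectrum into its $\ell$ top eigenvalues, which lie above $\bar u_i$ precisely on the index-$\ell$ event, and the remaining bulk; apply Theorem~\ref{coro:LDP_ell} to supply the joint LDP cost $-\ell\,\scrI_r$ for the relevant eigenvalue pair; and use semicircle concentration for the bulk to identify the bulk portion of $\frac{1}{N}\log|\det \un{\matG}_{\,i}|$ with $\Omega(\gamma_p |u_i|)$ via \eqref{eq:Omega_def} and Remark~\ref{rmk:IOmega}. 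Combining all the factors and applying Laplace's principle over the compact parameter domain then yields \eqref{eq:thm5_statement}.

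The main obstacle, in my view, is step (iv): producing the joint exponential upper bound for $|\det \un{\matG}_1|\,|\det \un{\matG}_2|$ under the joint index-$\ell$ constraint. The log-determinant is not a continuous functional of the empirical spectral measure, being sensitive to outliers separated from the bulk, and the two matrices $\un{\matG}_1, \un{\matG}_2$ are correlated via \eqref{eq:GOE_joint}. A careful decoupling of the $\ell$ top eigenvalues from the bulk, followed by a uniform-in-parameters application of Theorem~\ref{coro:LDP_ell} and the rigidity results underlying step (1), will be required to avoid double-counting the exponential cost and to maintain uniformity over $(r, u_1, u_2)$.
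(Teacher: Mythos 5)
Your high-level plan tracks the paper's: Kac–Rice, index transfer via Proposition~\ref{prop:index_transfer}, the eigenvalue LDP Theorem~\ref{coro:LDP_ell}, and a Varadhan/Laplace finish over a compact parameter domain. But the central technical step — how to pass from $\prod_i|\det\un{\matM}_{\,i}|$ to something controllable — is handled differently, and your version has a genuine gap.

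You factor $\det\un{\matM}_{\,i} = \det\un{\matG}_{\,i} \cdot X_i$ with $X_i = Q_i - \bar u_i + m^\circ_{\,i} - Z_i^T\un{\matG}_{\,i}^{-1}Z_i$ and assert that on the index-$\ell$ event "the spectrum of $\un{\matG}_{\,i}$ is bounded away from $0$," so the Schur correction is sub-exponential. This is false: $\{\ind(\un{\matG}_{\,i}) = \ell\}$ constrains the \emph{number} of negative eigenvalues, not their distance from zero. On this event an eigenvalue of $\un{\matG}_{\,i}$ can be arbitrarily close to $0$, in which case $\|\un{\matG}_{\,i}^{-1}\|$ and hence $|X_i|$ can be arbitrarily large (while $|\det\un{\matG}_{\,i}|$ is correspondingly small). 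The two pieces of your factorization are individually ill-behaved even though their product is fine; you cannot discard $X_i$ as a sub-exponential error. The index constraint gives you no separation of the spectrum from $0$, and the paper's index-transfer argument (which does need resolvent control) works on a \emph{separate} high-probability event $E_{(1)}$ where $|\sigma(\matG)-\bar u|\ge \eps/2$; that event is about the GOE minor's spectrum and the fixed energy, not about the index constraint.

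The paper sidesteps the small-eigenvalue issue entirely by not using the Schur factorization for the determinant estimate. It applies \cite[Lemma 14]{subag2017complexity} and Hölder's inequality (Corollary~\ref{coro:thm5_step1}) to split the determinant expectation into the three terms $\mathcal{E}^1, \mathcal{E}^2, \mathcal{E}^3$, built from the truncated functions $h_\eps^\kappa$ and $h_\kappa^\infty$ of \eqref{eq:h_def}. Small eigenvalues are capped at $\eps$ in $\mathcal{E}^1$, so the resulting log-test-function $\log_\eps^\kappa$ is Lipschitz and the empirical-measure LDP Theorem~\ref{thm:spectral_LDP} can be applied (Lemma~\ref{lem:16i}) to extract the $\Omega$ terms; the large-eigenvalue and boundary terms $\mathcal{E}^2, \mathcal{E}^3$ are then shown sub-exponential for $\kappa$ large via \cite[Lemmas 15, 16(ii)]{subag2017complexity}. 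You correctly identify the obstacle — the log-determinant is not a continuous functional of the spectral measure — but your proposed fix (Schur complement plus "semicircle concentration for the bulk") does not resolve it, whereas the truncation-plus-Hölder scheme is precisely the device that makes the estimate uniform and the LDP applicable. If you replace your step (iii) with the Subag-style Hölder splitting and handle the bulk via Lemma~\ref{lem:16i} rather than an unspecified concentration argument, the rest of your plan goes through as in the paper.
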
									
Analysis of the bounding function in Section~\ref{sec:exp_match} yields the next result. 
\begin{prop} 
\label{prop:L60}
Let $B \subset (- \sfE_\ell , -\sfE_\infty )$ be a nice set
\begin{align}								
\label{eq:lem6_0}
\sup_{r \in (-1,1)}
	\sup_{ u_1, u_2 \in B} \Psipell(r,u_1,u_2) 
		=
			\sup_{u \in B} \Psipell(0,u,u).
\end{align}								
\end{prop}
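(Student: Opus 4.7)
My strategy is to split the proof into two claims: (A) the decoupling identity $\Psi_{p,\ell}(0, u_1, u_2) = \Sigma_{p,\ell}(u_1) + \Sigma_{p,\ell}(u_2)$ (in particular $\Psi_{p,\ell}(0,u,u) = 2\Sigma_{p,\ell}(u)$); and (B) the monotonicity bound $\Psi_{p,\ell}(r, u_1, u_2) \leq \Psi_{p,\ell}(0, u_1, u_2)$ valid for every $r \in (-1,1)$ and $u_1, u_2 \in B$. Granted (A) and (B), the proposition follows immediately:
\begin{align*}
\sup_{r,\,u_1,u_2} \Psi_{p,\ell}(r, u_1, u_2) &\le \sup_{u_1, u_2 \in B}\bigl[\Sigma_{p,\ell}(u_1) + \Sigma_{p,\ell}(u_2)\bigr] \\
&= 2 \sup_{u \in B} \Sigma_{p,\ell}(u) = \sup_{u \in B} \Psi_{p,\ell}(0, u, u),
\end{align*}
while the reverse inequality is trivial.

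Claim (A) is essentially bookkeeping from the definitions in Lemma~\ref{lem:rosetta_stone} and Remark~\ref{rmk:IOmega}. At $r = 0$, the decomposition~\eqref{eq:GOE_joint} shows that $\matG_1$ and $\matG_2$ are independent GOE matrices, so the joint LDP rate $\mathscr{I}_r$ decouples as $\mathscr{I}_0(x, y) = I_1(x;1) + I_1(y;1)$. Moreover $\mathcal{G}(0) = 1$ from~\eqref{eq:F_G_Kac_Rice}, and $\bm{\Sigma}_U(0)$ is the identity since its off-diagonal entry equals $\la \np, \rp(0)\ra^p = 0$. Substituting these into~\eqref{eq:bounding}, applying $\Omega(x) = x^2/4 - 1/2 - I_1(x;1)$, the scaling $I_1(\gamma_p|u|;1) = I_1(|u|; \sfE_\infty/2) = I_1(u)$ (for $u \leq -\sfE_\infty$, via the scaling identity in Remark~\ref{rmk:IOmega}), and the arithmetic fact $\gamma_p^2/4 - 1/2 = -(p-2)/(4(p-1))$, a direct computation reproduces the formula~\eqref{e:thetakp} for $\Sigma_{p,\ell}(u_1) + \Sigma_{p,\ell}(u_2)$.

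The bulk of the work is Claim (B). The $r$-dependent part of $\Psi_{p,\ell}(r, u_1, u_2) - \Psi_{p,\ell}(0, u_1, u_2)$ equals
\begin{align*}
\log \mathcal{G}(r) &- \ell\bigl[\mathscr{I}_r(\gamma_p|u_1|, \gamma_p|u_2|) - I_1(\gamma_p|u_1|;1) - I_1(\gamma_p|u_2|;1)\bigr] \\
&\quad - \tfrac{1}{2}\bigl[u^{T}\bm{\Sigma}_U(r)^{-1}u - (u_1^2 + u_2^2)\bigr],
\end{align*}
with $\bm{\Sigma}_U(r)$ having off-diagonal entry $r^p$. The first term is nonpositive because $|r|^{2p-2} \leq r^2$ for $|r|\leq 1$ and $p \geq 2$. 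The remaining two terms must be handled jointly, as neither is individually monotone in $|r|$. The key tool is the Dyson Brownian motion realization of $(\matG_1, \matG_2)$ from Step (2) of the proof outline, with effective time-gap proportional to $1 - |r|^{p-2}$: this writes $\mathscr{I}_r$ as an infimum over eigenvalue paths that is convex in its endpoints. Using the $(u_1, u_2)\leftrightarrow (u_2, u_1)$ and $r \to -r$ symmetries of $\Psi_{p,\ell}$ to restrict to $r \geq 0$, the plan is to complete the square of the quadratic form against the DBM path cost and thereby show the combined contribution is nonpositive, with equality only at $r = 0$.

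\textbf{Main obstacle.} The delicate step is the joint handling of the $\mathscr{I}_r$ and $\bm{\Sigma}_U(r)^{-1}$ contributions, since they enter $\Psi_{p,\ell}$ through different powers of $|r|$ (namely $|r|^{p-2}$ and $r^p$), so a term-by-term comparison or a soft convexity argument will not close the inequality. The matching of the DBM-path cost against the quadratic form is where I expect the heaviest computation; the equality case $r=0$ and the known $\ell=0$ case of \cite{subag2017complexity} (where $\mathscr{I}_r$ is absent and (B) reduces to elementary manipulation of $\bm{\Sigma}_U(r)^{-1}$) should serve as guides for choosing the correct interpolating path and completing the square.
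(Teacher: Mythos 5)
The split into Claim (A) and Claim (B) is a reasonable plan, and Claim (A) is correct --- at $r=0$ the matrices decouple, $\mathcal{G}(0)=1$, $\bm{\Sigma}_U(0)=\matI$, and the bookkeeping reproduces $\Psipell(0,u_1,u_2)=\Sigma_{p,\ell}(u_1)+\Sigma_{p,\ell}(u_2)$ (the diagonal case is the identity \eqref{eq:thm37start}). The gap is in the proposed proof of Claim (B). You expect the $r$-dependent difference $\Psipell(r,u_1,u_2)-\Psipell(0,u_1,u_2)$ to be ``nonpositive, with equality only at $r=0$,'' via a completion of squares against the Dyson path cost, implying $r=0$ is the unique maximizer. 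That is false: the diagonal function $r\mapsto\Psipell(r,u,u)$ is not monotone in $|r|$; after the reduction to the diagonal it \emph{decreases} on $[0,r_*(u))$ but then \emph{increases} on $(r_*(u),1)$ (this bimodal structure is exactly what is established in the paper's Lemma~\ref{lem:prelem7} and is used explicitly as claims (1) and (2) in the proof of Corollary~\ref{coro:cor8}). No soft convexity or squares argument can produce the strict decay you describe, because the statement is simply not true.

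What actually closes Claim (B) is a quantitative comparison of the two endpoints, which is where the hypothesis $u\in(-\sfE_\ell,-\sfE_\infty)$ enters in an essential way and which your sketch never invokes. The paper computes, via L'H\^opital and $\lim_{r\to1}J_r(\bar u,\bar u)=I_1(\bar u;1)$, that the continuous extension satisfies $\Psi^\bullet(1,u,u)=\Sigma_{\ell-1}(u)$ and $\Psi(0,u,u)=2\Sigma_\ell(u)$, so the difference is $\Psi^\bullet(1)-\Psi^\bullet(0)=-\Sigma_\ell(u)-I_1(\bar u;1)$; this is strictly negative precisely because $\Sigma_\ell(u)>0$ on $(-\sfE_\ell,-\sfE_\infty)$ (Lemma~\ref{lem:lem7_1}(i)). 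For $u<-\sfE_\ell$ the inequality reverses, so your pointwise Claim (B) is actually false without that constraint. In short: the correct structure is the paper's three-step reduction (diagonalize in the energies via Lemma~\ref{coro:lem6}, localize the $r$-maximum to $\{-1,0,1\}$ via Lemma~\ref{lem:prelem7}, and beat the $r=\pm1$ value using the sign of $\Sigma_\ell$), and the monotonicity-in-$|r|$ that your strategy relies on does not hold even in Subag's $\ell=0$ case.
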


Theorem~\ref{thm:exp_match} follows directly from Proposition~\ref{prop:L60} and Theorem~\ref{thm5}. Its proof, given in Section~\ref{sec:exp_match}, hinges on the following relationship between the bounding and complexity functions: for
$ u 
	< 
		- \sfE _ \infty $, 
\begin{align}								
\label{eq:thm37start}
\Psipell(0, u , u ) 
	\equiv 
		2 \Sigma _ {p , \, \ell } ( u ).
\end{align}
%
%
%
\begin{rmk}								
\label{rmk:parallel_crit}
The constraint 
$ u > - \sfE _ \ell $ ensures 
$ \Sigma _ { p , \, \ell } ( u )
	>
		0 $, and hence that  
\begin{align*}								
\lim _ 
	{ N \to \infty } 
		\frac { 1 }
			{ N } 
			\log 
			\E 
					\leftp 
						\critNell ( \,
							( - \infty , \, u ) 
						\, )
					\rightp^2  
						> 
							\lim _ 
								{ N \to \infty } 
									\frac { 1 }
										{ N }  
										\log 
										\E 
											\critNell( \,
												( - \infty , \, u )
											) \,,
\end{align*}								
from which it follows 
\begin{align*}								
\lim _ 
	{ N \to \infty } 
		\frac { 1 }
			{ N } 
			\log 
				\E 
						\leftp
							\critNell( \,
								( - \infty , \, u )
							\, ) 
						\rightp^2 
							\equiv 
								\lim _
									{ N \to \infty } 
										\frac { 1 }
											{ N } 
											\log 
												\E 
													\leftcb
														\critNell( \, 
															( - \infty , \, u ) , \, ( - 1 , \,1) 
														\, ) 
													\rightcb _ { \bm { 2 } } .
\end{align*}								
\end{rmk}									
The corollary below says that, at the exponential scale, most pairs of critical points of fixed index 
$ \ell $ are nearly orthogonal. 
\begin{coro}								
\label{coro:cor8}
For any 
$ p 
	\geq 
		3 $ and any 
$ u \in ( - \sfE _ \ell ,  - \sfE _ \infty ) $ and 
$ \eps 
	>
		0 $, let 
$ R_\eps 
	\tri 
		( - 1 , - 1 ) \setminus [ - \eps, \eps ] $. Then,  
\begin{align*}								
\lim _ 
	{ N \to \infty } 
		\frac { 1 } 
			{ N } 
			&\log 
				\E 
					\leftcb 
						\critNell( \, 
							(  - \infty , \, u ) , \, ( - 1 , \, 1 ) 
							\, ) 
					\rightcb _ { \bm { 2 } } 
			 			> \, 
			 				\lim _ 
							 	{ N \to \infty } 
									\frac { 1 }
										{ N } 
										\log 
											\E 
											\leftcb 
												\critNell( \, 
													( - \infty , \, u ) , \,  R _ \eps
												\, )
											\rightcb _ { \bm { 2 } } .
\end{align*}								
\end{coro}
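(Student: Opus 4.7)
The plan is to identify the left-hand side via Theorem~\ref{thm:exp_match} and then bound the right-hand side by a region-decomposed application of Theorem~\ref{thm5}, exploiting the fact that the bounding function $\Psipell$ has a strict maximum in $r$ at the origin. By Theorem~\ref{thm:exp_match} together with Remark~\ref{rmk:parallel_crit}, the left-hand side equals $2\Sigma_{p,\ell}(u) > 0$, so it suffices to show that the right-hand side is strictly less than $2\Sigma_{p,\ell}(u)$.

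First, I would note that the proof of Theorem~\ref{thm5}, which chains Lemma~\ref{lem:our_Kac_Rice}, Proposition~\ref{prop:index_transfer}, and the LDP of Theorem~\ref{coro:LDP_ell}, goes through verbatim for any nice overlap set $R \subset (-1,1)$, yielding
\begin{align*}
\limsup_{N \to \infty} \frac{1}{N} \log \E\{\critNell(B,R)\}_{\bm 2} \ \leq\ \sup_{r \in R,\, u_1,u_2 \in B} \Psipell(r,u_1,u_2).
\end{align*}
Fix $r_0 \in (0,1)$ close to $1$ and decompose $R_\eps = (R_\eps \cap (-r_0,r_0)) \cup ([-1,-r_0] \cup [r_0,1])$. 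On the bulk piece, the strategy is to promote Proposition~\ref{prop:L60} from an equality of global suprema to a strict gap at every $r$ with $|r| \geq \eps$. Concretely, the calculus carried out in Section~\ref{sec:exp_match} to prove Proposition~\ref{prop:L60} already identifies $r = 0$ as the unique maximizer of $r \mapsto \Psipell(r,u',u')$ for each fixed $u'$; combining this with continuity of $\Psipell$, compactness of $R_\eps \cap [-r_0,r_0]$, and the coercivity $\Psipell \to -\infty$ as $u_i \to -\infty$ (coming from the quadratic form $-\tfrac12 \bm{u}^T \bm\Sigma_U(r)^{-1} \bm{u}$ in \eqref{eq:bounding}), a compactness argument upgrades the equality to
\begin{align*}
\sup_{r \in R_\eps \cap (-r_0,r_0),\ u_1,u_2 \in B} \Psipell(r,u_1,u_2) \ <\ 2\Sigma_{p,\ell}(u).
\end{align*}

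On the boundary piece $|r| \geq r_0$, the two critical points are nearly coincident, and here I would argue that as $|r| \to 1$ the matrices $\matG_1(r)$ and $\matG_2(r)$ in \eqref{eq:GOE_joint} coalesce, so the pair index constraint $\Eind_\ell$ effectively reduces to a single-matrix constraint. Heuristically, the number of such near-parallel pairs is on the order of the number of critical points itself, yielding exponential rate at most $\Sigma_{p,\ell}(u)$, which is strictly less than $2\Sigma_{p,\ell}(u)$. For $r_0$ chosen close enough to $1$ this estimate, combined with the bulk bound, gives the corollary.

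The main obstacle is the boundary piece: the Kac-Rice density $\mathcal{F}(r)$ in \eqref{eq:F_G_Kac_Rice} is singular at $r = \pm 1$, the covariance $\bm\Sigma_U(r)$ degenerates, and Theorem~\ref{coro:LDP_ell} cannot be applied uniformly up to $|r| = 1$. Making rigorous the intuition that nearly parallel pairs contribute at rate at most $\Sigma_{p,\ell}(u)$—via either a careful blow-up of the Kac-Rice formula near $r = \pm 1$ or a direct covering argument bounding the number of critical points in a small spherical cap—is the principal technical step.
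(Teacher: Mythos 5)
Your skeleton is right — the left-hand side is $2\Sigma_{p,\ell}(u)>0$ by Theorem~\ref{thm:exp_match}, and the task is to show the rate on the right is strictly smaller — but the argument has a genuine gap where you yourself locate the main difficulty: you never resolve the boundary piece $|r|\geq r_0$, only sketch a heuristic. The paper avoids the bulk/boundary split entirely. Rather than taking $r_0$ close to $1$ and arguing separately about near-parallel pairs, the paper works with the continuous extension $\Psipell^{\,\bullet}$ of the bounding function to the closed interval $[-1,1]$ constructed in Lemma~\ref{lem:prelem7} (via L'H\^opital at $r=\pm1$), so that the supremum over $I_\eps$ is a supremum of a continuous function on a compact set and the singularities of $\mathcal F(r)$, the degeneracy of $\bm\Sigma_U(r)$, and the breakdown of the LDP as $r\to\pm1$ never need to be confronted directly in the corollary's proof. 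The value $\Psipell^{\,\bullet}(1,u,u)$ is computed in closed form in the proof of Lemma~\ref{lem:lem7_1}, and it equals $\Sigma_{\ell-1}(u)$ (not $\Sigma_\ell(u)$, as your heuristic about pair-counting reducing to single-counting would suggest), which via \eqref{eq:lem7_1} is strictly less than $\Psipell(0,u,u)=2\Sigma_{p,\ell}(u)$ for $u\in(-\sfE_\ell,-\sfE_\infty)$.

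There is a second, subtler gap in your bulk step. You invoke "the unique maximizer $r=0$" as if it followed directly from Proposition~\ref{prop:L60}, but that proposition is a statement about global suprema only; establishing a strict gap at every $|r|\geq\eps$ requires more. The paper's proof of the corollary does the necessary extra work: it proves two monotonicity claims — that $r\mapsto\Psipell^w(r)$ is decreasing on $[0,r_*(w))$ and increasing on $(r_*(w),1)$ — using the positivity of $g'(r)$, the monotonicity of $\mathcal G(r)^{-2}$, and the monotonicity of $r\mapsto\mathscr I_r(w,w)$ from Lemma~\ref{lem:rate_fn_dom}. It then maximizes over $w$, showing $\Psipell^u(\eps)-\Psipell^w(\eps)>0$ for $w<u$, and finally chooses $\eps$ small so that $\Psipell^u(\eps)>\Psipell^{\,\bullet}_u(1)$, concluding with $\Psipell^u(\eps)<\Psipell^u(0)$. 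Both of these ingredients — the continuous extension at $r=\pm1$ with the explicit value $\Sigma_{\ell-1}(u)$, and the monotonicity-in-$r$ analysis over $[0,r_*)\cup(r_*,1)$ — are essential and missing from your proposal.
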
									

For $0 < \rho < 1$, we make the abbreviation 
\begin{align}								
\label{eq:abbrev_ov_int}
\leftcb
	\critNell ( B ) 
\rightcb _ { \bm { 2 } }^\rho 
	\tri 
		\leftcb
			\critNell
				( \, 
					B , \, ( - \rho , \, \rho ) 
				\, )
		\rightcb _ { \bm { 2 } }.
\end{align}		
We close the section by recording a last consequence of Theorem \ref{thm:exp_match}, Corollary~\ref{lem:sub20}, which is itself relevant to the proof of Theorem~\ref{thm:Jesus}. Corollary~\ref{lem:sub20} is analogous to \cite[Lemma 20]{
	subag2017complexity
 } and has a similar proof, repeated here because it is short. 
\begin{coro}								
\label{lem:sub20}
Let 
$ u_* \in ( - \sfE _ \ell , - \sfE _ \infty ) $, 
$ \rho \in ( 0 , 1 ) $ and 
$ \eps > 0 $. Then
\eq{										
\lim _ 
	{ N \to \infty } 
		\frac{ 
			\E 
				\leftcb 
					\critNell ( 
						(\, u_* - \eps , \, u_* \, )
					)
				\rightcb _ { \bm { 2 } } ^ \rho  
		}
		{
			\E 
				\leftb 
					\critNell ( \, 
						( - \infty , \, u_* ) 
					\, ) 
				\rightb^2
		} 
			=
				1\,.
		}	
\end{coro}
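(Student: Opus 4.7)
The plan is to sandwich the ratio: establish $\liminf_N \geq 1$ via Cauchy--Schwarz and Corollary~\ref{coro:cor8}, and $\limsup_N \leq 1$ (at the exponential scale) from Theorem~\ref{thm:exp_match}. Throughout, write $X_N \tri \critNell((-\infty,u_*))$ and $Y_N \tri \critNell((u_*-\eps,u_*))$, so $Y_N \leq X_N$ pointwise.

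First I would record a preliminary input: $\Sigma_{p,\ell}$ is strictly increasing on $(-\sfE_\ell,-\sfE_\infty)$. This follows by differentiating \eqref{e:thetakp}: the contribution $-\frac{p-2}{4(p-1)}u^2$ has positive derivative on $(-\infty,0)$, while $-(\ell+1)I_1(u)$ is nondecreasing on $(-\infty,-\sfE_\infty]$ since $I_1$ is a GOE rate function (zero at $-\sfE_\infty$ and strictly decreasing as $u$ moves further left). Combined with Theorem~\ref{thm:ABC}, this forces $\E\critNell((-\infty,u_*-\eps])/\E X_N \to 0$ exponentially, and hence $\E Y_N/\E X_N \to 1$ with exponentially small relative error.

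For the lower bound, apply Cauchy--Schwarz $(\E Y_N)^2 \leq \E Y_N^2$ together with the decomposition
$$\E Y_N^2 \;=\; \E\{\critNell((u_*-\eps,u_*),(-\rho,\rho))\}_{\bm{2}} \;+\; \E\{\critNell((u_*-\eps,u_*),[-1,-\rho]\cup[\rho,1])\}_{\bm{2}}.$$
Since $Y_N \leq X_N$, the second term is bounded by its $(-\infty,u_*)$-analogue, which Corollary~\ref{coro:cor8} shows has strictly smaller exponential growth rate than $(\E X_N)^2 \sim e^{2N\Sigma_{p,\ell}(u_*)}$. Dividing through by $(\E X_N)^2$ and combining with the preliminary $\E Y_N / \E X_N \to 1$ yields $\liminf_N \E\{\critNell((u_*-\eps,u_*))\}_{\bm{2}}^\rho / (\E X_N)^2 \geq 1$.

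The upper bound is the trivial inequality $\{\critNell((u_*-\eps,u_*))\}_{\bm{2}}^\rho \leq X_N^2$ combined with Theorem~\ref{thm:exp_match}, giving $\frac{1}{N}\log(\E\{\critNell((u_*-\eps,u_*))\}_{\bm{2}}^\rho / (\E X_N)^2) \leq \frac{1}{N}\log(\E X_N^2/(\E X_N)^2) \to 0$. The main obstacle is that this upper half of the sandwich is only at the exponential scale, since Theorem~\ref{thm:exp_match} a priori permits $\E X_N^2/(\E X_N)^2$ to grow subexponentially; interpreting the lemma as a precise limit to $1$ (rather than $\frac{1}{N}\log$ of the ratio tending to $0$) would require a Laplace-type refinement of the Kac--Rice double integral near its maximizer $(r,u_1,u_2)=(0,u_*,u_*)$ that matches polynomial prefactors between numerator and denominator exactly. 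That sharper analysis is presumably the content of the forthcoming paper referenced after Theorem~\ref{thm:Jesus}; the exponential-scale conclusion above is what is actually needed as input to that proof.
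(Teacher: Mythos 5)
Your proposal rests on a misreading of the denominator: $\E\left[\critNell\left(\,(-\infty,u_*)\,\right)\right]^2$ in Corollary~\ref{lem:sub20} is the \emph{second moment} $\E\big(\critNell((-\infty,u_*))^2\big)$, not the square of the mean. (This is the paper's convention; compare the bracket placements in the first display of the proof of Theorem~\ref{thm:Jesus}. It is also the only reading that avoids circularity, since under your interpretation one would need $\E X_N^2/(\E X_N)^2\to 1$ --- which is Theorem~\ref{thm:Jesus} itself --- to prove the corollary.) With the correct reading, the ``main obstacle'' you identify disappears: the upper bound $\limsup\leq 1$ is a pointwise comparison, since $\{\critNell((u_*-\eps,u_*))\}_{\bm{2}}^\rho \leq \critNell((-\infty,u_*))^2$ holds almost surely. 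No Laplace-type refinement is required.

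The real content is the lower bound, and there your Cauchy--Schwarz step $(\E Y_N)^2 \leq \E Y_N^2$ is too lossy: it yields only $(\E Y_N)^2/\E X_N^2$ as a lower bound, and a priori this ratio can stay bounded away from $1$. The paper instead squares the almost-sure decomposition $X_N = Y_N + Z_N$ (with $Z_N = \critNell((-\infty,u_*-\eps))$) pointwise and takes expectations, giving $\E X_N^2 - \E Y_N^2 = \E Z_N^2 + 2\E[Z_N Y_N]$. By Theorem~\ref{thm:exp_match} and Cauchy--Schwarz the right-hand side has exponential rate at most $\Sigma_{p,\ell}(u_*)+\Sigma_{p,\ell}(u_*-\eps)$, strictly below the rate $2\Sigma_{p,\ell}(u_*)$ of $\E X_N^2$, forcing $\E Y_N^2/\E X_N^2\to 1$; this is precisely the estimate Cauchy--Schwarz cannot reach. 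The remaining steps --- removing diagonal and antipodal pairs via Remark~\ref{rmk:parallel_crit} and discarding the large-overlap range via Corollary~\ref{coro:cor8} --- you identified correctly, and your observation that strict monotonicity of $\Sigma_{p,\ell}$ drives the comparison of $u_*$ with $u_*-\eps$ matches the paper's use of it.
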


\begin{proof}
We work within the almost sure event that 
$ \critNell ( \,
	(- \infty , \, u _ *)
\, ) 
	= \critNell ( \,
		(- \infty, \, u_* - \eps)
	\, )
	+ \critNell ( \,
		(u_* - \eps , \, u_*) 
	\, ) $. Rearranging terms after squaring both sides of this, one finds:
\begin{align*}
\Big[
	\critNell ( \,
		(- \infty , \, u_* ) 
	\, )
\Big]^2
&- \Big[
	\critNell ( \,
		(u_* - \eps , \, u_*)
	\, )
\Big]^2																	\\ %
	& \quad = 
		\Big[
			\critNell ( \,
				- \infty , \, u_* - \eps
			\, )
		\Big]^2
		+ 2 \critNell ( \,
					( - \infty , \, u_* - \eps ]
		\, ) \cdot \critNell ( \, 
			( u_* - \eps , \, u_* ) 
		\, ) 
\end{align*}								
Theorem~\ref{thm:exp_match} furnishes exponential-scale asymptotics for these terms:
\begin{align*}
\lim _ 
	{ N \to \infty } 
		\frac { 1 }
			{ N }
		\log 
			\E 
				\Big[
					\critNell ( \,
						( - \infty , \, u_* ) 
					\, ) 
				\Big]^2 
			&= 2 \Sigma _ { p , \, \ell } ( u_* ) \\
\lim _ 
	{ N \to \infty } 
		\frac { 1 }
			{ N }
		\log 
			\E 
				\Big[
					\critNell ( \,
						( - \infty , \, u_* - \eps ) 
					\, ) 
				\Big]^2 
			&= 2 \Sigma _ { p , \, \ell } ( u_* - \eps ) ,	\\
\lim _ 
	{ N \to \infty } 
		\frac { 1 }
			{ N }
		\log 
			\E \,
				2
				\critNell ( \,
					( - \infty , \, u_* - \eps ) 
				\, )
				\cdot \critNell ( \,
					( u_* - \eps , \, u_* )
				\, )
			&= 
				\Sigma _ { p , \, \ell } ( u_* ) 	
				+ \Sigma _ { p , \,\ell } ( u_* - \eps ) ,
\end{align*}								
the last line following from Cauchy-Schwarz.
As the complexity function 
$ u \mapsto \Sigma _ { p , \, \ell } ( u ) $ is strictly increasing over the interval 
$ ( - \infty, - \sfE _ \infty ) $, the above displays imply
\eq{							
\lim _
	{ N \to \infty } 
		\frac { 
			\E 
				\Big[
					\critNell (\,
						( - \infty , \, u_* ) 
					\, )
				\Big]^2
		}
		{
			\E 
				\Big[
					\critNell (\,
						( u_* - \eps , \, u_* )
					\, )
				\Big]^2
		}
			=
				1.
}								
This puts us in the desired situation, as far as the energy parameter. As 
$ u_* 
	>
		-\sfE _ \ell $, one has $\Sigma _ { p , \, \ell } (u_* ) 
	> 
		0,$ so that by Remark~\ref{rmk:parallel_crit}, 
\begin{align*}								
\lim _ 
	{ N \to \infty }
		\frac
		{
			\E
				\leftcb
					\critNell ( \,
						( u_* - \eps , \, u_* )
					\, )
				\rightcb _ { \bm { 2 } } ^ 1
		}
		{
			\E
				\Big[
					\critNell ( \, 
						( u_* - \eps , \, u_* ) 
					\, )
				\Big] ^ 2
		}
			=
				1.
\end{align*}								
Recalling that 
$ R_ \rho $ denotes
$ ( - 1, 1 ) \setminus [ - \rho , \rho ] $,  Corollary~\ref{coro:cor8} implies
\begin{align*}
\lim _ 
	{ N \to \infty }
		\frac
		{
			\E
				\leftcb
					\critNell ( \,
						( u_* - \eps , \, u_* )  , \, R _ \rho
					\, )
				\rightcb _ { \bm { 2 } } 
		}
		{
			\E
				\leftcb
					\critNell ( \,
						( u_* - \eps , \, u_* )
					\, )
				\rightcb _ { \bm { 2 } } ^ 1
		}
			=
				0,
\end{align*}
completing the proof. 
 \end{proof}

\vspace{4mm}
%
%
%
%
\vspace{4mm}
\section{\large Index transfer and exponential bounds}\label{sec:exp_bounds} 
The output of this section is a proof of Theorem~\ref{thm5}, given in the last subsection. In the prior subsection, we outline the key steps in the proof of Theorem~\ref{thm5}, one of which is the application of the ``index transfer" result Proposition~\ref{prop:index_transfer}. We prove Proposition~\ref{prop:index_transfer} in the first subsection.  

The notational conventions described in Remark~\ref{rmk:rosetta} are in effect throughout the section. For $i = 1,2$ and $\ell \in \{0, \dots, N-2\}$, define the events
\begin{align}
E_{\ell,\,i} &\tri \left\{ \ind(\un{\matM}_{\,i}) = \ell \right\} , \label{eq:E_ell_def} \\
E_{\ell,\,i}^* &\tri \left\{ \ind(\un{\matG}_{\,i}) = \ell \right\} . \label{eq:E_ell_star_def}  
\end{align}

\subsection{Index transfer}{\label{sec:4_1}} 
The next result allows us to reformulate Proposition~\ref{prop:index_transfer} into an equivalent statement about the resolvent of a GOE matrix.

\begin{lem}[{\cite[Equation 2]{lazutkin1988signature}} ] Let $\matS$ be a symmetric block matrix, denote its signature by $\sgn(\matS)$. Write $\matS$ and its inverse $\matS^{-1}$ in block form with the same block structure:
\begin{align*}
\matS = \begin{pmatrix} \matA & \matB \\ \matB^T & \matC \end{pmatrix}\,,\quad\quad S^{-1} = \begin{pmatrix} \matA' & \matB' \\ (\matB')^T & \matC' \end{pmatrix} . 
\end{align*}
In this setting, we have that $\sgn(\matS) = \sgn(\matA) + \sgn(\matC')$.
\label{lem:lazutkin}
\end{lem}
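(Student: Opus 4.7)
The plan is to establish the identity via a Schur complement decomposition of $\matS$: first apply Sylvester's law of inertia under the assumption that $\matA$ is invertible, then identify the resulting Schur complement with the inverse of $\matC'$ using the standard block inverse formula, and finally extend to singular $\matA$ by a perturbation.

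In detail, for invertible $\matA$ one has the congruence factorization
\begin{align*}
\matS \;=\; L^T \begin{pmatrix} \matA & 0 \\ 0 & \matS_{\sfc} \end{pmatrix} L, \qquad L \tri \begin{pmatrix} \matI & \matA^{-1}\matB \\ 0 & \matI \end{pmatrix}, \qquad \matS_{\sfc} \tri \matC - \matB^T \matA^{-1}\matB.
\end{align*}
Since $L$ is invertible, Sylvester's law of inertia yields $\sgn(\matS) = \sgn(\matA) + \sgn(\matS_{\sfc})$. The block inverse formula (read off directly from the factorization above) identifies the lower-right block of $\matS^{-1}$ as $\matC' = \matS_{\sfc}^{-1}$, and since a symmetric invertible matrix shares its signature with its inverse (the map $\lambda \mapsto 1/\lambda$ preserves sign), $\sgn(\matC') = \sgn(\matS_{\sfc})$. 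Combining these two identities gives the claim when $\matA$ is invertible.

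To remove the hypothesis that $\matA$ is invertible, I would argue by perturbation. Choose a symmetric $\matD$ so that $\matS_\epsilon \tri \matS + \epsilon \matD$ has invertible upper-left block $\matA_\epsilon$ for all sufficiently small nonzero $\epsilon$, while $\matS_\epsilon$ itself remains invertible on a neighborhood of $\epsilon = 0$. Applying the invertible case to $\matS_\epsilon$ gives $\sgn(\matS_\epsilon) = \sgn(\matA_\epsilon) + \sgn(\matC'_\epsilon)$, and the left-hand side equals $\sgn(\matS)$ by local constancy of signature on invertible symmetric matrices. On the right, as $\epsilon \to 0$ any eigenvalue of $\matA_\epsilon$ that crosses zero is accompanied, via the relation $\matC'_\epsilon = (\matC - \matB^T \matA_\epsilon^{-1}\matB)^{-1}$, by a matching eigenvalue of $\matC'_\epsilon$ crossing zero in a direction that exactly cancels the change in $\sgn(\matA_\epsilon)$; thus $\sgn(\matA_\epsilon) + \sgn(\matC'_\epsilon)$ is stable through the singular stratum and equals $\sgn(\matA) + \sgn(\matC')$ in the limit.

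The hard part will be making this eigenvalue-crossing bookkeeping precise. A clean route is to choose $\matD$ generically so that all crossings of $\matA_\epsilon$ and $\matC'_\epsilon$ through zero are simple and to verify the cancellation directly by tracking the sign of the diverging eigenvalue of $\matA_\epsilon^{-1}$ and its propagation through the Schur complement. An alternative that sidesteps this altogether is to reformulate signature intrinsically as an index of a quadratic form on complementary subspaces, for which the identity becomes a statement about transverse splittings and holds without any invertibility assumption on $\matA$. Every other ingredient is routine linear algebra.
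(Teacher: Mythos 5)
The paper offers no proof of this lemma at all; it is quoted verbatim from Lazutkin's 1988 paper, so there is no ``paper's own proof'' to compare yours against. With that caveat, your argument for the case that $\matA$ is invertible is correct and complete: the congruence $\matS = L^T\operatorname{diag}(\matA,\matS_{\sfc})L$ together with Sylvester's law of inertia gives $\sgn(\matS)=\sgn(\matA)+\sgn(\matS_{\sfc})$, the block-inverse formula identifies $\matC'=\matS_{\sfc}^{-1}$, and a symmetric matrix and its inverse share a signature. This is all that is actually used downstream: in the proof of Proposition~\ref{prop:index_transfer} the lemma is applied with $\matA=\un{\matG}_{\,i}=\matG_i-\bar u_i\matI$, and the authors work on the event $E_{(1)}$ that $\bar u_i$ lies a fixed distance from $\sigma(\matG_i)$, so $\matA$ is invertible by construction.

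The perturbation argument you sketch for singular $\matA$ has a genuine gap, and I would not describe what remains as routine. Signature is not continuous at matrices with a zero eigenvalue: for $\matA=[0]$ and $\matA_\eps=[\eps]$, $\sgn(\matA_\eps)=1$ for every $\eps>0$ while $\sgn(\matA)=0$. So knowing $\sgn(\matA_\eps)+\sgn(\matC'_\eps)=\sgn(\matS_\eps)=\sgn(\matS)$ for $\eps\ne 0$ does not on its own yield $\sgn(\matA)+\sgn(\matC')=\sgn(\matS)$ at $\eps=0$; one must prove the exact cancellation between the jump of $\sgn(\matA_\eps)$ and the jump of $\sgn(\matC'_\eps)$ at the singular stratum, which (together with the fact that $\dim\ker\matA=\dim\ker\matC'$ when $\matS$ is invertible) is essentially the content of Lazutkin's theorem and not a corollary of generic transversality. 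Your proposed alternative --- rephrasing signature as an invariant of the quadratic form of $\matS$ restricted to a subspace and its $\matS$-orthogonal complement, for which the additivity $\sgn(\matS)=\sgn(q|_W)+\sgn(q|_{W^{\perp_q}})$ holds without any invertibility hypothesis on $\matA$ --- is the correct way to close the gap and is closer in spirit to Lazutkin's argument. For the purposes of the paper, however, your invertible-case Schur argument already suffices.
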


Enumerate the eigenvalues of $\un{\matG}_{\,i}$ and $\un{\matM}_{\,i}$ in ascending manner as $\{ \lambda_j(\un{\matG}_{\,i}) \}_{j=1}^ {N-2}$ and $\{\lambda_j(\un{\matM}^{\,i})\}_{j=1}^ {N-1}$. By the interlacement property,
\begin{align}
\lambda_j (\un{\matM}_{\,i}) \leq \lambda_j(\un{\matG}_{\,i}) \leq \lambda_{j+1}(\un{\matM}_{\,i})
\label{eq:interlace}
\end{align} 
holds for all $1 \leq j \leq N-2$. On the event $E_{\ell,\,i}$ defined just above in \eqref{eq:E_ell_def}, display \eqref{eq:interlace} implies $\ind(\un{\matG}_{\,i}) \in \{\ell-1, \ell\}$. To apply Lemma~\ref{lem:lazutkin}, first express $\un{\matM}_{\,i}^{-1}$ in block form with block structure as in \eqref{eq:hess_block_structure}, writing  
\begin{align}
\lbob \un{\matM}_{\,i}^{-1}\rbob_{N-1,\, N-1} = \left( \lbob \un{\matM}_{\,i} \rbob_{N-1,\,N-1} - \left\la Z_i, \un{\matG}_{\,i}^{-1} Z_i \right\ra \right)^{-1} 
\label{eq:lazutkin_variable}
\end{align}
using the Schur complement formula, and note that Lemma~\ref{lem:lazutkin} and \eqref{eq:interlace} together imply the index of $M^i$ is equal to the index of $G^i$ exactly when 
\begin{align}
X_i \equiv X_i(u_i) \tri Q_i -\bar{u}_i + m _ { \, i } ^ \circ - \left\la Z_i, \un{\matG}_{\,i}^{-1} Z_i \right\ra > 0\,. 
\label{eq:X_def} 
\end{align}

Given a Wigner matrix $\matA$ with spectrum $\sig(\matA)$ and spectral parameter $z \in \C \setminus \sig(\matA)$, let
\begin{align}
\matR(z) \equiv R(z;\matA) \tri (\matA - z\matI)^{-1} \,,
\end{align}
denote the resolvent of $\matA$, and given a real spectral parameter $u < -2$, denote the Stieltjes transform of the semicircle law by
\begin{align}
m(u) &\tri \int \frac{1}{\lambda - u} \semi(d \lambda) \equiv \frac{-u + \sqrt{u^2 - 4}}{2}\,.
\end{align}
We use the following local law to control the inner product term in \eqref{eq:X_def}. 

\begin{thm}[{\cite[Theorem 10.3]{benaych2018lectures}} ] Let $\matA$ be an $N \times N$ Wigner matrix with resolvent $\matR(z) \equiv \matR(z;\matA)$. Fix $\eps >0$ and define the interval $S(\eps) \tri (-\infty, -2 -\eps]$. There is $c(\eps) >0$ so that, for all deterministic unit vectors $e \in \R^N$, all small $\delta >0$ and large $D >0$, 
\begin{align}
\prob\left( \,\sup_{u\, \in\, S(\eps)}| \la e , \matR(u) e \ra - m(u) | \geq  c N^{\delta-1/2} \right) \leq N^{\,-D}\,
\label{eq:local_law}
\end{align}
holds when $N \geq N_0(\delta, D)$, i.e. when $N$ is sufficiently large depending on the parameters $\delta,$ and $D$.  
\label{thm:local_law}
\end{thm}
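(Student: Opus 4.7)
The plan is to prove the isotropic local law off the spectrum by the self-consistent equation method, then bootstrap from a fixed $u$ to the uniform-in-$u$ statement. The crucial simplification is that $S(\eps) = (-\infty, -2-\eps]$ stays bounded away from $[-2, 2]$, the asymptotic support of the spectrum. Hence $\|\matR(u)\|_{\rm op} \leq 1/\eps$ with overwhelming probability (using Bai--Yin-type tail bounds on the extreme eigenvalues), $|m(u)| \leq 1$, and the self-consistent equation $m(u)(u + m(u)) = -1$ is uniformly stable.

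First I would establish the entrywise bound $R_{ii}(u) - m(u) = O(N^{\delta - 1/2})$ with overwhelming probability, via the Schur complement identity
\begin{align*}
R_{ii}(u)^{-1} = A_{ii} - u - \la a^{(i)}, \matR^{(i)}(u) a^{(i)} \ra ,
\end{align*}
where $\matR^{(i)}$ is the resolvent of the $(i,i)$-minor of $\matA$ and $a^{(i)}$ is the $i$-th column of $\matA$ with the $i$-th entry removed. Hanson--Wright concentrates the quadratic form around $N^{-1} \Tr \matR^{(i)}(u)$, which in turn differs from $m(u)$ by $O(N^{\delta - 1/2})$ by the averaged local law (itself a fixed-point argument exploiting the stability of $m(u)(u + m(u)) = -1$, which is uniform on $S(\eps)$). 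Off-diagonal entries $R_{ij}(u)$ satisfy an analogous two-index Schur expansion giving $R_{ij}(u) = O(N^{\delta - 1/2})$ with overwhelming probability.

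To upgrade to the isotropic statement, expand
\begin{align*}
\la e, \matR(u) e \ra - m(u) = \sum_i e_i^2 \bigl( R_{ii}(u) - m(u) \bigr) + \sum_{i \neq j} e_i e_j R_{ij}(u) .
\end{align*}
The diagonal sum is immediately $O(N^{\delta - 1/2})$ by the entrywise bound. The off-diagonal sum is the delicate part, since a naive triangle inequality costs a factor of $N$. Here I would invoke the fluctuation averaging machinery of Erd\H{o}s--Knowles--Yau--Yin (or equivalently a martingale decomposition across the rows of $\matA$), exploiting cancellations between the $R_{ij}$; a Ward-type identity $\sum_j |R_{ij}(u + i \eta)|^2 = \operatorname{Im} R_{ii}(u + i\eta)/\eta$, applied at a small imaginary shift $\eta$ and then extended back to the real axis (legitimate since $u$ sits strictly off the spectrum), gives the required $\ell^2$-type control.

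Finally, to promote the fixed-$u$ estimate to uniformity over $S(\eps)$, I would Lipschitz-estimate $\left| \frac{d}{du} \la e, \matR(u) e \ra \right| = | \la e, \matR(u)^2 e \ra | \leq 1/\eps^2$ on the event that the spectrum of $\matA$ lies in $[-2-\eps/2,\, 2+\eps/2]$, and similarly bound $|m'(u)|$ on $S(\eps)$. Cover $S(\eps) \cap [-K, -2 - \eps]$ by a net of polynomial size $O(N^C)$ with spacing $\sim N^{-C}$, apply the pointwise estimate with $D$ replaced by $D + C$, and union-bound; the tail $u \leq -K$ is handled trivially since both $\la e, \matR(u) e \ra$ and $m(u)$ are $O(1/|u|)$ there. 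The main obstacle is the off-diagonal fluctuation bound underlying the isotropic upgrade --- this is the core technical content of the result --- while the averaged local law in this off-spectrum regime and the net argument for uniformity are essentially routine.
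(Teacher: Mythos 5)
The paper does not supply a proof of this statement: it is cited as \cite[Theorem 10.3]{benaych2018lectures}, and the remark that follows only addresses two adaptation points, namely (i) converting the \emph{stochastic domination uniform in $S$} formulation of the cited result into the supremum form \eqref{eq:local_law}, via \cite[Remark 2.7]{benaych2018lectures} and \cite[Remark 2.6]{bloemendal2014isotropic}, and (ii) sending the imaginary part $\eta \to 0$, which is harmless because $S(\eps)$ sits at positive distance from $[-2,2]$.

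Your proposal instead sketches a proof from scratch, and as an outline of the EKYY-type isotropic local law it is broadly accurate: the Schur complement plus Hanson--Wright concentration for the diagonal, the decomposition of $\la e, \matR(u)e\ra - m(u)$ into a diagonal and an off-diagonal sum, identification of the off-diagonal sum as the crux (naive triangle inequality costs $N^{1/2}$), and Lipschitz-plus-net uniformity are all in line with the literature, and you correctly recognize the fluctuation averaging step as the hard part. One step is stated imprecisely: the Ward identity $\sum_j |R_{ij}(u+i\eta)|^2 = \operatorname{Im} R_{ii}(u+i\eta)/\eta$, after sending $\eta \to 0$ off the spectrum, gives only $\sum_j |R_{ij}(u)|^2 = (\matR(u)^2)_{ii} = O(1)$, which by Cauchy--Schwarz bounds the off-diagonal sum by $O(1)$, not $O(N^{\delta-1/2})$; the gain of $N^{-1/2}$ comes entirely from the fluctuation averaging (moment or martingale expansion over resolvent minors), not from the Ward identity. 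So the Ward identity does not close the estimate as suggested, although the surrounding structure of your argument is correct. The more relevant observation for the paper itself, though, is that it imports this theorem as a black box, so there is no proof of record to compare against; what the paper actually needs to justify (and does, briefly) is the passage from stochastic domination uniform in a parameter to a simultaneous supremum bound and the passage to real spectral parameter, both of which your net argument and real-axis extension address in substance.
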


\begin{rmk} Theorem~\ref{thm:local_law} follows from the proof of Theorem 10.3 in \cite{benaych2018lectures}, see equation (10.6) and the associated footnote. The latter theorem was stated using the notion of \emph{stochastic domination uniform in a set of parameters}, see \cite[Definition 2.5]{benaych2018lectures}. The parameter set in our case is $S \equiv S(\eps)$ above. Following \cite[Remark 2.7]{benaych2018lectures} and  \cite[Remark 2.6]{bloemendal2014isotropic}, Theorem~\ref{thm:local_law} upgrades a uniform bound on the collection of probabilities
\begin{align}
\left\{ \prob\left( | \la e , \matR(u) e \ra - m(u) | \leq cN^{\delta -1/2} \right) : u \in S  \right\} \,
\end{align}
into the simultaneous bound \eqref{eq:local_law} controlling a supremum. Moreover, though Theorem~10.3 was stated for a parameter set $S = \{ u + i\eta : |u| \geq 2 +\eps, \eta > 0 \}$, it one can send the imaginary part $\eta$ to zero to recover Theorem~\ref{thm:local_law}, see for instance \cite[Remark 2.7]{bloemendal2014isotropic}. 
\end{rmk} 

\begin{proof}[Proof of Proposition~\ref{prop:index_transfer}] It will suffice to show the result for one matrix, so we suppress $i$ in our notation. Write the interval $(-\infty, -2-\eps]$ as $S(\eps)$, as in the notation of Theorem~\ref{thm:local_law}, and suppose $u \in \R$ is such that $\bar{u} \in S(\eps)$, recalling \eqref{eq:u_bar_def}. 

Expressing $\un{\matG}$ as $\matG - \bar{u}\matI$, write $|\sigma(\matG) - \bar{u})|$ for the distance of the spectrum $\sigma(\matG)$ to $\bar{u}$, and let $E_{(1)}$ be the event $\{ | \sig(\matG) - \bar{u} | \geq \eps /2 \}$. As $\sqrt{(N-1)/(N-2)} \matG \sim \goe_{N-2}$,
the law of the smallest eigenvalue of $\matG$ and that of a $\goe_{N-2}$ matrix are exponentially equivalent. By \cite[Theorem 4.2.13]{dembo1998large} and the large deviation principle for the leading eigenvalue of a GOE matrix, \cite[Theorem 6.2]{benarous2001aging}, it follows that 
\begin{align}
\prob\left(E_{(1)}^\rmc\right)  \leq \exp(-c_1 N) \,,
\end{align}
for some $c_1(\eps) >0$. 

On the high-probability event $E_{(1)}$, the resolvent $\matR(\bar{u}) \equiv \matR(\bar{u};\matG) \equiv \un{\matG}^{-1}$ is well-defined.
Using Theorem~\ref{thm:local_law}, fix $\delta < 1/2$ small and $D > 1$ large so that for all unit vectors $e \in \R^{N-2}$ and $N$ sufficiently large,
\begin{align*}
\prob\left( \, \sup_{\bar{u}\, \in\, S(\eps)}| \la e , \matR(\bar{u}) e \ra - m(\bar{u}) | \geq  c N^{\delta-1/2} \right) \leq N^{\,-D} , 
\end{align*}
where $c(\eps) >0$. Write $Z_\ang$ for the random unit vector $Z / \| Z \|$, and define the event 
\eq{ 
E_{(2)} \tri \left\{|\la Z_\ang, \matR(\bar{u}) Z_\ang \ra - m(\bar{u}) | < c(\eps) N^{\delta-1/2} \right\},
}
noting that
\eq{
\prob\left( E_{(2)}^\rmc \right) &= \E_\ang \left[ \prob_{G} \left( | \la e , \matR(\bar{u}) e \ra - m(\bar{u}) | \geq cN^{\delta-1/2}\, \big|\, Z_\ang = e \right) \right] \leq N^{-D} .
}
Directly above, $\E_{\ang}$ denotes expectation with respect to $Z_\ang$, $\prob_G$ is the law of the matrix $G$, and we have used the independence of $Z$ and $G$ stated in item (1) of Lemma~\ref{lem:rosetta_stone}. 

The random magnitude $\|Z\|^2$ is $(N-1)^{-1} a(p,r)$ times a $\chi^2_{N-2}$ random variable. Recalling the covariance matrix $\bm{\Sigma}_Z(r)$ of $Z$ given in \eqref{eq:Sigma_Z}, the constant $a(p,r) >0$ is $\bm{\Sigma}_{Z,11}(r) / p(p-1)$. We now use standard concentration results for Lipschitz functions of Gaussians, for instance \cite[Theorem 5.6]{BLM}, for $\zeta >0$ to be chosen later,
\begin{align}
\prob \left( \left| \,\| Z \| - \E \| Z\| \,\right| \geq \zeta \right) \leq 2 \exp\left( -\zeta^2N / 2a(p,r)  \right).
\label{eq:bound_chi_pr}
\end{align}
In particular, the explicit form of the mean of $\chi$-distributed random variables and gamma function asymptotics imply that, for $N$ sufficiently large,
\begin{align*}
\prob \left( \left|\, \| Z\| - \sqrt{ a(p,r)}\, \right| > 2\zeta \right) \leq 2 \exp\left(- \zeta^2 N/ 2a(p,r) \right)\,,
\end{align*}
where the complement of the event on the left-hand side will be denoted $E_{(3)}$.

Lastly, we give a high probability bound on the first term of $X$: writing $E_{(4)} = \{ Q - \bar{u} + m^\circ \geq - \bar{u} - \zeta \}$, there is $c_4(\zeta,p,r) > 0$ so that
\begin{align}
\prob( E_{(4)}^\rmc) \leq \exp( -c_4 N ) \,,
\end{align}
which follows from the fact that $m^\circ$ is deterministic, on the order of $N^{-1/2}$, while $Q$ is a centered Gaussian with variance on the order of $N^{-1}$. 

Before concluding, we make two comments. The dependence of the constant $c_4$ on $p$ and $r$ comes from the variance of $Q$, given in \eqref{eq:Sigma_Q_11}. The dependence on $r$ can be dropped by noting the variance of $Q$ is uniformly bounded in $r$. While not transparent from \eqref{eq:Sigma_Q_11}, it follows directly from \cite[Lemma 15]{subag2017complexity}, which bounds the moments of the variable \eqref{eq:W_def} introduced in the next subsection. On the other hand, it is straightforward to show the constant $a(p,r)$ in \eqref{eq:bound_chi_pr} is bounded uniformly in $r$ from above by one. Statements we make below can thus be shown to hold uniformly in $r$ as well as $\bar{u} \in S(\eps)$: though the variables $Z$ and $Q$ change with $r$, they can each be realized as the appropriate function of $r$ times a fixed standard Gaussian or standard Gaussian vector, each rescaled according to $N$.

Using the aforementioned bound $a(p,r) \leq 1$, and working within the intersection of $E_{(1)}$ through $E_{(4)}$, the inequality 
\begin{align*}
X(u) \geq - \bar{u} - \zeta - (1 +2 \zeta)^2 \left( m(\bar{u}) + c(\eps) N^{\delta-1/2} \right) \,
\end{align*}
holds simultaneously for all $\bar{u} \in S(\eps)$, and by the above comments, for all $r$. For all $\bar{u} \in S(\eps)$, we have $-\bar{u} - m(\bar{u}) >0$. For $\zeta$ tuned appropriately in terms of $\eps$, and for $N$ large, each $X(u)$ is positive; our ability to choose $\zeta$ well relies on the fact that $S(\eps)$ is compact, as we first need a uniform lower bound on $-\bar{u} - m(\bar{u})$ by some positive constant (depending on $\eps$). We apply Borel-Cantelli to complete the proof. \end{proof}

\subsection{Inputs to the proof of Theorem~\ref{thm5}}{\label{sec:4_2}} The strategy for the proof of Theorem~\ref{thm5} goes as follows.
\begin{enumerate}
\item[(1)] We use lemmas from \cite{subag2017complexity} to bound the output of the Kac-Rice formula, described by Lemma~\ref{lem:our_Kac_Rice}, in terms of determinants of GOE matrices.
\item[(2)] The index constraint on the Hessians persists in the above bound, so we apply Proposition~\ref{prop:index_transfer} and Corollary~\ref{coro:index_transfer_thm5} to transfer the index constraint to the GOE matrices. 
\item[(3)] We use the LDP Theorem~\ref{coro:LDP_ell} on the eigenvalues of these GOE matrices. 
\item[(4)] We make a change of variables, as in \cite{subag2017complexity}, in order to apply Varadhan's lemma.
\item[(5)] We apply Varadhan's lemma and complete the proof.
\end{enumerate}

The starting point for the proof of Theorem~\ref{thm5} is Lemma~\ref{lem:our_Kac_Rice}, so we no longer consider fixed energies as in the last subsection. Recall that the covariance matrix $\bm{\Sigma}_U(r)$, given in \eqref{eq:Sigma_U}, describes the law of the energies of a pair of critical points of the rescaled $p$-spin landscape. 

For fixed $r$, the matrices $\matM_{\,i}$ are defined as in the previous subsection. Let $(U_1(r), U_2(r)) \sim \mathcal{N}(0, \bm{\Sigma}_U(r))$ be independent of $(\matM_1,\matM_2)$, and for $i=1,2$, define the random variables
\begin{align}
\bar{U}_i \equiv \bar{U}_i (r) = \frac{2}{\sfE_\infty\sqrt{N-1}} U_i(r)\,,
\label{eq:U_bar_def}. 
\end{align}
In this context, the shifted matrices $\un{\matM}_{\,i}$ are now defined conditionally on $(U_1,U_2)$ through the usual identity: 
\begin{align*}
\un{\matM}_{\,i} = \matM_i - \bar{U}_i \matI + m^\circ_{\,i} \mate_{N-1,\,N-1} 
\end{align*}
For $B \subset \R$ fixed, define the events
\begin{align}
A_i &\tri \left\{ U_i \in \sqrt{N}B \right\} \,, \label{eq:A_def} 
\end{align}
and write $A, E_\ell$ and $E_\ell^*$ respectively for the intersections of the events in \eqref{eq:A_def}, \eqref{eq:E_ell_def} and \eqref{eq:E_ell_star_def} over $i = 1,2$. Note that the event $\Eind_\ell $ from \eqref{eq:our_Kac_Rice_output} and $E_\ell$ are the same event. 

Following \cite{subag2017complexity}, bound the output of Lemma~\ref{lem:our_Kac_Rice} using H\"older's inequality, splitting terms using the truncation functions defined presently. For $\kappa > \eps >0$, write $h_\eps(x) \tri \max(x,\eps)$, and write
\begin{align}
h_\eps^\kappa(x) \tri 
\begin{cases}
 \eps & \quad x < \eps \\
 x & \quad x \in [\eps, \kappa] \\
 1 & \quad x > \kappa
 \end{cases}
 \quad \text{ and } \quad 
 h_\kappa^\infty(x) \tri
 \begin{cases}
 1 & \quad x \leq \kappa \\
 x & \quad x > \kappa 
 \end{cases} \,,
 \label{eq:h_def}
\end{align}
so that $h_\eps^\kappa(x) h_\kappa^\infty(x) \equiv h_\eps(x)$. For $i=1,2$, define the random variables
\begin{align}
W_i \equiv W_i(r)  &\tri \left( 2 \sum_{j=1}^{N-2}\lbob\un{\matM}_{\,i} \rbob_{j,\,N-1}^2 + \lbob \un{\matM}_{\,i} \rbob_ {N-1,\,N-1}^2 \right)^{1/2}  \non \\
&= \left( 2 \left\| Z_i \right\|^2 + \left(Q_i -\bar{U}_i + m^\circ_{\,i} \right)^2 \right)^{1/2} .
\label{eq:W_def}
\end{align}

Let $2 \leq m \in \N$, and let $q \equiv q(m) = m / (m-1)$ be the H\"older conjugate of $m$, and define:
\begin{align}
\mathcal{E}^{1}(r) \equiv \mathcal{E}_{\eps,\, \kappa}^{(1)}(r) &\tri \E \left( \prod_{i=1,2} \prod_{j=1}^{N-2} \left[h_\eps^\kappa \left( \lambda_j \left(\un{\matG}_{\,i} \right) \right) \right]^q \cdot \1 \left\{A \cap E_\ell \right\} \right) \label{eq:calE1_def}\\
\mathcal{E}^2(r) \equiv \mathcal{E}_{\eps,\, \kappa}^{(2)}(r) &\tri  \E \left( \prod_{i=1,2} \prod_{j=1}^{N-2} \left[h_\kappa^\infty\left( \lambda_j\left(\un{\matG}_{\,i}\right) \right) \right]^{2m}  \right)  \\
\mathcal{E}^3(r) \equiv \mathcal{E}_{\eps,\, \kappa}^{(3)}(r) &\tri \E \left[ \frac{ W_1(W_1 +\eps) }{ \eps } \right]^{4m} \E \left[ \frac{W_2 (W_2 + \eps] }{\eps} \right)^{4m} \,. 
\end{align}

\begin{coro} For $q,m$ and $\mathcal{E}^1$ above, we have:
\begin{align}
\limsup_{N \to \infty} & \frac{1}{N} \log \E [ \crit_{N,\ell}(B,I_R) ]_2 \non \\
&\leq 1 + \log(p-1) +  \limsup_{N\to \infty} \frac{1}{qN} \log \left( \int_{-r_0}^{\,r_0} \left(\mathcal{G}(r) \right)^{qN} \mathcal{E}^1(r) dr \right) \,.
\label{eq:Thm5_2}
\end{align}
\label{coro:thm5_step1}
\end{coro}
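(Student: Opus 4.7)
The starting point for my plan is Lemma~\ref{lem:our_Kac_Rice}, which expresses $\E[\CritNell(B,R)]_{\bm{2}}$ as $\sfC_N \int_R dr \,(\mathcal{G}(r))^N \mathcal{F}(r)\, \E[\prod_i |\det(\un{\matM}_{\,i})| \1\{A \cap E_\ell\}]$. I plan to bound each Hessian determinant by a quantity separating into the spectrum of the GOE-minor $\un{\matG}_{\,i}$ and the last row/column data $W_i$, then apply H\"older to split the resulting expectation into the three pieces $\mathcal{E}^1, \mathcal{E}^2, \mathcal{E}^3$. At the $\frac{1}{N}\log$ scale, I expect only the factor built from $(\mathcal{G})^{qN} \mathcal{E}^1$ to survive non-trivially beyond the explicit constant $1 + \log(p-1)$. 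For the determinant bound I would follow \cite{subag2017complexity}, using the block structure \eqref{eq:hess_block_structure} (either Schur complement as in \eqref{eq:lazutkin_variable} or cofactor expansion along the last row) to obtain, for each $\eps > 0$, an inequality of the form
\begin{align*}
|\det(\un{\matM}_{\,i})| \ \leq \ \prod_{j=1}^{N-2} h_\eps\bigl(\lambda_j(\un{\matG}_{\,i})\bigr) \cdot \frac{W_i(W_i + \eps)}{\eps} \,.
\end{align*}
Writing $h_\eps = h_\eps^\kappa \cdot h_\kappa^\infty$ then separates the spectral product into a low-eigenvalue piece (taking values in $[\eps, \kappa]$) and a high-eigenvalue piece (close to $1$ since GOE eigenvalues concentrate in the bulk).

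Next I would apply H\"older's inequality to the expectation with four factors and reciprocal exponents $(1/q, 1/(2m), 1/(4m), 1/(4m))$, which sum to $1/q + 1/m = 1$ by the choice $q = m/(m-1)$. The indicator $\1\{A \cap E_\ell\}$ together with the low-eigenvalue piece $\prod_i \prod_j h_\eps^\kappa(\lambda_j(\un{\matG}_{\,i}))$ raised to $q$ would yield $\mathcal{E}^1(r)^{1/q}$; the high-eigenvalue piece raised to $2m$ would yield $\mathcal{E}^2(r)^{1/(2m)}$; the two boundary factors $W_i(W_i + \eps)/\eps$, treated as separate H\"older factors at exponent $4m$, would combine into $\mathcal{E}^3(r)^{1/(4m)}$. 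A further application of H\"older on the $r$-integral with conjugate exponents $q$ and $m$ would then isolate the factor $\bigl(\int_{-r_0}^{r_0} (\mathcal{G}(r))^{qN} \mathcal{E}^1(r)\, dr\bigr)^{1/q}$, which after taking $\frac{1}{N}\log$ and $\limsup_N$ produces exactly the last term in \eqref{eq:Thm5_2}.

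It then remains to show that every other factor contributes at most $1 + \log(p-1)$ at the exponential scale. The function $\mathcal{F}(r)$ from \eqref{eq:F_G_Kac_Rice} is continuous and hence bounded on the compact $[-r_0, r_0] \subset (-1,1)$. The variables $W_i$ are built from Gaussian and $\chi$-distributed primitives with uniformly polynomial moments (as already exploited in the proof of Proposition~\ref{prop:index_transfer}), so $\mathcal{E}^3(r) = O(1)$ uniformly in $r$. For $\kappa$ fixed sufficiently large compared with $2 + \max_{u \in B} \gamma_p |u|$, Gaussian concentration combined with GOE eigenvalue rigidity should give $\frac{1}{N}\log \mathcal{E}^2(r) \to 0$ uniformly in $r$, since outliers of $\un{\matG}_{\,i}$ above $\kappa$ are exponentially rare. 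Finally, $\sfC_N = ((N-1)(p-1)/(2\pi))^{N-1}$, when aggregated via Stirling with the $(N-1)^{-(N-2)/2}$ normalization implicit in each $|\det(\un{\matG}_{\,i})|$ (absorbed into $\mathcal{E}^1$ through $\sqrt{(N-1)/(N-2)}\,\matG_i \sim \goe_{N-2}$) and with the $\Gamma$-function asymptotics hidden in the Kac-Rice density and in $\omega_N$, should produce $(e(p-1))^N$ at the exponential scale, i.e., the constant $1 + \log(p-1)$.

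The hard part will be this last bookkeeping step: the superlinear term $(N-1)\log(N-1)$ in $\log \sfC_N$ is only cancelled once every GOE normalization factor and every $\Gamma$-Stirling asymptotic is correctly tracked across the two determinant factors, and even a miscount of order $\log N$ would shift the final constant. A secondary, more routine concern is ensuring that the subexponential control of $\mathcal{E}^2$ and $\mathcal{E}^3$ holds uniformly both in $r \in (-r_0, r_0)$ and in the random energies $U_i$; this should follow from the uniform-in-$r$ concentration and resolvent estimates already developed in Section~\ref{sec:4_1}.
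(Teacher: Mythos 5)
Your plan follows exactly the paper's argument: the paper also applies the Subag determinant bound (its Lemma 14) together with H\"older on the inner expectation at exponents $(q, 2m, 4m, 4m)$ to obtain $(\mathcal{E}^1)^{1/q}(\mathcal{E}^2)^{1/2m}(\mathcal{E}^3)^{1/4m}$, then H\"older on the $r$-integral at exponents $(q,m)$, and disposes of the $\mathcal{F}$, $\mathcal{E}^2$, $\mathcal{E}^3$ contributions via Subag's Lemmas 15 and 16(ii) (your rigidity route is a harmless substitute). One caution on the bookkeeping you flag as "the hard part'': the $(N-1)\log(N-1)$ term in $\log \sfC_N$ is \emph{not} cancelled by any $(N-1)^{-(N-2)/2}$ factor hidden inside $\mathcal{E}^1$ — the matrices $\un{\matG}_{\,i}$ there are already GOE-normalized with $O(1)$ eigenvalues — but rather by the $\omega_N\,\omega_{N-1}$ surface-area factors arising from integrating over $\bbS\times\bbS$ (as \eqref{eq:frak_C_N_def} in the appendix makes explicit for the first moment); once those are correctly tracked the Stirling computation does yield $1+\log(p-1)$.
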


\begin{proof} We use \cite[Lemma 14]{subag2017complexity} and H\"older on the expectation inside the integral in \eqref{eq:our_Kac_Rice_output}:
\begin{align}
\E \left( \prod_{i=1,2} \left| \det \left(\un{\matM}_{\,i} \right) \right| \1\{A\cap E_\ell\} \right) \leq \left( \mathcal{E}^1 \right)^{1/q}  \left( \mathcal{E}^2 \right)^{1/2m}  \left( \mathcal{E}^3 \right)^{1/4m} \,,
\label{eq:thm5_step1_1}
\end{align} 
and we insert into the integral in \eqref{eq:our_Kac_Rice_output} to obtain
\begin{align}
\limsup_{N \to \infty} & \frac{1}{N} \log \E [ \crit_N(B,I_R) ]_2 \non\\
& \quad \leq \limsup_{N \to \infty} \frac{1}{N} \log C_N + \limsup_{N \to \infty} \frac{1}{qN} \log \left( \int_{-r_0}^{\,r_0} \left(\mathcal{G}(r) \right)^{qN} \mathcal{E}^1(r) dr \right) \label{eq:Thm5_0}\\
& \quad\quad\quad\quad+ \limsup_{N \to \infty} \frac{1}{mN} \log \left( \int_{-r_0}^{\,r_0} \left( \mathcal{F}(r) \right)^{m} \left( \mathcal{E}^2(r) \right)^{1/2} \left( \mathcal{E}^{3}(r) \right)^{1/4} dr \right) \,. 
\label{eq:Thm5_1}
\end{align}
It is straightforward to show the first summand in \eqref{eq:Thm5_0} is $1 + \log (p-1)$. Using Lemma 15 and Lemma 16 (ii) in \cite{subag2017complexity}, the term in \eqref{eq:Thm5_1} is zero for $\kappa$ large, and the proof is complete.\end{proof}

We next state a consequence of Proposition~\ref{prop:index_transfer}. 

\begin{coro} Let $0 < r_0 < 1$, and let $\sfr$ be a uniform random variable over the interval $(-r_0,r_0)$. Conditionally on $\sfr$, let $(U_1(\sfr),U_2(\sfr)) \sim \mathcal{N}( 0, \bm{\Sigma}_U(\sfr) )$, with $\bm{\Sigma}_U(\sfr)$ from \eqref{eq:Sigma_U}. Conditionally on $\sfr, U_1(\sfr)$ and $U_2(\sfr)$, define the Hessian matrices $\un{\matM}_{\,i}$ as above. Writing $\bar{U}_i(\sfr)$ for $\gamma_pU_i(\sfr)/ \sqrt{N-1}$, let $E_{\downarrow} \equiv E_{\downarrow,\,N}$ denote the event $\{ \bar{U}_1(\sfr), \bar{U}_2(\sfr) < -2\}$, and let $D_\ell$ denote the event $E_\ell^* \,\Delta \,E_\ell$. Then, as $N \to \infty$,
\begin{align*}
\1 \left\{ D_\ell \cap E_\downarrow \right\} \to 0\, \quad \text{ a.s. }
\end{align*}
with respect to the randomness of $\sfr$, the pair of energies $(U_1(\sfr),U_2(\sfr))$ and the additional randomness used to define the $\un{\matM}_{\,i}$. 
\label{coro:index_transfer_thm5}
\end{coro}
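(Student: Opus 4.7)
The plan is to combine Proposition~\ref{prop:index_transfer}, applied on a compact sub-range of parameters, with a Gaussian tail estimate and Borel--Cantelli to eliminate the residual. The rough intuition is that on the region where $\bar{U}_1, \bar{U}_2$ are well inside $(-\infty, -2)$ the index transfer is immediate from Proposition~\ref{prop:index_transfer}, and outside this region the event $E_\downarrow$ itself is exponentially unlikely.

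First I would put the random parameters $(\sfr, U_1(\sfr), U_2(\sfr))$ and the coupled matrix families supplied by Proposition~\ref{prop:index_transfer} (with parameters $(\eps, r_0)$ for a fixed small $\eps > 0$) on a common probability space, taking them independent of each other. Define the compact-parameter event
\begin{align*}
K_\eps \tri \{\bar{U}_1, \, \bar{U}_2 \in [-1/\eps, \, -2-\eps]\}.
\end{align*}
Since $\sfr \in (-r_0, r_0)$ by construction, on $K_\eps$ the realized triple $(\sfr, U_1(\sfr), U_2(\sfr))$ lies in the parameter set to which Proposition~\ref{prop:index_transfer} applies. Evaluating the coupled family at this random triple yields the matrices $\un{\matM}_{\,i}$ and $\un{\matG}_{\,i}$ of the corollary, whose indices coincide for $N \geq N_0(\eps, r_0)$, the almost surely finite random variable supplied by the proposition. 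Consequently $\1\{D_\ell \cap K_\eps\} = 0$ for all $N \geq N_0$ almost surely.

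Next I would handle the complement using the pointwise bound
\begin{align*}
\1\{D_\ell \cap E_\downarrow\} \leq \1\{D_\ell \cap K_\eps\} + \1\{E_\downarrow \cap K_\eps^c\}.
\end{align*}
The first summand vanishes for $N \geq N_0$ by the previous step. For the second, observe that on $E_\downarrow$ one has $|U_i(\sfr)| \geq 2\sqrt{N-1}/\gamma_p$ for $i = 1, 2$. Continuity of $\sfr \mapsto \bm{\Sigma}_U(\sfr)$ over the compact interval $[-r_0, r_0]$ gives a uniform upper bound on the conditional variance of $U_i(\sfr)$, so a standard Gaussian tail estimate yields $\p(E_\downarrow) \leq C e^{-cN}$ for constants $C, c > 0$ depending only on $r_0$ and $\gamma_p$. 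Summability in $N$ and Borel--Cantelli then give $\1\{E_\downarrow\} \to 0$ almost surely, which in particular controls the residual.

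The main obstacle I anticipate is the coupling bookkeeping rather than any analytic difficulty: Proposition~\ref{prop:index_transfer} is stated for deterministic parameters, and one must upgrade it to a joint coupling allowing measurable evaluation at the random triple $(\sfr, U_1, U_2)$ while preserving the marginal laws of $\matM_i$ (and hence of $\un{\matM}_{\,i}$ and $\un{\matG}_{\,i}$) prescribed by Lemma~\ref{lem:rosetta_stone}. This is essentially a measurability and monotone-class issue. The value of $\eps > 0$ plays no role in the limit; it is fixed once and for all before sending $N \to \infty$.
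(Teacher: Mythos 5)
Your proof is correct, and it is in fact slightly more explicit than the paper's. The paper instead introduces a dyadic sequence $\eps_k = 2^{-k}$ and the nested events $E_{\downarrow,k} = \{ -\eps_k^{-1} \leq \bar{U}_1, \bar{U}_2 \leq -2 - \eps_k \}$, observes that these exhaust $E_\downarrow$, and applies Proposition~\ref{prop:index_transfer} for each $k$. Your route fixes a single $\eps$, uses the proposition only on the compact window $K_\eps$, and then kills the residual $E_\downarrow \cap K_\eps^c$ with a Gaussian tail bound and Borel--Cantelli. The explicit tail bound has a virtue the paper's exhaustion argument glosses over: passing from "for each $k$, $\1\{D_\ell \cap E_{\downarrow,k}\} \to 0$ a.s." to "$\1\{D_\ell \cap E_\downarrow\} \to 0$ a.s." requires knowing that for fixed $\omega$ the triple eventually lands inside one of the $E_{\downarrow,k}$'s uniformly, which is exactly what the tail behaviour of $\bar U_i = \gamma_p U_i/\sqrt{N-1}$ supplies; your Borel--Cantelli step makes this explicit rather than implicit.

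One small remark worth noticing: your Gaussian tail estimate gives $\p(E_\downarrow) \leq C e^{-cN}$ outright (not merely $\p(E_\downarrow \cap K_\eps^c)$), so $\1\{E_\downarrow\} \to 0$ a.s. on its own, and the $K_\eps$ decomposition and appeal to Proposition~\ref{prop:index_transfer} become logically superfluous for this particular corollary. Nothing is wrong, and including that step makes transparent where Proposition~\ref{prop:index_transfer} \emph{would} be needed in the subsequent exponential-moment estimate of Theorem~\ref{thm5}, but as a proof of the stated a.s. convergence the tail bound alone suffices. Your coupling remark is also right: because Proposition~\ref{prop:index_transfer} furnishes a coupling of the matrix families over the whole compact parameter box, one can sample $(\sfr, U_1, U_2)$ independently of that coupled family and evaluate; no further measurability care is required.
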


\begin{proof} Introduce the sequence $(\eps_k)_{k\geq 2}$ defined by $\eps_k \tri 2^{-k}$, noting that the events 
\begin{align*}
E_{\downarrow,\,k} \tri \left\{ - \eps_k^{-1} \leq \bar{U}_1(\sfr), \bar{U}_2(\sfr) \leq -2 -\eps_k \right\} 
\end{align*}
are nested and exhaust $E_\downarrow \equiv \{  \bar{U}_1(\sfr), \bar{U}_2(\sfr) < -2  \}$. Applying Proposition~\ref{prop:index_transfer} to this $\eps_k$, note that as $N \to \infty$, $\1 \{ D_\ell \cap E_{\downarrow,\,k} \} \to 0$ almost surely, completing the proof. 
\end{proof}

The last ingredient needed before starting the proof of Theorem~\ref{thm5} is a bound on the term $\mathcal{E}^1$ appearing in \eqref{eq:Thm5_2}. The bound is given in terms an approximate of the function $\Omega$ from \eqref{eq:Omega_def}. Define
\begin{align}
\Omega_\eps^\kappa(x) \tri \int \log_\eps^\kappa ( |\lambda - x | ) \semi(d\lambda)  \label{eq:omega_trunc_def}\,,
\end{align}
with $\log_\eps^\kappa(x) \tri \log ( h_\eps^\kappa(x) )$. 

\begin{lem} Let $\delta >0$. For any $q > 0$ and nice $B$, there is $c(\delta) > 0$ so that 
\begin{align}
\mathcal{E}^{1}(r) &\leq \exp \left(-cN^2 \right) \non \\
&\quad + \E \left( \exp \left( qN \sum_{i=1,2} \Omega_{\eps}^{\kappa} \left( \bar{U}_i \right) + 2q\delta N \right) \cdot \1\{A \cap E_\ell \} \right)\,. \non
\end{align}
\label{lem:16i}
\end{lem}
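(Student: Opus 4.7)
The plan is to rewrite the product inside $\mathcal{E}^1(r)$ as an exponential of a linear spectral statistic of $\matG_i$, and then compare this statistic to $\Omega_\eps^\kappa(\bar U_i)$ using concentration of the empirical spectral measure of $\matG_i$ around the semicircle law $\semi$. Writing $\hat\mu_i$ for the empirical spectral measure of $\matG_i$ and using $\lambda_j(\un{\matG}_{\,i}) = \lambda_j(\matG_i) - \bar U_i$, the starting identity is
\begin{align*}
\prod_{j=1}^{N-2} \leftb h_\eps^\kappa \leftp \lambda_j(\un{\matG}_{\,i}) \rightp \rightb ^ q
	= \exp \leftp q(N-2) \int \log_\eps^\kappa(|\lambda - \bar U_i|) \, \hat\mu_i(d\lambda) \rightp \,,
\end{align*}
reading $h_\eps^\kappa$ as a function of $|\cdot|$ to match the definition \eqref{eq:omega_trunc_def} of $\Omega_\eps^\kappa$. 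The key analytic properties I will use are that $\log_\eps^\kappa$ is uniformly bounded by some $M \equiv M(\eps,\kappa)$ and is Lipschitz on $[0,\infty)$ with constant $1/\eps$; accordingly $\lambda \mapsto \log_\eps^\kappa(|\lambda - x|)$ is bounded and Lipschitz with the same constants, uniformly in $x \in \R$.

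For $\eta > 0$ to be chosen below, define the good event
$F \tri \{ W_1(\hat\mu_i, \semi) < \eta \text{ for } i = 1, 2 \}$, where $W_1$ is the $1$-Wasserstein distance. Since $\sqrt{(N-1)/(N-2)} \, \matG_i \sim \goe_{N-2}$, standard Gaussian concentration for GOE spectra (either through Herbst's inequality combined with Hoffman--Wielandt, or the Ben Arous--Guionnet LDP at speed $N^2$) produces a constant $c_0(\eta) > 0$ with
\begin{align*}
\prob(F^\rmc) \leq \exp(-c_0 N^2) \,.
\end{align*}
I then split $\mathcal{E}^1(r)$ across $F$ and $F^\rmc$. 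On $F^\rmc$, the uniform bound gives
$\prod_{i,j} [h_\eps^\kappa(\lambda_j(\un{\matG}_{\,i}))]^q \leq \exp(2qNM)$,
so this piece contributes at most $\exp(2qNM - c_0 N^2) \leq \exp(-cN^2)$ for large $N$. On $F$, Kantorovich--Rubinstein duality together with the Lipschitz bound yields
\begin{align*}
\left| \int \log_\eps^\kappa(|\lambda - \bar U_i|) \, \hat\mu_i(d\lambda) - \Omega_\eps^\kappa(\bar U_i) \right| \leq \frac{\eta}{\eps} \,.
\end{align*}
Choosing $\eta$ proportional to $\delta\eps$, and absorbing the difference between $q(N-2)$ and $qN$ (which is $O(1)$ after using the uniform bound $|\Omega_\eps^\kappa| \leq M$) into a second increment of $\delta$, the contribution from $F$ is bounded by $\E(\exp(qN \sum_{i=1,2} \Omega_\eps^\kappa(\bar U_i) + 2q\delta N) \1_{A \cap E_\ell})$, as claimed.

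The main technical point to execute carefully is the interpretation of $h_\eps^\kappa$ at possibly signed arguments $\lambda_j(\un{\matG}_{\,i})$ versus the $|\cdot|$ inside $\Omega_\eps^\kappa$; this is either absorbed into the convention (the truncation is designed to be used against $|\cdot|$), or made rigorous by using $E_\ell$ together with Proposition~\ref{prop:index_transfer} to fix, up to an exponentially small event, the signs of the $\ell$ smallest and $(N-2-\ell)$ largest eigenvalues and then apply $h_\eps^\kappa$ to the moduli. Aside from that bookkeeping, the proof is a routine combination of Wasserstein concentration for GOE and the Lipschitz calculus for $\log_\eps^\kappa$; the only quantitatively delicate step is matching the speed $N^2$ of the concentration bound against the speed $N$ at which the integrand grows, which is precisely why a $\delta$-enlargement (and not an exponentially small error) is what survives.
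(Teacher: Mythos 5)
Your proof is correct and follows the same route the paper intends: the paper's own "proof" is just a pointer to Subag's Lemma 16(i), which does exactly what you do — split on a high-probability event where the empirical spectral measure of $\matG_i$ is $d_L$-close to $\semi$ (controlled at speed $N^2$ by Theorem~\ref{thm:spectral_LDP}), use the bounded-Lipschitz property of $\log_\eps^\kappa$ to trade the empirical integral for $\Omega_\eps^\kappa(\bar U_i)$ up to a $\delta$-error, and bound the bad-event contribution by $e^{-cN^2}$ via the uniform bound on the integrand. Two cosmetic remarks: the cited LDP is stated for the bounded-Lipschitz metric $d_L$ rather than $W_1$ (since your test function is both bounded and Lipschitz this is interchangeable, but quoting the LDP in its own metric avoids a needless detour through Wasserstein), and your opening "identity" is really a one-sided inequality — since $h_\eps^\kappa$ is non-decreasing, $h_\eps^\kappa(\lambda_j(\un{\matG}_{\,i})) \leq h_\eps^\kappa(|\lambda_j(\un{\matG}_{\,i})|)$, which is all the upper bound requires and resolves the sign bookkeeping you flag at the end without invoking Proposition~\ref{prop:index_transfer}.
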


\begin{proof} Lemma~\ref{lem:16i} is essentially Lemma 16 (i) in \cite{subag2017complexity}. The same proof, which uses the LDP Theorem~\ref{thm:spectral_LDP}, goes through. The truncation enables the application of this LDP, as $x \mapsto \log_\eps^\kappa(x)$ is Lipschitz.  \end{proof}

\subsection{Proof of Theorem~\ref{thm5}}{\label{sec:4_1}} As discussed, the proof of Theorem~\ref{thm5} also uses the large deviation principle Theorem~\ref{coro:LDP_ell}. For the associated rate functions $\mathscr{I}_r, J$, and $J_r$ in \eqref{eq:leading_rate_1}, \eqref{eq:leading_rate_2}, and \eqref{eq:leading_rate}, we adopt the convention that $\mathscr{I}_r( -u_1, -u_2) \tri \mathscr{I}_r(u_1, u_2)$ for $u_1, u_2 > 2$, and likewise for $J$ and $J_r$. 

Below we suppose, to avoid redundancy with \cite{subag2017complexity}, that $\ell \geq 1$. We also suppose for simplicity that $B$ is an interval. Lastly, we recall the notation $\gamma_p$ introduced in \eqref{eq:gamma_def} for $2/\sfE_\infty$, and we note that under the hypotheses of Theorem~\ref{thm5}, for $\gamma_p B \tri \{ \gamma_p x : x \in B\}$, 
\begin{align}
\gamma_p B \subset (-\infty, -2) \,. \label{eq:thm5_good_containment}
\end{align}

\emph{Step 1: bounding the output of Kac-Rice.} We take Corollary~\ref{coro:thm5_step1} one step further. Consider the integral in \eqref{eq:Thm5_2}, and write 
\begin{align}
\int_{-r_0}^{\,r_0} \left( \mathcal{G}(r) \right)^{qN} \mathcal{E}^{\,1}(r) dr &= 2r_0 \E  \left[ \left( \mathcal{G}(\sfr) \right)^{qN} \mathcal{E}^1(\sfr)  \right],
\label{eq:Thm5_2.5}
\end{align}
having reinterpreted the integral on the left as expectation in a random variable $\sfr$ uniformly distributed over $(-r_0,r_0)$, and independent of the other variables present. \\

\emph{Step 2: transferring the index.} Use Lemma~\ref{lem:16i} on \eqref{eq:Thm5_2.5}:
\begin{align}
\int_{-r_0}^{\,r_0}  \left[ \mathcal{G}(r) \right)^{qN} \mathcal{E}^{1}(r) dr \leq 2r_0 e^{2\delta qN} \E \left( \exp \left( qN \psi\left(\sfr, \bar{U}_1, \bar{U}_2 \right)  \right) \cdot \1 \{ A \cap E_\ell \}  \right] +  \exp(-cN^2) \,,
\label{eq:plus_def}
\end{align}
where the function $\psi$ is defined by
\begin{align}
\psi(r, \bar{u}_1, \bar{u}_2 )  \equiv \psi_\eps^\kappa(r, \bar{u}_1, \bar{u}_2 ) \tri \log \mathcal{G}(r) + \sum_{i=1,2} \Omega_\eps^\kappa (\bar{u}_i) \,.
\end{align}

As in the statement of Corollary~\ref{coro:index_transfer_thm5}, write $D_\ell$ for the symmetric difference $E_\ell^*\, \Delta \, E_\ell$, and write $E_\downarrow = \{ \bar{U}_1, \bar{U}_2 < -2 \}$. By \eqref{eq:thm5_good_containment}, we have $A \subset E_\downarrow$. We use this containment, starting from \eqref{eq:plus_def}, to control the limsup on the right of \eqref{eq:Thm5_2}:
\begin{align}
\limsup_{N \to \infty} &\frac{1}{qN} \log\, \int_{-r_0}^{\,r_0}  \left(\mathcal{G}(r) \right)^{qN} \mathcal{E}^{1}(r) dr - 2\delta \non  \\
&\leq \limsup_{N\to \infty} \frac{1}{qN} \log  \E \left[ \exp \left( qN \psi \left(\sfr, \bar{U}_1, \bar{U}_2 \right)  \right) \cdot (\1 \{ A \cap E_\ell^* \}  + \1 \{ D_\ell \cap E_\downarrow \} )\right] \label{eq:Thm5_3pre} \\
&\leq\limsup_{N\to \infty} \frac{1}{qN} \log  \E \left[ \exp \left( qN \psi \left(\sfr, \bar{U}_1, \bar{U}_2 \right)  \right) \cdot \1 \{ A \cap E_\ell^*  \}   \right]  \label{eq:Thm5_3}.
\end{align}
Going from \eqref{eq:Thm5_3pre} and \eqref{eq:Thm5_3} above, we have used that Corollary~\ref{coro:index_transfer_thm5} implies $\1 \{ D_\ell \cap E_\downarrow \}$ tends almost surely to zero as $N \to \infty$, in addition to the fact that $\mathcal{G}(r)$ is bounded above by one, uniformly in $r$, and that $\Omega_\eps^\kappa \leq \log \kappa$. \\

\emph{Step 3: using the eigenvalue LDP.} Clearly,
\begin{align}
E_\ell^* \subset \left\{ \lambda_\ell( \matG_i) \leq \bar{U}_i \text{ for } i = 1,2 \right\} \tri A_\goe .
\end{align}

Unfold the expectation in \eqref{eq:Thm5_3} and use the above containment to bound \eqref{eq:Thm5_3} from above by
\begin{align}
\E_{\,\sfr} \E_{\,\pair} \left[  \left( \exp (qN \psi \left(\sfr, \bar{U}_1, \bar{U}_2 \right)  \right) \cdot \1\{A\} \cdot \E_{\,\goe} \left( \1\{A_\goe\} \big|\, \sfr, \bar{U}_1, \bar{U}_2 \right)  \, \big|\, \sfr  \right] ,
\label{eq:Thm5_4}
\end{align}
where $\E_{\,\sfr}$, $\E_{\,\pair}$ and $\E_{\,\goe}$ denote expectation taken in $\sfr$, the pair $(\bar{U}_1, \bar{U}_2)$ and $(\matG_1, \matG_2)$ respectively. For $\delta >0$ above, take $N$ large enough to apply Theorem~\ref{coro:LDP_ell} to the pair $(\lambda_\ell(\matG_1), (\lambda_\ell(\matG_2))$:
\begin{align}
\limsup_{N \to \infty} &\frac{1}{qN} \log\, \int_{I_R}  \left(\mathcal{G}(r) \right)^{qN} \mathcal{E}^{1}(r) dr  - 2\delta \non\\
\begin{split}
 \leq\limsup_{N \to \infty}\frac{1}{qN} \log \E_{\,\sfr} \left[ \E_{\,\pair}  \left( \exp \left( qN \psi \left(\sfr ,\bar{U}_1, \bar{U}_2 \right) \vphantom{\int}  \right.\right. \right. \quad\quad\quad\quad\quad\quad\quad\quad\quad\quad\quad&\\ 
 \left.\left.\left. - N\inf_{z_i \,\leq\, \bar{U}_i} \mathscr{I}_\sfr^{(\ell)}( z_1, z_2) + \delta N  \right)\cdot \1\{A\} \Big| \sfr \right)\right].
 \end{split} \label{eq:Thm5_5}
\end{align}
Define
\begin{align*}
\xi(r,\bar{u}_1, \bar{u}_2) \equiv \xi_\eps^\kappa(r,\bar{u}_1, \bar{u}_2) \tri \psi(r,\bar{u}_1,\bar{u}_2) - \frac{1}{q} \inf_{z_i \,\leq\, \bar{u}_i} \mathscr{I}_r^{(\ell)}(z_1, z_2 )  \,,
\end{align*}
so that \eqref{eq:Thm5_2} and \eqref{eq:Thm5_5} together yield the following bound. 
\begin{align}
\limsup_{N\to \infty} &\frac{1}{N} \log \E [ \crit_{N,\ell}(B,I_R) ]_2\\
&\leq 1 + \log(p-1) \,+ 3\delta + \limsup_{N\to \infty}\frac{1}{qN} \log \E \left[ \exp \left( qN \xi \left(\sfr ,\bar{U}_1, \bar{U}_2 \right) \right) \cdot \1\{ A \}   \right].
\label{eq:Thm5_6}
\end{align}
We handle the above expression using Varadhan's integral lemma after making a change of coordinates.\\

\emph{Step 4: change of coordinates.}  For $(\wt{U}_1, \wt{U}_2)$ a pair of independent standard normals,
\begin{align}
(U_1(r), U_2(r) ) =_d (\wt{U}_1, \wt{U}_2) \bm{\Sigma}_U(r)^{1/2} \,.
\label{eq:Thm5_u_tilde}
\end{align}
Let  $Y_r$ be the linear transformation acting from the right on the triple $(r,\tilde{u}_1, \tilde{u}_2)$, viewed as a row vector, which acts as the identity on the first coordinate and via
\begin{align}
(\tilde{u}_1, \tilde{u}_2) \mapsto  (\tilde{u}_1, \tilde{u}_2) \bm{\Sigma}_U(r)^{1/2} \,
\end{align}
on the latter two coordinates. Define
\begin{align}
T(B) = Y_r^{-1}((-r_0,r_0)\times B \times B ))\,,
\end{align}
and for $i = 1,2$, make the abbreviation $\wt{V}_i \equiv  \wt{U}_i / \sqrt{N}$
so that  $\{(\sfr, \wt{V}_1,\wt{V}_2) \in T(B)\}$ corresponds to the event $A$ under this change of coordinates. Let $\tilde{\xi}$ denote the function $\xi$ under the same change of coordinates: 
\begin{align}
\tilde{\xi}(r,\til{u}_1,\til{u}_2) \tri \xi \circ Y_r \left( r,\til{u}_1,\til{u}_2\right) \,. 
\label{eq:Thm5_zeta_tilde}
\end{align}
Returning to \eqref{eq:Thm5_6}, make this change of variables:
\begin{align}
\limsup_{N\to \infty} &\frac{1}{N} \log \E [ \crit_{N,\ell}(B,I_R) ]_2\\
&\leq 1 + \log(p-1) \,+ 3\delta \\
&\quad \limsup_{N\to \infty}\frac{1}{qN} \log \E\left[  \exp \left( qN \til{\xi} \left(\sfr ,\frac{\gamma_p\wt{U}_1}{\sqrt{N-1}}, \frac{\gamma_p\wt{U}_2}{\sqrt{N-1}} \right)  \right) \cdot \1\left\{ \left(\sfr, \wt{V}_1,\wt{V}_2 \right) \in T(B)\right\}\right],
\end{align}
where we emphasize that the $\wt{V}_i$ are just multiples of the $\wt{U}_i$. The triple of random variables $(\sfr , \wt{U}_1 / \sqrt{N-1}, \wt{U}_2/ \sqrt{N-1} )$ is exponentially equivalent to the triple $(\sfr, \wt{U}_1/ \sqrt{N}, \wt{U}_2/\sqrt{N})$, and the latter satisfies a large deviation principle with good rate function 
\begin{align}
J_0(r, \til{u}_1, \til{u}_2) = \til{u}_1^2/ 2 + \til{u}_2^2 / 2 \,,
\end{align}
and this LDP we apply Varadhan's lemma to:
\begin{align}
\limsup_{N\to \infty} &\frac{1}{N} \log \E [ \crit_{N,\ell}(B,I_R) ]_2 \label{eq:Thm5_8}  \\
&\leq 1 + \log (p-1) + 3\delta \\
&\quad + \frac{1}{q} \sup_{ (r,\, \,\til{u}_1,\, \,\til{u}_2)\, \in\, T(B) } \left( q \til{\xi} (r, \til{u}_1, \til{u}_2 ) - \frac{\til{u}_1^2}{2} - \frac{ \til{u}_2^2}{2} \right) \,. 
\end{align}

\emph{Step 5: finishing the proof.} To complete the proof, we first take $m \to \infty$ so that $q \to 1$, and we undo the change of variables \eqref{eq:Thm5_u_tilde} in \eqref{eq:Thm5_zeta_tilde}.  This yields the following upper bound on \eqref{eq:Thm5_8}:
\begin{align}
\begin{split}
1 + \log(p-1) + 3\delta + \sup_{r\, \in\, (-r_0,\,r_0)} \sup_{u_1,\,u_2\, \in\, B} \left(\xi(r,\gamma_p u_1, \gamma_p u_2)  - \frac{1}{2} \lbob
				\begin{matrix}%
					u _ 1 & u _ 2 
				\end{matrix}%
			\rbob 
			\bm { \Sigma } _ U ( r ) ^ { - 1 }  
			\lbob 
				\begin{matrix}%
					u _ 1												 \\ %
					u _ 2 
				\end{matrix} %
			\rbob \, \right)\,,
\end{split} 
\end{align}
which we write below more explicitly:
\begin{align}
\begin{split}
1 + \log(p-1) + 3\delta \hspace{70mm}& \\
+\sup_{r \in (-r_0, \, r_0)} \sup_{u_1,\,u_2\, \in\, B} \left( \log\mathcal{G}(r) + \sum_{i=1,\,2} \Omega_\eps^\kappa( \gamma_p u_i)  \right. \hspace{30mm}&\\
\left. \quad- \inf_{z_i \, \leq\, \gamma_p u_i} \mathscr{I}_r^{(\ell)}(z_1, z_2) -  \frac{1}{2} \lbob
				\begin{matrix}%
					u _ 1 & u _ 2 
				\end{matrix}%
			\rbob 
			\bm { \Sigma } _ U ( r ) ^ { - 1 }  
			\lbob 
				\begin{matrix}%
					u _ 1												 \\ %
					u _ 2 
				\end{matrix} %
			\rbob \, \right)\,.
\end{split} 
\label{eq:Thm5_9}
\end{align}
Recalling the expression for $\Psi_{p,\ell}$ from \eqref{eq:bounding}, we take $\kappa \to \infty$ and $\delta, \eps \to 0$ in \eqref{eq:Thm5_9}:
\begin{align}
\label{eq:Thm5_10}
\limsup_{N\to \infty} &\frac{1}{N} \log \E [ \crit_{N,\ell}(B, (-r_0,r_0)) ]_2 \leq \sup_{
	\substack{ r\, \in\, (-r_0,\,r_0),\\ u_1,\,u_2\, \in\, B
	}
	} \Psi_{p,\ell} (r,u_1,u_2) \,.
\end{align}
Above, we used Lemma~\ref{lem:rfm_diag} to remove the infimum in going from \eqref{eq:Thm5_9} to \eqref{eq:Thm5_10}, and the proof is complete. \qed

\section{\large Moment matching and analysis of the variational problem}\label{sec:exp_match}

The main task of this section is to prove that, at the exponential scale, the first and second moments of $\crit_{N,\ell}((-\infty,u))$ match. To this end, we analyze the supremum on the right-hand side of 
\eqref{eq:thm5_statement} with three lemmas similar to \cite[Lemma 6, Lemma 7]{
	subag2017complexity
 }. 
 
Our bounding function is maximized when the two energy parameters 
$ u _ 1 $ and 
$ u _ 2 $ are balanced.
\begin{lem}								
\label{coro:lem6}
For nice 
$ B 
	\subset 
		( 
			- \infty , - \sfE _ \infty 
		) $, and for any fixed 
$ r \in ( - 1 , 1 ) $, we have
\begin{align}								
\label{eq:lem6_1}
\sup _ 
	{ u _ 1 , \, u_2 \, \in \, B } 
		\Psipell 
		(
			r , u _ 1 , u _ 2 
		) 
			\, = \, 
				\sup _ { u \, \in \, B } \,
					\Psipell
					(
						r , u , u 
					) .
\end{align}								
Moreover, for such 
$ B $,
\begin{align}								
\label{eq:lem6_2}
\limsup _ 
	{ N \to \infty } 
	\frac { 1 } 
		{ N } 
		\log 
		\E 
			\leftcb 
			\critNell
				( \,
					B , \, ( - 1 , 1 ) 
				\, ) 
			\rightcb _ { \bm { 2 } } 
				\, \leq 
					\sup _ 
						{ 
						\substack{
							r \, \in \, ( - 1 , \, 1 ) \\
							u \, \in \, B
							}
						} 
					 \Psipell 
					 (
					 	r , u , u 
					)  . 
\end{align}								
\end{lem}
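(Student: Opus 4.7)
The plan is to prove the two displays separately. For the variational statement \eqref{eq:lem6_1}, fix $r \in (-1,1)$ and analyze the function $(u_1,u_2) \mapsto \Psi_{p,\ell}(r,u_1,u_2)$ on $B \times B$. The first observation is that this function is symmetric in $(u_1,u_2)$: the $\Omega$-terms enter additively in the same form; the quadratic form is symmetric because $\bm{\Sigma}_U(r)$ has equal diagonal entries with off-diagonal $r^p$; and $\mathscr{I}_r(v_1,v_2) = \mathscr{I}_r(v_2,v_1)$ by the exchangeable construction of the correlated GOE pair via Dyson Brownian motion. The core step is a first-order analysis: writing $\partial_{u_1}\Psi_{p,\ell} = \partial_{u_2}\Psi_{p,\ell} = 0$ and subtracting, the strict monotonicity of $\Omega'$ on $(-\infty,-2)$ (which follows from $\Omega''(x) = -\int (\lambda - x)^{-2}\,\semi(d\lambda) < 0$) combined with an appropriate monotonicity of $\mathscr{I}_r$ should force $u_1 = u_2$ at any interior critical point, paralleling the $\ell = 0$ case treated in \cite[Lemma~6]{subag2017complexity}. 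An alternative route is to verify joint concavity of $\Psi_{p,\ell}$ in $(u_1,u_2)$, in which case symmetry plus concavity puts the maximizer on the diagonal by a midpoint-averaging argument.

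For \eqref{eq:lem6_2}, the plan is to combine Theorem~\ref{thm5} with \eqref{eq:lem6_1} after extending the overlap range from $(-r_0,r_0)$ to $(-1,1)$. Decompose $\E \{\critNell(B,(-1,1))\}_{\bm 2}$ into pairs whose overlap lies in $(-r_0,r_0)$ and pairs with $|q| \in [r_0,1)$. Theorem~\ref{thm5} bounds the first piece by $\sup \Psi_{p,\ell}$ over $(-r_0,r_0) \times B \times B$, and \eqref{eq:lem6_1} collapses this supremum onto the diagonal. For the boundary piece, the Kac-Rice machinery of Theorem~\ref{thm5} extends to any nice overlap domain, including $(-1,-r_0] \cup [r_0,1)$. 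Crucially, $\bm{\Sigma}_U(r)$ becomes singular as $|r| \to 1$, so the quadratic-form penalty drives $\Psi_{p,\ell}(r,u_1,u_2) \to -\infty$ off the diagonal and the boundary region causes no trouble. Continuity of $r \mapsto \Psi_{p,\ell}(r,u,u)$ on $(-1,1)$ together with $r_0 \to 1$ then extends the supremum to the full overlap range.

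The main obstacle is the first-order analysis involving $\mathscr{I}_r$ for $\ell \geq 1$. In the $\ell = 0$ case the subtraction argument closes cleanly using only the strict monotonicity of $\Omega'$ and the positivity of $1 - r^p$ in the quadratic-form contribution, but the $-\ell \mathscr{I}_r$ term introduces cross-coupling between the $u_i$ whose effect on the first-order equations must be disentangled. This hinges on sufficiently precise information about the Dyson Brownian motion rate function from step (2) of the proof outline following Theorem~\ref{thm:exp_match}, in particular how $\mathscr{I}_r$ responds to balanced versus unbalanced perturbations of its two arguments.
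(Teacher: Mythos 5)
Your ``alternative route'' for \eqref{eq:lem6_1} --- concavity plus symmetry putting the maximizer on the diagonal via a midpoint argument --- is essentially what the paper does, but only for the $\Omega$-terms and the quadratic form $\mathcal{H}$. The paper groups these into $\Phi(r,u_1,u_2) = \sum_{i}\Omega(\bar u_i) - \mathcal{H}(r,u_1,u_2)$, observes $\Phi$ is concave (since $\Omega$ is concave on $(-\infty,-2)$ and $\bm{\Sigma}_U(r)^{-1}$ is positive definite) and symmetric, so $z \mapsto \Phi(r, u+z, u-z)$ is a concave even function and hence maximized at $z = 0$. The point you correctly flag as an obstacle --- the $-\ell\,\mathscr{I}_r$ term --- is precisely where your proposal has a genuine gap, and it is not filled by either of your two suggested routes: $\mathscr{I}_r$ is a rate function, hence convex rather than concave, so it cannot be folded into the concavity argument; and your first-order subtraction route stalls exactly because you do not identify how $\mathscr{I}_r$ responds to a balanced perturbation $(u+z, u-z)$.

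The paper closes this gap with a separate computation, Lemma~\ref{lem:rfm_diag}. Writing $s = |r|^{p-2}$ and isolating the $r$-dependent quadratic piece of $J_r$, one finds
\begin{equation*}
T_s(x+z,\,x-z) \;=\; \frac{(1-s)^2 x^2}{4(1-s^2)} \;+\; \frac{(1+s)^2}{4(1-s^2)}\,z^2 ,
\end{equation*}
which is manifestly minimized at $z=0$; combined with the convexity of $I_1(\cdot;1)$ this gives $\sup_{z}\bigl(-\mathscr{I}_r(u+z,u-z)\bigr) = -\mathscr{I}_r(u,u)$. This is a \emph{separate} argument, not a consequence of the concavity of $\Phi$, and it is the content you would have needed to supply for $\ell \geq 1$. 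Your proposal identifies the right question (``how $\mathscr{I}_r$ responds to balanced versus unbalanced perturbations'') but leaves it open; in the paper this is where the explicit form of the Donati-Martin--Ma\"ida rate function, carried through the contraction and change of variables, actually gets used.

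For \eqref{eq:lem6_2} the paper is terse, citing only \eqref{eq:lem6_1} and Theorem~\ref{thm5}; your more elaborate plan (decomposing by overlap range, tracking the degeneration of $\bm{\Sigma}_U(r)$ as $|r|\to 1$) is not wrong, but the intended argument is simpler: Theorem~\ref{thm5} holds for each $r_0 < 1$, the bound is monotone in $r_0$, and the extension $\Psipell^{\,\bullet}$ constructed in Lemma~\ref{lem:prelem7} shows the supremum is stable as $r_0 \uparrow 1$, so no separate treatment of the boundary region is required.
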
									
Along the diagonal of energy parameters, the bounding function is greatest (as a function of overlap parameter $r$) either when $ r 
	= 
		0 $, corresponding to orthogonal points on the sphere, or when $r =\pm1$, corresponding to parallel points on the sphere.

\begin{lem}								
\label{lem:prelem7}
For fixed $u \in (-\infty, -\sfE_\infty)$, the function 
$ r 
	\mapsto 
				\Psipell ( r , u , u ) $, can be extended to a continuous function 
$ r
	\mapsto 
				\Psipell ^ { \, \bullet } ( r , u , u ) $ on $[-1,1]$, and this extension is maximized in the set $\{-1,0,1\}$.
\end{lem}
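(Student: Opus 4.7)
I would prove the lemma in two main steps: first the continuous extension, then the identification of the maximum set.

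\textbf{Continuous extension to $[-1,1]$.} I would check that each $r$-dependent term in $\Psipell(r,u,u)$ has a limit (possibly $-\infty$) as $r \to \pm 1$. For $\log \mathcal{G}(r)$, L'H\^opital's rule on $(1-r^2)/(1-r^{2p-2})$ gives $\mathcal{G}(\pm 1) = 1/\sqrt{p-1}$. Along the diagonal $u_1=u_2=u$, the quadratic form $\tfrac{1}{2}(u,u)\bm{\Sigma}_U(r)^{-1}(u,u)^T$ becomes an explicit rational function of $r^p$ (of the form $u^2/(1+r^p)$, using the unconditional covariance of $(f(\np),f(\rp))$), continuous at $r = 1$ and for even $p$ at $r = -1$, and diverging to $+\infty$ for odd $p$ at $r = -1$ (giving $\Psipell^\bullet(-1,u,u)=-\infty$). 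The rate function $\mathscr{I}_r$ depends on $r$ only through the continuous coupling $|r|^{p-2}$ and a sign (for odd $p$), so its contraction-principle description from the Dyson Brownian motion LDP in Theorem~\ref{coro:LDP_ell} gives continuity in $r$, with limits $\mathscr{I}_{\pm 1}(v,v)=I_1(v)$ (coincident matrices) and $\mathscr{I}_0(v,v)=2I_1(v)$ (independent matrices). Collecting these yields the continuous extension $\Psipell^\bullet:[-1,1]\to\R\cup\{-\infty\}$.

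\textbf{Identifying the maximum.} I would first reduce to $r\in[0,1]$: the $r$-dependences of $\log\mathcal{G}$ and $\Omega$ are even in $r$; the quadratic form is weakly larger (less negative) at $|r|$ than at $-|r|$; and for odd $p$ the rate function $\mathscr{I}_r$ at $r<0$ corresponds to a negatively correlated GOE pair and satisfies $\mathscr{I}_{-|r|}(v,v)\geq \mathscr{I}_{|r|}(v,v)$ by a sign-flip coupling (using $\matG_1 \to -\matG_1$ together with the eigenvalue reflection). Hence $\Psipell^\bullet(-|r|,u,u)\leq\Psipell^\bullet(|r|,u,u)$. On $[0,1]$ I would combine two monotonicity facts: (a) $r\mapsto -\ell\mathscr{I}_r(v,v)$ is non-decreasing on $[0,1]$, because building a less-correlated pair from a more-correlated pair by adding independent GOE noise can only cheapen the joint deviation event $\{\lambda_\ell(\matG_1)\leq v,\lambda_\ell(\matG_2)\leq v\}$; (b) the remaining $r$-dependent piece $\log\mathcal{G}(r)-u^2/(1+r^p)$ is (up to $r$-independent terms) the object already analyzed by Subag in \cite[Lemma 7]{subag2017complexity} in the $\ell=0$ case, where direct differentiation shows its maximum on $[0,1]$ is attained at $r=0$ or $r=1$. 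Adding the non-decreasing term in (a) preserves the boundary-maximum property, completing the proof.

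\textbf{Main obstacle.} The technically hardest step is the coupling monotonicity (a): $\mathscr{I}_r$ is produced only abstractly by contraction from a path-space LDP, with no closed-form expression readily available, so one must justify the coupling at the variational level of the rate function. An alternative is to evaluate the contraction explicitly on the symmetric diagonal $(v,v)$, reducing to a one-time Dyson BM LDP computation and differentiating in $r$---likely feasible, but computationally intricate, as foreshadowed by the paper's Step~(3) outline.
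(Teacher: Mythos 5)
Your overall strategy—reduce to $r\in[0,1]$ by parity, then combine Subag's one-matrix analysis of $\log\mathcal{G}(r)-\mathcal{H}^u(r)$ with a monotonicity statement for the $\ell$-dependent piece—is the same route the paper takes, and your reduction to non-negative $r$ is essentially the paper's. There are, however, three substantive issues in the details.

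\textbf{Wrong formula for the quadratic form.} You evaluate the quadratic form along the diagonal using the \emph{unconditional} covariance of $(f(\np),f(\rp))$, getting $u^2/(1+r^p)$. The bounding function \eqref{eq:bounding} uses $\bm{\Sigma}_U(r)$ from \eqref{eq:Sigma_U}, which is the \emph{conditional} covariance of the energies given that both $\np$ and $\rp$ are critical points; along the diagonal it yields $\mathcal{H}^u(r)=u^2\,[1-r^p+(p-1)r^{p-2}(1-r^2)]/[1-r^{2p-2}+(p-1)r^{p-2}(1-r^2)]$ as in \eqref{eq:H_def}. This is the function Subag's Lemma~7 actually analyzes, not $u^2/(1+r^p)$. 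The parity-in-$r$ sign behavior happens to be qualitatively similar for the two formulas, so your reduction to $[0,1]$ survives, but the identification with Subag's object must use the correct $\mathcal{H}^u$.

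\textbf{The rate function is not ``only abstractly defined by contraction.''} Your stated main obstacle is that $\mathscr{I}_r$ has ``no closed-form expression readily available,'' and you propose a coupling argument at the variational level. In fact the paper's entire Section~\ref{sec:LDP_pair} is devoted to making $\mathscr{I}_r$ explicit (Theorem~\ref{coro:LDP_ell}, with the pieces $J$ and $J_r$ in \eqref{eq:leading_rate_1}--\eqref{eq:leading_rate_2}), and the monotonicity of $r\mapsto-\ell\mathscr{I}_r(v,v)$ is then established by direct calculus in Lemma~\ref{lem:rate_fn_dom}. The coupling intuition (adding independent GOE noise cheapens the joint deviation) is plausible, but it is exactly the step you flag as ``technically hardest'' and leave unresolved; the paper sidesteps it entirely via the explicit formula. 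Moreover, because $\mathscr{I}_r$ depends on $r$ only through $|r|$ in the paper's definition, the ``sign-flip coupling'' you invoke to handle $r<0$ for odd $p$ is unnecessary—and, as written, it is not a complete argument either, since replacing $\matG_1$ by $-\matG_1$ changes the event $\{\lambda_\ell(\matG_1)\le v\}$ to one about the $\ell$\oith\ \emph{largest} eigenvalue.

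\textbf{A genuine logical gap in the final step.} You conclude with ``Adding the non-decreasing term in (a) preserves the boundary-maximum property.'' This implication is false in general: if $f$ attains its maximum on $[0,1]$ at $\{0,1\}$ and $g$ is non-decreasing with $g(0)=0$, then $f+g$ may well attain its maximum strictly in the interior (e.g.\ $f(r)=1-r$ and $g$ rising sharply near $r=1/2$ then flat). To close the gap one must use the piecewise structure: $\mathscr{I}_r(v,v)$ is \emph{constant} (equal to $J$) on $[0,r_*(u)]$ and only begins to decrease past $r_*(u)$, together with the finer structure of $r\mapsto\log\mathcal{G}(r)-\mathcal{H}^u(r)$ coming from Subag's Lemma~7, and the continuity across $r_*(u)$ from Lemma~\ref{lem:rate_equal}. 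The paper's proof treats precisely this: it checks that a maximum at the crossover $r_*(u)$ itself would contradict Subag's Lemma~7, and uses $\psi_\perp^\bullet\le\psi_\parallel^\bullet$ on $[r_*(u),1]$ with strict monotonicity of $-J_r(u,u)$ there. Your proposal as stated does not supply the extra structure needed to make the ``adding a monotone term'' step valid.
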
									
The third lemma implies, for 
$ u $ in the range of energies relevant to index $\ell $ critical points, the bounding function is maximized at 
$ r 
	= 
	0 $. 
\begin{lem}								
\label{lem:lem7_1}
We adopt the notation for the extension in Lemma~\ref{lem:prelem7}. 
\begin{itemize} %
\item[(i)] For any 
	$ u \in ( - \sfE _ \ell , - \sfE _ \infty ) $, one has 
	$ \Psipell ^ { \, \bullet } ( 1 , u , u ) 
		< 
			\Psipell ( 0 , u , u ) $. 
\item[(ii)] Moreover, for 
	$ r \in \{ 0 , 1 \} $, we have	
	$\sup _ 
		{ 
			u \,\in\, ( - \infty , \, - \tsfE _ \ell ]
		} 
		\Psipell(r, u, u) \leq 0. 
	$						
\end{itemize} %
\end{lem}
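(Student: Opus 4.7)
The plan is to compute the boundary value $\Psipell^{\,\bullet}(1, u, u)$ explicitly and combine it with the identity $\Psipell(0, u, u) = 2\Sigma_{p,\ell}(u)$ recorded at \eqref{eq:thm37start}. Once both values are in closed form, parts (i) and (ii) reduce to the sign of $\Sigma_{p,\ell}$, which is controlled by the defining equation $\Sigma_{p,\ell}(-\sfE_\ell) = 0$ together with the strict monotonicity of $\Sigma_{p,\ell}$ on $(-\infty, -\sfE_\infty)$ established in Section~\ref{sec:outline}.

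To compute $\Psipell^{\,\bullet}(1, u, u)$, I would pass to the limit $r \to 1^-$ in \eqref{eq:bounding} term by term. The entropy factor $\log \mathcal{G}(r)$ extends to $-\tfrac{1}{2}\log(p-1)$ by L'H\^opital applied to $(1-r^2)/(1-r^{2p-2})$. Using $\bm{\Sigma}_U(r) = \bigl(\begin{smallmatrix}1 & r^p \\ r^p & 1\end{smallmatrix}\bigr)$, the quadratic form on the diagonal reads $\tfrac{1}{2}(u,u)\bm{\Sigma}_U(r)^{-1}(u,u)^T = u^2/(1+r^p)$, which converges to $u^2/2$ as $r \to 1^-$; the singularity of $\bm{\Sigma}_U(1)$ is harmless because $(u,u)$ lies in the range of $\bm{\Sigma}_U(1)$. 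Finally, the block decomposition \eqref{eq:GOE_joint} forces $\matG_1(r) - \matG_2(r) \to 0$ as $r \to 1^-$, so the joint LDP for the pair of $\ell$-th smallest eigenvalues concentrates on the diagonal and $\mathscr{I}_r(x,x)$ converges to the single-matrix GOE rate. Combining these limits with $\Omega(\gamma_p|u|) = (\gamma_p u)^2/4 - 1/2 - I_1(u)$ from Remark~\ref{rmk:IOmega} and $I_1(u) = I_1(\gamma_p|u|;1)$ produces a closed-form expression for $\Psipell^{\,\bullet}(1,u,u)$ in the variables $\log(p-1)$, $u^2$, and $I_1(u)$.

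Once this closed form is on paper, the strict inequality (i) becomes an algebraic statement equivalent to $\Sigma_{p,\ell}(u) > 0$, which holds throughout the open interval $(-\sfE_\ell, -\sfE_\infty)$ by the monotonicity and the defining property of $\sfE_\ell$. For (ii), the case $r = 0$ follows from $\Psipell(0,u,u) = 2\Sigma_{p,\ell}(u)$ together with $\sup_{u \leq -\sfE_\ell}\Sigma_{p,\ell}(u) = 0$, the supremum being attained in the limit $u \to -\sfE_\ell$. The case $r = 1$ follows from the same monotonicity argument applied to the closed form derived in the previous paragraph, whose supremum on $(-\infty, -\sfE_\ell]$ is controlled by the same algebraic identity that gave (i).

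The main obstacle is the boundary computation at $r = 1$: each of $\bm{\Sigma}_U(r)$, $\mathcal{G}(r)$, and $\mathscr{I}_r$ behaves singularly or degenerately in this limit, and extracting the correct closed form for $\Psipell^{\,\bullet}(1,u,u)$ requires tracking these singular contributions and verifying that the combination appearing in \eqref{eq:bounding} remains finite. After that, the deductions of (i) and (ii) are short computations using $\gamma_p^2 = p/(p-1)$, $\sfE_\infty^2 = 4(p-1)/p$, and the properties of $\Sigma_{p,\ell}$ already in hand.
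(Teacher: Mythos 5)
Your overall plan — compute $\Psipell^{\bullet}(1,u,u)$ in closed form, compare with $\Psipell(0,u,u)=2\Sigma_{p,\ell}(u)$, and deduce both parts from the sign and monotonicity of $\Sigma_{p,\ell}$ — is the same approach the paper takes. But there is a concrete error in the boundary computation that breaks the argument.

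You write $\bm{\Sigma}_U(r)=\bigl(\begin{smallmatrix}1&r^p\\ r^p&1\end{smallmatrix}\bigr)$, which is the covariance of the \emph{unconditioned} pair $(f(\mathfrak n),f(\mathfrak r))$. However, $\bm{\Sigma}_U(r)$ in the Kac--Rice formula is the covariance of the energies \emph{conditioned on $\nabla f(\mathfrak n)=\nabla f(\mathfrak r)=0$}, which is the more complicated matrix in \eqref{eq:Sigma_U} built from $b_1,b_2$. At a single point the energy is independent of the (tangential) gradient, but across two points it is not, so the conditioning genuinely changes the covariance. Consequently your $\mathcal H^u(r)=u^2/(1+r^p)$ and its limit $u^2/2$ at $r=1$ are wrong; the correct formula is \eqref{eq:H_def}, whose limit at $r=1$ (after two applications of L'H\^opital) is $\tfrac{3p-2}{4(p-1)}\,u^2$. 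These differ for every $p\ge3$. With the correct value one gets
\[
\Psipell^{\bullet}(1,u,u)-\Psipell(0,u,u)=-\Sigma_{p,\ell}(u)-I_1(\gamma_p|u|;1),
\]
which is manifestly negative on $(-\sfE_\ell,-\sfE_\infty)$ because $\Sigma_{p,\ell}(u)>0$ there. Your incorrect coefficient $1/2>\tfrac{p-2}{4(p-1)}$ inflates the difference and destroys this clean identity; it is not clear that (i) would even follow from your expression.

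For part (ii) you also gloss over an essential structural fact: the same computation gives the identity $\Psipell^{\bullet}(1,u,u)=\Sigma_{p,\ell-1}(u)$ for $\ell\ge1$. The argument for $r=1$ then has to split $(-\infty,-\sfE_\ell]$ into $(-\infty,-\sfE_{\ell-1}]$, where $\Sigma_{p,\ell-1}\le0$ handles things directly, and $[-\sfE_{\ell-1},-\sfE_\ell]$, where one instead uses $\Psipell^{\bullet}(1,u,u)\le\Psipell(0,u,u)=2\Sigma_{p,\ell}(u)<0$ from part (i). Describing (ii) as "the same algebraic identity that gave (i)" elides this case split and the appearance of the shifted complexity $\Sigma_{p,\ell-1}$.

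Finally, the heuristic you offer — that as $r\to1^-$ the two Hessians coincide so $\mathscr I_r(x,x)$ degenerates to the single-matrix rate — gets the right answer but is not what the rate function \eqref{eq:leading_rate_2} does; the limit $J_r(\bar u,\bar u)\to I_1(\bar u;1)$ should be checked from the explicit formula rather than inferred from concentration.
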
									

We prove these lemmas in the next subsection and then use the lemmas to prove Theorem~\ref{thm:exp_match}. As in  Section~\ref{sec:exp_bounds}, we write 
$ \gamma _ p $ for 
$ 2 / \sfE_\infty $. As discussed just after \eqref{eq:u_bar_def}, we repurpose the bar notation in this section: for 
$ z \in \R $, write 
$ \bar{z} $ for 
$ \gamma_pz $. For
$ \bm{\Sigma} _ U ( r ) $ defined in
\eqref{eq:Sigma_U}, let
\begin{align}
\mathcal{H}(r,u_1,u_2) 
	\tri 
		\frac{1}{2} 
			\lbob \begin{matrix} u_1 & u_2 \end{matrix} \rbob  \bm{\Sigma}_U(r)^{-1} \lbob \begin{matrix} u_1 \\ u_2 \end{matrix} \rbob ,
\end{align}
and for brevity, define:
\begin{align}
\Psipell^{\,u}(r) &\tri \Psipell(r,u,u) \non \\
\mathcal{H}^u(r) &\tri \mathcal{H}(r,u,u) \equiv u^2 \frac{ 1-r^p +(p-1) r^{p-2} (1-r^2) }{1-r^{2p-2} + (p-1) r^{p-2} (1-r^2) } .
\label{eq:H_def}
\end{align}

\subsection{Maximizing the bounding function}

\begin{proof}[Proof of Lemma~\ref{coro:lem6}] 
The proof follows that of 
\cite[Lemma 6]{subag2017complexity}. For $\bar{u} = \gamma_pu$, when $u \in B \subset (-\infty, -\sfE_\infty)$, we have $\bar{u} \in (-\infty,-2)$. As $\log$ is concave, the function $x \mapsto \Omega(x)$ restricted to $(-\infty,-2)$ is concave.  Moreover, as $\bm{\Sigma}_U(r)^{-1}$ is positive definite for each $r \in (-1,1)$, the function 
\begin{align}
\Phi(r,u_1,u_2) \tri \sum_{i=1, \,2} \Omega( \bar{u}_i) - \mathcal{H}(r,u_1,u_2) 
\end{align}
is also concave. Given $u \in (-\infty, -\sfE_\infty)$, define the domain $D(u) \tri (\sfE_\infty + u, -\sfE_\infty -u)$. For $z \in D(u)$, note that $\Phi^*(z) \tri \Phi(r,u+z, u-z)$ is well-defined, and is concave as a function of $z$. As $\Phi$ is symmetric in $u_1$ and $u_2$, it follows that (writing $\pa_z$ for a derivative in $z$) $\pa_z \Phi^*(0) =0$, and hence
\begin{align}
\sup_{z \, \in \, D(u)} \Phi^* (z) = \Phi_{r,u}^*(0) \equiv \Phi_{p,\ell}(r,u,u) .
\end{align}
By Lemma~\ref{lem:rfm_diag},
\begin{align}
\sup_{z \, \in \, D(u)} - \mathscr{I}_r( u+z,u-z) = -\mathscr{I}_r(u,u) \,,
\end{align}
and as $r$ is fixed, $\Psi_{p,\ell}$ differs from $\Phi - \mathscr{I}_r$ by a constant, so \eqref{eq:lem6_1} follows from the containment
\begin{align*}
B \times B \subset \{(u,z) : u \in B,\, z \in D(u) \},
\end{align*}
while \eqref{eq:lem6_2} follows from \eqref{eq:lem6_1} and Theorem~\ref{thm5}. \end{proof}

\begin{rmk}Suppose $u$ is fixed, and view $\Psipell^{\,u}(r)$ as a function of $r$ only. We remark that the piecewise nature of $\mathscr{I}_r$ is inherited by the bounding function. When restricted to $[0,1)$, $r \mapsto \Psipell^{\,u}(r)$ is defined piecewise on the intervals $\left[0, r_*(u)\right] \cup \left[r_*(u),1\right)$, where 
\begin{align}
r_*(u) \tri v(\gamma_pu)^{-2/{(p-2)}}\,.
\label{eq:r_star}
\end{align}
Suppressing parameters in the notation, let $\psi_\perp$ denote \eqref{eq:bounding} with $\scrI_r$ replaced by $J$ from \eqref{eq:leading_rate_1}. Similarly, let $\psi_\parallel$ denote \eqref{eq:bounding} with $\scrI_r$ replaced by $J_r$ from \eqref{eq:leading_rate_2}. We summarize the above by writing
\begin{align}
\label{eq:Psi_cases_2}
\Psipell^{\,u}(r) = 
\begin{cases}
\psi_\perp (r) &\quad 0 \leq r \leq r_*(u) \,   \\
\psi_\parallel (r) &\quad r_*(u) \leq r < 1.
\end{cases}
\end{align}
\end{rmk}

\begin{proof}[Proof of Lemma~\ref{lem:prelem7}] Consider the term $\mathcal{H}^u(r)$ in $\Psipell^{\,u}$. Inspecting the fraction in \eqref{eq:H_def}, and accounting for the parity of $p$, observe that $\mathcal{H}^u(r)$ is an even function of $r$ if and only if $p$ is even. In the case that $p$ is odd, and for any $r \in (-1,0)$, we see that $\mathcal{H}^u(r)$ is strictly larger than 1, while for $r \in (0,1)$, it is strictly less than one. It thus suffices to consider $\Psipell^{\,u}$ restricted to  the interval $[0,1)$.

We now wish to extend \emph{both} $\psi_\perp$ and $\psi_\parallel$ to continuous functions on $[0,1]$. In each case, the only obstruction comes from the terms
\begin{align}
\log \mathcal{G}(r) - \mathcal{H}^u(r) .
\label{eq:prelem7_1}
\end{align}
These terms are easily extended to the endpoint $r = 1$ via L'H\^opital's rule. This is done more explicitly in the proof of Lemma~\ref{lem:lem7_1}, so we consider the first claim of the Lemma~\ref{lem:prelem7} settled, denoting the extension of $\Psipell^{\,u}$ by $\Psi^\bullet$. Let us also use $\psi_\perp^\bullet$ and $\psi_\parallel^\bullet$ to denote the continuous extensions of $\psi_\perp$ and $\psi_\parallel$ to $[0,1]$.  

Because the rate function $J$ in \eqref{eq:leading_rate_1} does not depend on $r$, the $r$-derivative of $\psi_\perp^\bullet$ agrees with that of the bounding function considered in \cite[Lemma 7]{subag2017complexity}. From this lemma, we know the latter function attains its maximum over $[0,1]$ within $\{0,1\}$. It follows that the maximum of $\psi_\perp^\bullet$ over $[0,1]$ is attained at either $0$ or $1$. 

By Lemma~\ref{lem:rate_equal} we have $\psi_\perp(r) = \psi_\parallel(r)$ at $r = r_*(u)$. Moreover, Lemma~\ref{lem:rate_fn_dom} implies that $\psi_\perp^\bullet (r) \leq \psi_\parallel^\bullet(r)$ on the interval $[r_*(u),1]$. The proof of Lemma~\ref{lem:rate_fn_dom} implies $r \mapsto -J_r(u,u)$ is strictly increasing on this interval, so the proof is complete. To see that the argument $r_*(u)$ does not need to be considered, note that  $\Psi^\bullet (r_*(u)) = \psi_\perp^\bullet (r_*(u))$, and therefore a maximum at $r_*(u)$ would contradict \cite[Lemma 7]{subag2017complexity}.    \end{proof}

\begin{proof}[Proof of Lemma~\ref{lem:lem7_1}] We lose no generality supposing $\ell \geq 1$. Fix $u \in (-\sfE_\ell, -\sfE_\infty)$ and write $\Psipell^{\,u}$ as $\Psi$ throughout the proof. We let $\Psi^\bullet$ denote the continuous extension of $\Psi$ furnished by Lemma~\ref{lem:prelem7}. Because of this lemma, our task is to compare $\Psi^\bullet(0)$ and $\Psi^\bullet(1)$. Plugging in $r =0$, we find
\begin{align*}
\Psi^\bullet(0) = 1 + \log(p-1) + 2 \Omega(\bar{u} ) - \ell 2 I_1(\bar{u};1) - u^2\,. 
\end{align*}
To compute $\Psi^\bullet(1)$, we use L'H\^opital's rule twice:
\begin{align*}
\lim_{r \to 1} \log \mathcal{G}(r) &=\frac{1}{2} \log\left( \frac{1}{p-1} \right) \\
\lim_{r\to 1} \mathcal{H}^u(r)  &=  \left(\frac{3p -2}{4(p-1)}\right) u^2\,.
\end{align*}
As $\lim_{r\to 1} J_r( \bar{u}, \bar{u}) = I_1(\bar{u};1)$, 
\begin{align*}
\Psi^\bullet (1) = 1 + \log(p-1) + 2 \Omega(\bar{u}) - \ell I_1(\bar{u} ;1 ) + \frac{1}{2} \log \left( \frac{1}{p-1} \right) -  \left( \frac{3p-2}{4(p-1) } \right) u^2.
\end{align*}
Observe that the difference between these two values can be expressed using the complexity function $\Sigma_\ell$:
\begin{align}
\Psi^\bullet(1) - \Psi^\bullet(0) &= \ell I_1(\bar{u} ; 1) - \frac{1}{2} \log \left( p-1\right) + \frac{p-2}{4(p-1)} u^2 
\non \\
& = - \Sigma_\ell(u) - I_1(\bar{u};1) ,
\label{eq:lem7_1}
\end{align}
and \eqref{eq:lem7_1} is strictly negative for $u \in (-\sfE_\ell, -\sfE_\infty)$ by definition of $\sfE_\ell$.  This gives us the first statement in the lemma. 

To complete the proof, note that for all $u < -\sfE_\infty$, $\Psi^\bullet(0) \equiv 2\Sigma_{\ell}(u)$. Combining this observation with \eqref{eq:lem7_1}, one has for such $u$
\begin{align}
\Psi^\bullet (1) = \Sigma_{\ell}(u) - I_1(\bar{u};1) \equiv \Sigma_{\ell-1}(u) \,,
\label{eq:lem7_2}
\end{align}
using the hypothesis $\ell \geq 1$. 

Display \eqref{eq:lem7_1} implies $\Psi^\bullet(1) \leq \Psi^\bullet(0)$ on the interval $[-\sfE_{\ell-1} , -\sfE_\infty)$, but we know that $\Psi^\bullet(0)$ is negative on the interval $[-\sfE_{\ell-1}, -\sfE_\ell)$ because the complexity function $\Sigma_{p,\ell}$ is negative there. The proof is complete, as \eqref{eq:lem7_2} implies $\Psi^\bullet (1)$, which is greater than $\Psi^\bullet(0)$ on the interval $(-\infty, \sfE_{\ell-1})$, is also negative on this interval. \end{proof}

\subsection{Proofs of Theorem~\ref{thm:exp_match} and Corollary~\ref{coro:cor8}}

\begin{proof}[Proof of Theorem~\ref{thm:exp_match}]
By 
\eqref{eq:thm37start}, and because the complexity function 
$ \Sigma _ { p , \, \ell } ( u ) $ is strictly positive for the energies $u$ considered, it will suffice to show
\begin{align*}								
\limsup _ 
	{ N \to \infty } 
		\frac { 1 } 
			{ N } 
			\E 
			\leftcb
				 \crit_{N,\ell}( \,
				 	(-\infty,u), (-1,1) 
				\, ) 
			\rightcb _ { \bm { 2 } } 
				\leq 
					\Psipell
					(
						0 , u , u
					) \,.
\end{align*}								
Theorem
~\ref{thm5} bounds the left-hand side, in the display directly above, in terms of a supremum over the bounding function $\Psipell$. Lemma~\ref{coro:lem6}, Lemma~\ref{lem:prelem7} and Lemma
~\ref{lem:lem7_1} (i) then imply
\begin{align*}								
\limsup _ 
	{ N \to \infty } 
		\frac { 1 } 
			{ N } 
			\E 
				\leftcb 
					\critNell
						( \, 
							( - \infty , u ) , ( - 1 , 1 ) 
						\, ) 
				\rightcb _ { \bm { 2 } } 
				\, \leq \,
					\leftp \,
						\sup _ 
							{ 
								\substack{ 
									r \, \in \, \{ \, 0 , \, 1 \, \} \\%
									 v \, \in \, ( - \infty, \, - \tsfE _ \ell )
								}
							} 
								\Psipell ( 
									r , v , v 
								) 
					\rightp 
						\vee 
							\leftp 
								\sup _ 
									{ \, v \,  \in \, ( - \tsfE _ \ell , \, u \, ) } 
										\Psipell (
											0 , v , v 
										) 
							\rightp ,
\end{align*}								
and the proof is complete applying Lemma
~\ref{lem:lem7_1} (ii) to the left-most supremum, recalling that over the interval 
$ ( - \sfE _ \ell , - \sfE _ \infty ) $,
the complexity functions 
$ u \mapsto \Sigma _ { p \, , \ell } $ are strictly increasing. As in the proof of Lemma~\ref{lem:prelem7}, symmetry and parity considerations show it suffices to consider only $r \in \{ 0 , \, 1 \}$ in the above supremum, versus
$ r \in \{ -1 , 0 , 1 \} $. 
\end{proof}

\begin{proof}[Proof of Corollary~\ref{coro:cor8}] As in \cite{subag2017complexity}, let us define the function
\begin{align}
g(r) = (1- u^{-2} \mathcal{H}^u(r) ) \equiv  \frac{r^p - r^{2p-2} }{1 - r^{2p-2} + (p-1) r^{p-2} (1-r^2) } \,,
\label{eq:little_g_def}
\end{align}
which will be notationally convenient at the end of the proof.

Consider $u \in (-\sfE_\ell, -\sfE_\infty)$, and let $\eps >0$. Write $I_\eps$ for the set $(-1,1) \setminus (-\eps, \eps)$ and apply Theorem~\ref{thm5} for $B =(-\infty, u)$ and $I_R = I_\eps$. 
\begin{align}
\limsup_{N \to \infty} \frac{1}{N} \log \E [ \crit_{N,\,\ell} ( (-\infty, u ), I_\eps ) ]_2 &\leq \sup_{r \, \in\, I_\eps} \sup_{u_1,\,u_2 \in (-\infty,\, u)} \Psi_{p,\ell} (r, u_1, u_2) \\
&\leq \sup_{r \,\in\, I_\eps} \sup_{w \,\in \,(-\infty, u) } \Psi_{p,\ell}^w(r) \,,
\label{eq:coro8_1}
\end{align}
where Lemma~\ref{coro:lem6} implying second line. From the proof of Lemma~\ref{lem:lem7_1}, we have
\begin{align}
\un{\Psi}_{p,\ell}^w (1) \leq \Psi_{p,\ell}^w(0) \quad \text{ if and only if $u \in [-\sfE_{\ell-1}, -\sfE_\infty )$ } \,, 
\end{align}
which follows from  \eqref{eq:lem7_1}, and additionally,
\begin{align}
\un{\Psi}_{p,\ell}^w(1) \leq 0 \quad \text{ for $w \in (-\infty, -\sfE_{\ell-1} )$} \,,
\end{align}
which follows from \eqref{eq:lem7_2}. Applying these observations in \eqref{eq:coro8_1}, we have
\begin{align}
\limsup_{N \to \infty} \frac{1}{N} \log \E [ \crit_{N,\ell} ( (-\infty, u ), I_\eps ) ]_2 &\leq 0 \vee \left( \sup_{r \in I_\eps} \sup_{ w \in [-\sfE_{\ell-1}, u)} \Psi_{p,\ell}^w (r) \right) \,.
\label{eq:coro8_2}
\end{align}
Note that, as $\Psi_{p,\ell}^u(0)$ is twice the complexity function, which is continuous, it follows from the intermediate value theorem that there is $z \in (-\sfE_\ell , u )$ such that $\Psi_{p,\ell}^z(0) = \Sigma_{p,\ell}(u)$, i.e. half the value of $\Psi_{p,\ell}^u(0)$. For this $z = z(u)$, let us take $\eps$ small enough so that, by continuity of the map $r \mapsto \Psi_{p,\ell}^w(r)$, we have that $\Psi_{p,\ell}^z(\eps) \geq \tfrac{1}{2} \Sigma_{p,\ell}(u) > 0$. It follows that \eqref{eq:coro8_1} admits the simpler bound 
\begin{align}
\limsup_{N \to \infty} \frac{1}{N} \log \E [ \crit_{N,\ell} ( (-\infty, u ), I_\eps ) ]_2 &\leq \sup_{r \in I_\eps} \sup_{ w \in [-\sfE_{\ell-1}, u)} \Psi_{p,\ell}^w (r) \,,\\
&\leq\sup_{0 < r \in I_\eps} \sup_{ w \in [-\sfE_{\ell-1}, u)} \Psi_{p,\ell}^w (r) \,,
\end{align}
where the parity considerations at the beginning of the proof of Lemma~\ref{lem:prelem7} allow us to restrict our attention to $0 < r \in I_\eps$ in the second line.

From \eqref{eq:r_star}, we see that $w \mapsto r_*(w)$ is decreasing, which follows from the fact that $v(x) = \tfrac{1}{2}(|x| + \sqrt{x^2-4})$ is increasing in $x$. Thus, we may take $\eps$ sufficiently small so that $\eps < r_*(u)$, and hence that $\eps < r_*(w)$ for all $w < u$. 

We now make two claims, both for fixed $w < u$:
\begin{enumerate}
\item Over the interval $[0,r_*(w))$, the function $r \mapsto \Psi_{p,\ell}^w(r)$ decreases, and 
\item that over the interval $(r_*(w),1)$, the function $r \mapsto \Psi_{p,\ell}^w(r)$ increases. 
\end{enumerate}
For $w \in (-E_\ell(p),   u)$ fixed, and given $r < r_*(w)$, we have
\begin{align}
\Psi_{p,\ell}^w(0) - \Psi_{p,\ell}^w(r) &= -\log \mathcal{G}(r) + g(r) u^2\,,
\label{eq:coro8_3}
\end{align}
where $g(r)$ was defined in \eqref{eq:little_g_def} at the start of the proof. In \cite{subag2017complexity}, it is shown that $g'(r) > 0$, see equation (6.14). Moreover, it is easily seen that 
\begin{align}
G(r)^{-2} \equiv \frac{ 1-r^{2p-2}}{1-r^2} 
\end{align}
is an increasing function of $r$, from which it follows from \eqref{eq:coro8_3} that the function $r \mapsto \Psi_{p,\ell}^w(0) - \Psi_{p,\ell}^w(r)$  is an increasing function of $r$ over the interval $[0, r_*(w))$, which settles the first claim.  

Turning to the second claim, note that in Lemma~\ref{lem:rate_fn_dom}, after equation \eqref{eq:increase_in_r} we show that for fixed $w$, the function $r \mapsto \mathscr{I}_r(w,w)$ is non-increasing. Given $r, r' \in (0,1)$, write
\begin{align}
\Psi_{p,\ell}^w(r) - \Psi_{p,\ell}^w(r') = \log \mathcal{G}(r) - \log \mathcal{G}(r') - \ell \left( \mathscr{I}_r(w,w) - \mathscr{I}_{r'}(w,w) \right) - \left( g(r') - g(r) \right) w^2 \,,
\end{align}
which is, by the above observations about $\mathscr{I}_r$, $g$ and $\mathcal{G}$ negative when $r' > r$. Thus both claims hold. Using these claims, we have shown
\begin{align}
\limsup_{N \to \infty} \frac{1}{N} \log \E [ \crit_{N,\ell} ( (-\infty, u ), I_\eps ) ]_2 &\leq \left( \sup_{ w \in [-E_{\ell-1}(p), u)} \Psi_{p,\ell}^w(\eps) \right) \vee \un{\Psi}_{p,\ell}^{u}(1)\,,
\label{eq:coro8_4}
\end{align}
where we've used \eqref{eq:lem7_2}, the formulation of $\un{\Psi}_{p,\ell}^w$ in terms of the complexity, to deduce that it increases in $w$. Finally, for $\eps < r_*(u)$ and $w < u$, we consider the difference
\begin{align}
\Psi_{p,\ell}^u (\eps) - \Psi_{p,\ell}^w(\eps) = 2(\Omega(u) - \Omega(w)) - \ell ( \mathscr{I}_\eps(u,u) - \mathscr{I}_\eps(w,w) ) + (1- g(\eps))(w^2 -u^2)\,,
\end{align}
and we observe that this difference is positive: firstly, the integral formulation in \eqref{eq:Omega_def} implies the first term is positive. Here, we are subtracting a negative number in the second term, and finally the denominator of $g$, as defined in \eqref{eq:little_g_def} is positive, and hence that the difference in the third term above is positive. Using this in \eqref{eq:coro8_4},
\begin{align}
\limsup_{N \to \infty} \frac{1}{N} \log \E [ \crit_{N,\ell} ( (-\infty, u ), I_\eps ) ]_2 &\leq  \Psi_{p,\ell}^u(\eps) \ \vee \un{\Psi}_{p,\ell}^{u}(1)\,,
\end{align}
from which we can use claim (1) above: that $r \mapsto \Psi_{p,\ell}^u(r)$ decreases on the interval $[0,\eps]$. As $\Psi$ is continuous, we can choose $\eps$ small enough so that $\Psi_{p,\ell}^u(\eps) > \un{\Psi}_{p,\ell}^u(1)$, and hence
\begin{align}
\limsup_{N \to \infty} \frac{1}{N} \log \E [ \crit_{N,\ell} ( (-\infty, u ), I_\eps ) ]_2 &\leq  \Psi_{p,\ell}^u(\eps) < \Psi_{p,\ell}^u(0) \equiv \Sigma_{p,\ell}(u) \,,
\end{align}
completing the proof. \end{proof}

\section{\large Large deviation principle for eigenvalue pairs}\label{sec:LDP_pair}

The purpose of this last section is to prove a large deviation principle for the pair of $\ell\oith$ largest eigenvalues of two correlated GOE matrices, Theorem~\ref{coro:LDP_ell}. We state the LDP below and give its proof over the next three subsections. For $x \geq 2$, define
 \begin{align}
 v(x) \tri \frac{x + \sqrt{x^2 -4}}{2} \,\,,
 \label{eq:v_def}
 \end{align}
 and define the functions
 \begin{align}
J(x,y) &= I_1(x;1) + I_1(y;1)\,, \label{eq:leading_rate_1}\\
J_r( x,y) &= \frac{1}{2} \left( I_1(x;1) + I_1(y;1) \right) + \frac{1}{2} \log r^{p-2} + \frac{1+r^{2p-4}}{8(1-r^{2p-4})} \left(x^2 +y^2\right) - \frac{|r|^{p+2} xy}{ 2(1-r^{2p-4})} \,, \label{eq:leading_rate_2}
\end{align}
where $I_1(x;1)$ from \eqref{eq:GOE_rate_general} is the rate function for the leading eigenvalue of a single GOE matrix.

\begin{thm} Consider two correlated GOE matrices $\hat{G}^1$ and $\hat{G}^2$ distributed as in \eqref{eq:GOE_joint} with $r \in [0,1)$. Let $\eta_\ell^1$ and $\eta_\ell^2$ denote the $\ell\oith$ largest eigenvalues of $\hat{G}^1$ and $\hat{G}^2$ respectively. The pair $(\eta_\ell^1,\eta_\ell^2)$ obeys a LDP with speed $N$ and rate function
\begin{align}
\rateell_r( x,y) = 
\begin{dcases} 
\ell \cdot J(x,y) &\quad \text{ if } v(x)v(y) \leq |r|^{2-p} \quad \text{ and $x,y >2$,\, } \text{ or if $x=2$ or $y =2$\,,}\\
\ell \cdot J_r(x,y) &\quad \text{ if } v(x)v(y) \geq |r|^{2-p}\quad \text{ and $x,y >2$\,,} \\
\infty &\quad \text{ otherwise,}
\end{dcases}
\label{eq:LDP_ell}
\end{align}
where $v$ is the function given in \eqref{eq:v_def}, and where $J(x,y)$ and $J_r(x,y)$ are defined in \eqref{eq:leading_rate_1} and \eqref{eq:leading_rate_2}. 
\label{coro:LDP_ell}
\end{thm}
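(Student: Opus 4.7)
My plan is to prove Theorem~\ref{coro:LDP_ell} by embedding the pair $(\hat{G}^1, \hat{G}^2)$ into a branching Dyson Brownian motion and then combining a path-level LDP with the contraction principle, extending the strategy of \cite{donati2012large}, which handles the case $\ell = 1$. Set $s = |r|^{p-2}$; since $r \in [0,1)$ the sign factor in \eqref{eq:GOE_joint} is $+1$, so $\hat{G}^i = \sqrt{1-s}\,\bar{\matG}_i + \sqrt{s}\,\bar{\matG}_0$ for $i = 1, 2$. View this as a matrix-valued Brownian motion whose sample path agrees on $[0, s]$, reaching $\sqrt{s}\,\bar{\matG}_0$ at time $s$, and then splits into two independent branches of length $1-s$ ending at $\hat{G}^1$ and $\hat{G}^2$. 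By Dyson's theorem, the eigenvalues along each branch form a system of non-colliding diffusions starting from the eigenvalues of $\sqrt{s}\,\bar{\matG}_0$.

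Next I would establish a sample-path LDP for the $\ell$th largest eigenvalue trajectory $(\lambda_\ell(t))_{t \in [0,1]}$ along a single branch. For $\ell = 1$ this is carried out in \cite{donati2012large}: once the top eigenvalue has separated from the bulk, it satisfies an effective scalar SDE driven by the Stieltjes transform of the bulk empirical measure, which concentrates on the semicircle, and one applies a Freidlin--Wentzell argument together with the classical edge LDP for the initial matrix. For general $\ell$, the $\ell$th line interacts with the $\ell - 1$ lines above it, and I would work with the joint trajectory of the top $\ell$ lines. The key observation is that, on the LDP scale, the cheapest way to push the $\ell$th eigenvalue to a location $x > 2$ is to drag all $\ell$ lines essentially along a common path, so the path cost factors as $\ell$ times the $\ell = 1$ cost, which matches the prefactor $\ell$ in $\rateell_r$.

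Given the single-branch path LDP and the conditional independence of the two branches given the branching configuration, the contraction principle reduces Theorem~\ref{coro:LDP_ell} to a finite-dimensional variational problem: minimize, over the common eigenvalue value $z$ at time $s$ and over admissible paths from $z$ to $x$ on branch~1 and from $z$ to $y$ on branch~2, the sum of the two per-branch costs. The single-branch minimizer is explicit and its value is expressible via the function $v$ from \eqref{eq:v_def} and the branching value $z$. Optimizing over $z$ splits into two cases according to whether the unconstrained interior optimum lies in $(2, \infty)$: when $v(x)v(y) \leq |r|^{2-p}$ the optimum is realized at $z = 2$, where a branch can return to the bulk at zero cost, the two branches decouple, and the rate is $\ell J(x, y)$; when $v(x)v(y) \geq |r|^{2-p}$ the interior critical point is active, producing the correlated rate $\ell J_r(x, y)$. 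The degenerate cases $x = 2$ or $y = 2$ fall automatically into the first regime, matching continuously the boundary gluing of the two pieces of $\rateell_r$ along the threshold curve $v(x) v(y) = |r|^{2-p}$.

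The main obstacle is Step~2: generalizing the path LDP of \cite{donati2012large} from $\ell = 1$ to arbitrary $\ell$. One must argue that the electrostatic repulsion of DBM produces only lower-order contributions on the LDP scale when several top lines are simultaneously forced to an atypical location, that these top lines concentrate on a common optimal trajectory so that the rate cleanly factorizes, and that the classical LDP for the $\ell$th eigenvalue of a (deformed) GOE can be upgraded to a path-level statement via Freidlin--Wentzell-type estimates. Once this is in hand, the remainder of the proof is computational: verifying that the Euler--Lagrange equations for the variational problem reproduce \eqref{eq:leading_rate_1} and \eqref{eq:leading_rate_2}, and checking continuity across the threshold curve by direct substitution of $v(x) v(y) = |r|^{2-p}$.
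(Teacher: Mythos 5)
Your proposal takes a genuinely different route from the paper while sharing the same key ingredient. The paper realizes the correlated pair $(\hat G^1,\hat G^2)$ as rescalings of a \emph{single} symmetric Brownian motion evaluated at two times $q=r^{2p-4}$ and $1$, then contracts the Donati--Martin--Ma\"ida path LDP (extended to general $\ell$) at those two time points; the value at time $q$ is therefore fixed to be the second eigenvalue rescaled, so the resulting variational problem factors cleanly into a $[0,q]$ piece and a $[q,1]$ piece with no free intermediate variable (Lemma~\ref{lem:optimal_J_bar} and Lemmas~\ref{lem:donati_6.3}--\ref{lem:donati_6.4}). You instead propose a \emph{branching} Dyson Brownian motion that forks at time $s=|r|^{p-2}$, so both $\hat G^1$ and $\hat G^2$ appear as terminal values of independent branches, and the contraction produces an extra scalar optimization over the branch-point value $z$. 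Both decompositions realize the same joint law and both hinge on the same obstacle you correctly identify --- upgrading the $\ell=1$ path LDP of Donati-Martin--Ma\"ida to general $\ell$ --- which the paper carries out in its Section~\ref{AppendixProofLDPl} by modifying the exponential martingale and tilting arguments; your ``drag all $\ell$ lines along a common path'' heuristic is exactly the intuition the paper records after Theorem~\ref{thm:donati_main}. The paper's asymmetric choice has the practical advantage of avoiding the $z$-optimization.

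There is, however, a concrete accounting error in your contraction step that needs fixing. You minimize ``the sum of the two per-branch costs'' over the branch value $z$, but the joint path rate for the branching picture also charges for the common path from $0$ to $z$ at time $s$; in the scaled variables this contributes an additional $\ell\, I_1(z;1)$ to the objective. Omitting it happens to be harmless in your decoupled regime $v(x)v(y)\le |r|^{2-p}$, because there the optimal $z$ sits at the edge $z=2$ and $I_1(2;1)=0$, so both the corrected and uncorrected problems return $\ell J(x,y)$. But in the coupled regime the interior optimum has $z>2$, so dropping $\ell I_1(z;1)$ strictly underestimates the rate and will not reproduce $\ell J_r(x,y)$. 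The fix is to write the variational problem as
\[
\inf_{z\ge 2}\ \Big[\ \ell I_1(z;1)\ +\ \bar J_{\,s\mapsto z\sqrt{s}}^{\,1\mapsto x}\ +\ \bar J_{\,s\mapsto z\sqrt{s}}^{\,1\mapsto y}\ \Big],
\]
and then verify this agrees with the paper's two-time variational problem. With that correction, and granting the $\ell\ge 2$ extension of the path LDP, your route is viable.
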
 

To prove Theorem~\ref{coro:LDP_ell}, we apply the contraction principle to a result of Donati-Martin and Ma\"ida \cite{donati2012large}. We then analyze the resulting variational problem to get the explicit rate function \eqref{eq:LDP_ell} above. 

\subsection{Contracting the Donati-Martin-Ma\"ida LDP} 

Let $(B_{ij} )_{1\leq i \leq j \leq N}$ be a collection of real i.i.d. standard Brownian motions. The \emph{symmetric Brownian motion} $(H_N(t))_{t \geq 0}$ is a process taking values in the space of $N \times N$ symmetric matrices, with entries
\begin{align}
(H_N)_{ij} =
\begin{cases}
\sqrt{\tfrac{1}{N}}  B_{ij} & \text{ if } i < j\,, \\
\sqrt{\tfrac{2}{N}} B_{jj} & \text{ if } i = j \,. 
\end{cases}
\label{eq:hermitian_BM}
\end{align}
Consider the process $(\lambda_\ell^N(t))_{0 \leq t \leq 1}$ tracking the $\ell\oith$ largest eigenvalue of the symmetric Brownian motion over the time interval $[0,1]$. The next theorem in the case $\ell =1$ is the central result of \cite{donati2012large}; it gives a large deviation principle for the leading eigenvalue process.

Let $C([0,1],\R)$ denote the space of continuous functions from $[0,1]$ to $\R$ with initial value zero, and equip this space with the topology of uniform convergence. Let $\mathcal{A}$ denote the set of $\vp \in C([0,1],\R)$ which are absolutely continuous, and such that $\vp(t) \geq 2\sqrt{t}$ for all $t \in [0,1]$.

\begin{thm} The law of the process $(\lambda_\ell(t))_{0 \leq t \leq 1}$ satisfies a large deviation principle on $C([0,1],\R)$ with speed N. The good rate function associated to this LDP is given by
\begin{align}
I_\ell(\vp)=\ell I(\vp) = 
\begin{cases}
\frac{\ell}{4} \int_0^1\left( \dot{\vp}(s) - \frac{1}{2s} \left(\vp(s) - \sqrt{ \vp(s)^2 - 4s } \right) \right)^2 ds\,, & \text{ if } \vp \in \mathcal{A} \,,\\
+ \infty & \text{ else}\,. 
\end{cases}
\label{eq:donati_rate}
\end{align}
\label{thm:donati_main}
\end{thm}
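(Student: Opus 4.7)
The plan is to extend the Donati-Martin-Ma\"ida LDP from $\ell = 1$ to general $\ell$, using as input their explicit analysis of the leading eigenvalue trajectory together with an argument that the top $\ell$ eigenvalues must undergo a joint large deviation whose cost factorizes at the exponential scale.

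First, I would recall the strategy of \cite{donati2012large} in the case $\ell = 1$. Their proof exploits the Harish-Chandra-Itzykson-Zuber formula to obtain tractable expressions for the finite-dimensional marginals of the leading eigenvalue process, which they combine with sharp Laplace asymptotics and exponential tightness in $C([0,1],\R)$ to obtain the LDP with rate function $I(\vp)$. This is the single-trajectory building block, and I would invoke it as a black box.

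Next, to promote this to the $\ell$-th largest eigenvalue, I would argue as follows. On the event $\lambda_\ell^N(t) \approx \vp(t)$ uniformly in $t$, the interlacement $\lambda_1^N(t) \geq \cdots \geq \lambda_\ell^N(t)$ forces each of the top $\ell$ trajectories to lie above the curve $\vp(t) - \eps$. In the static GOE case, the same interlacement produces the factor $\ell$ in the rate function for the $\ell$-th eigenvalue, as recalled in Remark~\ref{r:Is}. For the dynamical case, I would implement this via a two-sided comparison. For the upper bound on probabilities, I would show that the top $\ell$ trajectories jointly follow curves close to $\vp$, each contributing rate $I(\vp)$ independently at the exponential scale once one proves that the $\ell$ trajectories can be decorrelated using the HCIZ rank-$\ell$ formula (which replaces the rank-one formula used for $\ell = 1$). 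For the lower bound, I would tilt the measure using a rank-$\ell$ deterministic perturbation of the symmetric Brownian motion that drives all of $\lambda_1^N, \ldots, \lambda_\ell^N$ along $\vp$, compute the Radon-Nikodym derivative via the Cameron-Martin formula, and verify it has exponential rate $\ell I(\vp)$.

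The main obstacle is handling the mutual repulsion (Vandermonde interaction) between the top $\ell$ eigenvalues in the large deviation regime. Because Dyson Brownian motion strongly repels nearby eigenvalues, the $\ell$ trajectories cannot literally coincide; rather they must form an edge cluster just above $\vp(t)$ of vanishing width on the macroscopic scale. I would control this by introducing a small window $[\vp(t), \vp(t) + \delta]$, showing that the probability that all of $\lambda_1^N, \ldots, \lambda_\ell^N$ lie in this window is $\exp(-N\ell I(\vp) + o(N))$ for each fixed $\delta$, and then letting $\delta \downarrow 0$. The interaction term $\prod_{i < j \leq \ell} |\lambda_i - \lambda_j|$ contributes only $O(\log N)$ to the exponent (a lower-order correction) once the eigenvalues are separated by scales $\gg N^{-1}$, so it does not affect the rate. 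Finally, exponential tightness on $C([0,1],\R)$ transfers directly from the $\ell = 1$ case since $\lambda_\ell^N$ has at most the same modulus of continuity as $\lambda_1^N$ on the high-probability event that the top $\ell$ eigenvalues remain ordered and bounded, completing the LDP.
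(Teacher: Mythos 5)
Your guiding intuition—that the rate is $\ell\,I(\varphi)$ because the top $\ell$ trajectories must each pay $I(\varphi)$, and once pulled away from the bulk they essentially stop interacting with it—is exactly the heuristic the paper records. But you mischaracterize the $\ell=1$ input: the Donati-Martin--Ma\"ida argument for the leading-eigenvalue \emph{process} is not built on the Harish-Chandra--Itzykson--Zuber formula plus Laplace asymptotics for finite-dimensional marginals. It works at the level of the Dyson SDE \eqref{EDSvp} for the eigenvalues of symmetric Brownian motion, using an exponential Girsanov martingale for the lower bound and a discretization of the functional $F_N$ for the upper bound. The HCIZ route you describe belongs to the static, fixed-time setting (Ma\"ida's rank-one deformation results); it does not by itself give a pathwise LDP on $C([0,1],\R)$. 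Consequently, your upper-bound step—``decorrelate the $\ell$ trajectories using the HCIZ rank-$\ell$ formula''—is not a modification of the existing argument but an unfinished new program: you would still need multi-time marginal asymptotics, a projective-limit step, and exponential tightness, none of which are supplied.

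The modifications the paper actually makes are small and live entirely inside the SDE/Girsanov framework. For the lower bound, one tilts the $\ell$ Brownian motions $B_1,\dots,B_\ell$ driving $\lambda_1,\dots,\lambda_\ell$ in \eqref{EDSvp} all by the same drift $k_\varphi$ (not a rank-$\ell$ matrix perturbation of $H_N$); the Radon--Nikodym cost is then $\ell$ times the single-line cost, and under the tilted measure each of the tracked eigenvalues converges to $\varphi$ while the empirical measure still converges to the semicircle process, so the argument of \cite{donati2012large} goes through verbatim with the factor $\ell$. For the upper bound, one replays their decomposition into events $B_{N,{\bf i},\delta,\alpha}$, but now requires $\psi_i \in B(\varphi,\delta)$ for every $i=1,\dots,\ell$; this is what produces the prefactor $\ell$ in the functional $F_{\bf i}$, and the Vandermonde repulsion is tamed not by an $O(\log N)$ heuristic but by the explicit separation constraints $\psi_{i_k}(t)-\psi_{i_k+1}(t) > \tfrac{2}{3}\delta$ that are already present in the $\ell=1$ discretization. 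Your $\delta$-window picture is consistent with this, but you would need to carry out the discretization and the $F_{\bf i}\to \ell F$ limit rather than gesture at a decorrelation that the method does not provide.
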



\begin{rmk}
Although Donati-Martin and Ma\"ida proved the above theorem only in the case $\ell = 1$, their proof also works for arbitrary $\ell$ provided minor modifications. We include a short discussion of these minor changes in subsection~\ref{AppendixProofLDPl} for completeness. The following intuition for obtaining the general case from the $\ell=1$ case guides the proof: in order to force the $\ell\oith$ largest eigenvalue process to be above a function $\varphi > 2\sqrt{t}$, one needs to put $\lambda_k > \varphi$ for $k=1, \ldots, \ell$. The cost of moving each one of these lines is the same and equal to $I(\varphi)$ since once they are away, they essentially exert no interaction to the rest of the Dyson Brownian motion. 
\end{rmk}

In \cite{donati2012large}, Donati-Martin and Ma\"ida applied the contraction principle to their main result at the right endpoint of the leading eigenvalue process; this gave another proof of the LDP for the leading eigenvalue of a GOE matrix. We apply the contraction principle to their result at two distinct points in time, and upon rescaling, this yields a LDP for the pair of leading eigenvalues of two \emph{correlated} GOE matrices. Remark~\ref{rmk:q_and_r} below has more details. With this approach in mind, we make the following definitions. Given $q \in [0,1)$ and $x,y \in \R$, define
\begin{align}
\mathcal{C}(0 \mapsto 0; q\mapsto y) &\tri \{ \vp \in C([0,q],\R) : \vp(0) = 0,\, \vp(q) = y \} \cap \mathcal{A} \, \label{eq:C_0to0_qtoy}\\
\mathcal{C}(q \mapsto y; 1 \mapsto x) &\tri \{ \vp \in C([q,1],\R) : \vp(q) = y,\, \vp(1) = x \} \cap \mathcal{A}\label{eq:C_qtoy_1tox} \,,
\end{align} 
and write $\mathcal{C}(x,y;q)$ for the functions on $[0,1]$ expressible as a concatenation of a function in \eqref{eq:C_0to0_qtoy} and a function in \eqref{eq:C_qtoy_1tox}:
\begin{align}
\mathcal{C}(x,y;q) \tri \{ \vp \in C([0,1],\R) :  \vp(0) = 0,\, \vp(1) = x,\, \vp(q) = y\} \cap \mathcal{A}\,.
\end{align}
For fixed $q$, the map $C([0,1], \R) \ni \vp \mapsto (\vp(1), \vp(q)) \in \R^2$ is continuous, and the contraction principle implies that the rate function for the pair $(\lambda_\ell(1), \lambda_\ell(q))$ is 
\begin{align}
\ell \bar{J}(x,y;q) \tri \ell \inf_{\vp\, \in\, \mathcal{C}(x,\,y;\,q)} I(\vp) \,,
\label{eq:our_rate_1}
\end{align}
with the convention that the infimum over the empty set is $\infty$.

\begin{rmk}Given the form of the rate function $I_\ell$, we lose no generality analyzing the leading eigenvalue case, and so we take $\ell = 1$ in the rest of this subsection and in those leading up to subsection~\ref{AppendixProofLDPl}. 
\end{rmk}

Define
\begin{align}
f(t,a,b) = \frac{1}{4} \left( b - \frac{1}{2t}  \left( a - \sqrt{ a^2 - 4t} \right) \right)^2 \,,
\end{align}
so that the rate function $I$ from Theorem~\ref{thm:donati_main} is given by
\begin{align}
I(\vp) = \int_0^1 f(t, \vp(t), \dot{\vp}(t) ) dt \,
\end{align}
when $\vp \in \mathcal{A}$. Split $I(\vp)$ as
\begin{align}
I(\vp) &= \int_0^q f(t, \vp(t), \dot{\vp}(t)) dt + \int_q^1 f(t, \vp(t), \dot{\vp}(t)) dt  \hspace{2mm} \tri  \hspace{2mm}I_0^q(\vp) + I_q^1(\vp) \,,
\end{align}
and define
\begin{align}
 \bar{J}_{\,0 \,\mapsto\, 0}^{\,q\,\mapsto\, y} \,\,&\tri  \inf_{\vp\, \in \,\mathcal{C}(0 \,\mapsto \,0; \,q\,\mapsto \,y)  } I_0^q(\vp) \,. \label{eq:A_lower}  \\
\bar{J}_{\,q \,\mapsto\, y}^{\,1\,\mapsto\, x} \,\,&\tri  \inf_{\vp \in \mathcal{C}(q \,\mapsto \,y; \,1\,\mapsto \,x) } I_q^1(\vp)\,.
\label{eq:A_upper}
\end{align}

\begin{rmk} Our goal is now to write the contracted rate function $\bar{J}(x,y;q)$ defined in \eqref{eq:our_rate_1} as an explicit function of these parameters, which will allow us to prove Theorem~\ref{coro:LDP_ell} in the case $\ell =1$ after a straightforward change of variables. We do this by exhibiting a function $\psi \in \mathcal{C}(x,y;q)$ such that $ \bar{J}_{\,0 \,\mapsto\, 0}^{\,q\,\mapsto\, y} = I_0^q(\psi)$, and such that $\bar{J}_{\,q \,\mapsto\, y}^{\,1\,\mapsto\, x} = I_q^1(\psi)$. This will show $J(x,y;q) =  \bar{J}_{\,0 \,\mapsto\, 0}^{\,q\,\mapsto\, y} + \bar{J}_{\,q \,\mapsto\, y}^{\,1\,\mapsto\, x} $, and $J(x,y;q)$ will become explicit after computing $I_0^q(\psi)$ and $I_q^1(\psi)$.

We build $\psi$ using results and methods from section 6 of  \cite{donati2012large}. Their analysis determines the values of $x,y$ and $q$ for which either:
\begin{itemize}
\item[(i)] the optimal function touches the barrier $t \mapsto 2\sqrt{t}$ in the interior of the given interval, or
\item[(ii)] the (linear) solution to the Euler-Lagrange equation corresponding to \eqref{eq:donati_rate} is optimal.
\end{itemize}
\label{rmk:obvious_inf}
\end{rmk} 

We analyze \eqref{eq:A_lower} and \eqref{eq:A_upper} one at a time, starting with the former. The setting of this first variational problem is the interval $[0,q]$, and the infinite slope of the barrier $t \mapsto 2\sqrt{t}$ at the origin forces optimal functions to fall into case (i) above. The proof of Lemma~\ref{lem:optimal_J_bar} below is based on the proofs of Lemmas 6.2 and 6.3 in \cite{donati2012large}.

\begin{lem} For any $q \in [0,1)$ and $y \geq 2\sqrt{q}$, 
\begin{align}
\bar{J}_{\,0 \,\mapsto\, 0}^{\,q\,\mapsto\, y} = \frac{1}{2} \int_2^{\,y\, /\sqrt{q}} \sqrt{u^2 - 4} du \,.
\label{eq:q_variational_1}
\end{align}
\label{lem:optimal_J_bar}
\end{lem}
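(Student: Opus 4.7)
The plan is to exploit the Brownian scaling symmetry of the symmetric Brownian motion to reduce the variational problem on $[0,q]$ to the analogous one on $[0,1]$, whose resolution is essentially the content of Lemmas~6.2 and~6.3 of \cite{donati2012large}. Given $\psi \in \mathcal{C}(0 \mapsto 0;\, q \mapsto y)$, I would define the rescaled function $\wt{\psi}(s) \tri q^{-1/2}\,\psi(qs)$ for $s \in [0,1]$. Direct inspection shows that $\psi \mapsto \wt{\psi}$ is a bijection between $\mathcal{C}(0 \mapsto 0;\, q \mapsto y)$ and the analogous set $\mathcal{C}(0 \mapsto 0;\, 1 \mapsto y/\sqrt{q})$ on $[0,1]$: the boundary conditions $\wt{\psi}(0) = 0$ and $\wt{\psi}(1) = y/\sqrt{q}$ are immediate, and the barrier constraint $\psi(t) \geq 2\sqrt{t}$ on $[0,q]$ is equivalent to $\wt{\psi}(s) \geq 2\sqrt{s}$ on $[0,1]$. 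The hypothesis $y \geq 2\sqrt{q}$ ensures that the rescaled endpoint satisfies $y/\sqrt{q} \geq 2$, placing us squarely in the regime considered by Donati-Martin--Ma\"ida.

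The central computation is the invariance of the cost functional under this change of variables. Using $\dot{\psi}(qs) = q^{-1/2}\wt{\psi}\,'(s)$ and $\sqrt{\psi(qs)^2 - 4qs} = \sqrt{q}\sqrt{\wt{\psi}(s)^2 - 4s}$, the quantity $\tfrac{1}{2t}\bigl(\psi(t) - \sqrt{\psi(t)^2 - 4t}\bigr)$ evaluated at $t = qs$ becomes $q^{-1/2} \cdot \tfrac{1}{2s}\bigl(\wt{\psi}(s) - \sqrt{\wt{\psi}(s)^2 - 4s}\bigr)$. Thus the integrand of $I_0^q(\psi)$ at $t = qs$ differs from the integrand of $I_0^1(\wt{\psi})$ at $s$ by a factor of $1/q$, which is exactly compensated by $dt = q\,ds$. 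This yields $I_0^q(\psi) = I_0^1(\wt{\psi})$, and therefore
\begin{align*}
\bar{J}_{\,0 \mapsto 0}^{\,q \mapsto y} \;=\; \inf_{\wt{\psi} \,\in\, \mathcal{C}(0 \mapsto 0;\, 1 \mapsto y/\sqrt{q})} I_0^1(\wt{\psi}).
\end{align*}

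The remaining task is to evaluate this $q = 1$ infimum. This is where I expect the main obstacle to lie, and it is handled by adapting the proofs of Lemmas~6.2 and~6.3 of \cite{donati2012large}: the infinite slope of the barrier $t \mapsto 2\sqrt{t}$ at $t = 0$ forces any finite-cost admissible path to leave the origin along the barrier, so one argues, via case (i) of Remark~\ref{rmk:obvious_inf}, that the optimizer coincides with $2\sqrt{t}$ on an initial interval $[0, t^{*}]$ and then departs tangentially along an extremal of the Euler--Lagrange equation for $f$, with the release time $t^{*}$ optimized. Goodness of the rate function $I$ from Theorem~\ref{thm:donati_main} guarantees existence of a minimizer, while the explicit extremal construction, followed by computation of its action, yields the value $\tfrac{1}{2}\int_{2}^{\,y/\sqrt{q}} \sqrt{u^2 - 4}\,du$, completing the proof.
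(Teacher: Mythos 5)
The proposal is correct and takes a genuinely different route from the paper. The paper evaluates the infimum directly on the interval $[0,q]$: it introduces the auxiliary functional $K(\vp)$ obtained from $I_0^q(\vp)$ by a sign change, computes $I_0^q - K$ as an explicit integral, uses $K \geq 0$ together with the substitution $z(s) = \vp(s)/\sqrt{s}$ to obtain a lower bound, argues that any minimizer must approach the barrier as $s \to 0$ (forcing $\liminf_{\eps \to 0}\vp(\eps)/\sqrt{\eps} = 2$), and finally exhibits a piecewise-smooth minimizer achieving the bound. You instead observe the Brownian-scaling invariance: $\wt{\psi}(s) \tri q^{-1/2}\psi(qs)$ is a cost-preserving bijection between admissible paths on $[0,q]$ and admissible paths on $[0,1]$, reducing the statement to its $q = 1$ incarnation. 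That incarnation is precisely the $q=0$, $y=0$ case of Lemma~\ref{lem:donati_6.3}, so after your rescaling one may simply cite the known result rather than, as you sketch, re-derive the optimizer from the Euler--Lagrange picture. Your reduction is cleaner than the paper's re-run of the $K$-function argument on a shortened interval; the only piece you leave unstated, should you prefer to remain self-contained rather than cite, is the explicit minimizer construction and the computation of its action, which the paper records in full.
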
 

\begin{proof} Starting from
\begin{align}
I_0^q(\vp) = \frac{1}{4} \int_{0}^q \left[ \dot{\vp}(s) - \frac{1}{2s} \left( \vp(s) - \sqrt{ \vp^2(s) - 4s} \right) \right]^2 ds \,
\end{align}
define $K(\vp)$ as the above expression with a single sign change:
\begin{align}
K(\vp) \tri \frac{1}{4} \int_{0}^q \left[ \dot{\vp}(s) - \frac{1}{2s} \left( \vp(s) + \sqrt{ \vp^2(s) - 4s} \right) \right]^2 ds\,,
\end{align}
and note that
\begin{align}
I_0^q(\vp) - K(\vp) = \frac{1}{2}  \int_{0}^q \frac{1}{s} \left( \dot{\vp}(s) - \frac{\vp(s)}{2s} \right) \sqrt{\vp^2(s) - 4s} \,\, ds \,.
\end{align}
As $K$ is nonnegative, it follows that 
\begin{align}
I_0^q(\vp) \geq \frac{1}{2} \int_{0}^q \frac{1}{s} \left( \dot{\vp}(s) - \frac{\vp(s)}{2s} \right) \sqrt{\vp^2(s) - 4s} ds \,. 
\end{align}
Define $z(s) \tri \tfrac{\vp(s)}{\sqrt{s}}$. Writing the right-hand side above in terms of $z(s)$ instead of $\vp(s)$, we find that for each $\eps > 0$,
\begin{align}
I_0^q(\vp) \geq  \frac{1}{2} \int_{\eps}^q \dot{z}(s) \sqrt{ z^2(s) -4 } ds \,,
\end{align}
and making the change of variables $u = z(s)$ yields
\begin{align}
I_0^q(\vp) \geq \limsup_{\eps \to 0} \frac{1}{2} \int_{ \vp(\eps) / \sqrt{\eps} }^{\,y\,/ \sqrt{q}}  \sqrt{u^2 - 4} du\,.
\label{eq:q_variational_2}
\end{align}
We now make the claim that, for any optimizer $\vp$ of $I_0^q$, 
\begin{align}
\liminf_{\eps \to 0} \frac{ \vp(\eps) }{ \sqrt{\eps}} = 2 \,. 
\end{align}
The lower bound on the liminf is automatic from the class of functions we consider. To see that equality holds, note that $\vp$ encounters infinitely many points along the curve $t \mapsto 2\sqrt{t}$ as $\eps \to 0$. If this were not the case, a nearly identical argument to the proof of Lemma 6.1 in \cite{donati2012large} implies that the restriction of $\vp$ to a sufficiently small interval of the form $[0,\delta]$ is a linear function. But, this is impossible, as the slope of the barrier at the origin is infinite. This proves the claim. Combined with \eqref{eq:q_variational_2}, we have shown for any optimal $\vp$ that
\begin{align}
I_0^q(\vp) \geq \frac{1}{2}  \int_{2 }^{\,y\,/ \sqrt{q}}  \sqrt{u^2 - 4} du \,. 
\label{eq:optimal_J_bar_1}
\end{align}
To complete the proof, it suffices to exhibit a function achieving this lower bound. Let $\psi$ be the following piecewise-defined function:
\begin{align}
\psi(t) = 
\begin{cases}
2\sqrt{t} & 0 \leq t \leq t^* ,\\
\frac{1}{\sqrt{t^*}}t + \sqrt{t^*}& t \geq t^* ,
\end{cases}
\end{align}
with $t^*$ defined so that 
\begin{align}
\sqrt{t^*} = \tfrac{ y - \sqrt{y^2-4q}}{2} \,.
\end{align}
The above function is chosen to be continuously differentiable, and so that it has the correct value at the endpoints of $[0,q]$. We leave it to the reader to verify that equality holds in \eqref{eq:optimal_J_bar_1} for this $\psi$: using the same change of variables as above, one only needs to show that $K(\psi) \equiv 0$.  \end{proof}

We now turn to \eqref{eq:A_upper}, and in this case, a function realizing this infimum will either touch the barrier $t \mapsto 2\sqrt{t}$ on $(q,1)$ or it will be linear depending on the paramters $x,y$ and $q$. These cases are treated separately in Lemma~\ref{lem:donati_6.3} and Lemma~\ref{lem:donati_6.4}. In the following lemma, the range of parameters is such that the function realizing the infimum \eqref{eq:A_upper} touches the barrier. 

\begin{lem}[{\cite[Lemma 6.3]{donati2012large}}] For any $q \in [0,1)$, suppose the triple $(x,y,q)$ satisfies
\begin{align}
y = 2\sqrt{q} \quad \text{\rm or } \quad \left( 2\sqrt{q} < y < 1 + q \quad\text{ \rm and }\quad x \leq \tfrac{y + \sqrt{y^2 - 4q}}{2} + \tfrac{2}{y + \sqrt{y^2 - 4q}} \right)\,,
\label{eq:donati_c_1}
\end{align}
then we have
\begin{align}
\bar{J}_{\,q \,\mapsto\, y}^{\,1\,\mapsto\, x} = \frac{1}{2} \int_2^x \sqrt{u^2 - 4} du \,.
\end{align}
and the argmin of $\bar{J}_{\,q \,\mapsto\, y}^{\,1\,\mapsto\, x}$ can be written explicitly.
\label{lem:donati_6.3}
\end{lem}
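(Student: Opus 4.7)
I plan to adapt the proof of Lemma~6.3 in \cite{donati2012large} to our interval $[q, 1]$. The strategy has two parts: an explicit candidate function achieving the claimed value, and a matching lower bound via the $K$-decomposition trick already used in Lemma~\ref{lem:optimal_J_bar}.

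For the upper bound, set $\alpha = (y + \sqrt{y^2-4q})/2$, $t_1 = \alpha^2$, and $t_2 = ((x-\sqrt{x^2-4})/2)^2$. The hypothesis $y < 1+q$ forces $\sqrt{q} < \alpha < 1$, and a short calculation shows that $x \leq \alpha + 1/\alpha$ is equivalent to $q \leq t_1 \leq t_2 \leq 1$. I would define
\begin{align*}
\psi(t) =
\begin{cases}
t/\alpha + \alpha, & q \leq t \leq t_1, \\
2\sqrt{t}, & t_1 \leq t \leq t_2, \\
t/\sqrt{t_2} + \sqrt{t_2}, & t_2 \leq t \leq 1,
\end{cases}
\end{align*}
and verify that $\psi \in \mathcal{C}(q\mapsto y; 1 \mapsto x) \cap \mathcal{A}$ is $C^1$ and tangent to the barrier at $t_1$ and $t_2$. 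Using the identity $\sqrt{\vp^2 - 4t} = |ct - 1/c|$ valid for linear $\vp(t) = ct + 1/c$, I would check that $f(t, \psi, \dot\psi) \equiv 0$ on $[q, t_2]$ (the first piece corresponds to the zero-cost "backward" regime $ct < 1/c$, and the barrier piece contributes nothing), while on $[t_2, 1]$ the change of variables $u = \psi(t)/\sqrt{t}$ combined with the decomposition $I = K + (I-K)$ (noting that $K$ vanishes along the forward-tangent line $\vp(t) = ct + 1/c$ in the regime $t > 1/c^2$) yields $I_{t_2}^1(\psi) = \tfrac{1}{2}\int_2^x\sqrt{u^2-4}\,du$.

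For the matching lower bound, I would set $z(s) = \vp(s)/\sqrt{s}$ and reuse the same identity from the proof of Lemma~\ref{lem:optimal_J_bar}: for any $\vp \in \mathcal{C}(q\mapsto y; 1 \mapsto x) \cap \mathcal{A}$,
\begin{align*}
I_q^1(\vp) \geq \frac{1}{2}\int_q^1 \dot z(s)\sqrt{z^2(s)-4}\,ds.
\end{align*}
If $\vp$ contacts the barrier at some $t^* \in [q, 1]$ (automatic when $y = 2\sqrt{q}$, taking $t^* = q$), then $z(t^*) = 2$ and restricting the identity to $[t^*, 1]$ gives $I_{t^*}^1(\vp) \geq \tfrac{1}{2}\int_2^x\sqrt{u^2-4}\,du$, while $I_q^{t^*}(\vp) \geq 0$, completing the bound.

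The hard part will be ruling out barrier-avoiding minimizers in the remaining range $2\sqrt{q} < y < 1+q$, since the naive endpoint bound $\tfrac{1}{2}(F(x) - F(y/\sqrt{q}))$ with $F(u) = \int_2^u\sqrt{w^2-4}\,dw$ is strictly weaker than what we need whenever $y > 2\sqrt{q}$. I would handle this via the Euler--Lagrange analysis of \cite[Section 6]{donati2012large}: critical curves of $I_q^1$ away from the barrier satisfy $\dot\vp = (\vp \pm \sqrt{\vp^2 - 4t})/(2t)$, whose integral curves are exactly the tangent lines $\vp(t) = ct + 1/c$. The condition $x \leq \alpha + 1/\alpha$ is equivalent to $v(x) \leq 1/\alpha$, which says no single such line can join $(q, y)$ and $(1, x)$ while remaining strictly above the barrier. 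Consequently any barrier-avoiding admissible candidate must concatenate two tangent arcs joined at an interior point; pinning the two slope parameters from the boundary conditions and optimizing the resulting one-parameter family produces a cost at least $\tfrac{1}{2}\int_2^x\sqrt{u^2-4}\,du$, with equality attained exactly in the degenerate limit where the two arcs meet on the barrier. This identifies $\psi$ as the argmin and completes the proof.
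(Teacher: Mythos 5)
The paper does not reprove this lemma; it cites it directly from \cite{donati2012large}, so there is no internal proof to match against, and your proposal is essentially reconstructing the Donati--Martin--Ma\"ida argument. Your upper bound is correct: $\psi$ is admissible and $C^1$, the first two pieces are zero-cost, and the computation $I_{t_2}^1(\psi) = \tfrac14(c^2 - c^{-2}) - \log c$ with $c = (x+\sqrt{x^2-4})/2$ does evaluate to $\tfrac{1}{2}\int_2^x\sqrt{u^2-4}\,du$.

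The lower bound, however, contains a genuine gap in the step ruling out barrier-avoiding competitors. You assert that "critical curves of $I_q^1$ away from the barrier satisfy $\dot\vp = (\vp \pm \sqrt{\vp^2-4t})/(2t)$, whose integral curves are exactly the tangent lines $\vp(t) = ct + 1/c$," and then conclude that a barrier-avoiding admissible candidate "must concatenate two tangent arcs." Both assertions are wrong. The Euler--Lagrange equation for $I_q^1$ reduces to $\ddot\vp \equiv 0$ (a direct computation shows $a_t + a\,a_\vp \equiv 0$ for $a(t,\vp) = \tfrac{1}{2t}(\vp - \sqrt{\vp^2-4t})$), so the extremals are \emph{all} affine functions, not just tangent lines; the ODEs you wrote are merely the loci where the integrand of $I$ (resp.\ of $K$) vanishes. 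Moreover, a general barrier-avoiding admissible function is not a concatenation of two tangent arcs, so "pinning two slope parameters and optimizing a one-parameter family" does not bound the infimum over that class.

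The way to repair this is the one \cite{donati2012large} actually uses, hinted at by the paper's own remark about their Lemma~6.1: since $I$ is a good rate function the infimum over $\mathcal{C}(q\mapsto y;\,1\mapsto x)\cap\mathcal{A}$ is attained, and any minimizer that stays strictly above the barrier on $(q,1)$ solves $\ddot\vp = 0$ and hence must equal $g_{x,y,q}$ from \eqref{eq:EL_sol}. But under \eqref{eq:donati_c_1} with $x<\alpha+1/\alpha$ (where $\alpha = (y+\sqrt{y^2-4q})/2$), this line dips below the barrier: $L(t) = t/\alpha+\alpha$ is tangent to $2\sqrt{t}$ at $t_1 = \alpha^2\in(q,1)$ and satisfies $L(q)=y=g_{x,y,q}(q)$ and $L(1)=\alpha+1/\alpha>x=g_{x,y,q}(1)$, so $g_{x,y,q}(t_1)<L(t_1)=2\sqrt{t_1}$. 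Thus the minimizer touches the barrier at some $t^*\in[q,1)$, and your $K$-decomposition applied on $[t^*,1]$ gives the lower bound. (The edge case $x=\alpha+1/\alpha$ makes $g_{x,y,q}=L$ tangent at $t_1$ and so is already barrier-touching.)
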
 
It is shown in the proof of Lemma~6.1 of \cite{donati2012large} that the function
\begin{align}
g_{x,y,q}(t) \tri \frac{ x- y}{1-q} (t - q) + y \,
\label{eq:EL_sol}
\end{align}
solves the Euler-Lagrange equations associated to \eqref{eq:donati_rate}. In the next lemma, the parameters $x,y$ and $q$ are constrained so that \eqref{eq:EL_sol} realizes the infimum \eqref{eq:A_upper}. 

\begin{lem}[{\cite[Lemma 6.4 and Lemma 6.5]{donati2012large}}] For any $q \in [0,1)$, suppose the triple $(x,y,q)$ satisfies the constraint 
\begin{align}
y \geq 1+ q \quad \text{\rm or } \quad \left( 2\sqrt{q} < y < 1 + q \quad\text{\rm and }\quad x > \tfrac{y + \sqrt{y^2 - 4q}}{2} + \tfrac{2}{y + \sqrt{y^2 - 4q}} \right)\,.
\label{eq:donati_c_2}
\end{align}
Then, we have $\bar{J}_{\,q \,\mapsto\, y}^{\,1\,\mapsto\, x} = I_q^1(g_{x,y,q})$, where the linear function $g_{x,y,q}$ is defined in \eqref{eq:EL_sol}. 
\label{lem:donati_6.4}
\end{lem}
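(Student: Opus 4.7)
The plan is to show, under hypothesis \eqref{eq:donati_c_2}, that the linear function $g_{x,y,q}$ belongs to $\mathcal{C}(q\mapsto y;\,1\mapsto x)$ and realizes the infimum $\bar{J}_{q\mapsto y}^{1\mapsto x}$. The first step is admissibility: one checks $g_{x,y,q}(t) \geq 2\sqrt{t}$ throughout $[q,1]$. Writing $g(t) = mt+c$ with $m=(x-y)/(1-q)$, squaring reduces the inequality to the quadratic $m^2 t^2 + (2mc-4)t + c^2 \geq 0$, whose discriminant equals $16(1-mc)$. Introducing $s_+ = (y+\sqrt{y^2-4q})/2$, the larger root of $s^2 - ys + q = 0$ parametrizing the tangent from $(q,y)$ to $t\mapsto 2\sqrt t$, the two sub-cases of \eqref{eq:donati_c_2} correspond precisely to the regimes where either the tangent touch point $t_\star = s_+^2$ exceeds $1$ (when $y\geq 1+q$), or it lies in $(q,1)$ but the tangent value at $t=1$ satisfies $s_+ + 1/s_+ < x$ (when $2\sqrt q < y < 1+q$). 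A direct slope comparison in either case yields that $g_{x,y,q}$ lies on or above $2\sqrt t$ throughout $[q,1]$, so $g_{x,y,q}\in\mathcal{A}$.

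Next, I would verify that $g_{x,y,q}$ is an extremal of $I_q^1$. After the substitution $u(t) = \vp(t)/\sqrt t$, the Lagrangian becomes $L = (2t\dot u + \sqrt{u^2-4})^2/(16t)$, and a direct computation shows the Euler--Lagrange equation reduces to $u = 4t\dot u + 4t^2\ddot u$, which every affine $\vp(t) = mt+c$ satisfies (as is already used in the discussion preceding Lemma~\ref{lem:donati_6.3}). Hence $g_{x,y,q}$ is the unique affine extremal with the prescribed boundary values.

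Finally, to upgrade ``extremal'' to ``global minimizer'', I would use that $I$ is a good rate function to obtain some minimizer $\vp^*\in\mathcal{C}(q\mapsto y;\,1\mapsto x)$. Either $\vp^*(t)>2\sqrt t$ throughout $(q,1)$, in which case $\vp^*$ satisfies the unconstrained Euler--Lagrange equation and hence coincides with the affine $g_{x,y,q}$; or $\vp^*$ touches the barrier somewhere inside $(q,1)$, in which case a bang-bang classification parallel to that in Lemma~6.1 of \cite{donati2012large} and the construction underlying Lemma~\ref{lem:donati_6.3} forces $\vp^*$ to be a concatenation of affine pieces tangent to $t\mapsto 2\sqrt t$ with barrier arcs. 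A direct matching argument then shows that any such concatenation living in $[q,1]$ must pass through $(t_\star, 2\sqrt{t_\star})$ with $t_\star = s_+^2$ and reach at $t=1$ only values $\leq s_+ + 1/s_+$; by \eqref{eq:donati_c_2} this is incompatible with $\vp^*(1) = x$ (or, when $y\geq 1+q$, no such $t_\star\in[q,1]$ exists at all), so only the first case survives and $\vp^* = g_{x,y,q}$. The main obstacle is this third step: establishing the bang-bang structure and then using the sharp threshold separating \eqref{eq:donati_c_1} and \eqref{eq:donati_c_2} to rule out barrier-touching minimizers. The bang-bang classification itself hinges on the decomposition $I(\vp) = K(\vp) + \frac{1}{2}\int s^{-1}(\dot\vp - \vp/(2s))\sqrt{\vp^2-4s}\,ds$ already used in the proof of Lemma~\ref{lem:optimal_J_bar}, together with the observation that tangent lines to $t\mapsto 2\sqrt t$ are zero-cost trajectories of $L$.
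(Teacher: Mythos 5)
The paper offers no proof of Lemma~\ref{lem:donati_6.4}: it is imported wholesale from \cite[Lemmas~6.4 and~6.5]{donati2012large}, and the only original contribution in the paper is the remark immediately following the statement, noting that the non-strict boundary case $y = 1+q$ can be included because the tangent touch point $t_\star = s_+^2$ then equals $1$. So there is no "paper's proof" to compare against; what follows assesses your reconstruction on its own terms.

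Your outline is the natural one (admissibility, Euler--Lagrange extremality, upgrade to global minimizer). Steps~1 and~2 are fine in spirit, with one caveat on step~1: when $mc<1$ the discriminant $16(1-mc)$ is positive and the quadratic $m^2t^2+(2mc-4)t+c^2$ genuinely dips negative somewhere; the actual claim is that the negative window sits outside $[q,1]$ precisely under \eqref{eq:donati_c_2}, and this is what has to be verified rather than a bare slope comparison. Step~3 is, as you already flag, where the proposal is incomplete. The assertion that a barrier-touching minimizer must be a concatenation of tangent segments and barrier arcs is not a consequence of anything you have set up; it requires (a) showing minimizers satisfy the unconstrained Euler--Lagrange equation on the open set where they are strictly above $2\sqrt t$, hence are affine there, and (b) showing the junctions with the barrier are tangential (a $C^1$-regularity or cost-comparison argument). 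Pointing at "a bang-bang classification parallel to Lemma~6.1 of \cite{donati2012large}" names the missing ingredient but does not supply it. Once the bang-bang structure is in hand, your matching argument is sound: a tangent leaving the barrier at $t_1\in[t_\star,1)$ reaches $\sqrt{t_1}+1/\sqrt{t_1}$ at $t=1$, which is decreasing in $t_1$ on $(0,1)$, so the maximum attainable value is $s_+ + 1/s_+$, and \eqref{eq:donati_c_2} rules this out. But as written the proposal is a sketch with one genuine gap, exactly the one you identify.
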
 

We remark that while in \cite{donati2012large}, the first inequality $y \geq 1+q$ in \eqref{eq:donati_c_2} is presented as strict, one easily sees that when $y = 1+q$, the linear function only touches the barrier at the endpoint $t=1$ and hence does not feel its presence.

\subsection{Making the rate function explicit} 
 
The next corollary follows directly from Remark~\ref{rmk:obvious_inf} and Lemmas~\ref{lem:optimal_J_bar} -- \ref{lem:donati_6.4}.

\begin{coro} For $q \in [0,1)$, the pair $(\lambda_1^N(1), \lambda_1^N(q))$ obeys a large deviation principle with good rate function
\begin{align}
\bar{J}(x,y;q) = 
\begin{dcases}
 \frac{1}{2}\int_{2 }^{\,y\,/ \sqrt{q}}  \sqrt{u^2 - 4} du + \frac{1}{2}\int_2^x \sqrt{u^2 - 4} du & \text{ if } \,\, (x,y,q) \,\, \text{satisfies} \quad\eqref{eq:donati_c_1} \, \\
\frac{1}{2}  \int_{2 }^{\,y\,/ \sqrt{q}}  \sqrt{u^2 - 4} du + I_q^1(g_{x,y,q}) & \text{ if }\,\, (x,y,q) \,\, \text{satisfies}\quad\eqref{eq:donati_c_2} \,,
\end{dcases}
\label{eq:SBM_LDP}
\end{align}
where $g_{x,y,q}$ is the linear function defined in \eqref{eq:EL_sol}. 
\label{coro:SBM_LDP}
\end{coro}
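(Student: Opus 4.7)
The plan is to obtain Corollary~\ref{coro:SBM_LDP} as a direct application of the contraction principle to the Donati-Martin--Ma\"ida LDP (Theorem~\ref{thm:donati_main}) for the leading eigenvalue process $(\lambda_1^N(t))_{0 \leq t \leq 1}$. The evaluation map $\Phi : C([0,1],\R) \to \R^2$ defined by $\Phi(\vp) = (\vp(1), \vp(q))$ is continuous with respect to the topology of uniform convergence, so the contraction principle immediately yields an LDP for $(\lambda_1^N(1), \lambda_1^N(q))$ with good rate function $\bar J(x,y;q) = \inf_{\vp \in \mathcal C(x,y;q)} I(\vp)$, where $I$ is the rate function from \eqref{eq:donati_rate} and the class $\mathcal C(x,y;q)$ is the one defined just before \eqref{eq:our_rate_1}. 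The task then reduces to evaluating this variational problem explicitly.

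The key observation is that $I$ splits additively as $I = I_0^q + I_q^1$, with the two summands depending only on the restrictions of $\vp$ to $[0,q]$ and $[q,1]$ respectively, while the barrier constraint $\vp(t) \geq 2\sqrt t$ is local in $t$. Any $\vp \in \mathcal C(x,y;q)$ restricts to elements of $\mathcal C(0\mapsto 0; q\mapsto y)$ and $\mathcal C(q\mapsto y; 1\mapsto x)$, and conversely any two such pieces glue continuously at $(q,y)$ to give an element of $\mathcal C(x,y;q)$ (absolute continuity is preserved under such gluing at a single point). Therefore
\[
\bar J(x,y;q) \;=\; \bar J_{\,0\,\mapsto\,0}^{\,q\,\mapsto\, y} + \bar J_{\,q\,\mapsto\, y}^{\,1\,\mapsto\, x}.
\]
The first summand is computed by Lemma~\ref{lem:optimal_J_bar}, giving $\tfrac12 \int_2^{y/\sqrt q}\sqrt{u^2-4}\,du$. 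For the second summand, the trichotomy analyzed in Lemmas~\ref{lem:donati_6.3} and \ref{lem:donati_6.4} exhausts the parameter region: under condition \eqref{eq:donati_c_1} the optimizer touches the barrier and the infimum equals $\tfrac12\int_2^x\sqrt{u^2-4}\,du$; under condition \eqref{eq:donati_c_2} the optimizer is the linear Euler--Lagrange solution $g_{x,y,q}$ and the infimum is $I_q^1(g_{x,y,q})$. Plugging in yields the two cases in \eqref{eq:SBM_LDP}.

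The main technical point to verify carefully is that the additive split of the infimum is legitimate, i.e.\ that the constraint $\vp \in \mathcal A$ globally is equivalent to its restrictions lying in the corresponding local classes. This requires only that the barrier $t \mapsto 2\sqrt t$ be checked pointwise and that continuity at $t=q$ be matched, both of which are immediate; one must also note that the conditions \eqref{eq:donati_c_1} and \eqref{eq:donati_c_2} together with $y \geq 2\sqrt q$ cover all relevant parameter values, and that if $y < 2\sqrt q$ the constraint set $\mathcal C(x,y;q)$ is empty, so $\bar J(x,y;q) = +\infty$ (consistent with the admissibility requirement $\vp(q) \geq 2\sqrt q$). Once these bookkeeping checks are made, the statement of the corollary follows by assembling Lemmas~\ref{lem:optimal_J_bar}--\ref{lem:donati_6.4} in the two cases, and goodness of the rate function is inherited from the contraction of the good rate function $I$ under the continuous map $\Phi$.
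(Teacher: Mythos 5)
Your proposal is correct and follows the paper's intended argument: the paper signals (in Remark~\ref{rmk:obvious_inf} and the text just before the corollary) that the result comes from decomposing the contracted infimum over $\mathcal{C}(x,y;q)$ into the two sub-infima $\bar{J}_{\,0\,\mapsto\,0}^{\,q\,\mapsto\,y}$ and $\bar{J}_{\,q\,\mapsto\,y}^{\,1\,\mapsto\,x}$ by gluing minimizers at $t=q$, then invoking Lemmas~\ref{lem:optimal_J_bar}--\ref{lem:donati_6.4}. Your explicit product-structure justification of the additive split, together with the bookkeeping that \eqref{eq:donati_c_1} and \eqref{eq:donati_c_2} exhaust the parameter region and that $\mathcal{C}(x,y;q)=\emptyset$ when $y<2\sqrt{q}$, is exactly what is needed and matches the paper's reasoning.
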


\begin{rmk} Rescale all entries of $H_N(q)$, the symmetric Brownian motion at time $q$, so that each diagonal entry has variance $2/N$, and observe that this rescaled matrix and $H_N(1)$ form a pair of correlated GOE matrices. This pair has the law described by \eqref{eq:GOE_joint} provided $q$ is chosen well in terms of $r$. Applying the contraction principle once more, one finds the rate function for the pair of leading eigenvalues associated to the GOE matrix pair is given by
\begin{align}
J_r(x,y) \tri \bar{J}(x, \sqrt{q}y;q ) \,,
\label{eq:rate_fn_relation}
\end{align}
with the relation $q = r^{2p-4}$ obtained by matching variances, and considered as part of the above definition. 
\label{rmk:q_and_r}
\end{rmk}

We  have not computed the integrals in \eqref{eq:SBM_LDP} above because it is convenient to first make the change of variables \eqref{eq:rate_fn_relation}. Before doing this computation, we show in Lemma~\ref{lem:new_cond} below that the constraints on $(x,y,q)$ given in \eqref{eq:SBM_LDP} transform nicely. 

Let us make an observation about the function $v$ which we use several times in the proof of this lemma: for $q \in [0,1)$, we have 
\begin{align}
a = (1 + q) / \sqrt{q} \quad\text{ if and only if }\quad  v(a) = 1/\sqrt{q} \,,
\label{eq:v_observation}
\end{align}
and moreover, as $v$ is strictly increasing on its domain, \eqref{eq:v_observation} holds when we replace the two equalities by two of the same inequality. 

\begin{lem} Consider $x,y > 2$ and $q \in [0,1)$, and recall the function $v$ defined in \eqref{eq:v_def}. The constraint on the parameters $(x,y,q)$ formed by making the substitution $y \mapsto \sqrt{q} y$ in \eqref{eq:donati_c_1} is equivalent to $v(x)v(y) \leq 1 /\sqrt{q}$. Likewise, the constraint formed by making this substitution in \eqref{eq:donati_c_2} is equivalent to $v(x)v(y) > 1/\sqrt{q}$.
 \label{lem:new_cond}
 \end{lem}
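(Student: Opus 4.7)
The proof plan rests on one algebraic identity: for any $a > 2$, $v(a)$ and $1/v(a)$ are the two roots of $t^2 - at + 1 = 0$, so $a = v(a) + 1/v(a)$ and $v(a) \cdot 1/v(a) = 1$. Moreover, if $w \in (0,1)$, the unique $a > 2$ satisfying $a = w + 1/w$ has $v(a) = 1/w$, because $\sqrt{a^{2}-4} = 1/w - w$ on this regime. In particular, $a \leq w + 1/w$ for $a > 2$ and $w \in (0,1)$ is equivalent to $v(a) \leq 1/w$, since $v$ is strictly increasing on $[2,\infty)$.

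First I would substitute $y \mapsto \sqrt{q}\,y$ into the range constraint $2\sqrt{q} < \sqrt{q}\,y < 1+q$ appearing in \eqref{eq:donati_c_1}. This simplifies to $2 < y < (1+q)/\sqrt{q}$, which by the observation \eqref{eq:v_observation} is exactly $1 < v(y) < 1/\sqrt{q}$. Next I would substitute the same change into the bound on $x$. Using $\sqrt{qy^{2}-4q} = \sqrt{q}\sqrt{y^{2}-4}$ and $(y+\sqrt{y^{2}-4})/2 = v(y)$, the upper bound on $x$ becomes
\begin{align*}
x \,\leq\, \sqrt{q}\,v(y) + \frac{1}{\sqrt{q}\,v(y)}.
\end{align*}
Setting $w = \sqrt{q}\,v(y)$, the range constraint just established places $w$ in $(\sqrt{q},1)$, so the algebraic identity of the first paragraph applies and the bound is equivalent to $v(x) \leq 1/w = 1/(\sqrt{q}\,v(y))$, i.e.\ $v(x)v(y) \leq 1/\sqrt{q}$.

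To finish the equivalence for \eqref{eq:donati_c_1}, I would note that $v(x)v(y) \leq 1/\sqrt{q}$ together with $x,y > 2$ (hence $v(x),v(y) > 1$) forces $v(y) < 1/\sqrt{q}$, so the range constraint $y < (1+q)/\sqrt{q}$ is automatically built in and needs not be carried separately. For \eqref{eq:donati_c_2}, I would split into the two given alternatives: the first alternative $\sqrt{q}\,y \geq 1+q$ translates via \eqref{eq:v_observation} to $v(y) \geq 1/\sqrt{q}$, which with $v(x) > 1$ yields $v(x)v(y) > 1/\sqrt{q}$ strictly; the second alternative is handled by the same inversion argument as in the previous paragraph, but with the inequality on $x$ reversed, producing $v(x) > 1/(\sqrt{q}\,v(y))$. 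Conversely, $v(x)v(y) > 1/\sqrt{q}$ with $x,y > 2$ lands in one of these two alternatives according to whether $v(y)$ exceeds or lies below $1/\sqrt{q}$.

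No genuine obstacle is expected: the content is purely algebraic once the identity $v^{-1}(w) = w + 1/w$ (extended to $w \in (0,1]$ via $v(w+1/w) = 1/w$) is isolated. The mildly non-obvious point, and the one I would be most careful with, is checking that the intermediate constraint $2 < y < (1+q)/\sqrt{q}$ is absorbed by the single inequality $v(x)v(y) \leq 1/\sqrt{q}$ on the regime $x,y > 2$, so that the dichotomy between \eqref{eq:donati_c_1} and \eqref{eq:donati_c_2} matches exactly the dichotomy between $v(x)v(y) \leq 1/\sqrt{q}$ and $v(x)v(y) > 1/\sqrt{q}$, with $1/\sqrt{q} = |r|^{2-p}$ via the identification $q = r^{2p-4}$ of Remark~\ref{rmk:q_and_r}.
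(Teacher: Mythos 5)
Your proof is correct and follows essentially the same approach as the paper: translate the range constraint on $y$ via monotonicity of $v$, observe that the endpoint $\sqrt{q}v(y) + (\sqrt{q}v(y))^{-1}$ has the form $w + 1/w$, and invert via $v$. The one cosmetic difference is that you apply the $y$-range constraint ($w = \sqrt{q}\,v(y) < 1$) \emph{before} analyzing the $x$-bound, which lets you read off $v(x)\leq 1/w$ directly; the paper instead first derives the two alternatives ($\sqrt{q}v(y) \leq 1/v(x)$ or $\sqrt{q}v(y) \geq v(x)$) and then rules one out using the $y$-range, but the content is identical.
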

 
 \begin{proof} We write the first constraint $\eqref{eq:donati_c_1}$ after making the substitution $y \mapsto \sqrt{q}y$:
\begin{align}
2 < y < (1+q)/\sqrt{q} \quad\text{ and }\quad x \leq \sqrt{q} v(y) + \left(\sqrt{q} v(y)\right)^{-1}\,.
\label{eq:first_condition}
\end{align}
Consider the constraint on the right involving $x$: if we set the inequality to an equality and solve for $\sqrt{q}v(y)$, we find $v(x)$ and $1/v(x)$ as possible solutions. Relaxing this back to an inequality, we see this latter constraint is equivalent to either \hyperlink{eq:new_cond_1}{(i)} or \hyperlink{eq:new_cond_2}{(ii)} holding:
\begin{itemize}
\item[\hypertarget{eq:new_cond_1}{(i)}]  $v(y) \geq v(x) / \sqrt{q}$.
\item[\hypertarget{eq:new_cond_2}{(ii)}] $v(x) v(y) \leq 1/\sqrt{q}$.
\end{itemize} 
But if \hyperlink{eq:new_cond_1}{(i)} held, we could combine \eqref{eq:v_observation} with the inequality $v(y) \geq v(x) / \sqrt{q} \geq 1/\sqrt{q}$ to contradict the left-most constraint on $y$ in \eqref{eq:first_condition}. Thus  \eqref{eq:first_condition} implies \hyperlink{eq:new_cond_2}{(ii)}.

As \hyperlink{eq:new_cond_2}{(ii)} gives the right-most constraint in \eqref{eq:first_condition}, note also that \eqref{eq:v_observation} tells us $x,y > 2$ implies  $v(x), v(y) > 1$. Using this observation on  \hyperlink{eq:new_cond_2}{(ii)} gives $v(x), v(y) < 1/\sqrt{q}$, and another application of \eqref{eq:v_observation} implies $x, y < (1+q)/\sqrt{q}$, so that \hyperlink{eq:new_cond_2}{(ii)}  implies the left-most constraint in \eqref{eq:first_condition}, and hence that \hyperlink{eq:new_cond_2}{(ii)} implies \eqref{eq:first_condition}, which verifying the lemma for \eqref{eq:donati_c_1}.

Turning to \eqref{eq:donati_c_2}, make the same substitution:
\begin{align}
y \geq (1+ q)/ \sqrt{q} \quad \text{ or } \quad \left( 2 < y < (1 + q)/\sqrt{q} \quad\text{\rm and }\quad x > \sqrt{q}v(y) + \left( \sqrt{q} v(y) \right)^{-1} \right)\,.
\label{eq:second_condition}
\end{align}
The simpler constraint $y \geq (1+q) /\sqrt{q}$ implies $v(x)v(y) > 1/\sqrt{q}$: by \eqref{eq:v_observation}, it gives $v(y) \geq 1/\sqrt{q}$, and we use this with $v(x) > 1$. Thus we may now assume the more complicated condition in \eqref{eq:second_condition} in parentheses holds. Examining the lower bound on $x$ in \eqref{eq:second_condition}, the reasoning that gave us  \hyperlink{eq:new_cond_1}{(i)} and  \hyperlink{eq:new_cond_2}{(ii)} above implies that the bound on $x$ is equivalent to \hyperlink{eq:new_cond_3}{(iii)} and \hyperlink{eq:new_cond_4}{(iv)} holding:
\begin{itemize}
\item[\hypertarget{eq:new_cond_3}{(iii)}]  $v(y) < v(x) / \sqrt{q}$.
\item[\hypertarget{eq:new_cond_4}{(iv)}] $v(y)v(x) > 1/\sqrt{q}$. 
\end{itemize} 
We have shown \eqref{eq:second_condition} implies \hyperlink{eq:new_cond_4}{(iv)}, so to verify the lemma for \eqref{eq:donati_c_2}, it remains to show the reverse implication. In this case, we can suppose that $2 < y < (1+q) /\sqrt{q}$ holds in addition to \hyperlink{eq:new_cond_4}{(iv)}. Applying \eqref{eq:v_observation} once more, we have $v(y) < 1/\sqrt{q}$ and hence that $v(y) < v(x) / \sqrt{q}$, i.e. that  \hyperlink{eq:new_cond_3}{(iii)} holds. The inequalities  \hyperlink{eq:new_cond_4}{(iii)} and  \hyperlink{eq:new_cond_4}{(iv)} together are equivalent to the desired lower bound on $x$ appearing in \eqref{eq:second_condition}, and the proof is finished. \end{proof}

Using Lemma~\ref{lem:new_cond}, we can now prove Theorem~\ref{coro:LDP_ell} in the case $\ell =1$ as a corollary of the above work. In the proof of this corollary, we describe the computations producing the expressions \eqref{eq:leading_rate_1} and \eqref{eq:leading_rate_2}. 

\begin{coro} Consider two correlated GOE matrices $\hat{G}^1, \hat{G}^2$ distributed as in \eqref{eq:GOE_joint} with $r \in [0,1)$. The pair $(\eta^1,\eta^2)$ of leading eigenvalues associated to these matrices obeys a LDP with speed $N$ and rate function
\begin{align}
\mathscr{I}_r(x,y) = 
\begin{dcases} 
J(x,y) &\quad \text{ if } v(x)v(y) \leq |r|^{2-p} \text{ and $x,y >2$,\, } \text{ or if $x=2$ or $y =2$\,,}\\
J_r(x,y) &\quad \text{ if } v(x)v(y) \geq |r|^{2-p} \text{ and $x,y >2$\,,} \\
\infty &\quad \text{ otherwise,}
\end{dcases}
\label{eq:leading_rate}
\end{align}
where $v$ is the function defined in \eqref{eq:v_def}, and where $J$ and $J_r$ are given in \eqref{eq:leading_rate_1} and \eqref{eq:leading_rate_2}.
\label{coro:GOE_corr_LDP}
\end{coro}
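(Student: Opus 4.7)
The plan is to deduce Corollary~\ref{coro:GOE_corr_LDP} from Corollary~\ref{coro:SBM_LDP} by one application of the contraction principle, followed by a direct evaluation of the integrals in \eqref{eq:SBM_LDP}. Following Remark~\ref{rmk:q_and_r}, I set $q = r^{2p-4}$, so that $\sqrt{q} = r^{p-2}$ (recall $r \in [0,1)$). Rescaling $H_N(q)$ by $1/\sqrt{q}$ produces a matrix with diagonal variance $2/N$ and off-diagonal variance $1/N$, whose entry-wise correlation with $H_N(1) \sim \goe_N$ equals $\sqrt{q}$; this reproduces the joint law \eqref{eq:GOE_joint} for nonnegative $r$, so $(\hat{G}^1,\hat{G}^2) \stackrel{d}{=} (H_N(1),\, H_N(q)/\sqrt{q})$ and consequently $(\eta^1,\eta^2) \stackrel{d}{=} (\lambda_1^N(1),\, \lambda_1^N(q)/\sqrt{q})$. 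Applying the contraction principle to the continuous bijection $(a,b)\mapsto(a,b/\sqrt{q})$ yields an LDP for $(\eta^1,\eta^2)$ with rate function $\mathscr{I}_r(x,y) = \bar{J}(x,\sqrt{q}\,y;\,q)$.

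Next I match the parameter regions of the two formulas. Lemma~\ref{lem:new_cond} shows that under $y \mapsto \sqrt{q}\,y$ the condition \eqref{eq:donati_c_1} becomes $\{v(x)v(y) \leq 1/\sqrt{q}\} = \{v(x)v(y) \leq |r|^{2-p}\}$ and \eqref{eq:donati_c_2} becomes $\{v(x)v(y) \geq |r|^{2-p}\}$, exactly reproducing the branches of \eqref{eq:leading_rate}. On the first branch, \eqref{eq:SBM_LDP} collapses immediately, via \eqref{eq:GOE_rate_general}, to $\tfrac{1}{2}\int_2^y \sqrt{u^2-4}\,du + \tfrac{1}{2}\int_2^x \sqrt{u^2-4}\,du = I_1(x;1) + I_1(y;1) = J(x,y)$. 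The boundary cases $x=2$ or $y=2$ lie in this branch since $v(2)=1$, and $J$ reduces correctly there because $I_1(2;1)=0$; the complement $\{x<2\}\cup\{y<2\}$ falls outside the support of the almost-sure limit $(2,2)$ of the pair and so carries infinite rate, matching the third line of \eqref{eq:leading_rate}.

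The main computational obstacle is the second branch, where one must evaluate $I_q^1(g_{x,\sqrt{q}\,y,q})$ along the linear Euler--Lagrange minimizer $g(t) = \tfrac{x-\sqrt{q}\,y}{1-q}(t-q) + \sqrt{q}\,y$ furnished by Lemma~\ref{lem:donati_6.4}. Even though $g$ is linear, the integrand still carries the barrier factor $\sqrt{g(t)^2 - 4t}$. Mimicking the substitution in the proof of Lemma~\ref{lem:optimal_J_bar}, I set $z(t) = g(t)/\sqrt{t}$ (so $z(q)=y$, $z(1)=x$); a short algebraic manipulation rewrites the integrand as
\begin{align*}
\tfrac{1}{4}\left(\dot{g}(t) - \tfrac{1}{2t}\bigl(g(t) - \sqrt{g(t)^2-4t}\bigr)\right)^{\!2}
\;=\; \frac{t\,\dot{z}(t)^2}{4} \;+\; \frac{\dot{z}(t)\sqrt{z(t)^2-4}}{4} \;+\; \frac{z(t)^2-4}{16\,t}.
\end{align*}
The middle term integrates via $u=z(t)$ to $\tfrac{1}{4}\int_y^x\sqrt{u^2-4}\,du = \tfrac{1}{2}I_1(x;1) - \tfrac{1}{2}I_1(y;1)$. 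Writing $g(t)=\alpha t + \beta$ with $\alpha = (x - \sqrt{q}\,y)/(1-q)$ and $\beta = (\sqrt{q}\,y - qx)/(1-q)$, the outer two pieces reduce to elementary $t$-integrals summing to $\tfrac{\alpha^2(1-q)}{8} + \tfrac{\beta^2(1-q)}{8q} + \tfrac{\log q}{4}$, which upon expansion becomes $\tfrac{(1+q)(x^2+y^2)}{8(1-q)} - \tfrac{\sqrt{q}\,xy}{2(1-q)} + \tfrac{\log q}{4}$. Finally, adding the separately computed $[0,q]$ piece $I_1(y;1)$ from \eqref{eq:SBM_LDP} promotes the square-root integrals to $\tfrac{1}{2}(I_1(x;1)+I_1(y;1))$, and substituting $q=r^{2p-4}$, $\sqrt{q} = r^{p-2}$ and $\tfrac{\log q}{4} = \tfrac{1}{2}\log r^{p-2}$ identifies the total with $J_r(x,y)$ in \eqref{eq:leading_rate_2}. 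This elementary but involved bookkeeping is where essentially all the work sits; the remaining steps are direct invocations of the inputs cited above.
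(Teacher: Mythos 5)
Your proposal is correct and follows the same overall strategy as the paper's proof: apply the contraction principle (Remark~\ref{rmk:q_and_r}), translate the parameter regions via Lemma~\ref{lem:new_cond}, and evaluate the resulting integrals. The main point of departure is the evaluation of $I_q^1(g)$ on the second branch. The paper uses the substitution $u(s) = g(s) + \sqrt{g^2(s) - 4s}$, which turns the integral into a rational one in $u$ and then evaluates it at the endpoints $2\sqrt{q}\,v(y)$ and $2v(x)$. You instead set $z(t) = g(t)/\sqrt{t}$ (as in Lemma~\ref{lem:optimal_J_bar}) and expand the integrand into three pieces $\tfrac{t\dot z^2}{4} + \tfrac{\dot z\sqrt{z^2-4}}{4} + \tfrac{z^2-4}{16t}$; the middle term is an exact differential producing $\tfrac12(I_1(x;1)-I_1(y;1))$ directly, and the remaining two terms are elementary polynomials in $\alpha,\beta,t$. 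I verified the resulting identities $\tfrac{\alpha^2(1-q)}{8}+\tfrac{\beta^2(1-q)}{8q}=\tfrac{(1+q)(x^2+y^2)}{8(1-q)}-\tfrac{\sqrt q\,xy}{2(1-q)}$, so both routes produce $J_r$. Yours is arguably cleaner in that the appearance of the $I_1$ terms is structurally transparent rather than emerging at the very end.

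One small caveat, which the paper's own proof shares: the claim that the boundary cases $x=2$ or $y=2$ ``lie in the first branch since $v(2)=1$'' is not a valid deduction. Having $v(x)=1$ does not by itself force $v(x)v(y)\leq |r|^{2-p}$ when $y$ is large, and in that regime condition \eqref{eq:donati_c_2} applies with the linear minimizer, giving $J_r$ rather than $J$. (By the exchangeability of $(\hat G^1,\hat G^2)$ the rate function is symmetric, and continuity from $x\downarrow 2$ also yields $J_r(2,y)$ when $v(y)>|r|^{2-p}$.) This is an imprecision inherited from the statement of \eqref{eq:donati_c_1}/\eqref{eq:donati_c_2}, and it is immaterial to the paper because all applications have $\gamma_p|u|>2$, i.e., strictly interior $x,y>2$; but if you want to be careful you should either restrict the corollary to $x,y>2$ or record that the $x=2$ or $y=2$ case follows the same dichotomy $v(x)v(y)\lessgtr |r|^{2-p}$.
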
 

\begin{proof} Let $x,y >2$, and consider $r \in [0,1)$, making the identification $q = r^{2p-4}$ throughout this proof. We first handle the easier regime $v(x)v(y) \leq \sqrt{q}$: by Remark~\ref{rmk:q_and_r}, the rate function $\mathscr{I}_r(x,y)$ in this case is obtained by making the substitution $y \mapsto \sqrt{q} y$ in the first line of \eqref{eq:SBM_LDP}. Recalling \eqref{eq:GOE_rate_general}, this is evidently $I_1(x;1) + I_1(y;1)$ in accordance with \eqref{eq:leading_rate_1}. 

Consider now the more difficult regime $v(x) v(y) \geq \sqrt{q}$. The same reasoning tells us that the rate function in this case is obtained by making the same substitution $y \mapsto \sqrt{q} y$ in the expression
\begin{align}
I_1( y / \sqrt{q};1) + I_q^1( g_{x,y,q})\,,
\label{eq:coro_gcl1}
\end{align}
where $g(s) \equiv g_{x,y,q}(s)$ is the linear function defined in \eqref{eq:EL_sol}. The integral 
\begin{align}
I_q^1(g) = \frac{1}{4} \int_q^1 \left( g'(s) - \frac{1}{2s} \left( g(s) - \sqrt{g^2(s) - 4s } \right) \right)^2 ds \,
\label{eq:GOE_corr_LDP_1}
\end{align}
itself can be evaluated through a straightforward yet somewhat lengthy computation. A crucial starting point to this computation is the change of variables $u(s) = g(s) + \sqrt{g^2(s) -4s}$, noting that the sign here is changed compared to the similar looking term in the integrand. It is also helpful in the computation to abbreviate the terms in the linear function $g$, writing $g(s) = \alpha s + \beta$, where $\alpha \tri (x-y)/ (1-q)$ and $\beta \tri y - \alpha q$.

After some manipulation which makes use of the quadratic formula, one finds that in the above notation, 
\begin{align}
ds = \frac{ \alpha u^2 - 4u + 4\beta}{2(\alpha u -2)^2} du \,,
\end{align}
while the integrand itself becomes $(\alpha u -2)^2/u^2$ after the change of variables, leading to nice cancellation. Thus, 
\begin{align}
I_q^1(g) &=  \frac{1}{4} \int_{\,u(g(q)}^{\,u(g(1))}  \frac{ \alpha u^2 - 4u + 4\beta}{2u^2} du \label{eq:new_int}\,,
\end{align}
and the integral has become trivial. The rate function begins to materialize only after evaluation of the above integral at the endpoints 
\begin{align}
u(g(q)) &= y + \sqrt{y^2-4q} \non \\
&\equiv 2\sqrt{q} v(y) \\ 
u(g(1)) &\equiv x + \sqrt{x^2-4} \non\\
&\equiv 2v(x) \,,
\end{align}
with the function $v$ as in \eqref{eq:v_def}. Before expanding the resulting expression, it is useful to make the substitution $y \mapsto \sqrt{q} y$, so that we are computing the second term of \eqref{eq:coro_gcl1} after this change. The rate function in this second regime is then:
\begin{align}
I_1(y;1) + \left[ I_q^1(g) \right]_{ y \, \mapsto\, \sqrt{q}y} \,,
\label{eq:coro_gcl2}
\end{align}
and we leave it to the reader to verify that 
\begin{align}
 4\cdot \left[ I_q^1(g) \right]_{ y \, \mapsto\, \sqrt{q}y} & =\left( \frac{1+q}{1-q} \right) \left( \frac{x^2 + y^2}{2} \right) - \left( \frac{ 2\sqrt{q}}{1-q} \right) xy + \frac{ x\sqrt{x^2-4}}{2} - \frac{ y\sqrt{y^2-4}}{2}  \non\\
&\quad\quad\quad\quad\quad\quad\quad\quad  - 2 \left(\log v(x)  - \log v(y)   \right) + \log(q)  \non\,.
\end{align}
Note that $I_1(y;1)$ can be written as $\left(y\sqrt{y^2-4}\right)/4 - \log v(y)$, as is seen comparing \eqref{eq:IOmega} with \eqref{eq:Omega_def}. Using this with the expression directly above, the two terms in \eqref{eq:coro_gcl2} combine nicely into 
\begin{align}
I_1(y;1) + \left[ I_q^1(g) \right]_{ y \, \mapsto\, \sqrt{q}y} &= \left( \frac{1+q}{8(1-q)} \right) \left( x^2 + y^2 \right) - \left( \frac{ \sqrt{q}}{2(1-q)} \right) xy + \frac{ x\sqrt{x^2-4}}{8} + \frac{ y\sqrt{y^2-4}}{8}  \non\\
&\quad\quad\quad\quad\quad\quad\quad\quad  - \frac{1}{2} \left(\log v(x)  + \log v(y)   \right) + \frac{1}{4}  \log q \non\\
&= \frac{1}{2} \left[ I_1(x;1) + I_1(y;1) \right] +  \left( \frac{1+q}{8(1-q)} \right) \left( x^2 + y^2 \right) - \left( \frac{ \sqrt{q}}{2(1-q)} \right) xy + \frac{1}{4} \log q\,, \non
\end{align}
precisely the expression in \eqref{eq:leading_rate_2} under the relation $r^{2p-4}=q$. As $I_1(x;1) + I_1(y;1)$ agrees with the above expression when $v(x)v(y) = r^{2-p}$ (we show this in  Lemma~\ref{lem:rate_equal}), we have relaxed the strictness of the constraints on $x,y$ and $q$ in the \eqref{eq:leading_rate}. 

We now turn to the case that $r \in [0,1)$ and either $x =2$ or $y=2$. Returning to Remark~\ref{rmk:q_and_r}, the case $y=2$ falls under \eqref{eq:donati_c_1} after the change of variables $y \mapsto \sqrt{q}y$, in which $\mathscr{I}_r(x,y) = J(x,y)$ as desired. The case $x=2$ is left to the reader.
\end{proof}

\subsection{LDP for the $\ell\oith$ largest eigenvalue pair}\label{AppendixProofLDPl}

The task of this subsection is to provide the minor modifications needed in the proof of  Donati-Martin and Ma\"ida \cite[Theorem 1]{donati2012large}   to obtain Theorem~\ref{thm:donati_main}. We assume that the reader is familiar to the notation of that paper and we only provide the necessary changes. We start with the lower bound. 

Recall that $(\lambda_1(t), \ldots, \lambda_N(t))$ is the solution of the following system of stochastic differential equations

\begin{equation} \label{EDSvp}
d\lambda_i(t) = \frac{1}{\sqrt{N}}dB_i(t) + \frac{1}{N} \sum_{ j \not= i} \frac{1}{\lambda_i(t) - \lambda_j(t)} dt , \, t \geq 0, \; i,j = 1, \ldots, N
\end{equation}
where $B_i, i=1, \ldots, N$ are independent standard real  Brownian motions.

\begin{prop} \label{lowerbound0}
 For any open set $O$ in $ C([0,1]; \R)$ and any $\ell \geq 1$,
 \begin{equation} \label{eq-lowerbound0}
\liminf_{N \rightarrow \infty}\frac{1}{N} \ln \mathbb P(\lambda_\ell^{N} \in O) \geq - \inf_{\varphi \in O}  I_\ell(\varphi).
\end{equation}
\end{prop}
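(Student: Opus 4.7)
The plan is to adapt the lower bound argument of \cite{donati2012large}, which handles the case $\ell=1$, to the general $\ell$ case. The guiding intuition is that to push the $\ell\oith$ largest eigenvalue above a profile $\varphi > 2\sqrt{t}$, one must simultaneously push $\lambda_1, \dots, \lambda_\ell$ above $\varphi$. Since each of these eigenvalues, once above the barrier, is detached from the bulk, the cost of forcing each one is the single-eigenvalue cost $I(\varphi)$, and there are $\ell$ such costs; the mutual interactions among $\lambda_1, \dots, \lambda_\ell$ should be subleading on the exponential scale.

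\textbf{Step 1: reduction to a fixed profile.} By standard arguments it suffices to show that, for every $\varphi \in O \cap \mathcal{A}$ and every $\eta >0$, there is a neighborhood $U$ of $\varphi$ in $C([0,1],\R)$ such that
\[
\liminf_{N \to \infty} \frac{1}{N} \log \prob(\lambda_\ell^N \in U) \, \geq\, -\ell \, I(\varphi) - \eta.
\]
If $I(\varphi) = \infty$ there is nothing to prove, so we may assume $\varphi \in \mathcal{A}$ with $I(\varphi) < \infty$.

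\textbf{Step 2: constructing $\ell$ well-separated target profiles.} For small parameters $\eps, \delta > 0$, I would build $\ell$ profiles $\varphi^{(1)} > \varphi^{(2)} > \dots > \varphi^{(\ell)}$ in $\mathcal{A}$, each within $\eps$ of $\varphi$ uniformly, each with $I(\varphi^{(i)}) \leq I(\varphi) + \eta/\ell$, and separated by gaps of order $\delta$ on $[0,1]$ (with the separation relaxed near $t=0$, where all profiles must vanish). Such a family can be obtained by adding small, carefully chosen perturbations to $\varphi$; continuity of $\varphi \mapsto I(\varphi)$ on $\mathcal{A}$ along such perturbations is essentially established in the analogous step of \cite{donati2012large}.

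\textbf{Step 3: Girsanov change of measure on the top $\ell$ Brownian motions.} Following \cite[Section 4]{donati2012large}, introduce a measure $\mathbb{Q}^N$ under which the driving Brownian motions $B_1, \dots, B_\ell$ acquire drifts
\[
h_i(t) \, \tri \, \sqrt{N}\,\Bigl( \dot{\varphi}^{(i)}(t) - \frac{1}{N}\sum_{j \neq i} \frac{1}{\varphi^{(i)}(t) - \lambda_j(t)} \Bigr), \qquad i = 1, \dots, \ell,
\]
chosen so that the SDE \eqref{EDSvp} forces $\lambda_i$ to track $\varphi^{(i)}$ for $i \leq \ell$, while $B_{\ell+1}, \dots, B_N$ remain unchanged. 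Under $\mathbb{Q}^N$, with high probability the top $\ell$ trajectories remain $\eps$-close to their targets $\varphi^{(i)}$, so that in particular $\lambda_\ell^N$ lies in any prescribed neighborhood of $\varphi$ for $N$ large; this is the direct generalization of the convergence statement proved for $\ell = 1$ in \cite{donati2012large}, and it relies on the separation $\delta$ to keep the top-$\ell$ interactions bounded.

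\textbf{Step 4: computing the exponential cost.} The Radon--Nikodym derivative produces a cost
\[
\frac{1}{N} \log \frac{d\mathbb{P}}{d\mathbb{Q}^N} \, = \, -\frac{1}{2 N} \sum_{i=1}^\ell \int_0^1 h_i(t)^2 \, dt \, + \, \text{martingale term},
\]
where the martingale term is negligible on the exponential scale by standard estimates. Under $\mathbb{Q}^N$, the empirical measure of the remaining eigenvalues concentrates on the semicircle $\sigma_{t}$ of width $2\sqrt{t}$, and the top $\ell$ eigenvalues remain outside the bulk by an amount bounded below. Hence
\[
\frac{1}{N} \sum_{j \neq i} \frac{1}{\varphi^{(i)}(t) - \lambda_j(t)} \, \longrightarrow \, \int \frac{d\sigma_t(x)}{\varphi^{(i)}(t) - x} \, = \, \frac{1}{2t}\Bigl(\varphi^{(i)}(t) - \sqrt{\varphi^{(i)}(t)^2 - 4t}\Bigr),
\]
and $h_i(t)^2 / N$ converges to $4 f(t,\varphi^{(i)}(t),\dot\varphi^{(i)}(t))$ in the notation of the previous subsection. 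Therefore the total cost converges to $\sum_{i=1}^\ell I(\varphi^{(i)}) \leq \ell I(\varphi) + \eta$.

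\textbf{Step 5: conclusion.} Combining Steps 3 and 4 via the standard Girsanov lower-bound technique gives $\liminf_N \frac{1}{N} \log \prob(\lambda_\ell^N \in U) \geq -\ell I(\varphi) - \eta$, which suffices.

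\textbf{Main obstacle.} The principal difficulty lies not in the exponential cost computation, which decouples thanks to the isolation of the top $\ell$ eigenvalues from the bulk, but in verifying that under $\mathbb{Q}^N$ the top $\ell$ trajectories really do remain close to the separated targets $\varphi^{(1)}, \dots, \varphi^{(\ell)}$: one must rule out crossings among the top $\ell$ eigenvalues and control the self-consistent nature of the drifts $h_i$, whose definition involves the random positions of all other $\lambda_j$. This is handled by the separation $\delta > 0$ together with rigidity/concentration of the bulk around the semicircle, letting $\delta \to 0$ only after $N \to \infty$.
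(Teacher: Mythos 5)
Your guiding intuition is exactly the one the paper records in the remark preceding Proposition~\ref{lowerbound0}, but your implementation diverges from the paper's in a way worth flagging. The paper applies the \emph{same deterministic} drift $k_\varphi$ to all of $B_1,\dots,B_\ell$ simultaneously, via the single martingale
$M_t^{k_\varphi}=\exp\bigl[N\bigl(\sum_{k=1}^\ell\int_0^t k_\varphi(s)\,\tfrac{1}{\sqrt N}\,dB_k(s)-\tfrac{\ell}{2}\int_0^t k_\varphi^2(s)\,ds\bigr)\bigr]$.
Under $\mathbb P^{k_\varphi}$ the Girsanov drift $c_ik_\varphi(t)$ is thus added to each of the top $\ell$ lines, all aimed at the \emph{same} profile $\varphi$, and the cost $\tfrac{\ell}{2}\int k_\varphi^2$ falls out of the compensator deterministically. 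The rest of the argument then rides on the limiting McKean--Vlasov equation for the empirical measure and is essentially the $\ell=1$ argument of Donati-Martin and Ma\"{\i}da run verbatim. You instead steer $\lambda_1,\dots,\lambda_\ell$ toward $\ell$ \emph{distinct} profiles $\varphi^{(1)}>\dots>\varphi^{(\ell)}$ using $\ell$ \emph{random} (path-dependent) drifts $h_i(t)=\sqrt N\bigl(\dot\varphi^{(i)}(t)-\tfrac1N\sum_{j\neq i}\tfrac{1}{\varphi^{(i)}(t)-\lambda_j(t)}\bigr)$. This is a genuinely different change of measure, and it buys you nothing: the paper's construction never needs the top $\ell$ lines to be artificially separated, because the singular repulsion in the Dyson SDE keeps them ordered automatically (they converge to the same limit $\varphi$ at scale $1$ while remaining separated at scale $o(1)$), and the cost is read off directly from the deterministic compensator without any stochastic-integral control.

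Your route comes with several real gaps that the paper's sidesteps. First, a random predictable drift makes $M^h$ a genuine Dol\'eans--Dade exponential whose Novikov condition and whose cost term $\tfrac{1}{2N}\sum_i\int h_i^2$ are no longer deterministic; one must control those quantities under $\mathbb Q^N$ via rigidity and bulk concentration, and rule out near-singularities $\varphi^{(i)}(t)-\lambda_j(t)\to 0$ -- an effort your Step~4 asserts but does not establish. Second, your Step~2 asserts continuity of $I$ under small uniform perturbations of $\varphi$; because $I$ involves $\dot\varphi$, uniform closeness is not enough, and this is \emph{not} established in \cite{donati2012large} (they never need separated approximating profiles). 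You would need to exhibit perturbations in a topology strong enough to control $I$, while preserving $\varphi^{(i)}(0)=0$ and $\varphi^{(i)}\geq 2\sqrt t$, with the extra constraint that the separation must vanish at $t=0$. Third, your Step~3 tacitly assumes the drifts $h_i$ decouple in the SDE for the top $\ell$, which rests on the separation -- but the self-consistent, random nature of the $h_i$ makes this circular unless one first proves an a priori bound on the trajectories under $\mathbb Q^N$. The paper's choice of a shared deterministic drift avoids all of these issues and is the shortest path to the lower bound; adopting it is what the proposition actually requires of you.
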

\begin{proof}
For $h \in L^2([0,1]),$ we define the exponential martingale $M^h$ such that for any $t \in [0,1],$
\begin{equation}\label{defimart}
  M_t^h = \exp \left[ N \left(  \sum_{k=1}^\ell \int_0^t h(s) \frac{1}{\sqrt{N}}
  dB_k(s) - \frac{\ell}{2} \int_0^t h^2(s) ds\right) \right],
\end{equation}
where $B_\ell$, $\ell = 1 , \ldots, \ell$ are the standard Brownian motion appearing in the SDE
for $\lambda_\ell^{N}$ \eqref{EDSvp}. This martingale replaces the martingale in (4.12) of \cite{donati2012large}. 

For  $\varphi$ such that $ I_\ell(\varphi) < \infty$, set
\begin{equation} \label{defkphi}
k_\varphi(s) \tri \dot{\varphi}(s) - \frac{1}{2s}(\varphi(s) - \sqrt{\varphi^2(s) -4s}). 
\end{equation}
As noted above, 

$$ I_\ell(\varphi) = \ell I_1(\theta) = \frac{\ell}{2}\int_0^1 k_\varphi^2(s) ds = \frac{\ell}{2} \|k_\varphi \|_2^2.$$

We now define a new probability measure  $\mathbb P^{\varphi}$ with $M_t^{k_\phi}$ as its Radon-Nykodim derivative with respect to $\mathbb P$.  We also denote by $\E^{\varphi}$ the expectation
under $\mathbb P^{\varphi}.$ 
Set 
$$\nu_N = \frac{1}{N-\ell} \sum_{k=\ell+1}^N \delta_{\lambda_k^N(t)}$$ to be the empirical distribution of all but the $\ell\oith$ largest eigenvalues, $\mu_N$ to be the the empirical distribution of all eigenvalues, and $\sigma_t =\frac{1}{2 \pi t} {\bf 1}_{[-2\sqrt t, 2\sqrt t]} \sqrt{4t-x^2}.$

An application of Girsanov's Theorem implies that under $\mathbb P^{k_\varphi}$ we have 
\begin{equation}\label{newSDE}
\displaystyle d\lambda_i(t) = \frac{1}{\sqrt{N}}d\beta_i(t) + c_i k_\varphi(t) dt + \frac{1}{N} \sum_{ j \not= 1} \frac{1}{\lambda_1(t) - \lambda_j(t)} dt,
\end{equation}
where $(\beta_i)_{1\le i\le N}$ are independent Brownian motions under $\mathbb P^{\rho_\varphi}$ and
\begin{equation}
c_i = \begin{cases} 1,    \text{ if } i = 1, \ldots, \ell, \\ 
0, \text{ if } i = \ell+1,\ldots, N.
\end{cases}
\end{equation}
By It\^o's formula we obtain a stochastic differential equation for $\langle \mu_N, f \rangle$ for any $f \in C^2$ with a diffusion coefficient that goes to $0$ as $N$ tends to $\infty$. Any limiting point of this equation satisfies a deterministic equation given by 
$$ \langle \mu_t , f \rangle =  \int f(x) d\mu_t(x) = \int f(x) d\mu_0(x) +\frac{1}{2}  \int_0^t \int \frac{f'(x) - f'(y)}{x-y} d\mu_s(x) d\mu_s(y) ds.$$
When the initial condition is $\delta_0$, the semicircle process is the unique solution of the above equation. This immediately implies that under $\mathbb P^{k_\varphi}$ both $\mu_N$ and $\nu_N$ converge towards the semicircle process $\sigma$  while $\lambda_k$ converges to $\varphi$ for all $k=1, \ldots, \ell$ as $N$ goes to infinity.

The rest of the proof of the lower bound is now identical to the one given in section 4.5 of \cite{donati2012large}.  For $\mu $ a probability measure on $\R$ and $x \in \R$, we define
 \begin{equation} \label{defdrift}
 b(x, \mu) = \int_{-\infty}^x  \frac{ d\mu(y)}{x-y}  
 \in \mathbb{R}_+ \cup \{ \infty\},
 \end{equation}
 and write $b_N=\frac{N-\ell}{N}b$. For $h\in \mathcal H$ and $(\varphi, \mu) \in
 C([0,1]; \R) \times C([0,1]; \mathcal P(\R)) $, we also set
  \begin{eqnarray}
  G_N(\varphi, \mu; h) &=& h(1) \varphi(1)  - h(0) \varphi(0) -
   \int_0^1 \varphi(s) \dot{h}(s) ds - \int_0^1 b_N(\varphi(s), \mu_s)
   h(s) ds,  \label{defG} \nonumber\\
   F_N(\varphi, \mu; h)  &\tri& G_N(\varphi, \mu; h) - \frac{1}{2} \int_0^1 h^2(s) ds \label{def:IloveSushi}.\
  \end{eqnarray}
We also set $F$ to be defined just as $F_{N}$ when $b_{N}$ is replaced by $b$.  
By It\^o, we obtain $$M_1^\varphi = \exp\left[N \sum_{k=1}^\ell F_N\left(\lambda_k,\nu_N;\varphi\right)\right].$$ Now, in short, we get
\begin{eqnarray*}
\mathbb P(\lambda_\ell \in B(\varphi, \delta)) & \geq & \mathbb P\left(\lambda_\ell \in
B(\varphi, \delta); \nu_N \in \mathbb B_r(\sigma, \alpha)\right) \\
&=& \E\left({\bf 1}_{\lambda_\ell \in B(\varphi, \delta);  \nu_N \in
  \mathbb B_r(\sigma, \alpha)} \frac{M^{ k_\varphi}_1}{M^{ k_\varphi}_1}\right) \\
&=& \E^{ k_\varphi}\left[{\bf 1}_{\lambda_\ell \in B(\varphi, \delta);
  \nu_N \in \mathbb B_r(\sigma, \alpha)} \exp\left(-N \sum_{k=1}^\ell F_N(\lambda_k, \nu_N; k_\varphi)\right)\right] \\
&\geq &   \exp\left(-N \ell \sup_{(\psi, \mu) \in C_{\alpha, \delta,r}}
F_N(\psi, \mu; k_\varphi)\right)\\
&& \times \,\mathbb P^{ k_\varphi}\left(\lambda_\ell \in B(\varphi,
\delta);  \nu_N \in \mathbb B_r(\sigma, \alpha)\right)
\end{eqnarray*}
where \begin{equation} \label{defC}
C_{\alpha, \delta, r} =  B(\varphi, \delta)
\times \mathbb B_r(\sigma, \alpha). \end{equation}
Here, the sets $B(\varphi, \delta)$, $\mathbb B_r(\sigma, \alpha)$ are balls in $C[0,1]$ and $C(\mathcal P[0,1];\mathbb R)$ respectively.

\noindent
Therefore as $\nu_N$ converges to the semi-circle process $\sigma$ we obtain
\begin{eqnarray}
\lefteqn{ \liminf_{N \rightarrow \infty} \frac{1}{N}  \ln \mathbb P(\lambda_\ell \in B(\varphi, \delta)) \geq - \sup_{(\psi, \,\mu) \, \in \, C_{\alpha, \delta,r}}  F(\psi, \mu; k_\varphi)  } \nonumber \\
&&  + \qquad \liminf_{N \rightarrow \infty} \frac{1}{N} \ln \mathbb P^{k_\varphi}(\lambda_\ell \in
B(\varphi, \delta);  \nu_N \in \mathbb B_r(\sigma, \alpha)). \label{lowerbound}
\end{eqnarray}
We now repeat the computation in Page 518 of \cite{donati2012large} and we end the proof of the lower bound.
\end{proof}

We now turn to the proof of the large deviation upper bound. There are three steps. First we deal with functions that enter the bulk of the semicircle process. 

\begin{prop}
\label{UB1}
Let $\varphi \in C([0,1]; \R)$ be such that there exists $t_0 \in [0,1]$ so
that
$\varphi(t_0) < 2\sqrt {t_0}.$ Then
$$ \lim_{\delta \downarrow 0 } \lim_{N \rightarrow \infty}
\frac{1}{N} \ln \mathbb P(\lambda_\ell^{N} \in B(\varphi, \delta) ) = - \infty.$$ 
\end{prop}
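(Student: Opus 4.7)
The plan is to mirror the proof of Proposition 5.1 in \cite{donati2012large}, where the same statement was established for $\ell=1$, since nothing in that argument is truly specific to the top eigenvalue. The heuristic is that the event $\{\lambda_\ell^N(t_0) < 2\sqrt{t_0} - \eta\}$ forces at least $N - \ell + 1$ eigenvalues of $H_N(t_0)$ to sit to the left of $2\sqrt{t_0} - \eta$, a macroscopic shift of the bulk that costs $e^{-\Theta(N^2)}$, not $e^{-\Theta(N)}$. Dividing by $N$ therefore drives the log-probability to $-\infty$.

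Since every $\varphi \in C([0,1],\R)$ in our setup satisfies $\varphi(0) = 0 = 2\sqrt{0}$, the hypothesis forces $t_0 > 0$. Choose $\eta > 0$ with $\varphi(t_0) + 3\eta < 2\sqrt{t_0}$. For any $\delta < \eta$ and any $\psi \in B(\varphi, \delta)$ we have $\psi(t_0) < 2\sqrt{t_0} - 2\eta$; hence on $\{\lambda_\ell^N \in B(\varphi, \delta)\}$ the empirical spectral measure $\mu_N^{t_0} \tri \tfrac{1}{N}\sum_i \delta_{\lambda_i^N(t_0)}$ satisfies
\begin{align*}
\mu_N^{t_0}\!\left( [2\sqrt{t_0} - 2\eta,\, \infty) \right) \leq (\ell-1)/N.
\end{align*}

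The rescaled matrix $H_N(t_0)/\sqrt{t_0}$ is a GOE in our normalization, so $\mu_N^{t_0}$ obeys the Ben Arous--Guionnet LDP at speed $N^2$ with good rate function uniquely minimized at the scaled semicircle $\sigma_{t_0}$. Set $\alpha \tri \sigma_{t_0}([2\sqrt{t_0} - \eta,\, 2\sqrt{t_0}]) > 0$. For $N$ large, $(\ell-1)/N < \alpha/2$, so the constraint set
\begin{align*}
K \tri \bigl\{ \nu \in \mathcal{P}(\R) \tc \nu\!\left( [2\sqrt{t_0} - 2\eta,\, \infty) \right) \leq \alpha/2 \bigr\}
\end{align*}
is a weakly closed set not containing $\sigma_{t_0}$. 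By lower semicontinuity of the rate function, its infimum $c \equiv c(\eta, t_0)$ on $K$ is strictly positive, and the LDP upper bound yields $\mathbb P(\mu_N^{t_0} \in K) \leq \exp(-cN^2/2)$ for all $N$ large enough, uniformly in $\delta < \eta$.

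Dividing by $N$ and sending $N \to \infty$, then $\delta \to 0$, produces the claimed $-\infty$ limit. The only point that needs care is the $N$-dependence of the constraint $(\ell-1)/N$ appearing in the definition of $K$; this is handled by replacing it with the fixed threshold $\alpha/2$ once $N$ is large, which is exactly the device used in the $\ell=1$ case of \cite{donati2012large}. Thus the main (and only) obstacle is simply verifying that the Ben Arous--Guionnet rate function is bounded below by a positive constant on $K$, which is an immediate consequence of its good-rate-function property and the uniqueness of the semicircle minimizer.
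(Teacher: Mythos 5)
Your argument is the one the paper itself intends: the paper's entire proof reads ``Identical to \cite[Proposition 5.1]{donati2012large},'' and that proof reduces, exactly as you have reconstructed, to the observation that $\{\lambda_\ell^N(t_0) < 2\sqrt{t_0}-2\eta\}$ forces a macroscopic deviation of the empirical spectral measure away from the semicircle, a deviation governed by the speed-$N^2$ Ben Arous--Guionnet LDP (Theorem~\ref{thm:spectral_LDP}), so that the $N$-rescaled log-probability is driven to $-\infty$.

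The only slip is your claim that $K=\{\nu: \nu([2\sqrt{t_0}-2\eta,\infty)) \leq \alpha/2\}$ is weakly closed. For a closed set $F$, the map $\nu\mapsto\nu(F)$ is weakly upper semicontinuous, so $\{\nu:\nu(F)\geq c\}$ is closed, but the sublevel set $\{\nu:\nu(F)\leq c\}$ need not be: with $a=2\sqrt{t_0}-2\eta$, take $\nu_n=\delta_{a-1/n}\to\delta_a$ weakly; then $\nu_n\in K$ for every $n$ yet $\delta_a\notin K$. The repair is immediate: work instead with the open half-line and set $K'=\{\nu:\nu((2\sqrt{t_0}-2\eta,\infty))\leq\alpha/2\}$. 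Since $\nu\mapsto\nu(U)$ is weakly lower semicontinuous for open $U$, the set $K'$ is weakly closed; it still excludes $\sigma_{t_0}$ because $\sigma_{t_0}$ is atomless so $\sigma_{t_0}((2\sqrt{t_0}-2\eta,\infty))=\alpha>\alpha/2$; and $\mu_N^{t_0}\in K\subset K'$ on the event in question once $N$ is large enough that $(\ell-1)/N\leq\alpha/2$. With $K$ replaced by $K'$, your goodness/uniqueness argument for the strict positivity of $\inf_{K'}J$ and the resulting $\exp(-cN^2/2)$ upper bound go through verbatim.
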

\begin{proof} Identical to \cite[Proposition 5.1]{donati2012large}.
\end{proof}

Second, we consider functions that stay above the boundary of the semicircle process, that is, functions $\varphi$ that satisfy $\varphi(t) > 2 \sqrt{t}$
 for all $t \in [0,1]$.

\begin{prop}
\label{Secondcase}
Let   $\varphi \in C([0,1]; \R)$ such that  for any $t \in [0,1],$ $\varphi(t) > 2\sqrt t.$
 Then
\begin{equation} \label{UPP2}
\lim_{\delta \downarrow 0 } \limsup_{N \rightarrow \infty}
\frac{1}{N} \ln \mathbb P (\lambda_\ell \in B(\varphi, \delta) ) \leq - I_{\ell}(\varphi).
\end{equation}
\end{prop}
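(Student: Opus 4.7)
The plan is to adapt the argument for Proposition~5.2 of \cite{donati2012large} (the $\ell = 1$ case) with the same exponential-martingale modification already used to prove Proposition~\ref{lowerbound0}. The central object is again the martingale $M_t^{k_\varphi}$ defined in \eqref{defimart} with tilt $h = k_\varphi$ from \eqref{defkphi}. Applying It\^{o} to the eigenvalue SDE \eqref{EDSvp} for each of the top $\ell$ eigenvalues and using the antisymmetry $\tfrac{1}{\lambda_k - \lambda_j} + \tfrac{1}{\lambda_j - \lambda_k} = 0$ to cancel the Dyson drifts coupling the top $\ell$ processes under a common tilt, one obtains the identity
\begin{align*}
M_1^{k_\varphi} = \exp\!\left[\,N \sum_{k=1}^{\ell} F_N(\lambda_k, \nu_N; k_\varphi)\right],
\end{align*}
where $\nu_N$ is the empirical measure of the bottom $N - \ell$ eigenvalues and $F_N$ is as in \eqref{def:IloveSushi}. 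This is the same identity that drove the lower bound from the opposite direction; here it will feed a Chebyshev-type estimate for the upper bound.

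Writing $A = \{\lambda_\ell \in B(\varphi, \delta)\}$, the fact that $\mathbb{E}[M_1^{k_\varphi}] = 1$ combined with a good-event decomposition yields
\begin{align*}
\mathbb{P}(A) \;\leq\; \sup_{\omega \in A \cap G_N} \exp\!\left[\,-N \sum_{k=1}^{\ell} F_N(\lambda_k, \nu_N; k_\varphi)\right] + \mathbb{P}(A \cap G_N^{\mathrm{c}}),
\end{align*}
where $G_N \equiv G_N(\alpha, \delta')$ is the good event $\{\nu_N \in \mathbb{B}_r(\sigma, \alpha)\} \cap \{\lambda_k \in B(\varphi, \delta') \text{ for all } k \leq \ell\}$, with auxiliary parameters $\alpha, \delta' > \delta$ to be driven to zero after $N$. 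On $A \cap G_N$, continuity of $(\lambda, \mu) \mapsto F(\lambda, \mu; k_\varphi)$ at $(\varphi, \sigma)$ together with the optimality identity $F(\varphi, \sigma; k_\varphi) = I_1(\varphi)$ provides the lower bound $\sum_{k=1}^{\ell} F_N(\lambda_k, \nu_N; k_\varphi) \geq \ell\, I_1(\varphi) - \eta$, with $\eta = \eta(\delta', \alpha, N) \to 0$ as $N \to \infty$ and then $\delta', \alpha \to 0$. This handles the first summand and delivers precisely the target rate $-\ell I_1(\varphi)$.

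The main technical obstacle is controlling the residual $\mathbb{P}(A \cap G_N^{\mathrm{c}})$. The piece coming from $\{\nu_N \notin \mathbb{B}_r(\sigma, \alpha)\}$ is super-exponentially small, by the same Dyson-Brownian-motion concentration estimate used in the $\ell = 1$ argument. The delicate piece is $\{\exists\, k < \ell \colon \lambda_k \notin B(\varphi, \delta')\} \cap A$: the ordering $\lambda_k \geq \lambda_\ell \geq \varphi - \delta$ forces any such deviation to be upward, so some $\lambda_k$ with $k < \ell$ must exceed $\varphi + \delta'$ at some time. I propose to dispatch this by induction on $\ell$: assuming the proposition holds for indices smaller than $\ell$, the excursion $\{\lambda_k > \varphi + \delta'\}$ together with the ordering constraint forces at least $\ell - 1$ eigenvalues to stay above a path strictly larger than $\varphi$, contributing rate at least $(\ell - 1)\, I_1(\varphi + \delta')$, while the $\lambda_\ell$ constraint adds another $I_1(\varphi)$. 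Provided one establishes the monotonicity $I_1(\varphi + \delta') \geq I_1(\varphi) + c(\delta')$ for some positive $c(\delta')$ along upward shifts above the barrier --- the key auxiliary lemma to prove, obtained by analyzing the Euler--Lagrange equation associated with the variational formulation of $I_1$ --- this rate strictly exceeds $\ell\, I_1(\varphi)$, making the bad-event contribution negligible. Patching the two estimates and sending $N \to \infty$, then $\delta', \alpha \to 0$, and finally $\delta \to 0$ completes the proof.
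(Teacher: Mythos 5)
Your proposal takes a genuinely different route from the paper's. The paper adapts the gap-and-subinterval decomposition from Section 5 of \cite{donati2012large}: $\mathbb{P}(\lambda_\ell \in B(\varphi,\delta))$ is split against events $B_{N,\mathbf{i},\delta,\alpha}$ tracking which eigenvalue indices cluster near $\varphi$ on each subinterval of a partition of $[0,1]$, and the factor $\ell$ drops out because the resulting variational problem constrains all of $\psi_1,\ldots,\psi_\ell$ to lie in $B(\varphi,\delta)$. You instead propose a one-shot Girsanov/Chebyshev bound with the same tilt $k_\varphi$ used for the lower bound, split against a good event $G_N$ where all of $\lambda_1,\ldots,\lambda_\ell$ stay near $\varphi$ and $\nu_N$ is near $\sigma$. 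The Chebyshev half of your argument on $A\cap G_N$ is sound and is arguably cleaner than the partition machinery.

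The issue is your control of $\mathbb{P}(A\cap G_N^{\mathrm c})$, which has two concrete gaps. First, the rate accounting is wrong: if some $\lambda_k$ with $k<\ell$ merely exceeds $\varphi+\delta'$ at a single time, this does not yield a contribution of $(\ell-1)\,I_1(\varphi+\delta')$; that would require the excursion to persist over all of $[0,1]$, which the ordering does not force — between excursions the bottom $\ell$ trajectories are only guaranteed to be above $\varphi-\delta$. Second, the monotonicity lemma you flag as the key auxiliary input is not obviously true and may fail as stated: the integrand $k_\varphi(s)^2$ defining $I_1$ via \eqref{defkphi} is not monotone under pointwise upward shifts, because raising $\varphi$ strictly decreases the drift $\frac{1}{2s}\bigl(\varphi-\sqrt{\varphi^2-4s}\bigr)$, and the net effect on $\bigl(\dot\varphi-\text{drift}\bigr)^2$ depends on the sign of $k_\varphi(s)$. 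The paper avoids both difficulties by inheriting the proven decomposition of \cite{donati2012large}; if you want a purely Girsanov route, you would need to show directly that the infimum of the rate over trajectories compatible with $G_N^{\mathrm c}$ strictly exceeds $\ell I_1(\varphi)$, rather than invoking shift-monotonicity.
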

\begin{proof}
This proposition is the equivalent of \cite[Proposition 5.2]{donati2012large}. The proof is almost the same. Set $r =
 \frac{1}{2} \inf(\varphi(t) - 2\sqrt t)$. Consider the following bound for $K > \ell$:
\begin{equation} \label{3terms}
 \mathbb P ( \lambda_\ell \in B(\varphi, \delta)) \leq 
\mathbb(A_{N, \delta, \alpha,K} ) 
+ \mathbb P(\exists t \in [0,1], \lambda_{K+1}(t) > 2 \sqrt{t} + r) + \mathbb P( \mu_N \not\in \mathbb B(\sigma, \alpha)) 
\end{equation}
with 
$$ A_{N, \delta, \alpha,K}  \tri \left\{\lambda_\ell \in B(\varphi, \delta); \forall p>K, \forall t, \lambda_p(t) \leq 2\sqrt{t} + r; \ \mu_N \in \mathbb B(\sigma, \alpha)\right\}.$$

As in \cite[Section 5.1]{donati2012large}, the probabilty of the event  $A_{N, \delta, \alpha,K}$ can be estimated by events of the form
\begin{multline*}
  B_{N, {\bf i}, \delta, \alpha} = \left\{\lambda_\ell \in B(\varphi, \delta), \forall i< i_k,  \forall t \in [t_k, t_{k+1}[,
 \lambda_{i}(t) \geq \varphi(t) - \left(i+ \frac{1}{3}\right)\delta, \right. \\
\left. \lambda_{i_k}(t) - \lambda_{i_k+1}(t)>\frac{2}{3} \delta,
\mu_N \in \mathbb B(\sigma, \alpha)  \right\},
\end{multline*}
where ${\bf i} = (i_1,\ldots,i_R)\in \{1, \ldots,K\}^R,$  $t_{i}$'s form a partition of $[0,1]$, $R$ and $K$ are sufficiently large and $\delta >0$. See \cite[Equation (5.7)]{donati2012large}.
Similar to \cite[Equation (5.8)]{donati2012large}, the proof boils down to show that  for any $K \in \N,$  any $h$ and any subdivision $(t_k)_{1 \le k \le R}$ of $[0,1],$
\begin{equation}\label{limBsN}
 \lim_{\delta \rightarrow 0}  \lim_{\alpha \rightarrow 0} \limsup \frac{1}{N} \ln \mathbb P(B_{N, {\bf i}, \delta, \alpha} ) \leq - \ell F(\varphi, \sigma; h).
\end{equation}
Proceeding as in \cite[Page 22]{donati2012large}, one obtains the bound 
$$ \lim_{\delta \rightarrow 0}  \lim_{\alpha \rightarrow 0} \limsup \frac{1}{N} \ln \mathbb P(B_{N, {\bf i}, \delta, \alpha} ) \leq -  \inf_{ \underline{\psi}  \in \Lambda_{\bf{i}, \delta } } F_{\bf i}(\underline{\varphi} , \underline{\sigma} ; h), $$
where now the set  
\begin{eqnarray*}
 \Lambda_{\bf{i}, \delta} &= &\bigg \{(\psi_1, \ldots \psi_K, \nu_1,\ldots  \nu_K): \psi_i \in B(\varphi, \delta) \; \forall i =1,\ldots, \ell,\\
&& \quad \forall k\le R,
 \forall i< i_k,  \forall t \in [t_k, t_{k+1}[,\psi_{i}(t) \geq \varphi(t) - \left(i+ \frac 1 3\right)\delta, \;  \psi_{i_k}(t) - \psi_{i_{k+1}}(t)>\frac{2}{3} \delta \bigg \}.
 \end{eqnarray*}
The fact that  we now require $\psi_i \in B(\varphi, \delta) \; \forall i =1,\ldots, \ell$ and this implies by taking $\delta$ to zero 
 \begin{equation*}
\lim_{\delta \rightarrow 0}  \lim_{\alpha \rightarrow 0} \limsup \frac{1}{N} \ln\mathbb P( B_{N, {\bf i}, \delta, \alpha}) \leq -  F_{\bf i}(\underline{\varphi} , \underline{\sigma} ; h)
\end{equation*}
where
$$F_{\bf i}(\underline{\varphi} , \underline{\sigma} ; h) = \ell \bigg(h(1)\varphi(1) - h(0) \varphi(0) - \int_0^1 \int_\R \frac{\sigma_t(dx)}{\varphi(t) - x}h(t) dt - \int_0^t \dot h(s)\varphi(s)ds - \sum_k \frac{1}{2i_k} \int_{t_k}^{t_{k+1}}h^2(s) ds\bigg)
$$
and thus
$$ - F_{\bf i}(\underline{\varphi} , \underline{\sigma} ; h)  \leq - \ell F(\varphi, \sigma; h)$$ where 
$F$ is defined by \eqref{def:IloveSushi}. The rest of the proof is identical to \cite[Proposition 5.2]{donati2012large}.

\end{proof}
\noindent
The last item to end the proof of the upper bound is given by the following proposition.
\begin{prop}
\label{lastone}
Let   $\varphi \in C([0,1]; \R)$ such that for any $t \in [0,1],$ $\varphi(t) \geq 2\sqrt t.$
 Then
$$
\lim_{\delta \to 0 } \limsup_{N \rightarrow \infty} \frac{1}{N} \ln \mathbb P(\lambda_\ell \in B(\varphi, \delta) ) \leq - I_\ell(\varphi).$$
\end{prop}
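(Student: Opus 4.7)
The plan is to reduce Proposition~\ref{lastone} to Proposition~\ref{Secondcase} via an approximation that lifts $\varphi$ strictly above the barrier $t\mapsto 2\sqrt t$. For $\eta>0$, set $\varphi^{\eta}(t)\tri \varphi(t)+\eta$. Since $\varphi(t)\ge 2\sqrt t$, we have $\varphi^{\eta}(t)\ge 2\sqrt t+\eta>2\sqrt t$ strictly, so Proposition~\ref{Secondcase} applies to $\varphi^{\eta}$ and gives
\[
\lim_{\delta\downarrow 0}\limsup_{N\to\infty}\frac{1}{N}\ln\mathbb{P}\bigl(\lambda_\ell^{N}\in B(\varphi^{\eta},\delta)\bigr)\le -I_\ell(\varphi^{\eta}).
\]

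The first step is to verify continuity of the rate function along this approximation, $I_\ell(\varphi^{\eta})\to I_\ell(\varphi)$ as $\eta\downarrow 0$. Since $\dot\varphi^{\eta}=\dot\varphi$ and the identity $s-\sqrt{s^{2}-4t}=4t/(s+\sqrt{s^{2}-4t})$ shows that the integrand defining $I_\ell(\varphi^{\eta})$ converges pointwise a.e. to that of $I_\ell(\varphi)$, dominated convergence applies: the factor $(\varphi^{\eta}-\sqrt{(\varphi^{\eta})^{2}-4t})/(2t)$ is bounded by $2/\varphi^{\eta}(t)\le 1/\sqrt t$, and $\dot\varphi\in L^{2}[0,1]$ by the finiteness of $I_\ell(\varphi)$, so Cauchy--Schwarz produces an integrable dominating function.

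The second step is the trivial inclusion $B(\varphi,\delta)\subset B(\varphi^{\eta},\delta+\eta)$, coming from $\|\varphi^{\eta}-\varphi\|_{\infty}=\eta$, which yields the pointwise bound
\[
\mathbb{P}\bigl(\lambda_\ell^{N}\in B(\varphi,\delta)\bigr)\le \mathbb{P}\bigl(\lambda_\ell^{N}\in B(\varphi^{\eta},\delta+\eta)\bigr).
\]

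The main obstacle lies in the last step: interchanging the limits $\delta\downarrow 0$ and $\eta\downarrow 0$. A naive combination of the two bounds above fails, because Proposition~\ref{Secondcase} applied to $\varphi^{\eta}$ requires sending $\delta+\eta\to 0$, whereas the inclusion only lets us shrink $\delta$ with $\eta$ fixed. We resolve this by a diagonal argument that couples $(\delta,\eta)\to(0,0)$: one revisits the proof of Proposition~\ref{Secondcase} applied to $\varphi^{\eta}$, where the decomposition \eqref{3terms} is controlled by the separation parameter $r=\tfrac12\inf_{t}(\varphi^{\eta}-2\sqrt t)=\eta/2$. Allowing $r=r(\delta,\eta)$ to shrink with $\delta$ and $\eta$, one verifies that $\mathbb{P}(\exists t,\ \lambda_{K+1}(t)>2\sqrt t+r)$ and $\mathbb{P}(\mu_{N}\notin\mathbb{B}(\sigma,\alpha))$ remain $o(e^{-cN})$ at the relevant scales, while the main term $\mathbb{P}(A_{N,\delta+\eta,\alpha,K})$ is controlled by the same variational problem that appeared in the proof of Proposition~\ref{Secondcase}, whose limit as $(\delta,\eta)\to(0,0)$ produces $-I_\ell(\varphi^{\eta})+o(1)$. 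Using the continuity from the first step and sending first $\delta\downarrow 0$ and then $\eta\downarrow 0$ completes the proof. The required bookkeeping follows \cite[Proposition 5.3]{donati2012large}, extended from $\ell=1$ to $\ell\ge 1$ by the same modifications described in Proposition~\ref{Secondcase}.
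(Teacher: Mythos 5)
The proposal follows the right high-level strategy (approximate $\varphi$ by a function strictly above the barrier and apply Proposition~\ref{Secondcase}), which is indeed the route taken in \cite[Proposition~5.3]{donati2012large} that the paper cites. However, there are concrete gaps.

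First, the approximation $\varphi^{\eta}=\varphi+\eta$ violates the initial condition: since the paper defines $C([0,1],\R)$ as continuous functions with initial value zero, and since $\mathcal{A}\subset C([0,1],\R)$, we have $\varphi^{\eta}(0)=\eta\neq 0$, so $\varphi^{\eta}\notin\mathcal{A}$ and hence $I_\ell(\varphi^{\eta})=+\infty$ by definition. This makes the claimed continuity $I_\ell(\varphi^{\eta})\to I_\ell(\varphi)$ false at the outset, and also means Proposition~\ref{Secondcase} cannot be invoked for $\varphi^{\eta}$ as stated (indeed, for $\delta<\eta$ the ball $B(\varphi^{\eta},\delta)$ contains no admissible paths, so the bound is vacuous and gives no information for the radii $\delta+\eta\geq\eta$ that the inclusion $B(\varphi,\delta)\subset B(\varphi^{\eta},\delta+\eta)$ produces). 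One must instead use a perturbation that preserves $\varphi(0)=0$, and control the behavior near $t=0$ where the separation necessarily vanishes.

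Second, the continuity argument is flawed even ignoring the above. The proposal asserts that $\dot\varphi\in L^{2}[0,1]$ follows from $I_\ell(\varphi)<\infty$; this is false. Taking $\varphi(t)=2\sqrt t$ gives $I_\ell(\varphi)=0$ but $\dot\varphi(t)=t^{-1/2}\notin L^{2}[0,1]$. Finiteness of $I_\ell(\varphi)$ controls $\dot\varphi - \tfrac{1}{2s}(\varphi-\sqrt{\varphi^2-4s})$ in $L^{2}$, not $\dot\varphi$ itself, and the bound $B_0(s)\leq 1/\sqrt s$ is not square-integrable. So the dominated convergence argument as written does not close.

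Third, the interchange of limits is precisely where the work lies, and the proposal delegates it to a ``diagonal argument'' that is only sketched. Shrinking the separation $r$ to zero in the decomposition \eqref{3terms} degrades the bound on $\prob(\exists t,\ \lambda_{K+1}(t)>2\sqrt t+r)$; for fixed $K$ this probability tends to $1$ as $r\to 0$, so $K$ must diverge in a coordinated way, and the resulting bound on $\prob(A_{N,\delta,\alpha,K})$ must be shown uniform in $K$. The claim that ``one verifies'' this is exactly the content of \cite[Proposition~5.3]{donati2012large}, and the proposal does not in fact verify it.
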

\begin{proof} Identical to \cite[Proposition 5.3]{donati2012large}, replacing the use of \cite[Proposition 5.2]{donati2012large} by Proposition \ref{Secondcase} \end{proof}
The argument in the previous subsection now implies Theorem~\ref{coro:LDP_ell}.

\appendix 

\section{Appendix}

The appendix has three purposes: the first is to analyze the rate function $\mathscr{I}_r$ in greater detail. The second purpose is to gather key inputs to the paper not introduced in section~\ref{sec:notation_inputs}, including the explicit  covariances underpinning Lemma~\ref{lem:rosetta_stone}. The last one is to explain how the methods introduced in the previous sections to prove Theorem \ref{thm:exp_match} also allow us a proof of Theorem \ref{thm:Jesus} (a detailed account of this Theorem will appear in a forthcoming paper). 

\subsection{Further analysis of the rate function $\mathscr{I}_r$} 

The rate function $\scrI_r$ is defined in \eqref{eq:leading_rate} in a piecewise fashion. The next lemma verifies that $\scrI_r$ is continuous.

\begin{lem} Consider the rate functions $J$ and $J_r$ defined in \eqref{eq:leading_rate_1} and \eqref{eq:leading_rate_2}. For all $x,y \geq 2$ and $r \in (0,1)$, we have $J_r(x,y) \equiv J(x,y)$ along the curve $v(x)v(y) = |r|^{2-p}$, where the function $v$ is defined in \eqref{eq:v_def}. 
\label{lem:rate_equal}
\end{lem}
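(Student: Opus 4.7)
The plan is to reduce the claim to a direct algebraic identity by changing variables from $(x,y)$ to $(a,b) = (v(x), v(y))$. Since $v$ inverts the map $t \mapsto t + 1/t$ on $[1, \infty)$, we have $x = a + 1/a$, $y = b + 1/b$, $\sqrt{x^2-4} = a - 1/a$, $\sqrt{y^2 - 4} = b - 1/b$. Combining the definition of $\Omega$ in \eqref{eq:Omega_def} with the identity \eqref{eq:IOmega} (and the formula $2 \log v(x) = \log(x^2/4) + \ldots$ implicit in \eqref{eq:Omega_def}), the GOE rate function admits the compact form
\begin{align*}
I_1(x;1) = \tfrac{1}{4}(a^2 - 1/a^2) - \log a, \qquad I_1(y;1) = \tfrac{1}{4}(b^2 - 1/b^2) - \log b.
\end{align*}
I would verify this formula at the outset, since it cleanly separates the ``polynomial'' and ``logarithmic'' contributions to $J$.

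Next, set $q = r^{2p-4}$, so the constraint $v(x)v(y) = |r|^{2-p}$ becomes $ab = q^{-1/2}$. The key observation is that on this curve one has $1/a^2 = qb^2$ and $1/b^2 = qa^2$, which yields the useful identities
\begin{align*}
\tfrac{1}{a^2} + \tfrac{1}{b^2} = q(a^2 + b^2), \qquad \tfrac{a}{b} + \tfrac{b}{a} = (a^2+b^2)\sqrt{q}, \qquad \log a + \log b = -\tfrac{1}{2}\log q.
\end{align*}
The last one pairs perfectly with the $\frac{1}{2}\log r^{p-2}$ term appearing in $J_r$, reducing the log-piece of $J - J_r$ to zero on the curve. (This also confirms that the exponent in the $xy$ coefficient in \eqref{eq:leading_rate_2} must be $|r|^{p-2}$, matching the derivation performed in the proof of Corollary~\ref{coro:GOE_corr_LDP}.)

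It then remains to check the polynomial identity. Writing $x^2 + y^2 = a^2 + b^2 + 4 + 1/a^2 + 1/b^2$ and $xy = ab + 1/(ab) + a/b + b/a$, and substituting the identities above, the algebraic part of $J(x,y) - J_r(x,y)$ becomes
\begin{align*}
\tfrac{(1-q)(a^2+b^2)}{8} - \tfrac{(1+q)(a^2+b^2+4+q(a^2+b^2))}{8(1-q)} + \tfrac{1+q}{2(1-q)} + \tfrac{q(a^2+b^2)}{2(1-q)}.
\end{align*}
The constant contribution $-\tfrac{4(1+q)}{8(1-q)}$ cancels against $\tfrac{1+q}{2(1-q)}$, and what is left collapses via $(1-q)^2 - (1+q)^2 = -4q$ to zero.

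The main obstacle is purely bookkeeping: ensuring the logarithmic and polynomial pieces have been grouped correctly, and reconciling the $|r|^{p\pm 2}$ notation from \eqref{eq:leading_rate_2} with the computation in the proof of Corollary~\ref{coro:GOE_corr_LDP}. Once the substitution $a = v(x),\ b = v(y)$ is in place and the curve is rewritten as $ab = q^{-1/2}$, the identity $J = J_r$ on the curve becomes a short computation.
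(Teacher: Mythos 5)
Your proof is correct and follows essentially the same route as the paper's: set $a=v(x)$, $b=v(y)$, express $I_1(\,\cdot\,;1)$ as $\tfrac14(a^2-a^{-2})-\log a$, cancel the logarithmic pieces via $ab=1/s$ (equivalently $ab=q^{-1/2}$), and reduce the remaining polynomial expression to zero using the relations $a^{-1}=sb$, $b^{-1}=sa$. The only cosmetic difference is that you parametrize by $q=s^2=r^{2p-4}$ and group terms as multiples of $a^2+b^2$ from the outset, while the paper keeps $s=|r|^{p-2}$ and expands directly; the algebra is identical. You also correctly flag that the $|r|^{p+2}$ in the displayed definition \eqref{eq:leading_rate_2} is a typo for $|r|^{p-2}$, which is consistent with the paper's own proof (it uses $\tfrac{s}{2(1-s^2)}xy$ with $s=|r|^{p-2}$) and with the derivation in Corollary~\ref{coro:GOE_corr_LDP}.
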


\begin{proof} Write $s \equiv |r|^{p-2}$ for ease of notation, and write $J_r(x,y)$ equivalently as $J_s(x,y)$. It suffices to show $J_s(x,y) - J(x,y) =0$ when $v(x) v(y) = 1/s$. Let us also write $\sfx$ for $v(x)$ and $\sfy$ for $v(y)$. We have
\begin{align}
\label{eq:rate_equal_1}
\Delta &\tri J_s(x,y) - J(x,y) \non \\
&=   \left( \frac{1+s^2}{8(1-s^2)} \right) \left( x^2 + y^2 \right) - \left( \frac{ s}{2(1-s^2)} \right) xy + \frac{1}{2} \log s -  \frac{1}{2} \left[ I_1(x;1) + I_1(y;1) \right].
\end{align}
The relations $x = \sfx + \sfx^{-1}$ and $y = \sfy + \sfy^{-1}$ were leveraged throughout Section~\ref{sec:LDP_pair}. We use these relations below, as well as the following identities:
\begin{align}
I_1(x;1) &= \frac{1}{4} \left( \sfx^2 - \sfx^{-2} \right) - \log \sfx \non \\
I_1(y;1) &=  \frac{1}{4} \left( \sfy^2 - \sfy^{-2} \right) - \log \sfy.\non
\end{align}
which we plug into \eqref{eq:rate_equal_1} above:
\begin{align}
8(1-s^2)\Delta &= \left(1+s^2 \right) \left[ \left( \sfx + \sfx^{-1} \right)^2 + \left( \sfy + \sfy^{-1} \right) \right] - 4s \left( \sfx + \sfx^{-1} \right) \left( \sfy + \sfy^{-1} \right) + 4(1-s^2) \log s \non\\
&\quad\quad\quad - 4\left(1-s^2\right) \left[ \frac{1}{4} \left( \sfx^2 - \sfx^{-2} \right) + \frac{1}{4} \left( \sfy^2 - \sfy^{-2} \right) - \log \sfx \sfy \right] \non\\
&= \left(1+s^2\right) \left[ \left( \sfx + \sfx^{-1} \right)^2 + \left( \sfy + \sfy^{-1} \right) \right] - 4s \left( \sfx + \sfx^{-1} \right) \left( \sfy + \sfy^{-1} \right) \non\\
&\quad\quad\quad\quad\quad\quad\quad\quad\quad\quad - 4\left(1-s^2\right) \left[ \frac{1}{4} \left( \sfx^2 - \sfx^{-2} \right) + \frac{1}{4} \left( \sfy^2 - \sfy^{-2} \right)  \right] \,, \non
\end{align}
where we've used the relation $\sfx \sfy = 1/s$ to cancel the log-terms going from the first line to the second. Expanding what is written directly above, we have
\begin{align}
8(1-s^2)\Delta&= \left( 1 + s^2 \right) \left[ \sfx^2 + \sfx^{-2} + \sfy^2 + \sfy^{-2} + 4 \right] - 4s \left[ \sfx \sfy + \sfx \sfy^{-1} + \sfx^{-1} \sfy + ( \sfx \sfy)^{-1} \right] \non\\
&\quad\quad\quad\quad\quad\quad\quad\quad\quad\quad - \left( 1-s^2 \right) \left[ \sfx^2 - \sfx^{-2} + \sfy^2 - \sfy^{-2} \right] \non \\
&= \left( \left(1 + s^2\right) - \left( 1-s^2 \right) \right) \left[ \sfx^2 +  \sfy^2 \right] + \left( \left(1 + s^2\right) + \left( 1-s^2 \right) \right) \left[ \sfx^{-2} + \sfy^{-2} \right] \non \\
&\quad\quad\quad\quad\quad\quad\quad\quad\quad\quad  - 4s  \left[ \sfx \sfy + \sfx \sfy^{-1} + \sfx^{-1} \sfy + ( \sfx \sfy)^{-1} \right] + 4\left( 1 + s^2 \right) \non \\
&= \left( 2s^2 \right) \left[ \sfx^2 +  \sfy^2 \right] + 2  \left[ \sfx^{-2} + \sfy^{-2} \right] - 4s  \left[ \sfx \sfy + \sfx \sfy^{-1} + \sfx^{-1} \sfy + ( \sfx \sfy)^{-1} \right] + 4 \left( 1+s^2 \right)\,. \non
\end{align}
To simplify the above further, we again use that $x$ and $y$ satisfy $\sfx \sfy = 1/s$, we replace the negative powers of $\sfx$ and $\sfy$ above via the relations $\sfx^{-1} = s \sfy$ and $\sfy^{-1} = s \sfx$. Continuing from the last display, 
\begin{align}
8(1-s^2)\Delta&= \left(4s^2 \right) \left[ \sfx^2 + \sfy^2 \right] - \left(4s \right) \left[ \frac{1}{s} + s \sfx^2 + s \sfy^2 + s \right] + 4(1+s^2) \equiv 0\,,\non
\end{align}
completing the proof. \end{proof}

We now determine where $\scrI_r (x,y)$ is minimized, thinking of $r$ as fixed. When applying the LDP associated to these rate functions, we constrain $x$ and $y$ separately. For our purposes, it suffices to consider this optimization taking place over domains of the form $[ u_1, \infty) \times [u_2, \infty)$. The next lemma handles a special case of this. 

\begin{lem} For $r \in (-1,1)$, and for any $u > 2$, the function $\mathscr{I}_r(x,y)$ defined in \eqref{eq:leading_rate}, restricted to the domain $[u, \infty) \times [u, \infty)$ achieves its minimum at the pair $(u,u)$. 
\label{lem:rfm_diag} 
\end{lem}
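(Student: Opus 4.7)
The plan is to combine the piecewise structure of $\mathscr{I}_r$ with a gradient computation of $J_r$ after the change of variables $\xi = v(x)$, $\eta = v(y)$. Setting $\alpha = |r|^{p-2}$ and $v_0 = v(u) \geq 1$, the domain $[u,\infty)^2$ corresponds to $[v_0,\infty)^2 \subset [1,\infty)^2$, and the two pieces of $\mathscr{I}_r$ are separated by the curve $\xi\eta = 1/\alpha$. The case $r = 0$ is immediate since $\mathscr{I}_0 = J$ and $J(x,y) = I_1(x;1) + I_1(y;1)$ is non-decreasing in each argument on $[2,\infty)^2$, so assume $r \neq 0$. On the $J$-region portion of the domain, the same monotonicity yields the pointwise lower bound $J(x,y) \geq J(u,u) = 2I_1(u;1)$.

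The heart of the argument is on the $J_r$-portion. Using $x = \xi + \xi^{-1}$ and $\sqrt{x^2-4} = \xi - \xi^{-1}$ for $\xi \geq 1$, a direct differentiation produces the factorization
\[
\partial_\xi J_r = \frac{(\xi^2 - 1)(\xi - \alpha\eta)(\xi - \alpha/\eta)}{2(1 - \alpha^2)\xi^3},
\]
and a symmetric formula for $\partial_\eta J_r$. Since $\xi, \eta \geq 1$ and $\alpha \in (0,1)$, the factors $\xi^2 - 1$ and $\xi - \alpha/\eta$ are non-negative, so the sign of $\partial_\xi J_r$ equals that of $\xi - \alpha\eta$. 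A critical point in the interior would therefore force $\xi = \alpha\eta = \alpha^2 \xi$, which is impossible. Combined with the identity $(1+\alpha^2)(x^2+y^2) - 4\alpha xy = (x-\alpha y)^2 + (y-\alpha x)^2 \geq (1-\alpha)^2(x^2+y^2)$, which forces $J_r \to \infty$ at infinity, the minimum of $J_r$ over the $J_r$-portion of $[v_0,\infty)^2$ must be attained on its topological boundary.

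That boundary consists of the arc $\{\xi\eta = 1/\alpha\} \cap [v_0,\infty)^2$, on which $J_r = J$ by Lemma~\ref{lem:rate_equal}, and the walls $\xi = v_0$, $\eta = v_0$. On the wall $\xi = v_0$ the sign formula gives $\partial_\eta J_r \geq 0$ throughout the permissible range of $\eta$, so the minimum along the wall occurs at the smallest allowed $\eta$. Split into two cases according to the position of $(u,u)$: if $v_0^2 \leq 1/\alpha$, then $(u,u)$ lies in the closed $J$-region and the wall minimum sits on the arc $\xi\eta = 1/\alpha$, after which Lemma~\ref{lem:rate_equal} reduces the bound to the $J$-analysis and yields $\geq 2I_1(u;1)$; if $v_0^2 > 1/\alpha$, the entire domain $[v_0,\infty)^2$ lies in the closed $J_r$-region, and the wall minimum is attained at the corner $(v_0,v_0)$, i.e. at $(u,u)$ itself. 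In either case the global minimum of $\mathscr{I}_r$ over $[u,\infty)^2$ equals $\mathscr{I}_r(u,u)$.

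The main technical obstacle is the algebraic identity $\xi^2 - \alpha\xi\eta - \alpha\xi/\eta + \alpha^2 = (\xi - \alpha\eta)(\xi - \alpha/\eta)$ that yields the clean factorization of $\partial_\xi J_r$; once this is in hand, the sign analysis, the absence of interior critical points, and the boundary case split are all routine bookkeeping.
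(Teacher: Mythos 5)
Your proof is correct, and it reaches the conclusion by a genuinely different route from the paper. The paper's proof of this lemma works in the original variables $x,y$ and applies the "completing the square" substitution $(x,y) \mapsto (x+z, x-z)$ to the quadratic part $T_s(x,y)$ of $J_r$, showing $T_s$ is minimized on the diagonal and then increasing along it; it handles $J$ by the monotonicity of $I_1(\cdot\,;1)$. You instead pass to $\xi = v(x)$, $\eta = v(y)$ and derive the clean factorization $\partial_\xi J_r = (\xi^2-1)(\xi-\alpha\eta)(\xi-\alpha/\eta)/\bigl(2(1-\alpha^2)\xi^3\bigr)$, which is closer in spirit to what the paper does in Lemma~\ref{lem:rectangle} (where it solves $\partial_x J_s = 0$ for $v(x)$ and $v(y)$) than to its proof of this particular lemma. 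I verified the factorization; it is correct with $\alpha = |r|^{p-2}$ (the exponent $p+2$ in \eqref{eq:leading_rate_2} is a typo for $p-2$, as the derivation in Corollary~\ref{coro:GOE_corr_LDP} shows).

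Your treatment of the piecewise structure of $\mathscr{I}_r$ is more thorough than the paper's. The paper argues that $J$ and $J_r$ are each minimized over all of $[u,\infty)^2$ at $(u,u)$, but because on the $J_r$-region one has $J_r < J$ (and in particular $J_r(u,u) < J(u,u)$ whenever $(u,u)$ sits strictly in the $J$-region, by the proof of Lemma~\ref{lem:rate_fn_dom}), showing $J_r(x,y) \geq J_r(u,u)$ does not by itself give $J_r(x,y) \geq \mathscr{I}_r(u,u) = J(u,u)$ on the $J_r$-region. Your boundary analysis closes this exactly: no interior critical points, coercivity pushes the minimum to the boundary, the wall is monotone in the right direction, and when $(u,u)$ lies in the $J$-region the wall minimum lands on the arc where Lemma~\ref{lem:rate_equal} lets you compare with $J(u,u)$ directly. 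The case split on whether $v_0^2 \lessgtr 1/\alpha$ is the right way to organize this, and the observation that $v_0^2 > 1/\alpha$ forces all of $[v_0,\infty)^2$ into the $J_r$-region makes that case immediate. In short: the paper's proof is slicker but glosses over the gluing; yours is a little longer but actually addresses what $\mathscr{I}_r(u,u)$ is versus what lower bounds are available on each piece.
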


\begin{proof} The lemma clearly holds for $J(x,y) = I_1(x;1) + I_1(y;1)$ defined in \eqref{eq:leading_rate_1}. This follows from the fact that $z \mapsto I_1(z;1)$ is strictly increasing over $(2, \infty)$. 

Examining the form of $J_r(x,y)$ in \eqref{eq:leading_rate_2}, it will suffice to show the lemma holds for the last two summands in this expression. When put in terms of the variable $s$ using the equivalence $s \equiv \sqrt{q} \equiv |r|^{p-2}$, we call the sum of these two terms $T_s(x,y)$:
\begin{align}
T_s(x,y) \tri \frac{1+s^2}{8(1-s^2)} \left(x^2 + y^2\right) - \frac{sxy}{2(1-s^2)} \,.
\label{eq:T_xy_def}
\end{align}
Considering $x$ and $z$ such that $x+z, x-z > 2$, observe that 
\begin{align}
T_s(x +z, x-z) &=  \frac{1+s^2}{8(1-s^2)}\left[ \left(x+z\right)^2 + \left(x - z \right)^2\right] -  \frac{s}{2(1-s^2)} \left[ \left( x + z \right)\left( x -z \right) \right]  \non \\
&= \frac{1 +s^2}{ 4(1-s^2) } \left(x^2 + z^2\right)- \frac{2s }{4(1-s^2) } \left[ x^2 -z^2\right] \non \\
&= \frac{(1-s)^2x^2}{4(1-s^2)} + \frac{(1+s)^2}{4(1-s^2)} z^2\,. \label{eq:T_x_z}
\end{align}
Treating $x$ as fixed, the function $z \mapsto T_s(x+z, x-z)$ is minimized at $z =0$, and hence $T_s(x,y)$ itself is minimized along the diagonal $y=x$. Along the diagonal, we have 
\begin{align}
T(x,x) = \frac{(1-s)^2}{4(1-s^2)} x^2\,,
\end{align}
a strictly increasing function of $x$ for $x \geq u > 2$, completing the proof.  \end{proof}

The next lemma complements the one above. 

\begin{lem} 
\label{lem:rectangle}
For $r \in (-1,1)$, and for any $u_2 > u_1 > 2$, the function $\mathscr{I}_r(x,y)$ defined in \eqref{eq:leading_rate}, restricted to the domain $[u_1, \infty) \times [u_2, \infty)$ achieves its minimum at the pair $(u_*\vee u_1,u_2)$, where $u_*$ satisfies
\begin{align}
v(u_*) = s v(u_2) \,, \non
\end{align}
and with the function $v$ from \eqref{eq:v_def}. A symmetric statement holds in the case $u_1 > u_2 > 2$. 
\end{lem}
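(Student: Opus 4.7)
The plan is to reduce the 2D minimization to analysis of $\mathscr{I}_r$ on the two finite edges of the boundary of $[u_1,\infty)\times[u_2,\infty)$, and then find the 1D minimum along each by direct differentiation. Set $s \tri |r|^{p-2}$. The function $\mathscr{I}_r$ is continuous on the closed domain (Lemma~\ref{lem:rate_equal}) and grows at infinity---the quadratic form $T_s$ from \eqref{eq:T_xy_def} dominates the cross-term in $J_r$, and $I_1(\cdot\,;1)\to\infty$ in the $J$ region---so a minimizer exists. To rule out interior critical points, I compute: in the $J$ region, $\nabla J = \bigl(\tfrac{1}{2}\sqrt{x^2-4},\, \tfrac{1}{2}\sqrt{y^2-4}\bigr)$ vanishes only at $(2,2)\notin[u_1,\infty)\times[u_2,\infty)$; in the $J_r$ region, $\nabla J_r=0$ forces simultaneously $v(y)=s\,v(x)$ and $v(x)=s\,v(y)$, hence $(1-s^2)v(x)=0$, impossible for $s<1$ and $v\geq 1$. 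The minimizer must therefore lie on $\{u_1\}\times[u_2,\infty) \cup [u_1,\infty)\times\{u_2\}$.

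I next analyze the bottom edge $\{y=u_2\}$. Solving $\partial_x J_r(x,u_2)=0$ via the substitution $v(x)=(x+\sqrt{x^2-4})/2$ (and multiplying through by $v(x)$) reduces to the quadratic $v(x)^2 - s\bigl(v(u_2)+1/v(u_2)\bigr) v(x) + s^2 = 0$, with roots $v(x) = s\,v(u_2)$ and $v(x) = s/v(u_2)$. The second root gives $v(x)v(u_2)=s<1/s$, placing it in the $J$ region, so only the candidate $u_*$ with $v(u_*) = s\,v(u_2)$ is relevant. Since $\partial_x^2 J_r>0$, whenever $u_*\in[2,\infty)$ it is the unique minimizer of $\mathscr{I}_r(\cdot,u_2)$ on $[2,\infty)$. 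If $v(u_2)\geq 1/s$, the entire bottom edge $\{x\geq 2\}$ is in the $J_r$ region, $u_*\geq 2$, and the constrained minimum on $[u_1,\infty)$ is $u_*\vee u_1$. If $v(u_2)<1/s$, the candidate falls outside $[2,\infty)$; here I use the $C^1$-patching of $\mathscr{I}_r$ across the $J/J_r$ interface---an explicit computation shows $\partial_x J$ and $\partial_x J_r$ both equal $(1-(s\,v(u_2))^2)/(2 s\,v(u_2)) > 0$ there---combined with $\partial_x J>0$ on $\{x>2\}$ and $\partial_x^2 J_r>0$ to conclude $\mathscr{I}_r(\cdot,u_2)$ is strictly increasing on $[2,\infty)$; the constrained minimum is then at $u_1 = u_* \vee u_1$ under the convention $u_*\leq u_1$ in this regime.

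Along the left edge $\{x=u_1\}$, the analogous critical condition $\partial_y J_r(u_1,y)=0$ has relevant root $v(y_{\mathrm{crit}}) = s\,v(u_1)$. The hypotheses $u_1<u_2$ and $s<1$ give $s\,v(u_1) < v(u_1) < v(u_2)$, so $y_{\mathrm{crit}} < u_2$, placing the critical point strictly below the domain of interest. The same sign/$C^1$-patching argument then shows $\mathscr{I}_r(u_1,\cdot)$ is strictly increasing on $[u_2,\infty)$, with minimum at $(u_1,u_2)$. Comparing the two edges: if $u_*\leq u_1$, both give the common point $(u_1,u_2)=(u_*\vee u_1,u_2)$; if $u_*>u_1$, the bottom-edge minimum at $(u_*,u_2)$ strictly undercuts $(u_1,u_2)$ because $u_*$ is the unconstrained 1D minimizer along $y=u_2$. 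The global minimum is therefore attained at $(u_*\vee u_1,u_2)$, as claimed.

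The main technical obstacle is the piecewise definition of $\mathscr{I}_r$: one must track where the interface curve $v(x)v(y)=1/s$ crosses each edge and verify that $\mathscr{I}_r$ is $C^1$ across this curve, so the monotonicity analyses in the $J$ and $J_r$ regions glue together without introducing phantom critical points. Once this patching is established, the remainder of the argument is a direct calculation with the quadratic roots above.
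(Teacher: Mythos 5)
Your proof is correct and follows essentially the same route as the paper: locate the critical-point condition $v(x)=s\,v(y)$ for $J_r$ via the substitution $v(\cdot)$, rule out interior critical points by observing that $\nabla J_r=0$ forces $(1-s^2)v(x)=0$, and then analyze the two bounding rays of the rectangle. The one place you are more careful than the published proof is the piecewise definition of $\mathscr{I}_r$: the paper simply asserts that ``it suffices to verify the lemma for $J_s$,'' whereas you explicitly check that the one-sided $x$-derivatives of $J$ and $J_r$ agree on the interface $v(x)v(u_2)=1/s$ (both equal $(1-(s\,v(u_2))^2)/(2s\,v(u_2))$), which is exactly what is needed to glue the monotonicity analysis across the two regions; together with your verification that $\nabla J$ never vanishes for $x,y>2$ and that $T_s$ is positive definite, this is a welcome tightening of the argument rather than a different approach.
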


\begin{proof} As above, write $s$ for $\sqrt{q} \equiv |r|^{p-2}$. It suffices to verify the lemma for $J_s(x,y)$:
\begin{align}
J_s(x,y) = \frac{1}{2} \left( I_1(x;1) + I_1(y;1) \right) + \frac{1}{2} \log s + \frac{1+s^2}{8(1-s^2)} \left(x^2 +y^2\right) - \frac{sxy}{ 2(1-s^2)} .
\label{eq:rectangle1}
\end{align}
Treating $s$ as fixed, we find critical points of this function. The proof has three short steps.
\begin{itemize}
\item[(1)] Using the notation $\sfx \equiv v(x)$ and $\sfy \equiv v(y)$ once again, we show $\pa_xJ_s(x,y) =0$ iff $\sfx =s\sfy$.
\item[(2)] We apply (1) to show $J_s(x,y)$ achieves its minimum on the ray $\{y = u_2\} \cap \{ x \geq u_1\}$.
\item[(3)] We conclude using the first two parts. 
\end{itemize}

\vspace{4mm}

\emph{Step 1:} Writing $\pa_x$ for a derivative in $x$, note that
\begin{align}
\pa_x J_s(x,y) = \frac{1}{2} \pa_x \left[ I_1(x;1) \right] + \left(\frac{ 1+s^2}{4(1-s^2)} \right) x -  \left( \frac{2s}{4(1-s^2)}\right) y.\non
\end{align}
Recalling the integral representation $I_1(x;1) = \tfrac{1}{2} \int_2^x \sqrt{z^2-4} dz$, we have
\begin{align}
\pa_xJ_s(x,y) &= \left( \frac{1}{4(1-s^2)} \right) \left[ \left(1-s^2 \right) \sqrt{x^2-4} + \left(1+s^2\right)x - \left(2s\right) y \right]\,,
\label{eq:pa_x_J}
\end{align}
from which one sees $\pa_x J_s(x,y) = 0$ if and only if 
\begin{align}
y &= \left( \frac{ 1-s^2}{2s} \right) \sqrt{ x^2-4} + \left( \frac{ 1+ s^2}{2s} \right) x \non\\
&= \left( \frac{ 1-s^2}{2s} \right) \left[ x + \sqrt{x^2-4} \right] + sx \non\\
&= \left( \frac{1-s^2}{s} \right) \sfx + s \left( \sfx + \sfx^{-1} \right) \non\\
&= \frac{\sfx}{s} + \frac{s}{\sfx} \,,\non
\end{align}
and the only values of $\sfx$ which can satisfy the above equality are (via the quadratic formula) either $s \sfy$ or $s\sfy^{-1}$. The latter is impossible: were we to have $\sfx = s\sfy^{-1}$, it would follow that $\sfx \sfy = s$, where $s \in [0,1)$ and $\sfx, \sfy > 1$ (the lower bounds on $\sfx$ and $\sfy$ follow directly from the constraints $u_1,u_2 >2$). Thus $\pa_xJ_s(x,y) =0$ iff $\sfx =s\sfy$, completing the first step. 

\emph{Step 2:} A symmetric argument implies the function $J_s(x,y)$ has a critical point on the interior of $[u_1, \infty) \times [u_2, \infty)$ when $\sfx = s\sfy$ and $\sfy = s\sfx$. Were both these constraints to hold, we would have $ \sfy =s^2 \sfy$,  impossible for $s \in [0,1)$. 

Moreover, we showed in the proof of Lemma~\ref{lem:rfm_diag} that $J_s(x,y)$ is minimized along the diagonal $y = x$, and that the function is strictly increasing along the diagonal. Using this fact with what we have just shown, it follows that $J_s(x,y)$ achieves its minimum over $[u_1, \infty) \times [u_2, \infty)$ on the boundary of this region: either the ray $\{x = u_1\} \cap \{y \geq u_2\}$ or on the ray $\{y = u_2\} \cap \{ x \geq u_1\}$. 

To complete the second step, we rule out the former ray. Use the above computation: we have shown that $\pa_y J_s(u_1,y) =0$ at $y_*$, the $y$-value satisfying $v(y_*) =s v(u_1)$. As $z \mapsto v(z)$ is strictly increasing on $(2,\infty)$, that $v(y_*) < v(u_1)$ implies $y_* < u_1$, and hence $y_* < u_2$. Using the symmetry of $J_s(x,y)$ in $x$ and $y$, and examining \eqref{eq:pa_x_J}, we have
\begin{align}
\pa_{yy} J_s(u_1,y) = \frac{y}{4 \sqrt{y^2-4}} + (1+s^2) > 0\,,
\label{eq:toaster}
\end{align}
which implies that the function $y \mapsto J_s(u_1,y)$ restricted to the interval $(y_*, \infty)$ is increasing.

\emph{Step 3:} By step 2 it suffices to restrict $J_s(x,y)$ to $\{y = u_2\} \cap \{ x \geq u_1\}$. Using step 1 and the symmetry of $J_s(x,y)$ with \eqref{eq:toaster}, we find this restricted function has a minimum when $v(x) = sv(u_2)$, completing the proof. \end{proof}

Having studied $\scrI_r$ at fixed $r$, we next fix the energies in the argument of $\scrI_r$ and establish a relationship between its two piecewise components as $r$ varies. 

\begin{lem} 
\label{lem:rate_fn_dom}
For a fixed $u \geq 2$, consider the functions $J$ and $J_r$, defined in \eqref{eq:leading_rate_1} and \eqref{eq:leading_rate_2}. For all $r$ with $|r|^{p-2} \in ((v(u))^{-1}, 1)$, we have $J_r(u,u) \leq J(u,u)$.
\end{lem}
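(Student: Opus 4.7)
My plan is to reduce the claim to a short one-variable calculus argument. Writing $s = |r|^{p-2}$ and $D(s) \triangleq J_r(u,u) - J(u,u)$, the first step is to use the simplification already carried out in the proof of Lemma~\ref{lem:rfm_diag}. There it was shown that the quadratic part of $J_s$ collapses along the diagonal to $T_s(u,u) = u^2(1-s)^2/(4(1-s^2)) = u^2(1-s)/(4(1+s))$. Substituting this into \eqref{eq:leading_rate_2} and cancelling the common $I_1(u;1)$-terms against $J(u,u) = 2 I_1(u;1)$ yields
\begin{align*}
D(s) = -I_1(u;1) + \tfrac{1}{2}\log s + \frac{u^2(1-s)}{4(1+s)}.
\end{align*}

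Next I would differentiate: $D'(s) = \tfrac{1}{2s} - \tfrac{u^2}{2(1+s)^2}$, so $D'(s)=0$ is equivalent to $(1+s)^2 = su^2$, i.e.\ $\sqrt{s} + 1/\sqrt{s} = u$. The two roots of this equation in $\sqrt{s}$ are $v(u)$ and $1/v(u)$, and since $v(u) > 1$, only $\sqrt{s_*} = 1/v(u)$, i.e.\ $s_* = v(u)^{-2}$, lies in $(0,1)$. Crucially, this is precisely the value of $s$ for which Lemma~\ref{lem:rate_equal} applies at $x=y=u$: the curve $v(x)v(y) = 1/s$ meets the diagonal exactly when $s = v(u)^{-2}$, so $D(s_*) = 0$.

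To finish, I would verify $s_*$ is the global maximum of $D$ on $(0,1)$. A second-derivative check gives $D''(s_*) = -\tfrac{1}{2s_*^2} + \tfrac{u^2}{(1+s_*)^3}$; substituting $u^2 = (1+s_*)^2/s_*$ reduces this to $(s_*-1)/(2 s_*^2(1+s_*))$, which is negative since $s_* < 1$. Since $D'$ has a unique zero on $(0,1)$, $D$ increases on $(0, s_*)$ and decreases on $(s_*,1)$, and therefore $D(s) \leq D(s_*) = 0$ for all $s \in (0,1)$. Specializing to the sub-interval $s \in (v(u)^{-1}, 1) \subset (v(u)^{-2}, 1)$ gives the claim.

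The only mildly delicate ingredient is the algebraic cleanup at the start that turns $D$ into a transparent one-variable expression; once that is done, the fact that the critical point lands exactly on the boundary curve from Lemma~\ref{lem:rate_equal} makes the monotonicity argument essentially automatic, so I do not anticipate any real obstacle.
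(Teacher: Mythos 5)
Your proof is correct and follows essentially the same route as the paper: write $s = |r|^{p-2}$, reduce the diagonal difference $D(s) = J_s(u,u) - J(u,u)$ to $\tfrac{1}{2}\log s + \tfrac{u^2(1-s)}{4(1+s)} - I_1(u;1)$, identify $s_* = v(u)^{-2}$ as the unique critical point in $(0,1)$ where $D$ vanishes by Lemma~\ref{lem:rate_equal}, and check $D''(s_*) < 0$. Your write-up is in fact a bit more careful than the paper's (you explicitly argue the unique critical point is a global maximum); no issues.
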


\begin{proof}
Continuing to write $|r|^{p-2}$ as $s$, we recall the expression $J_s(x,y)$ for $J_r(x,y)$ under this change of variables is given in \eqref{eq:rectangle1}, and we remark that $J(x,y)$ has no dependence on $s$. Expressions for $J_s(x,y)$ and $J(x,y)$ simplify after setting $x = u$ and $y = u$:
\begin{align}
J_u(s) \equiv J_r(u,u) &= I_1(u;1)  + \frac{1}{2} \log s + \left( \frac{1-s}{4(1+s)} \right)u^2 \,,\\
J_u \equiv J(u,u) & = 2 I_1(u;1) \,.
\end{align}
We have changed our notation slightly in the above display to emphasize that $u$ is fixed. Consider the difference $D_u(s)  \tri J_u(s) - J_u$, 
\begin{align}
D_u(s) = \frac{1}{2} \log s + \left( \frac{1-s}{4(1+s)} \right) u^2 - I_1(u;1) .
\label{eq:increase_in_r}
\end{align}
To prove the lemma, it suffices to show $\pa_s D$ is non-positive on the interval $s \in [1/v(u),1]$.

A short computation, using the identity $u = v(u) + v(u)^{-1}$ (once again), shows $\pa_sD$ is zero when
\begin{align}
s = v(u)^2 \text{ or } s = 1/v(u)^2 \,.
\end{align}
The former is not relevant as $s \in (0,1)$ and $v(u) \geq 1$. The latter root implies $\pa_s D = 0$ at the point $s = 1/v(u)^2$, exactly where $J_r = J$ by Lemma~\ref{lem:rate_equal}. Another short computation shows $\pa_s^2 D < 0$ at the point $s = 1 /v(u)^2$, which completes the proof. \end{proof}

\subsection{Covariance structures}{\label{subsec:cov}} For $1 \leq i \leq 4$ and any $r \in (-1,1)$, define
\begin{align}
a_1(r) &\tri \left[ p(1-r^{2p-2}) \right]^{-1} \label{eq:a1_def}\\
a_2(r) &\tri \left[ p[1-(r^p - (p-1)r^{p-2} (1-r^2))^2] \right]^{-1} \label{eq:a2_def}\\
a_3(r) &\tri  -r^{p-1} a_1(r) \label{eq:a3_def}\\
a_4(r) &\tri (-r^p + (p-1) r^{p-2} (1-r^2) ) a_2(r)\,, \label{eq:a4_def} 
\end{align}
and also define
\begin{align}
b_1(r) &\tri -p +a_2(r) p^3r^{2p-2}(1-r^2) \label{eq:b1_def}\\
b_2(r) &\tri -pr^p -a_4(r) p^3 r^{2p-2} (1-r^2) \label{eq:b2_def}\\
b_3(r) &\tri a_2(r) p^2(p-1) r^{2p-4} (1-r^2) [ -(p-2) + pr^2 ] \label{eq:b3_def}\\
b_4(r) &\tri p(p-1) r^{p-2} (1-r^2) - a_4(r) p^2(p-1)r^{2p-4}(1-r^2) [ -(p-2) + pr^2 ] \,. \label{eq:b4_def} 
\end{align}
 
The $a_i(r)$ and $b_i(r)$ are used to describe the covariances (and constants) present in Lemma~\ref{lem:rosetta_stone}. We start with the constants $m_i$: these are the additional perturbations made to the last entry of each Hessian, and are defined as follows.
\begin{align}
m_1(r,u_1,u_2) & \tri \lbob \begin{matrix} b_3(r) & b_4(r)  \end{matrix} \rbob  \bm{\Sigma}_U(r)^{-1} \lbob \begin{matrix} u_1 \\ u_2 \end{matrix} \rbob  \\
m_2(r,u_1,u_2) &\tri m_1(r,u_2,u_1)  \,,
\label{eq:little_m}
\end{align}
with 
\begin{align}
\bm{\Sigma}_U(r) \tri -\frac{1}{p} 
\lbob\begin{matrix} 
b_1(r) & b_2(r) \\ 
b_2(r) & b_1(r) 
\end{matrix} \rbob .
\label{eq:Sigma_U}
\end{align}

The covariance matrix $\bm{\Sigma}_Z(r)$ describes the joint law of the last column of each Hessian in Lemma~\ref{lem:rosetta_stone}. It is a $2 \times 2$ matrix describing the random vectors in the last column of each Hessian, and its four entries are given as follows. 
\begin{align}
\bm{\Sigma}_{Z,11}(r) \equiv \bm{\Sigma}_{Z,22}(r) &\tri p(p-1) -a_1(r) p^2 (p-1)^2 r^{2p-4} (1-r^2) \non \\
\bm{\Sigma}_{Z,12}(r) \equiv \bm{\Sigma}_{Z,21}(r) &\tri p(p-1)^2 r^{p-1} - p(p-1) (p-2) r^{p-3} + a_3(r) p^2 (p-1)^2 r^{2p-4} (1-r^2) 
\label{eq:Sigma_Z} 
\end{align}

The covariance matrix $\bm{\Sigma}_Q(r)$ describes the random variables in the last entry of each Hessian. Its diagonal entries are given by
\begin{align}
\bm{\Sigma}_{Q,11}(r) = \bm{\Sigma}_{Q,22}(r) & \tri 2p(p-1)  - a_2(r) (1-r^2) [ p(p-1)r^{p-3}(pr^2 - (p-2))]^2  \non\\
&\quad - \lbob \begin{matrix} b_3(r) & b_4(r)  \end{matrix} \rbob  \bm{\Sigma}_U(r)^{-1} \lbob \begin{matrix} b_3(r) \\ b_4(r) \end{matrix} \rbob ,
\label{eq:Sigma_Q_11}
\end{align}
\begin{align}
\bm{\Sigma}_{Q,12}(r) \equiv \bm{\Sigma}_{Q,21}(r) &\tri p^4 r^p - 2p(p-1) (p^2 -2p +2)r^{p-2} + p(p-1)(p-2)(p-3)r^{p-4} \non \\
&\quad + a_4(r) p^2 r^{2p-6} (1-r^2) (p^2 r^2 - (p-1) (p-2) )^2 \non \\
&-  \lbob \begin{matrix} b_1(r) + b_3(r) & b_2(r) + b_4(r)  \end{matrix} \rbob  \bm{\Sigma}_U(r)^{-1} \lbob \begin{matrix} b_2(r) + b_4(r) \\ b_1(r) + b_3(r)  \end{matrix} \rbob
\label{eq:Sigma_Q_12}.
\end{align}

It will be convenient to have the eigenvalues of $\bm{\Sigma}_U(r)$ on hand.

\begin{lem} For any $r \in (-1,1)$, the covariance matrix $\bm{\Sigma}_U(r)$ x has eigenvectors $\lbob 1, 1 \rbob^T$ and $\lbob 1,-1 \rbob^T$, with respective eigenvalues
\begin{align}
\sigma_1(r) &\tri \frac{1 + (p-1)r^{p-2}(1-r^2)-r^{2p-2}}{1 + (p-1)r^{p-2}(1-r^2) -r^p} \label{eq:U_sig_1}, \\
\sigma_2(r) &\tri \frac{ 1- (p-1)r^{p-2} (1-r^2) - r^{2p-2}}{1-(p-1)r^{p-2}(1-r^2) + r^p} . \label{eq:U_sig_2}
\end{align} 
\label{lem:U_eval}
\end{lem}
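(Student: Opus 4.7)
The structure of the problem makes the eigenvector part immediate: $\bm{\Sigma}_U(r) = -\tfrac{1}{p}\begin{pmatrix} b_1 & b_2 \\ b_2 & b_1 \end{pmatrix}$ is a symmetric $2\times 2$ matrix with equal diagonal entries, so it is of the form $\alpha I + \beta (J-I)$ (with $J$ the all-ones matrix), and therefore $(1,1)^T$ and $(1,-1)^T$ are automatically eigenvectors, with eigenvalues $-\tfrac{1}{p}(b_1(r)+b_2(r))$ and $-\tfrac{1}{p}(b_1(r)-b_2(r))$ respectively. The content of the lemma is therefore purely algebraic: one must verify
\begin{align*}
-\tfrac{1}{p}\bigl(b_1(r)+b_2(r)\bigr) = \sigma_1(r), \qquad -\tfrac{1}{p}\bigl(b_1(r)-b_2(r)\bigr) = \sigma_2(r).
\end{align*}

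To carry out the verification cleanly, I would first introduce the shorthand $c(r) \tri r^p - (p-1)r^{p-2}(1-r^2)$, so that from \eqref{eq:a2_def} and \eqref{eq:a4_def} one has $a_2(r) = [p(1-c(r)^2)]^{-1}$ and $a_4(r) = -c(r)\, a_2(r)$. Substituting into \eqref{eq:b1_def}--\eqref{eq:b2_def} gives
\begin{align*}
b_1(r) + b_2(r) &= -p(1+r^p) + p^3 r^{2p-2}(1-r^2)\,\bigl(a_2(r) - a_4(r)\bigr) \\
&= -p(1+r^p) + \frac{p^2 r^{2p-2}(1-r^2)}{1-c(r)}, \\
b_1(r) - b_2(r) &= -p(1-r^p) + p^3 r^{2p-2}(1-r^2)\,\bigl(a_2(r) + a_4(r)\bigr) \\
&= -p(1-r^p) + \frac{p^2 r^{2p-2}(1-r^2)}{1+c(r)},
\end{align*}
using the factorization $1-c^2 = (1-c)(1+c)$.

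The remaining step is to combine each expression over a common denominator and expand. For $-\tfrac{1}{p}(b_1+b_2)$ the denominator becomes $1-c(r) = 1 - r^p + (p-1)r^{p-2}(1-r^2)$, matching the denominator of $\sigma_1(r)$; the numerator simplifies by expanding $(1+r^p)(1-c(r))$ and subtracting $p\,r^{2p-2}(1-r^2)$, where the cross term $(p-1)r^{2p-2}(1-r^2) - p\,r^{2p-2}(1-r^2) = -r^{2p-2}(1-r^2)$ conspires with $1-r^{2p}$ to yield $1 - r^{2p-2} + (p-1)r^{p-2}(1-r^2)$, exactly $\sigma_1(r)$'s numerator. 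The computation for $\sigma_2(r)$ is entirely parallel, with a sign flip arising from $1+c(r)$ replacing $1-c(r)$.

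There is no substantive obstacle here; the only place one must be careful is keeping track of the signs when combining the $r^{2p-2}(1-r^2)$ terms, which would be a natural source of arithmetic error. The lemma therefore reduces to two short algebraic identities, and I would simply record them after introducing the abbreviation $c(r)$. \qed
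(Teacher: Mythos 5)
Your proof is correct, and it fills a gap the paper leaves open: Lemma~\ref{lem:U_eval} is stated in the appendix without proof. The reduction to the two scalar identities $-\tfrac{1}{p}(b_1 \pm b_2) = \sigma_{1,2}(r)$ is exactly right (equal diagonals force the eigenvectors), and the shorthand $c(r) = r^p - (p-1)r^{p-2}(1-r^2)$ is well chosen: it makes $a_2 \mp a_4 = a_2(1\pm c) = \bigl[p(1\mp c)\bigr]^{-1}$ transparent, after which $1 \mp c$ becomes the stated denominator and the numerator collapses via the $-r^{2p-2}(1-r^2)$ cross term exactly as you describe. I checked the expansion in both cases and it matches \eqref{eq:U_sig_1} and \eqref{eq:U_sig_2}. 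One small stylistic note: rather than asserting at the end that the algebra "conspires," it would be slightly tighter to record the two displayed numerator identities explicitly, since that is the entire content of the lemma and takes only a line each — but this is a presentation point, not a gap.
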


\subsection{Proof of Theorem \ref{thm:Jesus}}

%
We set up notation to state Lemma~\ref{lem:sub18}, Lemma~\ref{lem:sub19}. They are used to sharpen Theorem~\ref{thm:exp_match} and provide a proof of Theorem \ref{thm:Jesus}. We return to the convention in 
\eqref{eq:u_bar_def}: for $z \in \R$, we write
$ \gamma _ p \, z / \sqrt { N - 1 } $ as
$ \bar { z } $. As usual, 
$ \gamma_p 
	\equiv 
		\sqrt{ p / ( p - 1 ) }$. 
		
		For 
$ u 
	< 
		-\sfE_\infty $, define
\begin{align}
\mathfrak { S } _{(\ell)} ( u ) 
	&\tri (\ell +1)
		\int 
			\frac { 1 }
				{ 
					\sqrt{ 
						\tfrac { p - 1 }
							{ p }
					}
					\lambda - u 
				} 
		\semi ( d \lambda ) 											\non \\ %
	&\equiv 
		\gamma _ p (\ell + 1)
			\int 
				\frac { 1 }
					{ \lambda - \gamma_p u } 
			\semi ( d \lambda ) \, ,
	\label{eq:frakS_def}
\end{align}								
so that for such 
$ u $, 
\begin{align*}								
\frac { d }
	{ du } 
	\Sigma _ {p,  \ell} ( u ) 
		= 
			- \leftp
				\mathfrak { S } _{(\ell)} ( u ) + \frac{2(p-1) + \ell p}{2(p-1)}u 
			\rightp 
				> 
					0 \,. 
\end{align*}								

Finally, for 
$ \matX 
	\sim 
		\goe _ { N - 1 } $ given and 
$ u 
	< 
		- \sfE _ \infty $, define 
\begin{align}								
\label{eq:frak_C_N_def}
\mathfrak { C } _ N ( u ) 
	\tri 
		\omega_N 
		\sfC _N ^ { 1 / 2 } 
		\sqrt{ 
			\frac { N } 
				{ 2 \pi } 
		} 
		e ^ { 
			- N ( u ^ 2 / 2 )
		} 
			\E 
				\det 
					\leftp
						\matX - \sqrt{N} \bar{u} \matI 
					\rightp \,,
\end{align}								
recalling $\omega_N$ and $\sfC_N$ from \eqref{eq:omega_N_def} and \eqref{eq:C_N_Kac_Rice}.
\begin{rmk}								
\label{rmk:sec6_setting}
In the three results below, we consider a distinguished 
$ u _ * 
	< 
		- \sfE _ \infty $ and a sequence of intervals 
$ J _ N = (a_N, b_N) $, whose endpoints satisfy
$ a _ N , b _ N \to u _ * $ as 
$ N \to \infty $. The next two results concern either critical points of a given index with energies in these shrinking intervals, or pairs of such points having overlap in $ ( - \rho _ N , \rho _ N ) $, for some sequence 
$ \rho _ N \downarrow 0 $. 
\end{rmk}						

Set 
\[
c_{\ell,p} = \frac{2(p-1) + \ell p}{2(p-1)}.
\]

Lemma~\ref{lem:sub18} plays the role of Theorem
~\ref{thm:ABC} in this setting.
\begin{lem}								
\label{lem:sub18}
Let 
$ u _ * 
	< 
		- \sfE _ \infty $ and consider a sequence of intervals 
$ J _ N 
	= 
		( a _ N , b _ N ) $ with 
$ a _ N $ and 
$ b _ N $ tending to 
$ u _ * $ as 
$ N \to \infty $. With $ \mathfrak { S } _{(\ell)}( u ) $ and $ \mathfrak { C } _ N ( u ) $  as in 
\eqref{eq:frakS_def} and \eqref{eq:frak_C_N_def} 
, as $N \to \infty$ 
\begin{align*}								
\E 
	\leftp 
		\crit_{N,\ell} ( J _ N )  
	\rightp 
		= 
			( 1 + o ( 1 ) ) 
			\mathfrak { C } _ N ( b _ N ) 
			\int _ { J _ N } 
				\exp 
					\Big( 
						- N ( \,
							c_{\ell,p} u_* + \mathfrak { S }_{(\ell)} ( u _ * ) 
						\, ) \cdot (
							v - b _ N
						) 
					\Big) 
			\, dv.
\end{align*}								
\end{lem}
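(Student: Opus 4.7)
The plan is to obtain a Kac--Rice integral representation for $\E\,\crit_{N,\ell}(J_N)$ and then perform a sharp Laplace-style analysis of the resulting one-dimensional integral as $N\to\infty$ and $J_N$ shrinks to $u_*$. First I would invoke Kac--Rice together with the rotational invariance of the law of $\HNp$, reducing the sphere integral to a single-point evaluation (at the north pole $\np$). Using the single-point specialization of Lemma~\ref{lem:rosetta_stone} to identify the conditional law of the Hessian (and observing that the rank-one perturbation $m_i^\circ\mate_{N-1}^{N-1}$ is of size $O(N^{-1/2})$, so contributes only to a $(1+o(1))$ multiplicative factor), this would yield an explicit representation
\begin{align*}
\E\,\crit_{N,\ell}(J_N) = (1+o(1))\,\omega_N\,\sfC_N^{1/2}\sqrt{\tfrac{N}{2\pi}}\int_{J_N} e^{-Nu^2/2}\, \E\!\left[\,|\det(\matX - \sqrt{N}\bar{u}\matI)|\,\1\!\left\{\ind(\matX - \sqrt{N}\bar{u}\matI) = \ell\right\}\,\right] du,
\end{align*}
with $\matX\sim\goe_{N-1}$. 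I denote the integrand (absorbing the energy Gaussian) by $\Phi_N(u)$.

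Second, I would establish the uniform ratio asymptotic
\begin{align*}
\frac{\Phi_N(u)}{\Phi_N(b_N)} = (1+o(1))\,\exp\!\left(-N\bigl(c_{\ell,p}u_* + \mathfrak{S}_{(\ell)}(u_*)\bigr)(u-b_N)\right), \qquad u\in J_N.
\end{align*}
This is a differentiation statement. Two ingredients feed into it: the Hermite-polynomial expression for the expected characteristic polynomial of a GOE, giving $\log \E|\det(\matX-\sqrt{N}\bar{u}\matI)| = N\Omega(\gamma_p u) + O(\log N)$ with smooth sub-leading corrections, together with the single-matrix ($r=0$) case of Theorem~\ref{coro:LDP_ell}, which captures the index constraint at the sharp exponential cost $e^{-N\ell I_1(-\gamma_p u;1)}$. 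A direct computation then verifies
\begin{align*}
\frac{d}{du}\!\left[\,-\tfrac{u^2}{2} + \Omega(\gamma_p u) - \ell\, I_1(-\gamma_p u;1)\,\right] = -\bigl(c_{\ell,p}u + \mathfrak{S}_{(\ell)}(u)\bigr),
\end{align*}
which, by the identity $\Sigma_{p,\ell}'(u) = -(c_{\ell,p}u + \mathfrak{S}_{(\ell)}(u))$ noted in the text, is exactly the required slope at $u_*$. Taylor-expanding the log-ratio at $u_*$, and using $|u-u_*|\to 0$ uniformly on $J_N$, converts this into the claimed exponential with additive error $N\cdot o(|u-b_N|)=o(1)$.

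Third, pulling $\Phi_N(b_N)$ out of the integral and identifying $\omega_N \sfC_N^{1/2}\sqrt{N/(2\pi)}\,\Phi_N(b_N)/\Phi_N(b_N)$, up to the $(1+o(1))$ prefactor, with $\mathfrak{C}_N(b_N)$ --- since the index indicator and the absolute-value can both be absorbed into the multiplicative $(1+o(1))$ after the factorization of the preceding step --- yields the stated formula. The main obstacle is the second paragraph: crude applications of the eigenvalue LDP only produce $\log\Phi_N(u)-\log\Phi_N(b_N) = -N(c_{\ell,p}u_*+\mathfrak{S}_{(\ell)}(u_*))(u-b_N)(1+o(1))$, which is too weak to conclude when $N(u-b_N)$ is of order one (precisely the regime where the integral is non-degenerate). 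Upgrading to the $o(1)$-additive precision needed here requires a more refined analysis of $\E[|\det(\matX-\sqrt{N}\bar{u}\matI)|\1\{\ind=\ell\}]$ --- for example by integrating the bottom $\ell$ eigenvalues against the joint GOE density and exploiting Harer--Zagier-type identities, or by combining the isotropic local-law inputs of Section~\ref{sec:4_1} with the explicit Hermite representation --- in order to guarantee uniform differentiability of the log-integrand on compact neighborhoods of $u_*$. Once this refinement is in place, a routine dominated-convergence argument on the shrinking interval $J_N$ completes the proof.
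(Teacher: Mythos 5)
The paper does not prove Lemma~\ref{lem:sub18}: it is stated without proof, and the text explicitly defers a detailed account of Theorem~\ref{thm:Jesus} (of which this lemma is an ingredient) to a forthcoming paper. So your proposal cannot be compared against an in-paper argument; I can only assess it on its own terms.

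Your overall shape --- Kac--Rice with rotational symmetry to reduce to a one-dimensional energy integral, then a sharp Laplace-type ratio asymptotic $\Phi_N(u)/\Phi_N(b_N)$ uniformly on the shrinking window, with the slope computed by differentiating the log-integrand --- is the right skeleton, and your verification that $\tfrac{d}{du}\bigl[-u^2/2 + \Omega(\gamma_p u) - \ell I_1(-\gamma_p u;1)\bigr] = -(c_{\ell,p}u + \mathfrak{S}_{(\ell)}(u))$ is correct (this is exactly $\Sigma_{p,\ell}'(u)$). You also correctly flag that the raw eigenvalue LDP only gives $N\cdot o(1)$ precision in the exponent, which is too coarse here, and that one needs true sub-exponential control of $\E\bigl[|\det(\matX - \sqrt{N}\bar u\matI)|\1\{\ind = \ell\}\bigr]$.

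Two specific problems. First, the parenthetical about the rank-one perturbation $m_i^\circ\mate_{N-1}^{N-1}$ is out of place: that perturbation only arises in the \emph{two-point} Hessian law of Lemma~\ref{lem:rosetta_stone}; the conditional law of a single Hessian at a critical point (the input to the first-moment Kac--Rice formula) is a pure shifted GOE, as in~\cite{ABC}, with no such correction term. Second, and more seriously, the closing step --- ``the index indicator and the absolute-value can both be absorbed into the multiplicative $(1+o(1))$'' --- is false for $\ell \ge 1$. The event $\{\ind(\matX - \sqrt{N}\bar u\matI) = \ell\}$ forces exactly $\ell$ eigenvalues of a GOE below the level $\gamma_p u < -2$, an event of probability $\exp(-N\ell I_1(-\gamma_p u;1)(1+o(1)))$. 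Consequently $\E\bigl[|\det|\1\{\ind = \ell\}\bigr]$ is exponentially smaller than $\E\det$, not $(1+o(1))$ of it, so the identification of $\sqrt{N/2\pi}\,\omega_N\sfC_N^{1/2}\Phi_N(b_N)$ with $\mathfrak{C}_N(b_N)$ does not hold as you claim. Your own earlier paragraphs --- where the $e^{-N\ell I_1}$ factor appears in the ratio asymptotic --- are in tension with this final absorption, and the contradiction is not resolved. The correct route, which the paper itself gestures at in the remainder of the appendix, is to handle the index constraint explicitly: integrate the bottom $\ell$ eigenvalues against the joint GOE density (in the manner of~\cite{ABC}) to reformulate $\E[|\det|\1\{\ind=\ell\}]$ as an expectation over a smaller GOE whose spectrum is entirely positive, and only then apply the concentration machinery of~\cite{subag2017complexity} to that reduced object. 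You mention this idea in passing, but then contradict it in the last sentence. As written, the argument has a genuine gap at the step where the index constraint is discharged.
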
									
%
%
Lemma~\ref{lem:sub19} extends Lemma
 ~\ref{lem:sub18} to pairs of nearly orthogonal critical points.

\begin{lem}								
\label{lem:sub19} 
Let 
$ u _ * 
	< 
		- \sfE _ \infty $ and 
$ J _ N 
	= 
		( a _ N , b _ N ) $ be as in Remark
~\ref{rmk:sec6_setting} for this energy, and let 
$ \rho _ N \downarrow 0 $.
With 
$ \mathfrak { S } _{(\ell)} ( u ) $ 
and $ \mathfrak { C } _ N ( u ) $ 
as in 
\eqref{eq:frakS_def}
and
\eqref{eq:frak_C_N_def}, and using the notation 
\eqref{eq:abbrev_ov_int}, as $N \to \infty$,
\begin{align*}								
\E 
	\leftcb
		\critNell ( J _ N ) 
	\rightcb _ { \bm { 2 } }^{\rho_N} 
		\leq 
			( 1 + o ( 1 ) ) 
			\leftp 
				\mathfrak { C } _ N ( b _ N ) 
					\int _ { J _ N } 
						\exp 
							\Big( 
								- N ( \, 
									c_{\ell,p}u_* + \mathfrak{S}_{(\ell)}(u _ *) 
								\, ) \cdot (
									v - b _ N 
								) 
							\Big)
					\, dv 
			\rightp^2 .
\end{align*}								
\end{lem}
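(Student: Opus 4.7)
The starting point is Lemma~\ref{lem:our_Kac_Rice} applied with $B = J_N$ and $R = (-\rho_N, \rho_N)$, giving
\[
\E \leftcb \critNell(J_N) \rightcb _ {\bm{2}}^{\rho_N}
	= \sfC_N \int_{-\rho_N}^{\rho_N} (\mathcal{G}(r))^N \mathcal{F}(r) \, \mathcal{E}(r, J_N) \, dr,
\]
where $\mathcal{E}(r, J_N) \tri \E \leftp \prod_{i=1,2} \big| \det \un{\matM}_{\,i}\big| \cdot \1\{\Een_{J_N} \cap \Eind_\ell\} \rightp$. Since $\rho_N \to 0$, the plan is to replace $\mathcal{E}(r, J_N)$ uniformly by its value $\mathcal{E}(0, J_N)$ at $r = 0$, to factorize the latter by decoupling, and to identify the resulting quantity with the square of the sharp asymptotic from Lemma~\ref{lem:sub18}.

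The factorization at $r=0$ follows by direct inspection. Equation \eqref{eq:GOE_joint} at $r=0$ shows that $\matG_1$ and $\matG_2$ become independent; the formulas in the appendix yield $a_3(0)=a_4(0)=b_3(0)=b_4(0)=0$, $\bm{\Sigma}_{Z,12}(0)=\bm{\Sigma}_{Q,12}(0)=0$, and $\bm{\Sigma}_U(0)=\matI$ by Lemma~\ref{lem:U_eval}. Hence $(Z_1,Q_1,U_1)$ is independent of $(Z_2,Q_2,U_2)$ at $r=0$ and $m_i^\circ|_{r=0}=0$, so each $\un{\matM}_{\,i}|_{r=0}$ is distributed exactly as the Hessian at a single typical critical point, and
\[
\mathcal{E}(0, J_N) = \prod_{i=1,2} \E \leftp \big| \det \un{\matM}_{\,i}|_{r=0} \big| \cdot \1\{U_i \in \sqrt{N} J_N\} \cdot \1\{\ind(\un{\matM}_{\,i}|_{r=0}) = \ell\} \rightp,
\]
each factor being exactly the integrand of the single-point Kac-Rice formula underlying Lemma~\ref{lem:sub18}. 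For the deterministic prefactors, Taylor expansion gives $\mathcal{G}(r) = \exp(-r^2/2 + O(r^{2p-2}))$ and $\mathcal{F}(r) = 1 + O(r^{p-2})$, so $(\mathcal{G}(r))^N \mathcal{F}(r) = (1+o(1)) e^{-Nr^2/2}$ uniformly on a window $|r| \leq N^{-1/2+\delta}$ for small $\delta > 0$. Contributions from $N^{-1/2+\delta} < |r| < \rho_N$ are negligible thanks to the Gaussian suppression $e^{-Nr^2/2}$ combined with the a priori exponential-scale bound from Theorem~\ref{thm5}.

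The principal obstacle is the sharp (not merely exponential-scale) continuity estimate $\mathcal{E}(r, J_N) = (1+o(1)) \mathcal{E}(0, J_N)$ uniform on $|r| \leq N^{-1/2+\delta}$. The coupling \eqref{eq:GOE_joint} realizes $(\matG_1(r), \matG_2(r))$ as a perturbation of the independent pair by a shared matrix of operator norm $O(|r|^{(p-2)/2})$, and analogous couplings for $(Z_i), (Q_i), (U_i)$ are obtained by diagonalizing the covariance structures in the appendix. The determinant factors are then controlled by the H\"older-plus-truncation scheme used in the proof of Theorem~\ref{thm5} (compare \eqref{eq:calE1_def} and \eqref{eq:h_def}), which provides an $L^{1+}$ modulus of continuity robust to these small perturbations; the joint index constraint $\1\{\Eind_\ell\}$ is transferred to analogous constraints on the GOE minors through Proposition~\ref{prop:index_transfer}, whose coupling is stable as $r \to 0$. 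Integrating and using $\int e^{-Nr^2/2}\, dr = (1+o(1))\sqrt{2\pi/N}$ yields
\[
\E \leftcb \critNell(J_N) \rightcb _ {\bm{2}}^{\rho_N}
	\leq (1+o(1)) \, \sfC_N \sqrt{\tfrac{2\pi}{N}} \, \mathcal{E}(0, J_N).
\]
A final bookkeeping step, matching $\sfC_N \sqrt{2\pi/N}$ with the product of single-matrix Kac-Rice normalizations via Stirling and $\omega_N = 2\pi^{N/2}/\Gamma(N/2)$, identifies the right-hand side with $(1+o(1))(\E\critNell(J_N))^2$, which is the stated bound by Lemma~\ref{lem:sub18}.
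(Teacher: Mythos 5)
Your high-level scaffolding — Kac-Rice, localize to $r$ near $0$ via the entropy factor $\mathcal{G}(r)^N$, decouple the Hessians at $r=0$ where all the cross-covariances vanish, and match the factored quantity with the square of Lemma~\ref{lem:sub18} — is the same as the paper's. Your factorization at $r=0$ (via $a_3(0)=a_4(0)=b_3(0)=b_4(0)=0$, $\bm{\Sigma}_{Z,12}(0)=\bm{\Sigma}_{Q,12}(0)=0$, $\bm{\Sigma}_U(0)=\matI$) and your Laplace estimate $\mathcal{G}(r)^N\mathcal{F}(r)\approx e^{-Nr^2/2}$ are both correct.

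The decisive gap is your treatment of what you rightly identify as ``the principal obstacle'': the claim that $\mathcal{E}(r,J_N)=(1+o(1))\,\mathcal{E}(0,J_N)$ uniformly for $|r|\leq N^{-1/2+\delta}$ via ``an $L^{1+}$ modulus of continuity'' supplied by the H\"older-plus-truncation scheme of Theorem~\ref{thm5}. That scheme is built for exponential-scale bounds (it introduces a $\kappa$-cutoff whose removal costs an arbitrarily small but nonzero exponent, and an $\eps$-regularization whose removal gives additive error $o(1)$ in $\frac1N\log$, not a multiplicative $1+o(1)$). Moreover, a naive coupling bound does not give what you want: the shared component in \eqref{eq:GOE_joint} has operator norm of order $|r|^{(p-2)/2}$, so $\log\det$ can shift by $O\big(N\,|r|^{(p-2)/2}\big)$, which for $|r|\sim N^{-1/2+\delta}$ is polynomially large in $N$; the ratio $\mathcal{E}(r,J_N)/\mathcal{E}(0,J_N)$ is therefore not controlled by any Lipschitz continuity of determinants. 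The paper replaces this step by a genuine comparison inequality, Lemma~\ref{lem:sub25}: a Gaussian interpolation in the correlation parameter $\rho=\sgn(r)\sqrt{|r|^{p-2}}$, proved by differentiating $\rho\mapsto\E\,\big(\det\matW_1(\rho)\det\matW_2(\rho)\big)$ and using expansion by minors, giving $\til g(\rho)-\til g(0)\leq\rho\,\big(\til g(1)-\til g(0)\big)$. This is a convexity-type bound in $\rho$, not a continuity estimate, and it is precisely what lets one pass from correlated to independent Hessians at the $(1+o(1))$ level after integrating against the localizing Gaussian in $r$.

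There is a second gap you do not address: the paper stresses that once the expectation is factored, the $O(1)$-precision asymptotics of $\E\big(\det(\un{\matX}_{\iid,\,i})\cdot\1\{\ind(\un{\matX}_{\iid,\,i})=\ell\}\big)$ are delicate — Subag's concentration estimates assume the shifted GOE spectrum is bounded away from zero, which fails on the index-$\ell$ event — and the paper resolves this by reformulating the constrained determinant in terms of a smaller GOE matrix with all positive eigenvalues, following \cite{ABC}. Your ``final bookkeeping step'' implicitly relies on having this single-matrix asymptotic available, but nothing in your sketch provides it, and the mechanism is not a matter of Stirling-type normalization: it is a nontrivial manipulation of the joint eigenvalue density in the presence of the index constraint.
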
									
We now prove Theorem \ref{thm:Jesus} assuming Lemma~\ref{lem:sub18}, and Lemma~\ref{lem:sub19}.
\begin{proof}
[ 
	Proof of Theorem
~\ref{thm:Jesus} 
]
It will suffice to show that, for $p \geq 3$ and $u_* \in (-\sfE_\ell, - \sfE_\infty)$,
\begin{align}
\label{eq:thm_jesus}
\lim_{N \to \infty} 
	\frac{
	\E \left[ \CritNell ( \, ( -\infty, u_*) \, ) \right]^2
	}
	{
	\left[
	\E \CritNell ( \, (- \infty, u_*) \, ) 
	\right]^2
	}
		=
			1.
\end{align} 
To this end, we first use Theorem~\ref{thm:ABC} and that $u \mapsto \Sigma_{p,\,\ell}(u)$ is strictly increasing over the interval $(-\infty, -E_\infty)$: there is a positive sequence $\eps_N \downarrow 0$ such that:
\eq{
 \lim_{N\to \infty} 
		\frac{\E \critNell ( \, (u_* -\eps_N, u_*) \, ) }{\E \critNell( \, (-\infty, u_*) \, ) } = 1.
}
Using the above display with the trivial inequality $\E \CritNell ( \, (-\infty, u_*) \,) \geq \E \CritNell ( \, (u_* -\eps_N , u_* ) \, )$, \eqref{eq:thm_jesus} will follow if
\eq{
\lim_{N \to \infty} 
	\frac{
	\E \left[ \critNell ( \, (-\infty, u_*) \, ) \right]^2}
	{
	\left[ \E \critNell ( \, (u_* - \eps, u_*) \, ) \right]^2	
	}
	\leq 1
}
By Lemma~\ref{lem:sub20}, without loss of generality, we may use the same sequence $\eps_N$ and find another positive sequence $\rho_N \downarrow 0$ so that
\eq{
\lim_{N \to \infty} 
	\frac{
	\E 		\leftcb
			\crit _ N ( \, (- u_* - \eps_N, u_*) \,  ) 
		\rightcb_ { \bm { 2 } }^{\rho_N}  }
		{
			\E \left[ \critNell(\, (- \infty, - u_*) \, ) \right]^2 
		}
		&=1,
}
and thus \eqref{eq:thm_jesus} is implied by
\eq{
\lim_{N \to \infty}
	\frac{
	\E \{ \CritNell( \, (u_* - \eps _ N, u_* ) \, ) \}_{\bm{2}}^{\rho_N}
	}
	{
	\left[ \E \critNell ( \, (u_* -\eps _ N, u_*) \, ) \right]^2	
	}
		\leq 1,
}
which follows directly from  Lemma~\ref{lem:sub18} and \ref{lem:sub19}, completing the proof. \end{proof}


Let 
$ \rho _ N 
	\downarrow 
		0 $, and let 
$ \matX _ { \iid, \, 0 } $, 
$ \matX _ { \iid, \, 1 } $ and 
$ \matX _ { \iid, \, 2 } $ be three i.i.d. 
$ \goe _ { N - 1 } $ random matrices. For 
$ i 
	= 
		1 , 2 $ and 
$ r 
	\equiv 
		r _ N \in ( - \rho _ N , \rho _ N ) $, set
\begin{align}									
\label{eq:l24prepre} 
\matX _ i ( r )  
	\tri 
		\sqrt{ 
			1 - | r | ^ { p - 2 } 
		} 
		\matX _ { \iid , \, i} 
		+ (\sgn(r))^{ip} 
		\sqrt{ 
			| r | ^ { p - 2 } 
		} 
		\matX _ { \iid , \, 0 } \,.
\end{align}									
From Lemma
~\ref{lem:rosetta_stone}, if one restricts the pair 
$ ( \matX _ 1 ( r ) , \matX _ 2 ( r ) ) $ to the pair of $(N-2) \times (N-2)$ principal minors, the resulting random matrix pair has the correlation structure 
\eqref{eq:GOE_joint}, suggesting a natural and useful coupling.

Let 
$ \Xminor $ denote the 
$ ( N - 2 ) \times (N-2)$ principal minor of 
$ \matX _ i ( r ) $, and let 
$ \Xzout $ denote the 
$(N - 1) 
	\times 
		( N - 1 )$ matrix obtained from 
$ \matX _ i ( r ) $ by setting each matrix element of 
$ \matX _ {\minor, \, i} (r) $ within
$ \matX _ i ( r ) $ to zero. The only non-zero entries of 
$ \Xzout $ are in the last row or the last column. Couple the matrices 
$ \matX _ i ( r ) $ and 
$ \un { \matM } _ { \, i } ( r ) $ together so that (i) -- (iii) below hold almost surely. 
\begin{itemize}								
\item[(i)]
Recalling that 
$ \un { \matM } _{ \, i } ( r ) $ is a shift of 
$ \matM _ i ( r ) $, whose block structure has the form 
\eqref{eq:hess_block_structure}, we couple the largest block of 
$ \matM _ i ( r ) $ to 
$ \Xminor $ so that 
$ \Xminor 
	= 
		\matG _ i ( r ) $ almost surely. 
\item[(ii)]
Having coupled most of 
$ \matX _ i ( r ) $ to most of 
$ \matM _ i ( r ) $, we couple the column
$ Z _ i ( r ) $ in 
\eqref{eq:hess_block_structure} to the last column of 
$ \matX _ i ( r ) $ so that, for
$ j 
	= 
		1 , \dots , N - 2 $, 
\begin{align*}								
\lbob 
	Z _ i ( r ) 
\rbob _ j 
	&= 
		\sqrt{
			\frac{ 
				\bm { \Sigma } _ { Z , \, 11 } (r) - | \bm { \Sigma } _ { Z , \, 12 } (r) | 
			}
			{
				p ( p - 1 )
			} 
		}
		\lbob
			\matX _ {\iid , \, i } 
		\rbob _ { j , \, N - 1 } 
		+ \leftp 
			\sgn \leftp 
				\bm { \Sigma } _ { Z , \,12 } ( r ) 
			\rightp 
		\rightp ^ i  
		\sqrt{ 
			\frac{ 
				| \bm { \Sigma } _ { Z , \, 12 } ( r ) | 
			}
			{ 
				p ( p -1 ) 
			} 
		} 
		\lbob 
			\matX _ { \iid, \, 0 } 
		\rbob _ { j , \, N- 1 } 
\end{align*}								
\item[(iii)]
We finally couple the matrix element 
$ Q _i ( r ) $ in 
\eqref{eq:hess_block_structure} to the last element of each 
$ \matX _ i ( r ) $ so that
\begin{align*}								
Q _ i ( r ) 
	&= 
		\sqrt{
			\frac { 
				\bm { \Sigma } _ { Q , \, 11 } (r) - | \bm { \Sigma } _ { Q , \, 12 } ( r ) | 
			}
			{
				p ( p - 1 )
			} 
		} 
		\lbob
			\matX _ {\iid , \, i }
		\rbob_{N-1,\,N-1} 
		+ \leftp 
			\sgn 
				\leftp \bm { \Sigma } _ { Q , \, 12 } ( r ) 
			\rightp 
		\rightp ^ i  
		\sqrt{ 
			\frac{ | \Sigma_{Q,\,12} (r) | }{ p (p-1) } 
		} 
		\lbob
			\matX _ { \iid , \, 0 } 
		\rbob _ { N - 1 , \, N - 1 } 
\end{align*}								
\end{itemize}								

Define the matrix
\begin{align}								
\label{eq:T_i_def}
\matT _ i ( r ) 
	\tri 
		\leftp 
			\begin{matrix}
				\bm { 0 } & Z _ i ( r ) \\
				Z_i ( r ) ^ T &  Q _ i ( r ) 
			\end{matrix}
		\rightp
- \Xzout, 
\end{align}								
so that:
\begin{align}								
\label{eq:M_X_perturb}
\matM _ i ( r ) 
	= 
		\matX _ i ( r ) 
		+ \matT _ i ( r ) \,. 
\end{align}								

The next lemma is analogous to \cite[Lemma 25]{ 
	subag2017complexity
 }. 
\begin{lem}									
\label{lem:sub25}
Given $r \in [-1,1]$, define $\rho(r) \tri \sgn(r) \sqrt{ |r|^{p-2} }$. For $i= 1,2$, let $\matW_i \equiv \matW_i(r)$ be 
$ (N-1) 
	\times 
		(N-1) $ jointly Gaussian matrices such that 
\begin{align}									
\label{eq:sub25_cov}
\matW_i \equiv \matW_i(r) \sim \sqrt{N-1} \matM_i(r),
\end{align}									
with the $\matM_i$ as in Lemma~\ref{lem:rosetta_stone}. Let 
$ g : 
	\R ^ { (N-1) \times (N-1) } 
		\to 
			\R $ be the function mapping matrices $\R^{ (N - 1) \times (N-1 ) } \ni \bm{A}$ to their determinants $\det ( \bm{A} )$, and define
$\hat{g}(r) 
	\tri 
		\E 
			( \,
				g \leftp 
					\matW_1
				\rightp  
				\cdot 
				g \leftp
					\matW _ 2 
				\rightp  
			\,).$								
Letting $\til{g}$ denote the equivalent function of $\rho$ under the above change of variables, one has that $\til{g} : [-1,1] \to \R$ is a polynomial in $\rho$ satisfying
\begin{align}									
\label{eq:sub25_state_1}
		\til{ g } ( \rho ) 
		- \til{g}(0) 
			\leq \rho ( \til{g}(1) - \til{g}(0) )  
\end{align}									
for any 
$ \rho \in [ 0 , 1 ] $.
\end{lem}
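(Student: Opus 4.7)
The plan is to adapt the Gaussian-interpolation argument used for Lemma~25 in \cite{subag2017complexity}. The strategy has three ingredients: a polynomial-in-$\rho$ representation of $\tilde g$, nonnegativity of its Taylor coefficients at $\rho = 0$, and the elementary bound $\rho^n \leq \rho$ for $\rho \in [0,1]$ and $n \geq 1$.

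\textbf{Step 1 (polynomial structure).} Using \eqref{eq:l24prepre} and the coupling (i)--(iii), I would realize $(\matW_1, \matW_2) = \sqrt{N-1}\,(\matM_1, \matM_2)$ as a jointly Gaussian pair in which, on $\rho\in[0,1]$, the shared component (entries of $\matX_{\iid,0}$) enters each block of $\matM_i$ (minor, column, and corner) with a coefficient proportional to $\rho$, while the independent components (entries of $\matX_{\iid,i}$) carry block-dependent but $\rho$-independent weights. The cross-covariances between entries of $\matW_1$ and $\matW_2$ are therefore proportional to $\rho^2$, whereas the within-matrix covariances can be arranged to not depend on $\rho$. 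Since $\det \matW_1 \cdot \det \matW_2$ is a polynomial of total degree $2(N-1)$ in the entries, Wick's theorem then writes $\tilde g(\rho)$ as a finite sum over pair partitions of products of covariances, and hence as a polynomial in $\rho$ (in fact with only even powers).

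\textbf{Step 2 (nonnegative Taylor coefficients).} I would show $\partial_\rho^n \tilde g(0) \geq 0$ for all $n \geq 1$ by iterated Gaussian integration by parts (Price's theorem). Each application of $\partial_\rho$ to $\E[\det \matW_1 \det \matW_2]$ produces a sum, indexed by pairs of matrix entries, of expectations of products of first partial derivatives of $\det \matW_1$ and $\det \matW_2$, weighted by $\partial_\rho$-derivatives of the cross-covariances. Iterating $n$ times and then setting $\rho = 0$ decouples the two matrices (the shared component vanishes), so each term factors into a product of two identical expectations—one over each matrix, since both have the same marginal law at $\rho = 0$—giving a perfect square. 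Summing over the index choices, with nonnegative weights coming from the $\rho$-derivatives of the rank-one-like shared covariances, produces a sum of squares.

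\textbf{Step 3 (conclusion).} Writing $\tilde g(\rho) = \tilde g(0) + \sum_{n \geq 1} c_n \rho^n$ with $c_n \geq 0$, and using $\rho^n \leq \rho$ on $[0,1]$, we obtain
\[
\tilde g(\rho) - \tilde g(0) = \sum_{n\geq 1} c_n \rho^n \leq \rho \sum_{n\geq 1} c_n = \rho\bigl(\tilde g(1) - \tilde g(0)\bigr),
\]
as claimed.

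The main obstacle is Step 2: because $\matM_i$ has distinct blocks (the minor $\matG_i$, the column $Z_i$, and the entry $Q_i$), each with its own intricate $r$-dependent cross-covariance (cf.\ $\bm{\Sigma}_{Z,12}(r)$ and $\bm{\Sigma}_{Q,12}(r)$ in Section~\ref{subsec:cov}), the Gaussian integration-by-parts bookkeeping must carefully track how a derivative taken in a given block of $\matW_1$ must be paired with the analogous derivative in the \emph{same} block of $\matW_2$ through the shared $\matX_{\iid,0}$-direction. One must verify that, despite the more complicated block covariances, the only terms surviving in $\partial_\rho^n \tilde g(0)$ correspond to such matched pairings, so that the resulting factorization genuinely produces a sum of squares across all three blocks.
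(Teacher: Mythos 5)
Your overall plan—pass to the $\rho$ variable, interpolate via Gaussian integration by parts, establish nonnegativity of the derivatives of $\til g$ at $\rho=0$, then use $\rho^n\le\rho$ on $[0,1]$—is the same route as \cite[Lemma~25]{subag2017complexity} and the one the paper sketches. The difficulty is that your Step~1 asserts a covariance structure that holds only for the $(N-2)\times(N-2)$ principal minor $\matG_i$, and fails for the last row, column and corner.

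Concretely, in the coupling (ii) the shared vector $\matX_{\iid,\,0}$ enters the column $Z_i$ with coefficient $\sqrt{|\bm{\Sigma}_{Z,12}(r)|/p(p-1)}$, which is \emph{not} proportional to $\rho$. For $p=3$ one reads off from \eqref{eq:Sigma_Z} that $\bm{\Sigma}_{Z,12}(0)=-p(p-1)(p-2)=-6$ while $\bm{\Sigma}_{Z,11}(0)=p(p-1)=6$: the cross covariance of the $Z$-columns does not vanish at $r=0$; in fact the coupling then forces $Z_1=-Z_2$ there, so $\matW_1$ and $\matW_2$ are not independent at $\rho=0$. Likewise the within-matrix variances $\bm{\Sigma}_{Z,11}(r)$ and $\bm{\Sigma}_{Q,11}(r)$ genuinely depend on $r$ (for instance $\bm{\Sigma}_{Z,11}$ equals $p(p-1)$ at $r=0$ and tends to $0$ as $r\to1$). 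This contradicts both claims underpinning your Step~1 (``cross-covariances \ldots proportional to $\rho^2$'' and ``within-matrix covariances \ldots not depend on $\rho$''), and these are exactly what your Step~2 needs: without independence of the two matrices at $\rho=0$, setting $\rho=0$ after differentiating does not decouple the expectation, and the sum-of-squares factorization is not available in the form you describe.

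This is the very complication the paper flags—``the covariances in our setting also simplify under the change of variables $r\mapsto\rho$, but not to the same extent''—and its resolution is not a more careful pairing bookkeeping but a change of tactic that is missing from your proposal: specialize $g$ to the determinant, differentiate $\til g$ in $\rho$ by the chain rule through the matrix entries, and evaluate the entrywise derivatives of $\det(\matW_i)$ by expanding along the last row and column. That minor expansion is precisely what isolates the well-behaved $\matG_i$ blocks from the $\rho$-dependent $Z_i,Q_i$ pieces and is the essential new ingredient relative to the $\ell=0$ situation of \cite{subag2017complexity}.
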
 		

\begin{proof} The method of proof is the same as that of \cite[Lemma 25]{subag2017complexity}, whose statement considers more general functions $g$. In that case, the random matrices considered have a much simpler covariance structure. The lemma is shown, in either case, by differentiating $\til{g}$ in the parameter $\rho$. In \cite{subag2017complexity}, the simple covariance structure of the matrices considered leads many terms to vanish after differentiation. The covariances in our setting also simplify under the change of variables $r \mapsto \rho$, but not to the same extent, and we counteract this difficulty by specializing $g$ to the determinant function. Using the chain rule to take a derivative in $\rho$, the determinants inside the expectation are differentiated with respect to the matrix entries of the $\matW_i$. These derivatives are computed easily, expanding by minors, and we leave the details of the proof to the reader. \end{proof}			

We now summarize the rest of the argument, leaving the details to a forthcoming paper. A similar decoupling lemma is in some sense the last step of Subag's argument, specifically in the proof of \cite[Lemma 19]{subag2017complexity}. In our case, it is necessary to apply Lemma~\ref{lem:sub25} sooner. As a consequence, we show that for $r = r_N \in (-\rho_N, \rho_N)$, 
\eq{
\E \left( \prod_{i=1,\,2} \det \left( \un{\matM}_{\,i}(r,u_1,u_2) \right) \cdot \1\{ \Eind_\ell(r) \}  \right) \leq (1 + o(1) )  \E \left( \prod_{i=1,\,2} \det \left( \un{\matM}_{\,i} (0, u_1, u_2) \right) \cdot \1 \{ \Eind_\ell(r) \} \right),
}
where $\Eind_\ell(r)$ is the event that both of the $\un{\matM}_{\,i}(r,u_1,u_2)$ have index $\ell$. For $i = 1,2$, the coupling introduced above implies $\un{\matM}_{\,i}(0,u_1, u_2) = \un{\matX}_{\iid,\,i}$, where $\un{\matX}_{\iid,\,i}$ denotes a shift of the matrix $\matX_{\iid,\,i}$. Thus the random matrices in the above display on the right are independent of one another, and it is only the index constraint on the $\un{\matM}_{\,i}(r,u_1, u_2)$ which prevents the expectation from factoring. 

An index transfer lemma based on Proposition~\ref{prop:index_transfer} is necessary to correct the indicator function on the right, getting the bound
\eq{
\E \left( \prod_{i=1,\,2}  \det \left( \un{\matM}_{\,i}(r,u_1,u_2)\right) \cdot \1\{ \Eind_\ell(r) \}  \right) &\leq (1 + o(1) ) \E \left( \prod_{i=1,\,2} \det  \left( \un{\matM}_{\,i} (0, u_1, u_2) \right) \cdot \1 \{ \Eind_\ell(0) \} \right) \\
&= (1 + o(1))  \prod_{i=1,\,2} \E \left( \det ( \un{\matX}_{\iid,\,i}  ) \cdot \1\{ \ind ( \un{\matX}_{\iid,\,i} ) = \ell \} \right),
}
and what remains is to analyze the factors $\E ( \det ( \un{\matX}_{\iid,\,i} ) \cdot \1 \{ \ind ( \un{\matX}_{\iid,\,i} ) = \ell \} )$ at the $O(1)$ scale rather than the exponential scale. This requires us to use concentration results, as in \cite{subag2017complexity}, in place of large deviations, but this step becomes even more delicate with an index constraint: on the event that the $\un{\matX}_{\iid,\,i}$ have negative eigenvalues, there is no way to use Subag's concentration results directly -- for instance, he works within the event that the spectrum of the shifted GOE matrix $\un{\matX}_{\iid,\,i}$ is bounded away from zero by some $\eps > 0$, which specializes to the case $\ell = 0$. 

This is the reason we perform the decoupling first: with only one matrix determinant in each expectation, we have access to the law governing the eigenvalues as an explicit density. Along the lines of \cite{ABC}, the index constraint can be combined with the explicit density, leading to a reformulation of $\E ( \det ( \un{\matX}_{\iid,\,i} ) \cdot \1 \{ \ind ( \un{\matX}_{\iid,\,i} ) = \ell \} )$ in terms of the expectation of a slightly smaller GOE matrix, all of whose eigenvalues are positive. The concentration results used at the end of \cite{subag2017complexity} apply to this smaller matrix and allow us to carry the analysis through to a proof of Theorem~\ref{thm:Jesus}.

\subsection{Additional inputs} We record the main theorem of \cite{benarous1997large}, a large deviation principle for the empirical spectral measure of GOE matrices. Let $\lambda_1 < \dots < \lambda_N$ denote the eigenvalues of $G \sim \goe_N$, and denote empirical spectral measure of $G$ by 
\begin{align}
L_N = \frac{1}{N} \sum_{i=1}^N \delta_{\lambda_i} \,. 
\end{align}

 Let $M_1(\R)$ be the space of Borel probability measures on $\R$ endowed with the weak topology. Let $\mathcal{L}$ denote the collection of Lipschitz functions $f: \R \to \R$ which are uniformly bounded by one, and which have Lipschitz constant at most one. Equip $M_1(\R)$ with the metric
\begin{align}
d_L(\mu,\mu') = \sup_{f \in \mathcal{L}} \left| \int_\R fd\mu - \int_R f d\mu' \right| \,,
\label{eq:lip_metric}
\end{align}
which metrizes the weak topology. We state the LDP in the form given in the appendix of \cite{subag2017complexity}.

\begin{thm}[\cite{benarous1997large}, Theorem 2.1.1] There is a good rate function $J(\mu)$, for which $J(\mu) = 0$ if and only if $\mu = \semi$, and such that the empirical measure $L_N$ satisfies the LDP on $M_1(\R)$ with speed $N^2$ and rate function $J(\mu)$.
\label{thm:spectral_LDP}
\end{thm}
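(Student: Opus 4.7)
The plan is to exploit the explicit joint density of eigenvalues of a $\goe_N$ matrix and read the LDP off directly. With the normalization convention of the paper (entries of variance $N^{-1}(1+\delta_{ij})$), the ordered eigenvalues $\lambda_1 \leq \cdots \leq \lambda_N$ of $G \sim \goe_N$ have joint density
\[
\frac{1}{Z_N} \prod_{i<j} |\lambda_i - \lambda_j|\, \exp\Big(-\tfrac{N}{4} \sum_{i=1}^N \lambda_i^2\Big),
\]
with a Selberg-type normalizer satisfying $\log Z_N = -N^2 c^\star + o(N^2)$ for an explicit $c^\star$. Rewriting in terms of $L_N$ gives a heuristic density $\exp(-N^2 F(L_N) + o(N^2))$, where
\[
F(\mu) \tri \tfrac14 \int x^2\, d\mu(x) - \tfrac12 \iint_{x \neq y} \log|x-y|\, d\mu(x)\,d\mu(y),
\]
which identifies the candidate good rate function as $J(\mu) = F(\mu) - \inf_\nu F(\nu)$.

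Next I would upgrade this heuristic into a genuine LDP on $(M_1(\R), d_L)$ at speed $N^2$ through three standard steps. First, I would establish \emph{exponential tightness}: Gaussian concentration for the entries (equivalently for $\Tr(G^2)$) confines $L_N$ to a ball of bounded second moment up to an error of order $\exp(-cN^2)$. Second, I would prove the \emph{upper bound} on small neighborhoods by replacing $\log|x-y|$ by the truncated kernel $\log(\max(|x-y|, \eps))$ to remove the singularity, integrating explicitly, and sending $\eps \downarrow 0$ using monotone convergence together with lower semicontinuity of $F$. Third, I would prove the matching \emph{lower bound} by discretizing a target $\mu$ of finite logarithmic energy on a grid of spacing $\sim N^{-1}$, restricting the eigenvalue configuration to a small product neighborhood of this quantization, and estimating the resulting contribution via an explicit product-of-gaps lower bound on the Vandermonde. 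Together with lower semicontinuity of $F$, exponential tightness also gives goodness of $J$.

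The last task is to identify $\semi$ as the unique minimizer. The Euler--Lagrange equation for minimizing $F$ over $M_1(\R)$ reads, on $\supp(\mu)$,
\[
\frac{x^2}{4} - \int \log|x-y|\, d\mu(y) = \text{const},
\]
with a matching inequality off $\supp(\mu)$. Differentiating gives the singular integral equation $x/2 = 2\,\mathrm{p.v.}\!\int (x-y)^{-1} d\mu(y)$, which is the defining equation of the semicircle density on $[-2,2]$. Strict convexity of the logarithmic energy on the space of signed measures of zero total mass and finite energy (a standard potential-theoretic fact) then upgrades this critical-point identification to uniqueness, so $\{J = 0\} = \{\semi\}$.

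The main analytic obstacle is the logarithmic singularity of the pair interaction, which enters all three of the LDP steps and the uniqueness argument. One must choose the truncation parameter $\eps = \eps_N \downarrow 0$ slowly enough that coincidences of nearby eigenvalues contribute negligibly at the $N^2$ scale, while still decoupling the pair interaction from the confinement. The uniqueness half is similarly delicate: strict convexity only applies to measures of finite logarithmic energy, so minimizing sequences must first be regularized (e.g.\ by convolution with a small Gaussian) before convexity can be invoked, and one then removes the regularization using continuity of $F$ along such approximations.
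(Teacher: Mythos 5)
This theorem is not proved in the paper; it is quoted directly from Ben Arous--Guionnet, and your outline faithfully reproduces the argument of that reference (explicit joint eigenvalue density, truncation of the logarithmic kernel for the upper bound, quantization of the target measure for the lower bound, exponential tightness for goodness, and strict convexity of the logarithmic energy on zero-charge differences to pin down the semicircle as unique minimizer). Two small slips worth noting: the differentiated Euler--Lagrange equation should read $x/2 = \mathrm{p.v.}\int (x-y)^{-1}\,d\mu(y)$, without the extra factor of $2$ (the $\tfrac12$ in front of the double integral in $F$ cancels the $2$ produced by the symmetric variational derivative), and for exponential tightness at speed $N^2$ you need $\limsup_N N^{-2}\log \prob(L_N \notin K_M) \to -\infty$ as $M \to \infty$, so the error must decay like $\exp(-c_M N^2)$ with $c_M \uparrow \infty$, not merely $\exp(-cN^2)$ for a fixed $c$.
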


\bibliographystyle{abbrv}
\bibliography{complexity}

\begin{thebibliography}{10}

\bibitem{AB}
A.~Auffinger and G.~Ben~Arous.
\newblock Complexity of random smooth functions on the high-dimensional sphere.
\newblock {\em Ann. Probab.}, 41(6):4214--4247, 2013.

\bibitem{ABC}
A.~Auffinger, G.~Ben~Arous, and J.~{\v{C}}ern{\'y}.
\newblock Random matrices and complexity of spin glasses.
\newblock {\em Comm. Pure Appl. Math.}, 66(2):165--201, 2013.

\bibitem{AChenBip}
A.~Auffinger and W.-K. Chen.
\newblock Free energy and complexity of spherical bipartite models.
\newblock {\em Journal of Statistical Physics}, 157(1):40--59, 2014.

\bibitem{AChen15PTRF}
A.~Auffinger and W.-K. Chen.
\newblock On properties of {P}arisi measures.
\newblock {\em Probab. Theory Related Fields}, 161(3-4):817--850, 2015.

\bibitem{AC15}
A.~Auffinger and W.-K. Chen.
\newblock {The Legendre structure of the Parisi formula}.
\newblock {\em Comm. Math. Physi.}, pages 1--20, 2016.

\bibitem{AC16}
A.~Auffinger and W.-K. Chen.
\newblock Parisi formula for the ground state energy in the mixed $p$-spin
  model.
\newblock {\em Ann. Probab.}, 45(6B):4617--4631, 11 2017.

\bibitem{AC18Advances}
A.~Auffinger and W.-K. Chen.
\newblock On the energy landscape of spherical spin glasses.
\newblock {\em Adv. Math.}, 330:553--588, 2018.

\bibitem{AZeng}
A.~Auffinger and Q.~Zeng.
\newblock Existence of two-step replica symmetry breaking for the spherical
  mixed {$p$}-spin glass at zero temperature.
\newblock {\em Comm. Math. Phys.}, 370(1):377--402, 2019.

\bibitem{MR3554380}
J.~Baik and J.~O. Lee.
\newblock Fluctuations of the free energy of the spherical
  {S}herrington-{K}irkpatrick model.
\newblock {\em J. Stat. Phys.}, 165(2):185--224, 2016.

\bibitem{MR3649446}
J.~Baik and J.~O. Lee.
\newblock Fluctuations of the free energy of the spherical
  {S}herrington-{K}irkpatrick model with ferromagnetic interaction.
\newblock {\em Ann. Henri Poincar\'{e}}, 18(6):1867--1917, 2017.

\bibitem{MR3878351}
J.~Baik, J.~O. Lee, and H.~Wu.
\newblock Ferromagnetic to paramagnetic transition in spherical spin glass.
\newblock {\em J. Stat. Phys.}, 173(5):1484--1522, 2018.

\bibitem{benarous2001aging}
G.~Ben~Arous, A.~Dembo, and A.~Guionnet.
\newblock Aging of spherical spin glasses.
\newblock {\em Probab. Theory Related Fields}, 120(1):1--67, 2001.

\bibitem{benarous1997large}
G.~Ben~Arous and A.~Guionnet.
\newblock Large deviations for wigner's law and voiculescu's non-commutative
  entropy.
\newblock {\em Probability Theory and Related Fields}, 108(4):517--542, Aug
  1997.

\bibitem{benaych2018lectures}
F.~Benaych-Georges and A.~Knowles.
\newblock Local semicircle law for {W}igner matrices.
\newblock In {\em Advanced topics in random matrices}, volume~53 of {\em Panor.
  Synth\`eses}, pages 1--90. Soc. Math. France, Paris, 2017.

\bibitem{bloemendal2014isotropic}
A.~Bloemendal, L.~Erd{\H o}s, A.~Knowles, H.-T. Yau, and J.~Yin.
\newblock Isotropic local laws for sample covariance and generalized wigner
  matrices.
\newblock {\em Electronic Journal of Probability}, 19(0), 2014.

\bibitem{BLM}
S.~Boucheron, G.~Lugosi, and P.~Massart.
\newblock {\em Concentration inequalities}.
\newblock Oxford University Press, Oxford, 2013.
\newblock A nonasymptotic theory of independence, With a foreword by Michel
  Ledoux.

\bibitem{Bray_1980}
A.~J. Bray and M.~A. Moore.
\newblock Metastable states in spin glasses.
\newblock {\em Journal of Physics C: Solid State Physics}, 13(19):L469--L476,
  jul 1980.

\bibitem{Cavagna_1997}
A.~Cavagna, I.~Giardina, and G.~Parisi.
\newblock An investigation of the hidden structure of states in a mean-field
  spin-glass model.
\newblock {\em Journal of Physics A: Mathematical and General},
  30(20):7021--7038, oct 1997.

\bibitem{ChoromanskaHMAL15}
A.~Choromanska, M.~Henaff, M.~Mathieu, G.~B. Arous, and Y.~LeCun.
\newblock The loss surfaces of multilayer networks.
\newblock In G.~Lebanon and S.~V.~N. Vishwanathan, editors, {\em Proceedings of
  the Eighteenth International Conference on Artificial Intelligence and
  Statistics, {AISTATS} 2015, San Diego, California, USA, May 9-12, 2015},
  volume~38 of {\em {JMLR} Workshop and Conference Proceedings}. JMLR.org,
  2015.

\bibitem{CLR}
A.~Crisanti, L.~Leuzzi, and T.~Rizzo.
\newblock The complexity of the spherical p-spin spin glass model, revisited.
\newblock {\em Eur. Phys. J. B}, 36(1):129--136, 2003.

\bibitem{CS}
A.~Crisanti and H.~Sommers.
\newblock The spherical $p$-spin interaction spin glass model: the statics.
\newblock {\em Z. Phys. B. condensed Matter}, 83:341--354, 1992.

\bibitem{CS95}
A.~Crisanti and H.-J. Sommers.
\newblock Thouless-anderson-palmer approach to the spherical p-spin spin glass
  model.
\newblock {\em J. Phys. I France}, 5(7):805--813, 1995.

\bibitem{NIPS2014_5486}
Y.~N. Dauphin, R.~Pascanu, C.~Gulcehre, K.~Cho, S.~Ganguli, and Y.~Bengio.
\newblock Identifying and attacking the saddle point problem in
  high-dimensional non-convex optimization.
\newblock In Z.~Ghahramani, M.~Welling, C.~Cortes, N.~D. Lawrence, and K.~Q.
  Weinberger, editors, {\em Advances in Neural Information Processing Systems
  27}, pages 2933--2941. Curran Associates, Inc., 2014.

\bibitem{dembo1998large}
A.~Dembo and O.~Zeitouni.
\newblock {\em Large deviations techniques and applications}, volume~38 of {\em
  Applications of Mathematics (New York)}.
\newblock Springer-Verlag, New York, second edition, 1998.

\bibitem{donati2012large}
C.~Donati-Martin and M.~Ma{\"\i}da.
\newblock Large deviations for the largest eigenvalue of an hermitian brownian
  motion.
\newblock {\em ALEA : Latin American Journal of Probability and Mathematical
  Statistics 9,}, 2:501--530, 2012.

\bibitem{SZ04}
M.~R. Douglas, B.~Shiffman, and S.~Zelditch.
\newblock Critical points and supersymmetric vacua i.
\newblock {\em Communications in Mathematical Physics}, 252(1):325--358, 2004.

\bibitem{douglas2006}
M.~R. Douglas, B.~Shiffman, and S.~Zelditch.
\newblock Critical points and supersymmetric vacua, ii: Asymptotics and
  extremal metrics.
\newblock {\em J. Differential Geom.}, 72(3):381--427, 03 2006.

\bibitem{ELS09}
L.~Erd\H{o}s, B.~Schlein, and H.-T. Yau.
\newblock Local semicircle law and complete delocalization for {W}igner random
  matrices.
\newblock {\em Comm. Math. Phys.}, 287(2):641--655, 2009.

\bibitem{F1}
Y.~V. Fyodorov.
\newblock Complexity of random energy landscapes, glass transition, and
  absolute value of the spectral determinant of random matrices.
\newblock {\em Phys. Rev. Lett.}, 92(24):240601, 4, 2004.

\bibitem{MR3469265}
Y.~V. Fyodorov.
\newblock High-dimensional random fields and random matrix theory.
\newblock {\em Markov Process. Related Fields}, 21(3, part 1):483--518, 2015.

\bibitem{Fyo2016}
Y.~V. Fyodorov.
\newblock Topology trivialization transition in random non-gradient autonomous
  {ODE}s on a sphere.
\newblock {\em J. Stat. Mech. Theory Exp.}, (12):124003, 21, 2016.

\bibitem{MR3876574}
Y.~V. Fyodorov and P.~Le~Doussal.
\newblock Hessian spectrum at the global minimum of high-dimensional random
  landscapes.
\newblock {\em J. Phys. A}, 51(47):474002, 27, 2018.

\bibitem{FyoPol}
Y.~V. Fyodorov, P.~Le~Doussal, A.~Rosso, and C.~Texier.
\newblock Exponential number of equilibria and depinning threshold for a
  directed polymer in a random potential.
\newblock {\em Ann. Physics}, 397:1--64, 2018.

\bibitem{F0}
Y.~V. Fyodorov and I.~Williams.
\newblock Replica symmetry breaking condition exposed by random matrix
  calculation of landscape complexity.
\newblock {\em J. Stat. Phys.}, 129(5-6):1081--1116, 2007.

\bibitem{JT}
A.~{Jagannath} and I.~{Tobasco}.
\newblock {Low temperature asymptotics of spherical mean field spin glasses.}
\newblock {\em ArXiv e-prints}, Feb. 2016.

\bibitem{KY13}
A.~Knowles and J.~Yin.
\newblock The isotropic semicircle law and deformation of {W}igner matrices.
\newblock {\em Comm. Pure Appl. Math.}, 66(11):1663--1750, 2013.

\bibitem{Kurchan_1991}
J.~Kurchan.
\newblock Replica trick to calculate means of absolute values: applications to
  stochastic equations.
\newblock {\em Journal of Physics A: Mathematical and General},
  24(21):4969--4979, nov 1991.

\bibitem{KMV}
J.~Kurchan, G.~Parisi, and M.~A. Virasoro.
\newblock Barriers and metastable states as saddle points in the replica
  approach.
\newblock {\em Journal de Physique I}, 3(8):1819--1838, 1993.

\bibitem{lazutkin1988signature}
V.~F. Lazutkin.
\newblock Signature of invertible symmetric matrices.
\newblock {\em Mathematical Notes of the Academy of Sciences of the USSR},
  44(2):592--595, Aug 1988.

\bibitem{MPV}
M.~M{\'e}zard, G.~Parisi, and M.~A. Virasoro.
\newblock {\em Spin glass theory and beyond}, volume~9 of {\em World Scientific
  Lecture Notes in Physics}.
\newblock World Scientific Publishing Co., Inc., Teaneck, NJ, 1987.

\bibitem{NS}
F.~Nazarov and M.~Sodin.
\newblock On the number of nodal domains of random spherical harmonics.
\newblock {\em American Journal of Mathematics}, 131(5):1337--1357, 2009.

\bibitem{PhysRevX.9.011003}
V.~Ros, G.~Ben~Arous, G.~Biroli, and C.~Cammarota.
\newblock Complex energy landscapes in spiked-tensor and simple glassy models:
  Ruggedness, arrangements of local minima, and phase transitions.
\newblock {\em Phys. Rev. X}, 9:011003, Jan 2019.

\bibitem{Ros_2019}
V.~Ros, G.~Biroli, and C.~Cammarota.
\newblock Complexity of energy barriers in mean-field glassy systems.
\newblock {\em {EPL} (Europhysics Letters)}, 126(2):20003, may 2019.

\bibitem{SW}
P.~Sarnak and I.~Wigman.
\newblock Topologies of nodal sets of random band-limited functions.
\newblock {\em Communications on Pure and Applied Mathematics}, 72(2):275--342,
  2019.

\bibitem{subag2017complexity}
E.~Subag.
\newblock The complexity of spherical {$p$}-spin models---a second moment
  approach.
\newblock {\em Ann. Probab.}, 45(5):3385--3450, 2017.

\bibitem{Subag2017}
E.~Subag.
\newblock The geometry of the {G}ibbs measure of pure spherical spin glasses.
\newblock {\em Invent. Math.}, 210(1):135--209, 2017.

\bibitem{SZ20}
E.~Subag and O.~Zeitouni.
\newblock The extremal process of critical points of the pure {$p$}-spin
  spherical spin glass model.
\newblock {\em Probab. Theory Related Fields}, 168(3-4):773--820, 2017.

\end{thebibliography}

\end{document}